\title[Non-commutative Courant algebroids and Quiver algebras]{Non-commutative Courant algebroids\\ and Quiver algebras}
\author[L. \'Alvarez-C\'onsul]{Luis \'Alvarez-C\'onsul}
\address{Instituto de Ciencias Matem\'aticas (CSIC-UAM-UC3M-UCM)\\
Nicol\'as Cabrera 13-15\\
Cantoblanco UAM\\ 28049 Madrid, Spain}
\email{l.alvarez-consul@icmat.es}
\author[D. Fern\'andez]{David Fern\'andez}
  \address{Instituto de Matem\'atica Pura e Aplicada (IMPA) \\
Estrada Dona Castorina, 110\\
22460-320\\ Rio de Janeiro\\ Brazil
    }
  \email{davidfa@impa.br}
\theoremstyle{plain}
\newtheorem{theorem}{Theorem}[section]
\newtheorem{lemma}[theorem]{Lemma}
\newtheorem{proposition}[theorem]{Proposition}
\theoremstyle{definition}
\newtheorem{definition}[theorem]{Definition}
\newtheorem{definition-theorem}[theorem]{Definition-Theorem}
\newtheorem{example}[theorem]{Example}
\theoremstyle{remark}
\newtheorem{remark}[theorem]{Remark}
\newtheorem{claim}[theorem]{Claim}
\newtheorem{framework}[theorem]{Framework}
\newcommand{\secref}[1]{\S\ref{#1}}
\numberwithin{equation}{section} \setcounter{tocdepth}{1}
  \newcommand{\surj}{\to\kern-1.8ex\to}
\newcommand{\hra}{\hookrightarrow}
\newcommand{\lto}{\longrightarrow}
\newcommand{\lra}[1]{\stackrel{#1}{\longrightarrow}}
\newcommand{\defeq}{\mathrel{\mathop:}=} 
\renewcommand{\(}{\left(}
\renewcommand{\)}{\right)}
\renewcommand{\mod}{\operatorname{mod}}
\newcommand{\DD}{\mathbb{D}}
\newcommand{\NN}{\mathbb{N}}
\newcommand{\XX}{\mathbb{X}}
\newcommand{\ZZ}{\mathbb{Z}}
\newcommand{\kk}{{\Bbbk}} 
\newcommand{\op}{{\operatorname{op}}} 
\newcommand{\e}{{\operatorname{e}}} 
\newcommand{\du}{\operatorname{d}\!}
\newcommand{\out}{{\operatorname{out}}} 
\newcommand{\inn}{{\operatorname{inn}}} 
\newcommand{\Id}{{\operatorname{Id}}} 
\newcommand{\Tor}{{\operatorname{Tor}}}
\newcommand{\aad}{\operatorname{ \mathbbm{a}d}}
\newcommand{\lr}[1]{
  \left\{\mkern-6mu\left\{#1\right\}\mkern-6mu\right\}}
  \newcommand{\cc}[1]{
  [\mkern-3mu[#1]\mkern-3mu]}
  \newcommand{\mm}[1]{
  \mid \mkern-9mu\mid#1\mid\mkern-9mu\mid}
\newcommand{\QQ}{{Q}} 
\newcommand{\Q}[1]{{Q_{#1}}} 
\newcommand{\PP}{{P}} 
\renewcommand{\P}[1]{{P_{#1}}} 
\newcommand{\Pp}{{P^+_1}} 
\newcommand{\dQ}[1]{{\overline{Q}_{#1}}} 
\newcommand{\dPP}{{\overline{P}}} 
\newcommand{\cas}{\operatorname{\mathtt{cas}}}
\newcommand{\End}{\operatorname{End}}
\newcommand{\T}{\operatorname{T}}
\newcommand{\Hom}{\operatorname{Hom}}
\newcommand{\Der}{\operatorname{Der}}
\newcommand{\diff}[1]{{\Omega_R^1{#1}}}
\newcommand{\DR}[2]{{\operatorname{DR}_R^{#1}{#2}}}
\newcommand{\D}{\operatorname{\mathbb{D}er}}
\newcommand{\Tr}{\operatorname{Tr}}
\newcommand{\EEnd}{\operatorname{\mathbb{E}nd}}
\newcommand{\Eu}{{\operatorname{Eu}}}
\newcommand{\AAt}{\operatorname{ \mathbb{A}t}}
\newcommand{\LL}{{\operatorname{L}}}
\newcommand{\iii}{{\operatorname{i}}}
\newcommand{\ab}{{\operatorname{ab}}}
\newcommand{\dR}{{\operatorname{DR}}}
\thanks{Partial support of the first author was provided the Spanish
  MINECO, through the Severo Ochoa Programme (grant SEV-2015-0554),
  and by MINECO grants MTM2013-43963-P, MTM2016-81048-P. The initial
  work of the second author was provided by a FPI-UAM
  grant. Subsequent support of the second author was provided by IMPA
  and CAPES through their postdoctorate of excellence fellowships at
  UFRJ}
\begin{document}

\begin{abstract}
In this paper, we develop a differential-graded symplectic
(Batalin--Vilkovisky) version of the framework of Crawley-Boevey,
Etingof and Ginzburg on noncommutative differential geometry based on
double derivations to construct non-commutative analogues of the
Courant algebroids introduced by Liu, Weinstein and Xu. Adapting
geometric constructions of \v Severa and Roytenberg for
(commutative) graded symplectic supermanifolds, we express the BRST
charge, given in our framework by a `homological double derivation',
in terms of Van den Bergh's double Poisson algebras for graded
bi-symplectic non-commutative 2-forms of weight 1, and in terms of our
non-commutative Courant algebroids for graded bi-symplectic
non-commutative 2-forms of weight 2 (here, the grading, or ghost
degree, is called weight). We then apply our formalism to obtain
examples of exact non-commutative Courant algebroids, using
appropriate graded quivers equipped with bi-symplectic forms of weight
2, with a possible twist by a closed Karoubi--de Rham non-commutative
differential 3-form.
\end{abstract}

\maketitle

\section{Introduction}

In this paper, we propose a notion of non-commutative Courant
algebroid that satisfies the Kontsevich--Rosenberg principle, whereby
a structure on an associative algebra has geometric meaning if it
induces standard geometric structures on its representation
spaces~\cite{KR00}. Replacing vector fields on manifolds by
Crawley-Boevey's double derivations on associative
algebras~\cite{CB01}, this principle has been successfully applied by
Crawley-Boevey, Etingof and Ginzburg~\cite{CBEG07} to symplectic
structures and by Van den Bergh to Poisson
structures~\cite{VdB08,VdB08a}.

Courant algebroids, introduced in differential geometry by Liu,
Weinstein and Xu~\cite{LWX97}, generalize the notion of the Drinfeld
double to Lie bialgebroids. They axiomatize the properties of the
Courant--Dorfman bracket, introduced by Courant and
Weinstein~\cite{CW88, Cou90}, and Dorfman~\cite{Dor87}, to provide a
geometric setting for Dirac's theory of constrained mechanical
systems~\cite{Dir64}.

Our approach is based on a well-known correspondence (in commutative
geometry) between Courant algebroids and a suitable class of
differential graded symplectic manifolds. More precisely, symplectic
$\mathbb{N}$Q-manifolds are non-negatively graded manifolds (the
grading is called weight), endowed with a graded symplectic structure
and a symplectic homological vector field $Q$ of weight 1. They encode
higher Lie algebroid structures in the Batalin--Vilkovisky formalism
in physics, where the weight keeps track of the ghost
number. Following ideas and results of \v{S}evera~\cite{Sev00},
Roytenberg~\cite{Roy00} proved that symplectic $\mathbb{N}$Q-manifolds
of weights 1 and 2 are in 1-1 correspondence with Poisson manifolds
and Courant algebroids, respectively. Our method to construct
non-commutative Courant algebroids is to adapt this result to a graded
version of the formalism of Crawley-Boevey, Etingof and Ginzburg.

After setting out our notation (Section~\ref{sec:notation}), and
reviewing several constructions involving graded quivers,
non-commutative differential forms and double derivations
(Section~\ref{sub:elements}), in Section~\ref{sec:N-algebra-assoc} we
start generalizing to graded associative algebras the theories of
bi-symplectic forms and double Poisson brackets of
Crawley-Boevey--Etingof--Ginzburg and Van den Bergh, respectively. In
this framework, we obtain suitable Darboux theorems for graded
bi-symplectic forms, and prove a 1-1 correspondence between
appropriate bi-symplectic $\mathbb{N}$Q-algebras of weight 1 and Van
den Bergh's double Poisson algebras (Section~\ref{sec:Darboux}).  We
then use suitable non-commutative Lie and Atiyah algebroids to
describe bi-symplectic $\mathbb{N}$-graded algebras of weight 2 whose
underlying graded algebras are graded-quiver path algebras, in terms
Van den Bergh's pairings on projective bimodules
(Section~\ref{sec:bisympl-weight-2}).  To complete the data that
determines Courant algebroids, in Section~\ref{sub:DoubleCourant}, we
define bi-symplectic $\mathbb{N}$Q-algebras and use non-commutative
derived brackets to calculate the algebraic structure that corresponds
to symplectic $\mathbb{N}$Q-algebras of this type. By the analogy with
Roytenberg's correspondence for commutative algebras~\cite{Roy09},
this structure can be regarded as a double Courant--Dorfman algebra,
although we call them simply double Courant algebroids. Finally, we
consider examples of non-commutative Courant algebroids obtained by
deforming standard non-commutative Courant algebroids associated to
graded quivers (Section~\ref{sub:standard}).

\subsection*{Acknowledgements}

The authors wish to thank Henrique Bursztyn, Alejandro Cabrera, Mario
Garcia-Fernandez, Reimundo Heluani, Alastair King, and Marco Zambon,
for useful discussions.

\section{Notation and Conventions}
\label{sec:notation}

Throughout the paper, unless otherwise stated, all associative
algebras will be unital and finitely generated over a fixed base field
$\kk$ of characteristic 0. The unadorned symbols
$\otimes=\otimes_\kk$, $\Hom=\Hom_\kk$, will denote the tensor product
and the space of linear homomorphisms over the base field. The
opposite algebra and the enveloping algebra of an associative algebra
$B$ will be denoted $B^{\op}$ and $B^{\e}\defeq B\otimes B^{\op}$,
respectively. Given an associative algebra $R$, an $R$-algebra will
mean an associative algebra $B$ together with a unit preserving
algebra morphism $R\to B$ (note that the image of $R$ may not be in
the centre of $B$), and by a morphism $B_1\to B_2$ of $R$-algebras we
mean an algebra morphism such that $R\to B_2$ is the composite of the
given morphisms $R\to B_1$ and $B_1\to B_2$.

A graded algebra, and a graded $A$-module, will mean an $\NN$-graded
associative algebra $A$, and a $\ZZ$-graded left $A$-module $V$, with
degree decompositions
\[
A=\bigoplus_{d\in\NN}A_d, \quad 
V=\bigoplus_{d\in\mathbb{Z}}V_d, 
\]
where $\NN\subset\ZZ$ are the set of non-negative integers and the set
of integers, respectively. An element $v\in V_d$ is called homogeneous
of degree $\lvert v\rvert=d$. Depending on the context, the degree
will be called \emph{weight}, when it plays the role of the `ghost
degree' in the BRST or Batalin--Vilkovisky quantization in physics
(see, e.g.,~\secref{sub:grQ-pathAlg}). For any $N\in\ZZ$, the
$N$-degree of a homogeneous $v\in V$ is $\lvert v\rvert_N\defeq\lvert
v\rvert+N$ (this notation will be used in~\secref{sub:DDP}). The
graded $A$-module $V[n]$, with degree shifted by $n$, is
$V[n]=\bigoplus_{d\in\mathbb{Z}}V[n]_d$ with $V[n]_d=V_{d+n}$. Given
another graded module $V'$, a graded linear map and a graded
$A$-module homomorphism $V\to V'$ are respectively a linear map and an
$A$-module homomorphism, carrying $V_d$ to $V'_d$, for all
$d\in\ZZ$. Ungraded modules are viewed as graded modules concentrated
in degree 0.

We will use the Koszul sign rule to generalize standard constructions
for algebras and modules to graded algebras and graded modules, such
as tensor products, or the opposite $A^\op$ and the enveloping algebra
$A^\e$ of a graded algebra $A$, and to identify graded left
$A^\e$-modules with graded $A$-bimodules. For instance, the tensor
product of two graded algebras $A$ and $B$ is the graded algebra with
underlying vector space $A\otimes B$, and multiplication
\[
(a_1\otimes b_1)(a_2\otimes b_2)=(-1)^{\lvert b_1\rvert\lvert a_2\rvert}a_1a_2\otimes b_1b_2,
\]
for homogeneous $a_1,a_2\in A$ and $b_1,b_2\in B$.
Given a bigraded module 
\[
V=\bigoplus_{d,p\in\ZZ}V_d^p, 
\]
the elements $v$ of $V_d^p$ are called homogeneous of bigrading
$(\lvert v\rvert,\lVert v\rVert)\defeq(d,p)$. In this case, the
bigraded Koszul sign rule applied to two homogeneous elements $u,v\in
V$ yields a sign $(-1)^{(\lvert u\rvert,\lVert u\rVert)\cdot(\lvert
  v\rvert,\lVert v\rVert)}$, where
\begin{equation}\label{eq:bigraded-Koszul}
(\lvert u\rvert,\lVert u\rVert)\cdot(\lvert v\rvert,\lVert v\rVert)
\defeq \lvert v\rvert\lvert v\rvert+\lVert u\rVert\lVert v\rVert.
\end{equation}

Let $R$ be an associative algebra, and $A$ a graded $R$-algebra, that
is, a graded algebra equipped with an algebra homomorphism $R\to A$
with image in $A_0$. The $A$-bimodule $A\otimes A$ has two graded
$A$-bimodule structures (see \cite{BCER12}), called the \emph{outer}
graded bimodule structure $(A\otimes A)_{\out}$ and the \emph{inner}
graded bimodule structure $(A\otimes A)_{\inn}$, corresponding to the
left graded $A^\e$-module structure ${}_{A^\e}A^\e$ and right graded
$A^\e$-module structure ${}_{({A^\e)}^{\op}}A^\e=(A^\e)_{A^\e}$,
respectively. In other words, for all homogeneous $a,b,u,v\in A$,
\begin{align*}
 a(u\otimes v)b=au\otimes vb\quad  \text{ in } (A\otimes A)_\out,
\\
a*(u\otimes v)*b=(-1)^{\lvert a\rvert \lvert u\rvert+\lvert a\rvert\lvert b\rvert+\lvert b\rvert\lvert v\rvert}ub\otimes av \quad \text{ in } (A\otimes A)_\inn,
\end{align*}
The \emph{dual} of a graded $A$-bimodule $V$ is the graded
$A$-bimodule
\begin{equation}\label{eq:dual-bimodule}
V^\vee\defeq\bigoplus_{d\in\ZZ}V_d^\vee,\quad
\text{with }
V_d^\vee\defeq\Hom_{A^\e}(V_d,(A\otimes A)_\out),
\end{equation}
where the graded $A$-bimodule structure on $V_d^\vee$ is induced by
the one on $(A\otimes A)_{\inn}$.

The above inner and outer bimodule structures are special cases of the
following general construction, that for simplicity we will describe
only for ungraded algebras and modules. Let $B$ and $V$ be an
(ungraded) associative algebra and an (ungraded) $B$-bimodule,
respectively. Then the $n$-th tensor power $V^{\otimes n}$ has many
$B$-bimodule structures (cf.~\cite[pp. 5718,
5732]{VdB08}). Following~\cite{DSKV15}, the $d$-th left and right
$B$-module structures of $V^{\otimes n}$ are 
\begin{equation}
\begin{aligned}
b*_i(v_1\otimes\cdots \otimes v_n)&=v_1\otimes\cdots\otimes v_i\otimes b v_{i+1}\otimes\cdots\otimes v_n,
\\
(v_1\otimes\cdots\otimes v_n)*_i b&=v_1\otimes\cdots\otimes v_{n-i}b\otimes\cdots\otimes v_n,
\end{aligned}
\label{square-op}
\end{equation}
for all $i=0,...,n-1$, and $b\in B$, $v_1,\ldots,v_n\in V$, where the
index denotes the number of `jumps'. Then the outer bimodule structure
and the inner bimodule structure are given by $a_1*_0(a\otimes b) *_0
b_1$ and $a_1*_1(a\otimes b)*_1 b_1$, respectively. We use a similar
notation for the tensor product of an element $u$ of $V$ and an
element $v_1\otimes\cdots\otimes v_n$ of $V^{\otimes n}$, namely, 
\begin{equation}
\begin{aligned}
u\otimes_i(v_1\otimes\cdots\otimes v_n)&=v_1\otimes\cdots\otimes
v_i\otimes u \otimes v_{i+1}\otimes v_n
\in V^{\otimes n+1},
\\
(v_1\otimes\cdots\otimes v_n)\otimes_i u&=v_1\otimes\cdots\otimes
v_{n-i}\otimes u\otimes\cdots\otimes v_n
\in V^{\otimes n+1}.
\end{aligned}
\label{jumping-notation}
\end{equation}

\section{Basics on graded non-commutative algebraic geometry and quivers}
\label{sub:elements}

\subsection{Background on graded quivers}

\subsubsection{Quivers}
\label{sub:basics quivers}

To fix notation, here we recall a few relevant definitions from the
theory of quivers (see, e.g.,~\cite{ARS95, ASS06} for introductions to
this topic).
A \emph{quiver} $\QQ$ consists of a set $\Q0$ of vertices, a set $\Q1$
of arrows, and two maps $t,h\colon\Q1\to\Q0$ assigning to each arrow
$a\in\Q1$, its \emph{tail} and its \emph{head}. We write $a\colon i\to
j$ to indicate that an arrow $a\in\Q1$ has tail $i=t(a)$ and head
$j=h(a)$. Given an integer $\ell\geq 1$, a non-trivial path of length
$\ell$ in $\QQ$ is an ordered sequence of arrows $p=a_\ell\cdots a_1$,
such that $h(a_j)=t(a_{j+1})$ for $1\leq j<\ell$. This path $p$ has
tail $t(p)=t(a_1)$, head $h(p)=h(a_\ell)$, and is represented
pictorially as follows.
\begin{equation}\label{eq:path}
p\colon 
\bullet\stackrel{a_\ell}{\longleftarrow}\bullet\longleftarrow\cdots\longleftarrow\bullet\stackrel{a_1}{\longleftarrow}\bullet
\end{equation}
For each vertex $i\in\Q0$, $e_i$ is the \emph{trivial path} in $\QQ$,
with tail and head $i$, and length $0$. A \emph{path} in $\QQ$ is
either a trivial path or a non-trivial path in $\QQ$. The path algebra
$\kk\QQ$ is the associative algebra with underlying vector space
\[
\kk\QQ=\bigoplus_{\text{paths $p$}}\kk p,
\]
that is, $\kk\QQ$ has a basis consisting of all the paths in $\QQ$, with
the product $pq$ of two non-trivial paths $p$ and $q$ given by the
obvious path concatenation if $t(p)=h(q)$, $pq=0$ otherwise,
$pe_{t(p)}=e_{h(p)}p=p$, $pe_i=e_jp=0$, for non-trivial paths $p$ and
$i,j\in\Q0$ such that $i\neq t(p)$, $j\neq h(p)$, and $e_ie_i=e_i, e_ie_j=0$
for all $i,j\in\Q0$ if $i\neq j$. We will always assume that a
quiver $\QQ$ is finite, i.e. its vertex and arrow sets are finite, so
$\kk\QQ$ has a unit
 \begin{equation}\label{eq:path-algebra-unit}
1=\sum_{i\in\Q0}e_i.
\end{equation}
Define vector spaces
\[
R_\QQ=\bigoplus_{i\in \Q0}\kk e_i,\quad V_\QQ=\bigoplus_{a\in \Q1}\kk a.
\]
Then $R_\QQ\subset \kk\QQ$ is a semisimple commutative (associative)
algebra, 
because it is the subalgebra spanned by the trivial paths, which are a
complete set of orthogonal idempotents of $\kk\QQ$. 
Furthermore, as $V_\QQ$ is a vector space with basis consisting of the
arrows, it is an $R_\QQ$-bimodule with multiplication $e_jae_i=a$ if
$a\colon i\to j$ and $e_iae_j=0$ otherwise, and the path algebra is
the tensor algebra of the bimodule $V_\QQ$ over $R\defeq R_\QQ$, that
is 
\begin{equation}\label{eq:grPathAlg.1}
\kk\QQ=\T_{R}V_\QQ,
\end{equation}
where a path $p=a_\ell\cdots a_1\in \kk\QQ$ is identified with a tensor
product $a_\ell\otimes\cdots\otimes a_1\in \T_{R}V_\QQ$.

Given a quiver $\QQ$, the \emph{double quiver} of $\QQ$ is the quiver
$\overline{\QQ}$ obtained from $\QQ$ by adjoining a reverse arrow
$a^*\colon j\to i$ for each arrow $a\colon i\to j$ in $\QQ$. Following
\cite[\S 8.1]{CBEG07}, for convenience we introduce the function
\begin{equation}
\varepsilon\colon\overline{\QQ}\lto \{\pm 1\}\colon\quad a\longmapsto \varepsilon(a)=\begin{cases}
1 &\mbox{if } a\in \Q1,\\
-1 & \mbox{if }  a\in\QQ^*_1\defeq \dQ1\setminus\Q1.
\end{cases}
\label{varepsilon}
\end{equation}

\subsubsection{Graded quivers and graded path algebras}
\label{sub:grQ-pathAlg}

The following construction is partially inspired by similar ones in
physics~\cite{Laz05} and for Calabi--Yau algebras
(cf.~\cite{Gin07},~\cite[\S 10.3]{VdB15}).
%

\begin{definition}\label{def:graded-quiver}
  A \emph{graded quiver}
      is a quiver $\PP$ together with a map
      \[
      \lvert -\rvert \colon \P1\lto\NN\colon \quad a\longmapsto\lvert a\rvert
      \] that to each arrow $a$ assigns its \emph{weight} $\lvert
  a\rvert$. The \emph{weight} of the graded quiver is
\[
\lvert\PP\rvert\defeq\max_{a\in\PP_1}\,\lvert a\rvert.
\]
Given a graded quiver $\PP$ and an integer $N\geq \lvert\PP\rvert$, the
\emph{weight $N$ double graded quiver} of $\PP$ is the graded quiver
$\dPP$ obtained from $\PP$ by adjoining a reverse arrow $a^*\colon
j\to i$ for each arrow $a\colon i\to j$ in $\PP$, with weight $\lvert
a^*\rvert =N-\lvert a\rvert$ (note that $\lvert\dPP\rvert=N$ if and
only if $\PP$ has at least one arrow of weight $0$).
\end{definition}


The weight function $\lvert -\rvert\colon \P1\to\NN$ induces a
structure of graded associative algebra, called the \emph{graded path
  algebra} of $P$, on the path algebra $\kk\PP$ of the underlying
quiver of $\PP$, where a trivial path $e_i$ has weight $\lvert
e_i\rvert=0$, and a non-trivial path $p=a_\ell\cdots a_1$ has weight
\[
\lvert p\rvert=\lvert a_1\rvert+\cdots+\lvert a_\ell\rvert.
\]
Let $R\defeq R_\PP$ the algebra with basis the trivial paths in a
graded quiver $\PP$, and
\begin{equation}\label{eq:grPathAlg.3}
V_\PP=\bigoplus_{a\in\P1}\kk a
\end{equation}
the graded $R$-bimodule with basis consisting of the arrows in $\P1$,
where $a\in\P1$ has weight $\lvert a\rvert$, and
multiplications $e_jae_i=a$ if $i=t(a), j=h(a)$, and $e_iae_j=0$
otherwise, for all $a\in\P1$.
As in~\eqref{eq:grPathAlg.1}, the graded path algebra $k\PP$ is the
graded tensor algebra
\begin{equation}\label{eq:grPathAlg.2}
\kk\PP=\T_RV_\PP,
\end{equation}
where a path $p=a_\ell\cdots a_1\in k\PP$ is identified with a tensor
product $a_\ell\otimes\cdots\otimes a_1\in \T_RV_\PP$.

The graded path algebra $k\PP$ can be expressed as a graded tensor
algebra in another way, using the following two subquivers of
$\PP$. The \emph{weight 0 subquiver} of $\PP$ is the (ungraded) quiver
$\QQ$ with vertex set $\Q0=\P0$, arrow set $\Q1=\{a\in\P1\mid\lvert
a\rvert=0\}$, and tail and head maps $t,h\colon\Q1\to\Q0$ obtained
restricting the tail and head maps of $\PP$. The \emph{higher-weight
  subquiver} of $\PP$ is the graded quiver $\PP^+$ with vertex set
$\PP_0^+=\P0$, arrow set $\Pp=\{a\in\P1\mid\lvert a\rvert>0\}$, tail
and head maps $t,h\colon\Pp\to\PP_0^+$ obtained restricting the tail
and head maps of $\PP$, and weight function $\Pp\to\NN$ obtained
restricting the weight function of $\PP$. Later it will also be useful
to consider the graded subquivers $\P{(w)}\subset\PP$ with vertex set
$\P0$ and arrow set $\P{(w),1}$ consisting of all the arrows $a\in\Pp$
with weight $w$, for $0\leq w\leq\lvert\PP\rvert$.

In Lemma~\ref{lem:grPathAlg-TensorAlg}, $BaB\subset A$ denotes the
$B$-sub-bimodule of $_BA_B$ generated by $a\in\ A$.

\begin{lemma}
\label{lem:grPathAlg-TensorAlg}
Let $B=\kk\QQ$ be the path algebra of $\QQ$. Define the graded $B$-bimodule
\begin{equation}\label{eq:grPathAlg.MP+}
M_\PP\defeq\bigoplus_{a\in\Pp}B aB.
\end{equation}
Then $M_\PP$ is a finitely generated projective $B$-bimodule and the
graded path algebra $A=\kk\PP$ of $\PP$ is canonically isomorphic to
the graded tensor algebra of $M_\PP$ over $B$, that is, 
\begin{equation}\label{eq:grPathAlg-TensorAlg}
A=\T_BM_\PP.
\end{equation}
\end{lemma}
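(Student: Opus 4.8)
The plan is to realise both $A=\kk\PP$ and $B=\kk\QQ$ as graded tensor algebras over their common semisimple subalgebra $R\defeq R_\PP=R_\QQ$, and to obtain~\eqref{eq:grPathAlg-TensorAlg} from the way a tensor algebra decomposes when its module of generators splits. Write $V_{\PP^+}\defeq\bigoplus_{a\in\Pp}\kk a$ for the graded $R$-bimodule spanned by the arrows of positive weight; the partition of $\P1$ into weight-$0$ arrows and positive-weight arrows then gives a splitting $V_\PP=V_\QQ\oplus V_{\PP^+}$ of graded $R$-bimodules, and by~\eqref{eq:grPathAlg.1} and~\eqref{eq:grPathAlg.2} we have $A=\T_R V_\PP$ and $B=\T_R V_\QQ$.

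Next I would establish the projectivity statement. Inside $A=\T_R V_\PP$, the $B$-sub-bimodule generated by $V_{\PP^+}$ is spanned by the tensors having exactly one factor from $V_{\PP^+}$, hence equals $B\otimes_R V_{\PP^+}\otimes_R B=\bigoplus_{a\in\Pp}BaB=M_\PP$; the last sum is direct because $M_\PP$ is then just the span of the paths of $\PP$ containing exactly one positive-weight arrow. Since $R$ is semisimple and each $a\colon i\to j$ in $\Pp$ spans a one-dimensional $R$-sub-bimodule of $V_{\PP^+}$ on which $e_j$ and $e_i$ act as the left and right identities,
\[
M_\PP\;\cong\;\bigoplus_{a\in\Pp}Be_{h(a)}\otimes_\kk e_{t(a)}B
\]
as graded $B$-bimodules, the summand indexed by $a$ being concentrated in weight $\lvert a\rvert$. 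As $Be_{h(a)}$ and $e_{t(a)}B$ are direct summands of $B$ as left and right $B$-modules respectively, each summand is a direct summand of $B\otimes_\kk B={}_{B^\e}B^\e$; since $\Pp$ is finite, $M_\PP$ is a finitely generated projective $B$-bimodule.

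For the identification $A=\T_B M_\PP$, the universal property of the tensor algebra applied to the algebra inclusion $B\hookrightarrow A$ and the $B$-bimodule inclusion $M_\PP\hookrightarrow A$ produces a morphism of graded algebras $\Phi\colon\T_B M_\PP\to A$ which is the identity on $B$ and the inclusion on $M_\PP$; it remains to see $\Phi$ is an isomorphism. The cleanest route invokes the standard relative decomposition of a tensor algebra under a splitting of its generators, $\T_R(V'\oplus V'')\cong\T_{\T_R V'}\bigl(\T_R V'\otimes_R V''\otimes_R\T_R V'\bigr)$, which for $V'=V_\QQ$ and $V''=V_{\PP^+}$ reads precisely $A\cong\T_B M_\PP$ and is compatible with $\Phi$ and with the weight gradings. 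Alternatively, one argues with explicit bases: from $M_\PP\cong\bigoplus_a Be_{h(a)}\otimes_\kk e_{t(a)}B$ and $(Be_j\otimes_\kk e_iB)\otimes_B(Be_{j'}\otimes_\kk e_{i'}B)\cong Be_j\otimes_\kk(e_iBe_{j'})\otimes_\kk e_{i'}B$ one checks that, for each $k\geq 0$, $M_\PP^{\otimes_B k}$ has a $\kk$-basis indexed by data $(q_k,a_k,p_{k-1},a_{k-1},\dots,p_1,a_1,p_0)$ with $a_1,\dots,a_k\in\Pp$ and $p_0,\dots,p_{k-1},q_k$ composable paths in $\QQ$, that $\Phi$ sends this basis element to $q_k a_k p_{k-1}a_{k-1}\cdots p_1 a_1 p_0$, and that these expressions run bijectively over the paths of $\PP$ having exactly $k$ positive-weight arrows (the inverse assignment being the canonical operation of cutting a path at each of its positive-weight arrows). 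Summing over $k\geq 0$ gives that $\Phi$ is bijective, hence an isomorphism of graded algebras.

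The only genuinely delicate step is the last one: one must check that forming tensor powers over $B$ produces no collapse, i.e.\ that $M_\PP^{\otimes_B k}$ really has the asserted $\kk$-basis. This is exactly where the concrete shape $Be_{h(a)}\otimes_\kk e_{t(a)}B$ is indispensable, since it turns each $\otimes_B$ into a $\kk$-tensor product with a space $e_{t(a)}Be_{h(a')}$ of paths in $\QQ$; granting this, what is left is the routine bookkeeping of the unique path decomposition.
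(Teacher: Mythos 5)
Your proof is correct and follows essentially the same route as the paper's: the paper identifies $A$ with $\T_BM_\PP$ by comparing the path bases (paths with exactly one higher-weight arrow generate $M_\PP$, and their $B$-tensor powers enumerate all paths), which is exactly your explicit basis argument with the cut-at-positive-weight-arrows bijection; your projectivity argument via $BaB\cong Be_{h(a)}\otimes_\kk e_{t(a)}B$ is likewise the justification the paper invokes. You simply supply in full the bookkeeping the paper delegates to a reference.
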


\begin{proof}
As $M_\PP$ has a basis consisting of paths in $\PP$ of weight 1,
its tensor algebra $\T_BM_\PP$ has a basis consisting of paths in
$\PP$ of arbitrary weight, that is, $A=\T_BM_\PP$ as graded vector
spaces, and hence as graded algebras, with concatenation of paths on
$A$ identified with multiplication of paths in $\T_BM_\PP$
(see~\cite[Lemma 3.3.7]{Fer16} for further details).
\end{proof}

It will be useful to decompose 
\[
M_\PP=\bigoplus_{w=1}^{\lvert\PP\rvert} M_{\P{(w)}},
\text{ with }
M_{\PP_{(w)}}=\bigoplus_{a\in\P{(w),1}}BaB,
\]
where $M_{\PP_{(w)}}$ is a finitely generated projective $B$-bimodule
of weight $w$, because so are the $B$-bimodules $BaB$, and hence
$E_w\defeq M_{\PP_{(w)}}[w]$ is a finitely generated projective
$B$-bimodule (concentrated in weight $0$).


\subsection{Graded non-commutative differential forms and double derivations}
\label{sub:ncdiffforms}

Let $R$ be an associative algebra and $A$ a graded $R$-algebra.  Given
a graded $A$-bimodule $M$, a \emph{derivation of weight $d$} of $A$
into $M$ is an additive map $\theta\colon\, A\to M$, such that
$\theta(A_i)\subset M_{i+d}$, satisfying the graded Leibniz rule
$\theta(ab)=(\theta a)b+(-1)^{d\lvert a\rvert}a(\theta b)$ for all
$a,b\in A$. It is called an \emph{$R$-linear graded derivation} if,
furthermore, $\theta(R)=0$, i.e., it is a graded $R$-bimodule morphism
of weight $d$. The graded vector space of graded $R$-derivations is
\[
\Der_R(A,M)=\bigoplus_{d\in\ZZ}\Der^d_R(A,M),
\]
where $\Der^d_R(A,M)$ is the vector space of $R$-linear graded
derivations of weight $d$.

\subsubsection{Graded non-commutative differential forms}

\begin{lemma}[cf. {\cite[\S 2]{Qui89}}]
\label{lem:NC-diff-forms}
There exists a unique pair $(\diff{A},\du\,)$ (up to isomorphism),
where $\diff{A}$ is a graded $A$-bimodule and $\du\,\colon
A\to\diff{A}$ is an $R$-linear graded derivation, satisfying the
following universal property: for all pairs $(M,\theta)$ consisting of
a graded $A$-bimodule $M$ and an $R$-linear graded derivation
$\theta\colon A\to M$, there exists a unique graded $A$-bimodule
morphism $i_\theta\,\colon\diff{A}\to M$ such that
$\theta=i_\theta\circ\du\,$.
\label{Univ-Property-Kaehler-diff}
\end{lemma}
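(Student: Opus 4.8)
The plan is to reproduce, in the graded $R$-relative setting and with the Koszul sign rule, Quillen's construction of the bimodule of non-commutative differential forms (this is why the statement cites \cite[\S 2]{Qui89}). Uniqueness is a formal consequence of the universal property: if $(\diff{A},\du)$ and $(\Omega',\du')$ both satisfy it, then applying the property of each to the derivation of the other yields graded $A$-bimodule morphisms $\diff{A}\to\Omega'$ and $\Omega'\to\diff{A}$ intertwining $\du$ and $\du'$; the two composites are solutions of the universal problem for $\du$ (resp.\ $\du'$) with respect to itself, so by the uniqueness clause they must be the identities, and $(\diff{A},\du)$ is determined up to a unique isomorphism.

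For existence, I would equip $A\otimes_R A$ with the outer graded $A$-bimodule structure $a(u\otimes v)b=au\otimes vb$ (well defined and compatible with the grading because $R$ lands in $A_0$), and consider the multiplication map $m\colon A\otimes_R A\to A$, $u\otimes v\mapsto uv$, which is a graded $A$-bimodule morphism of weight $0$. Set $\diff{A}\defeq\ker m$, a graded $A$-sub-bimodule, and define $\du\colon A\to\diff{A}$ by $\du a\defeq a\otimes 1-1\otimes a$. Then $m(\du a)=0$, $\du$ is additive and weight-preserving, $\du(R)=0$ because $r\otimes 1=1\otimes r$ in $A\otimes_R A$ for $r\in R$, and a one-line computation with the outer structure gives $(\du a)b+a(\du b)=\du(ab)$ with no Koszul sign, precisely because $\du$ has weight $0$; hence $\du$ is an $R$-linear graded derivation. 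The one algebraic fact I would isolate is that $\du(A)$ generates $\diff{A}$ as a left $A$-module: whenever $\sum_i a_ib_i=0$ one has $\sum_i a_i\otimes b_i=-\sum_i a_i\,\du b_i$.

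It then remains to verify the universal property. Decomposing a given $R$-linear graded derivation $\theta\colon A\to M$ into homogeneous components, it suffices to treat $\theta$ of weight $d$, with $i_\theta$ the corresponding morphism of weight $d$. I would define $\tilde\varphi\colon A\otimes_R A\to M$ on homogeneous elements by $\tilde\varphi(a\otimes b)\defeq\pm(-1)^{d\lvert a\rvert}a\,\theta(b)$ with a fixed sign; the $R$-linearity of $\theta$ together with $R\subset A_0$ shows $\tilde\varphi$ is well defined over $R$ and is a left $A$-module morphism of weight $d$ on all of $A\otimes_R A$, while its restriction $i_\theta\defeq\tilde\varphi|_{\diff{A}}$ is in addition right $A$-linear: for homogeneous $\omega=\sum_i a_i\otimes b_i\in\ker m$, the Leibniz expansion of $\tilde\varphi(\omega c)$ produces, besides $\tilde\varphi(\omega)c$, only a correction term proportional to $\big(\sum_i a_ib_i\big)\theta(c)=0$. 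The sign can be fixed so that $i_\theta\circ\du=\theta$, and $i_\theta$ is the unique such morphism since any graded bimodule morphism out of $\diff{A}$ is determined by its restriction to the generating set $\du(A)$; summing over $d$ recovers the statement for arbitrary $\theta$. I expect the only point requiring genuine care to be this sign bookkeeping — in particular the observation that the natural candidate $A\otimes_R A\to M$ is a priori merely left $A$-linear and becomes a bimodule morphism exactly upon restriction to $\ker m$ — everything else being the standard, essentially sign-free argument.
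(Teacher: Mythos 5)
Your proof is correct and follows exactly the construction the paper itself indicates (the Cuntz--Quillen model $\Omega^1_RA=\ker(m\colon A\otimes_RA\to A)$ with the universal derivation, plus the formal uniqueness argument); the paper gives no further detail beyond citing Quillen. The only cosmetic discrepancy is that the paper's convention is $\du a=1\otimes a-a\otimes 1$, the negative of yours, which changes nothing in the argument.
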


The elements of $\diff{A}$ are called relative noncommutative
differential 1-forms of $A$ over $R$. Concretely, we can construct
$\Omega^1_R A$ as the kernel of the multiplication $A\otimes A\to A$,
and
\begin{equation}
\du\,\colon A\lto \Omega^1_R A\colon \quad a\longmapsto   \du a=1\otimes a-a\otimes 1.
\label{univ-derivation}
\end{equation}

\begin{lemma}[cf. {\cite[Proposition 2.6]{CQ95}}]
\label{lema 2.6}
Let $A=\T_{B}M$ be the graded tensor algebra of a graded bimodule $M$
over an  associative $R$-algebra $B$. Then there is a canonical
isomorphism 
\[
A\otimes_B M\otimes_B  A\lra{\cong} \Omega^{1}_{B} A
\colon a_1\otimes m\otimes a_2\longmapsto a_1(\du m) a_2,
\]
where $\du\,\colon A\to\Omega^1_R A$ is the universal graded derivation.
\end{lemma}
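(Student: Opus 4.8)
The plan is to produce the isomorphism explicitly and check it is well-defined, an $A$-bimodule map, and invertible, all by exploiting the universal properties of both the tensor algebra $A=\T_B M$ and the module of differentials $\Omega^1_B A$ (Lemma \ref{lem:NC-diff-forms}). First I would recall that, since $A=\T_B M=\bigoplus_{n\ge 0}M^{\otimes_B n}$ is generated over $B$ by $M$, any $R$-linear (indeed $B$-linear) graded derivation out of $A$ is determined by its restriction to $M$; in particular $\du$ itself is determined on $M$. Conversely, given any graded $A$-bimodule $N$ and any graded $B$-bimodule map $\phi\colon M\to N$, the graded Leibniz rule forces a unique extension to a $B$-linear graded derivation $\theta_\phi\colon A\to N$, namely on a generator $m_n\otimes\cdots\otimes m_1$ one sets $\theta_\phi(m_n\otimes\cdots\otimes m_1)=\sum_{k} (\pm)\, m_n\cdots m_{k+1}\,\phi(m_k)\,m_{k-1}\cdots m_1$, with the Koszul sign $(\pm)=(-1)^{|\phi|(|m_n|+\cdots+|m_{k+1}|)}$ dictated by moving the derivation past the higher-weight factors (here $\du$ has weight $0$, so in our case all these signs are trivial, but I would phrase the extension statement so as to keep track of them for safety). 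One then verifies $\theta_\phi$ is indeed a derivation by a direct check on products of generators.

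Next I would build the two maps. In one direction, apply the universal property of Lemma \ref{lem:NC-diff-forms} to the graded $A$-bimodule $N\defeq A\otimes_B M\otimes_B A$ and the $B$-bimodule map $\phi\colon M\to N$, $m\mapsto 1\otimes m\otimes 1$: this yields a unique $B$-linear graded derivation $A\to A\otimes_B M\otimes_B A$, which by the universal property of $(\Omega^1_B A,\du)$ factors through a unique graded $A$-bimodule morphism $\psi\colon \Omega^1_B A\to A\otimes_B M\otimes_B A$ with $\psi(a_1(\du m)a_2)=a_1\otimes m\otimes a_2$. In the other direction, define $\mu\colon A\otimes_B M\otimes_B A\to\Omega^1_B A$ by $a_1\otimes m\otimes a_2\mapsto a_1(\du m)a_2$; here I must check this is well-defined over the balanced tensor product, i.e. that it respects the relations $a_1 b\otimes m\otimes a_2=a_1\otimes bm\otimes a_2$ and $a_1\otimes mb\otimes a_2=a_1\otimes m\otimes ba_2$, which follows from $B$-bilinearity of $\du$ restricted appropriately and from $(\du m)b=\du(mb)-m\,\du b$ together with the fact that $\du b$ lies in the sub-bimodule generated by lower-weight pieces — more simply, since $b\in B\subset A_0$ one uses that $\Omega^1_B A$ is the relative differentials so that $\du b$ need not vanish but the Leibniz rule still gives the required identity after absorbing terms; I would organize this so the balancing is immediate from the Leibniz rule and $B$-linearity. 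The map $\mu$ is manifestly a graded $A$-bimodule morphism.

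Finally I would show $\psi$ and $\mu$ are mutually inverse. The composite $\mu\circ\psi$ is an $A$-bimodule endomorphism of $\Omega^1_B A$; precomposing with $\du$ and using that $\du$ is determined on $M$, one checks $\mu\circ\psi\circ\du=\du$ on $M$, hence on all of $A$, and then the universal property of $\Omega^1_B A$ forces $\mu\circ\psi=\Id$. For $\psi\circ\mu$, one evaluates on the generators $a_1\otimes m\otimes a_2$ and gets $\psi(a_1(\du m)a_2)=a_1\otimes m\otimes a_2$ directly from the construction of $\psi$ as an $A$-bimodule map, so $\psi\circ\mu=\Id$ as well. The grading is respected throughout since $\du$ has weight $0$ and all maps send a weight-$w$ element to a weight-$w$ element. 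I expect the main obstacle to be the bookkeeping in showing $\mu$ is well-defined on the balanced tensor product $A\otimes_B M\otimes_B A$ — making sure the $B$-action relations are compatible with the defining relations of $\Omega^1_B A$ — and, relatedly, pinning down the precise extension-of-derivation formula with correct Koszul signs so that the whole argument goes through verbatim in the graded setting rather than only in weight $0$.
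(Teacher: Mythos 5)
Your plan is correct and is essentially the standard argument: the paper gives no proof of this lemma (it is quoted from Cuntz--Quillen, Proposition~2.6), and the proof there is exactly the universal-property argument you describe — extend the $B$-bimodule map $m\mapsto 1\otimes m\otimes 1$ to the unique $B$-linear derivation on $\T_B M$, factor it through $\Omega^1_B A$, and check the two composites are the identity on generators.

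The one place you should tighten the argument is the well-definedness of $\mu$ over the balanced tensor product, where you hedge that ``$\du b$ need not vanish but the Leibniz rule still gives the required identity after absorbing terms.'' That hedge does not work: if $\du b\neq 0$ then $a_1\du(bm)a_2$ and $a_1 b(\du m)a_2$ differ by $a_1(\du b)m\,a_2$, which is not zero in general, and $\mu$ would simply fail to be balanced over $\otimes_B$. The resolution is that the lemma must be read with the \emph{relative} differentials over $B$ throughout: the target is $\Omega^1_B A$ and the universal derivation is the $B$-linear one $\du\colon A\to\Omega^1_B A$, which by definition satisfies $\du(B)=0$ (the ``$\Omega^1_R A$'' in the statement of the lemma is a typo in the paper). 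With $\du|_B=0$ the balancing relations $a_1b\otimes m\otimes a_2\sim a_1\otimes bm\otimes a_2$ and $a_1\otimes mb\otimes a_2\sim a_1\otimes m\otimes ba_2$ are respected immediately, since $\du(bm)=b\,\du m$ and $\du(mb)=(\du m)b$. Everything else in your outline — the sign bookkeeping (trivial here since $\du$ has weight $0$), the extension-of-derivations formula, and the two inverse checks — goes through as you describe.
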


\begin{example}
If $A$ is the graded path algebra of a graded quiver $\PP$, then
\[
\diff A=\bigoplus_{a\in\P1}(Ae_{h(a)})\du a (e_{t(a)}A).
\]
\end{example}

The \emph{relative graded non-commutative differential
  forms} of $A$ over $R$ are the elements of 
\begin{equation}
\Omega^\bullet_R A:=T_A\Omega^1_R A, 
\label{NC-diff-forms}
\end{equation}
i.e., the tensor algebra of the graded $A$-bimodule $\Omega^1_R
A$. This is a bigraded algebra, with bigrading denoted
$(\lvert-\rvert,\lVert-\rVert)$, where the \emph{weight}
$\lvert-\rvert$ is induced by the grading of $A$ (also called weight),
and the \emph{degree} $\lVert-\rVert$ is the `form degree', i.e., the
elements of the $n$-th tensor power $(\Omega^1_R A)^{\otimes_A n}$
have degree $n$.

As for ungraded associative algebras, $(\Omega^\bullet_R A,\du)$ has
trivial cohomology (cf., e.g.,~\cite[\S 11.4]{Gin05}). To obtain
interesting cohomology spaces, one defines the \emph{non-commutative
  Karoubi--de Rham complex} of $A$ (relative over $R$) as the
bigraded vector space
\begin{equation}
\DR{\bullet} A=\Omega^\bullet_R A/[\Omega^\bullet_R A,\Omega^\bullet_R A],
\label{Karoubi-de-Rham}
\end{equation}
where the bigraded commutator $[-,-]$ is given by the bigraded Koszul
sign rule, i.e.,
\[
[\alpha,\beta]\defeq\alpha\beta-(-1)^{(\lvert\alpha\rvert,\lVert\alpha\rVert)\cdot(\lvert\beta\rvert,\lVert\beta\rVert)}\beta\alpha,
\]
with the sign given by~\eqref{eq:bigraded-Koszul}. Then the
differential $\du\,\colon\ \Omega^\bullet_R A\to \Omega^{\bullet+1}_R
A$ descends to another differential $\du\,\colon \DR{\bullet} A\to
\DR{\bullet+1} A$, and so $\DR{\bullet} A$ is a differential bigraded
vector space.

\subsubsection{Graded double derivations}
\label{subsub:graded-double-Der}

By Lemma~\ref{lem:NC-diff-forms}, there is a canonical isomorphism
\begin{equation}\label{sub:ncdiffforms.corep}
\Der_R(A,M)\lra{\cong}\Hom_{A^\e}(\diff{A},M)\colon\quad \theta\longmapsto i_\theta
\end{equation}
of graded $A$-bimodules, such that $\theta=i_\theta\circ\du$. In
particular, when $M=(A\otimes A)_\out$,
\begin{equation}\label{eq:double-dual-1}
\D_R{A}\lra{\cong}(\diff{A})^\vee, \quad
\Theta\longmapsto i_\Theta,
\end{equation}
where the graded $A$-bimodule of $R$-linear \emph{graded double
  derivations} on $A$ is 
\begin{equation}
\D_R{A}\defeq\Der_R(A,(A\otimes A)_\out)
\label{def-doble-deriv},
\end{equation}
and the graded $A$-bimodule structures on both $\D_R{A}$ and
$(\diff{A})^\vee$ come from the inner graded $A$-bimodule structure
$(A\otimes A)_\inn$ (see~\ref{eq:dual-bimodule}). If we want to
consider the outer $A$-bimodule structure instead, we can compose with
the graded flip isomorphism
\begin{equation}\label{eq:flip}
\sigma_{(12)}\colon (A\otimes A)_\out \lto (A\otimes A)_\inn\colon a'\otimes
a''\longmapsto (-1)^{\lvert a_1\rvert\lvert a_2\rvert}a''\otimes a',
\end{equation}
obtaining another canonical isomorphism (cf.~\cite[\S 5.3]{BCER12})
\begin{equation}
\D_R A \lra{\cong}\Hom_{A^\e}(\diff A,(A\otimes A)_\inn)
\colon\Theta \longmapsto \Theta^\vee=\sigma_{(12)}\circ i_\Theta,
\label{double universal property}
\end{equation}
where 
\[
\Theta^\vee \colon\diff A\lto (A\otimes A)_\inn
\colon
\alpha\longmapsto 
(-1)^{\lvert i'_\Theta(\alpha)\rvert\lvert i''_\Theta(\alpha)\rvert}i''_\Theta\alpha\otimes i'_\Theta\alpha.
\]
Following~\cite{CBEG07}, in this paper we will systematically use
symbolic Sweedler's notation, writing an element $x$ of $A\otimes A$
as $x'\otimes x''$, omitting the summation symbols. In particular, we
write $\Theta\colon A\to A\otimes A\colon a\mapsto
\Theta'(a)\otimes\Theta''(a)$ and $i_\Theta\colon\diff A\to A\otimes
A\colon \alpha\mapsto i_\Theta\alpha=i'_\Theta\alpha\otimes
i''_\Theta\alpha$.

\begin{example}
Consider the graded path algebra $\kk\PP$ of a graded quiver
$\PP$. This is a graded $R$-algebra, where $R=R_\PP$
(see~\secref{sub:grQ-pathAlg}). As in the ungraded case (see~\cite[\S
6]{VdB08}), the $\kk\PP$-bimodule of $R$-linear double derivations
$\D_R(\kk\PP)$ is generated by the set of double derivations
$\{\partial/\partial a\}_{a\in P_1}$, which on each arrow $b\in P_1$
act by the formula
\begin{equation}
\frac{\partial b}{\partial a} =
\begin{cases}
e_{h(a)}\otimes e_{t(a)} &\mbox{if } a=b, \\
0 & \text{otherwise}.
\end{cases}
\end{equation}
\label{derivaciones-quivers}
Note that by convention, we compose arrows from right to left
(see~\eqref{eq:path}), whereas Van den Bergh composes arrows from left
to right (see, e.g.,~\cite[Proposition 6.2.2]{VdB08}).
\end{example}

\subsubsection{Smooth graded algebras}

Let $A$ be a graded $R$-algebra. A finitely generated graded
$A$-bimodule $M$ is \emph{projective} if it is a projective object of
the abelian category $\mod(A)$ of finitely generated graded
$A$-bimodules, i.e. the functor $\Hom_{A^\e}^0(M,-)$ is exact on
$\mod(A)$.

\begin{definition}
The graded $R$-algebra $A$ is called \emph{smooth} over $R$ if it
is finitely generated over $R$ and the graded $A^\e$-module $\diff{A}$ is projective.
\end{definition}

Note that in the above definition, if the algebra $A$ is finitely
generated over $R$, then $\diff{A}$ is a finitely generated
$A^\e$-module.

\begin{lemma}[cf. {\cite[Proposition 5.3(3)]{CQ95}}]
\label{tensor smooth}
If an associative (ungraded) algebra $B$ is smooth over $R$ and $M$ is
a finitely generated and projective graded $B$-bimodule, then the
graded tensor algebra $A=\T_B M$ of $M$ over $B$ is also smooth over
$R$.
\end{lemma}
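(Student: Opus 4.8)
The plan is to mimic the proof of the ungraded statement in \cite[Proposition 5.3(3)]{CQ95}, upgrading it with the Koszul sign rule and keeping track of weights, and then to reduce the graded-projectivity claim to the hypotheses via Lemma~\ref{lema 2.6}. First I would record the key formula: for $A=\T_B M$, Lemma~\ref{lema 2.6} gives a canonical isomorphism of graded $A$-bimodules
\[
\Omega^1_R A \;\cong\; \big(A\otimes_B \Omega^1_R B\otimes_B A\big)\;\oplus\;\big(A\otimes_B M\otimes_B A\big),
\]
where the first summand accounts for the differentials coming from $B$ and the second for the `new' differentials $\du m$ of elements $m\in M$; this is the graded analogue of the standard short exact sequence $0\to A\otimes_B\Omega^1_R B\otimes_B A\to\Omega^1_R A\to A\otimes_B M\otimes_B A\to 0$, which splits because $A$ is free as a left and right $B$-module. (Strictly, I would first establish this split exact sequence in the graded setting, checking that the maps are graded $A$-bimodule morphisms of weight $0$; the splitting is induced by the $B$-bimodule decomposition $A=B\oplus\bigoplus_{n\ge 1}M^{\otimes_B n}$.)

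Next I would argue that each summand is a finitely generated projective graded $A^\e$-module. For the second summand: $M$ is finitely generated and projective over $B^\e$ by hypothesis, so it is a direct summand of a finite free graded $B^\e$-module $B^\e[k_1]\oplus\cdots\oplus B^\e[k_r]$; applying the exact functor $A\otimes_B(-)\otimes_B A$ (exact because $A$ is free, hence flat, over $B$ on both sides) shows $A\otimes_B M\otimes_B A$ is a direct summand of a finite free graded $A^\e$-module. For the first summand: $B$ is smooth over $R$, so $\Omega^1_R B$ is a finitely generated projective graded (indeed ungraded, concentrated in weight $0$) $B^\e$-module, and the same functoriality argument applies. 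Taking the direct sum, $\Omega^1_R A$ is a finitely generated projective graded $A^\e$-module. Finite generation over $R$ is immediate: $A=\T_B M$ is generated over $B$ by a finite generating set of $M$, and $B$ is finitely generated over $R$, so $A$ is finitely generated over $R$.

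The main obstacle is not conceptual but bookkeeping: verifying that all the isomorphisms and splittings in Lemma~\ref{lema 2.6} and the decomposition of $A\otimes A$ are compatible with \emph{both} gradings (the weight coming from $A$ and, where relevant, the form degree) and with the inner/outer bimodule structures used to define projectivity of graded bimodules, so that `projective in $\md(A)$' is genuinely preserved. In particular one must check that $A\otimes_B(-)\otimes_B A$ sends a graded $B^\e$-module shift $B^\e[k]$ to the graded $A^\e$-module shift $A^\e[k]$ and commutes with finite direct sums of such, and that exactness of this functor holds at the level of graded bimodules (not merely ungraded ones) — this follows since $M^{\otimes_B n}$ is graded-free as a one-sided $B$-module, but it should be stated. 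Once this is in place the proof is routine, and I would close by citing \cite[Proposition 5.3(3)]{CQ95} for the ungraded template and \cite[Lemma 3.3.7]{Fer16} or similar for the graded-module decomposition of $\T_B M$.
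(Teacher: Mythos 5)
The paper does not actually prove this lemma: it is stated with a bare citation to \cite[Proposition 5.3(3)]{CQ95}, so there is no in-paper argument to compare yours against. Your reconstruction is the standard one and is essentially correct; it also matches the machinery the paper itself develops later (the cotangent exact sequence of \secref{sub:cotangent}, i.e.\ $0\to A\otimes_B\Omega^1_R B\otimes_B A\to\Omega^1_R A\to A\otimes_B M\otimes_B A\to 0$, which is exactly the graded version of \cite[Corollary 2.10]{CQ95}). Two small corrections. First, Lemma~\ref{lema 2.6} only gives $\Omega^1_B A\cong A\otimes_B M\otimes_B A$ (relative over $B$); the full two-term description of $\Omega^1_R A$ is the cotangent sequence, which you rightly say must be established separately in the graded setting. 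Second, your stated reason for the splitting (``$A$ is free as a left and right $B$-module'') justifies exactness of $A\otimes_B(-)\otimes_B A$, not the splitting of the sequence as graded $A$-bimodules; but no splitting is needed at all, since once both outer terms are shown to be finitely generated projective graded $A^\e$-modules (by your direct-summand-of-shifted-frees argument, using that $\Omega^1_R B$ is concentrated in weight $0$ because $B$ is ungraded), the middle term is an extension of a projective by a projective and is therefore automatically a split extension, hence projective. With that adjustment, together with your observation that $A$ is finitely generated over $R$, the proof is complete.
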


By the above lemma, graded path algebras of graded quivers are
prototypical examples of smooth graded algebras.

\subsubsection{The morphism $\mathtt{bidual}$}

Given a graded $R$-algebra $A$, 
the evaluation map gives a canonical $A$-bimodule morphism from any
graded $A$-bimodule $M$ into its double dual,
\[
\mathtt{bidual}_M\colon M\lto M^{\vee\vee}, 
\text{ where $M^{\vee\vee}\defeq(M^\vee)^\vee$}
\]
(see~\secref{eq:dual-bimodule}). This is an isomorphism when $M$ is a
finitely generated projective graded $A^\e$-module (cf.~\cite[\S
5.3]{CBEG07}).
In the special case $M=\diff{A}$, $\D_R{A}=(\diff{A})^\vee$
(see~\eqref{eq:double-dual-1}), so if $A$ is smooth over $R$, then
both $\diff{A}$ and $\D_R{A}$ are finitely generated and projective,
and the above morphism becomes a graded $A$-bimodule isomorphism
\begin{equation}
\label{bidual}
\mathtt{bidual}_{\diff{A}}\colon\diff{A}\lra{\cong}
(\D_R{A})^\vee=(\diff{A})^{\vee\vee}
\colon\quad \alpha\longmapsto i(\alpha)=\alpha^\vee,
\end{equation}
where
\[
i(\alpha)\colon\D_R{A}\lto(A\otimes A)_\out\colon \Theta\longmapsto i_\Theta\alpha.
\]

\subsection{Non-commutative differential calculus}

Using symbolic Sweedler's notation as
in~\secref{subsub:graded-double-Der}, any $\Theta\in\D_R A$ determines
a \emph{contraction operator}
\begin{equation}
\label{contraction-operator-double-deriv}
i_\Theta\colon\Omega^1_R A\lto A\otimes A
\colon\alpha\longmapsto i_\Theta\alpha=i'_\Theta\alpha\otimes i''_\Theta\alpha,
\end{equation}
that is determined by its values on generators, namely 
\[
i_\Theta(a)=0, \quad i_\Theta(\du b)=\Theta(b)=\Theta'(b)\otimes\Theta''(b),
\]
for all $a,b\in A$ (so $\du b\in\Omega^1_{R}A$). Since
$\Omega^{\bullet}_{R}A=\T_A\diff A$ in \eqref{NC-diff-forms} is the
free algebra of the graded $A$-bimodule $\diff A$, the graded
$A$-bimodule morphism $i_\Theta$ admits a unique extension to a
graded double derivation of bidegree $(\lvert\Theta\rvert,-1)$ on
$\Omega^{\bullet}_{R}A$,
\begin{equation}
i_{\Theta} \colon \Omega^{\bullet}_{R}A\lto\bigoplus
(\Omega^{i}_{R} A \otimes \Omega^{j}_{R} A)
\subset \Omega^{\bullet}_{R}A\otimes\Omega^{\bullet}_{R}A,
\label{extension-contraccion}
\end{equation}
where the direct sum is over pairs $(i,j)$ with $i+j=\bullet-1$, and
$\Omega^{\bullet}_{R}A\otimes\Omega^{\bullet}_{R}A$ is regarded a
graded $\Omega^{\bullet}_{R}A$-bimodule with respect to the outer
graded bimodule structure. Explicitly,
\begin{equation}
i_\Theta(\alpha_0\alpha_2\cdots\alpha_n)
=\sum_{k=0}^n
(-1)^{k}(\alpha_1\cdots\alpha_{k-1}(i^\prime_\Theta\alpha_k))\otimes((i^{\prime\prime}_\Theta\alpha_k)\alpha_{k+1}\cdots\alpha_n).
\label{formula-larga-contraccion}
\end{equation}
for all $\alpha_0,\ldots,\alpha_n\in\Omega^1_R A$ (see
\cite[(2.6.2)]{CBEG07}). Sometimes we will also need to view the
contraction operator $i_\Theta$ as a map $\Omega^{\bullet}_{R}A\to
(T_A(\Omega^\bullet_{R}A))^{\otimes 2}$, and then extend it further to
a graded double derivation of the tensor algebra
$T_A(\Omega^\bullet_{R}A)$.

The most interesting properties of the contraction operator are the
following.

 \begin{lemma}[{\cite[Lemma 2.6.3]{CBEG07}}]
Let $\Theta,\Delta\in\D_R A$, and $\alpha,\beta\in\Omega^\bullet_R A$. Then we have
\begin{enumerate}
\item [\textup{(i)}]
$i_\Theta(\alpha\beta)=i_\Theta(\alpha)\beta
+(-1)^{(\lvert\alpha\rvert,\lVert\alpha\rVert)\cdot(\lvert\Theta\rvert,\lVert\Theta\rVert)}
\alpha i_\Theta(\beta)$;
\item [\textup{(ii)}]
$i_\Theta\circ i_\Delta+i_\Delta\circ i_\Theta=0$.
\end{enumerate}
\label{i-leibniz}
 \end{lemma}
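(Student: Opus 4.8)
The plan is to prove both identities by the standard strategy for operators defined on a free tensor algebra: reduce everything to the generators of $\Omega^\bullet_R A = \T_A \diff A$, use that $i_\Theta$ and $i_\Delta$ are graded double derivations of known bidegree, and then check the identities on generators, where they follow from the defining formulas. More precisely, since $\Omega^\bullet_R A$ is generated over $A$ by $A$ itself (in form-degree $0$) and by the image of $\du$ (in form-degree $1$), and since any element is a sum of products $\alpha_0 \alpha_1 \cdots \alpha_n$ with $\alpha_k \in \Omega^1_R A$ (or elements of $A$), it suffices to verify (i) and (ii) on such monomials, and then propagate along products.

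For part (i), the statement is exactly that $i_\Theta$ is a graded double derivation of bidegree $(\lvert \Theta \rvert, -1)$ with respect to the outer bimodule structure on $\Omega^\bullet_R A \otimes \Omega^\bullet_R A$. This is essentially the content of how the extension~\eqref{extension-contraccion} was constructed: $i_\Theta$ on generators is a graded $A$-bimodule morphism $\Omega^1_R A \to A \otimes A$, and $\Omega^\bullet_R A = \T_A \diff A$ is the free graded $A$-algebra on $\diff A$, so there is a unique extension to a graded double derivation, and that extension is given explicitly by~\eqref{formula-larga-contraccion}. Thus I would either cite this universal property directly, or verify the Leibniz rule inductively from~\eqref{formula-larga-contraccion}: writing $\alpha = \alpha_0 \cdots \alpha_p$ and $\beta = \alpha_{p+1} \cdots \alpha_n$, split the sum $\sum_{k=0}^n$ into $\sum_{k \le p}$ and $\sum_{k > p}$; the first group reassembles into $i_\Theta(\alpha)\beta$, and the second into $\pm\, \alpha\, i_\Theta(\beta)$, where the sign is the shift $(-1)^{(\lvert\alpha\rvert,\lVert\alpha\rVert)\cdot(\lvert\Theta\rvert,\lVert\Theta\rVert)}$ coming from moving the $i_\Theta$-action (of bidegree $(\lvert\Theta\rvert,-1)$) past the prefix $\alpha$ of bidegree $(\lvert\alpha\rvert, \lVert\alpha\rVert = p)$, consistently with the bigraded Koszul rule~\eqref{eq:bigraded-Koszul}. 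Careful bookkeeping of this sign is the only subtle point here.

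For part (ii), I would use that $i_\Theta \circ i_\Delta + i_\Delta \circ i_\Theta$, being a sum of composites of graded double derivations, is itself a graded double derivation of bidegree $(\lvert\Theta\rvert + \lvert\Delta\rvert, -2)$ on $\Omega^\bullet_R A$ — more precisely one checks, using part (i) and the Koszul sign rule, that the graded anticommutator of two graded double derivations of bidegrees $(\lvert\Theta\rvert,-1)$ and $(\lvert\Delta\rvert,-1)$ is again a graded derivation. A graded double derivation is determined by its values on the generators of the tensor algebra, namely on $A$ and on $\du b$ for $b \in A$. On $A$ it vanishes since $i_\Theta(a) = i_\Delta(a) = 0$. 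On $\du b$ we have $i_\Delta(\du b) = \Theta(b) \in A \otimes A \subset \Omega^0 \otimes \Omega^0$, which is a form of degree $0$ in each tensor factor, so applying $i_\Theta$ (which lowers form-degree by $1$) kills it; symmetrically $i_\Theta \circ i_\Delta (\du b) = 0$. Hence the graded anticommutator vanishes on all generators and therefore identically.

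The main obstacle I anticipate is purely the sign accounting, especially in part (i): one must move the bidegree-$(\lvert\Theta\rvert,-1)$ operator past a prefix carrying both an internal weight $\lvert\alpha\rvert$ and a form-degree $\lVert\alpha\rVert$, and confirm that the resulting sign is precisely $(-1)^{(\lvert\alpha\rvert,\lVert\alpha\rVert)\cdot(\lvert\Theta\rvert,\lVert\Theta\rVert)}$ as defined in~\eqref{eq:bigraded-Koszul}, not some variant. A secondary subtlety in part (ii) is verifying that the graded anticommutator of graded double derivations is again a graded derivation with respect to the outer bimodule structure on the relevant tensor square — this requires one more application of the Koszul rule to the two-sided Leibniz identities — but once this is in place the evaluation on generators is immediate. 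Both points are already handled in~\cite[Lemma 2.6.3]{CBEG07} in the ungraded case, and the graded case differs only by these signs.
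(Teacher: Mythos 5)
The paper itself offers no proof of this lemma: it is quoted directly from \cite[Lemma 2.6.3]{CBEG07} (the ungraded case), so your proposal can only be measured against the standard argument there, which is indeed the one you follow. Part (i) is unobjectionable: the extension of $i_\Theta$ to $\Omega^\bullet_R A=\T_A\diff A$ is \emph{defined} in~\eqref{extension-contraccion} as the unique graded double derivation of bidegree $(\lvert\Theta\rvert,-1)$ extending the bimodule map $i_\Theta\colon\diff A\to A\otimes A$, so the Leibniz identity (i) holds essentially by construction; alternatively your splitting of the sum in~\eqref{formula-larga-contraccion} recovers it, and you are right that the sign bookkeeping is the only delicate point --- note that the displayed sign $(-1)^k$ in~\eqref{formula-larga-contraccion} accounts only for the form degree, so weight-dependent Koszul signs must be inserted for the split to reproduce exactly $(-1)^{(\lvert\alpha\rvert,\lVert\alpha\rVert)\cdot(\lvert\Theta\rvert,\lVert\Theta\rVert)}$.

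The one genuine gap is in part (ii). You assert that $i_\Theta\circ i_\Delta+i_\Delta\circ i_\Theta$ is again a graded double derivation because it is the graded anticommutator of two odd derivations. With the bigraded Koszul rule~\eqref{eq:bigraded-Koszul}, the operator that is automatically a derivation is the graded commutator $i_\Theta\circ i_\Delta-(-1)^{(\lvert\Theta\rvert,-1)\cdot(\lvert\Delta\rvert,-1)}\,i_\Delta\circ i_\Theta=i_\Theta\circ i_\Delta+(-1)^{\lvert\Theta\rvert\lvert\Delta\rvert}\,i_\Delta\circ i_\Theta$, which coincides with the plain anticommutator in the statement only when $\lvert\Theta\rvert\lvert\Delta\rvert$ is even. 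For $\Theta,\Delta$ both of odd weight, your argument shows that $i_\Theta\circ i_\Delta-i_\Delta\circ i_\Theta$ vanishes, which does not by itself give $i_\Theta\circ i_\Delta+i_\Delta\circ i_\Theta=0$. You should either prove the identity with the Koszul-signed commutator (which is presumably how the graded statement is meant to be read, the paper having transcribed the ungraded formula verbatim), or give a direct computation from~\eqref{formula-larga-contraccion} exhibiting the cancellation with the stated sign. There is also a small slip: on the generator $\du b$ one has $i_\Delta(\du b)=\Delta(b)$, not $\Theta(b)$; the conclusion that both composites annihilate $\du b$ (since the result lies in $A\otimes A$ and the extended $i_\Theta$ kills elements of $A$) is nevertheless correct.
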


 Given $\Theta\in\D_RA$, the corresponding \emph{Lie derivative} is
 the graded double derivation
\[
L_\Theta\colon \Omega^\bullet_RA\lto
\bigoplus (\Omega^{i}_{R} A \otimes \Omega^{j}_{R} A) \subset \Omega^{\bullet}_{R}A\otimes\Omega^{\bullet}_{R}A,
\]
of bidegree $(\lvert\Theta\rvert,0)$, where the sum is over pairs
$(i,j)$ with $i+j=\bullet$, and
$\Omega^{\bullet}_{R}A\otimes\Omega^{\bullet}_{R}A$ is regarded a
graded $\Omega^{\bullet}_{R}A$-bimodule with respect to the outer
graded bimodule structure, that is is determined by its values on
generators by the following formulae for all $a,b\in A$:
\[
L_\Theta(a)=\Theta(a), \quad L_\Theta(\du b)=\du\Theta(b).
\]
By a simple calculation on generators, one obtains a Cartan formula
(see~\cite[(2.7.2)]{CBEG07}):
\begin{equation}
L_\Theta=\du\;\circ\; i_\Theta+i_\Theta\circ \du .
\label{Cartan-NC}
\end{equation}

Given a graded $R$-algebra $C$, we define a linear map
\[
C\otimes C\lto C\colon c=c_1\otimes c_2\longmapsto {}^\circ c\defeq(-1)^{\lvert c_1\rvert\lvert c_2\rvert} c_2 c_1.
\]
Similarly, given a linear map $\phi\colon C\lto C^{\otimes 2}$, we
define ${}^\circ \phi\,\colon \,C\lto C\colon$ $ c\longmapsto {}^\circ
(\phi(c))$. Applying this construction to $C=\Omega^{\bullet}_{R}A$,
we define for all $\Theta\in\D_RA$, the corresponding \emph{reduced
  contraction operator} and \emph{reduced Lie derivative},
\begin{equation}
\label{circulo derecha}
\iota_\Theta\, \colon \Omega^\bullet_R A \lto \Omega^\bullet_R A
\colon\alpha \longmapsto {}^\circ(i_\Theta\alpha)= 
(-1)^{(\lvert i'_\Theta\alpha\rvert,\lVert i'_\Theta\alpha\rVert)\cdot(\lvert i''_\Theta\alpha\rvert,\lVert i''_\Theta\alpha\rVert)}
i^{\prime\prime}_\Theta (\alpha) i^\prime_\Theta(\alpha),
\end{equation}
\begin{equation}
\label{reduced-Lie}
\mathcal{L}_{\Theta}\,\colon
\Omega^{\bullet}_{R}A\lto\Omega^{\bullet}_{R}A
\colon\alpha\longmapsto {}^\circ(L_\Theta\alpha),
\end{equation}
respectively. Explicitly, for all
$\alpha_0,\alpha_1,\ldots,
\alpha_n\in\Omega^1_R A$,
\begin{equation}
\iota_\Theta(\alpha_0\cdots\alpha_n)=\sum^n_{k=0}(-1)^{k(n-k)}
(i^{\prime\prime}_\Theta\alpha_k)\alpha_{k+1}\cdots\alpha_n\alpha_0\cdots\alpha_{k-1}(i^\prime_\Theta\alpha_k).
\label{reduced-contraction-general}
\end{equation}
Applying ${}^\circ(-)$ to~\eqref{Cartan-NC}, we obtain the
\emph{reduced Cartan identity} (cf.~\cite[Lemma 2.8.8]{CBEG07}):
\begin{equation}
\mathcal{L}_\Theta=\du\;\circ\;\iota_\Theta+\iota_\Theta\circ \du.
\label{reduced-Cartan}
\end{equation}

\section{Bi-symplectic tensor $\mathbb{N}$-algebras and doubled graded quivers}
\label{sec:N-algebra-assoc}

\subsection{Associative and tensor $\mathbb{N}$-algebras}

\begin{definition} 
Let $R$ be an associative algebra.
\begin{enumerate}
\item [\textup{(i)}]
An \emph{associative} \emph{$\mathbb{N}$-algebra} over $R$ (shorthand for
`non-negatively graded algebra') is a $\ZZ$-graded
associative $R$-algebra $A$ such that $A^i=0$ for all $i<0$. We say
$a\in A$ is \emph{homogeneous of weight} $\lvert a\rvert=i$ if $a\in
A^i$.
\item [\textup{(ii)}]
A \emph{tensor $\mathbb{N}$-algebra} over $R$ is an associative
$\mathbb{N}$-algebra $A$ over $R$ which can be written as a tensor
algebra $A=\T_{R}V$, for a positively graded $R$-bimodule $V$, so
$V=\bigoplus_{i\in\ZZ}V^i$, where $V^i=0$ for $i\leq 0$.
\item [\textup{(iii)}]
We say $v\in V$ is \emph{homogeneous of weight} $\lvert v\rvert=i$ if
$v\in V^i$.
\item [\textup{(iv)}]
The \emph{weight} of an associative $\mathbb{N}$-algebra $A$ is
$\lvert A\rvert\defeq{\displaystyle\min_{S\in\mathcal{G}}\max_{a\in S}\;\lvert a\rvert}$, where the
elements of $\mathcal{G}$ are the finite sets of homogeneous generators
of $A$.
\end{enumerate}
\label{def-tensor-N-algebra}
\end{definition}

\subsection{Double Poisson brackets on graded algebras}
\label{sub:DDP}
\subsubsection{Double Poisson brackets on graded algebras}

Commutative Poisson algebras appear in several geometric and algebraic
contexts. As a direct non-commutative generalization, one might
consider associative algebras that are at the same time Lie algebras
under a `Poisson bracket' $\{-,-\}$ satisfying the Leibniz rules
$\{ab,c\}=a\{b,c\}+\{a,c\}b$, $\{a,bc\}=b\{a,c\}+\{a,b\}c$. However
such Poisson brackets are the commutator brackets up to a scalar
multiple, provided the algebra is prime and not
commutative~\cite[Theorem 1.2]{FL98}. A way to resolve this apparent
lack of noncommutative Poisson algebras is provided by double Poisson
structures, introduced by Van den Bergh in \cite[\S 2.2]{VdB08}. In
this subsection, we will extend his definitions to graded associative
algebras (cf.~\cite{BCER12}).

Let $A$ be an associative $\mathbb{N}$-algebra over $R$, and $N\in\mathbb{Z}$. A \emph{double bracket of weight $N$} on $A$ is an $R$-bilinear map
\[
\lr{-,-}\colon A^p\otimes A^q\lto\bigoplus_{i+j=p+q+N} A^i\otimes A^j\subset A\otimes A,
\]
for any integers $p$ and $q$, which is a double $R$-derivation of weight $N$ (for the outer graded $A$-bimodule structure on $A\otimes A$) in its second argument, that is, 
\begin{equation}
\label{graded-Leibniz-Poisson}
\lr{a,bc}=\lr{a,b}c+(-1)^{\lvert a\rvert_N \lvert b\rvert}b\lr{a,c},
\end{equation}
for all homogeneous $a,b,c\in A$, and is $N$-graded skew-symmetric,
that is,
\begin{equation}
\lr{a,b}=-(-1)^{\lvert a\rvert_N\lvert b\rvert_N}\lr{b,a}^\circ,
\label{skew-symmetry-double}
\end{equation}
where $(u\otimes v)^\circ=(-1)^{\lvert u\rvert\lvert v\rvert}v\otimes
u$. Let $S_n$ be the group of permutations of $n$ elements. For
$a,b_1,...,b_n$ in $A$, and a permutation $s\in S_n$, we define
\[
\lr{a,b}_L\defeq\lr{a,b_1}\otimes b_2\otimes\cdots\otimes b_n,
\]
\begin{equation}\label{eq:permutation-tau}
\sigma_s(b)\defeq(-1)^t b_{s^{-1}(1)}\otimes\cdots\otimes b_{s^{-1}(n)},
\end{equation}
where $b=b_1\otimes\cdots\otimes b_n\in A^{\otimes n}$, and 
\[
t=\sum_{\substack{i<j\\s^{-1}(i)>s^{-1}(j)}}\lvert b_{s^{-1}(i)}\rvert\lvert b_{s^{-1}(j)}\rvert.
\]

A \emph{double Poisson bracket of weight $N$} on $A$ is a double
bracket $\lr{-,-}$ of weight $N$ on $A$ that satisfies the
\emph{graded double Jacobi identity}: for all homogeneous $a,b,c\in
A$,
\begin{equation}
\begin{aligned}
0=&\lr{a,\lr{b,c}}_L+(-1)^{\lvert a\rvert_N(\lvert b\rvert+\lvert
  c\rvert)}\sigma_{(123)}\lr{b,\lr{c,a}}_L\\
&+(-1)^{\lvert c\rvert_N(\lvert a\rvert+\lvert b\rvert)}\sigma_{(132)}\lr{c,\lr{a,b}}_L.
\end{aligned}
\label{graded-Jacobi-double}
\end{equation}
A \emph{double Poisson algebra of weight} $N$ is a pair $(A,\lr{-,-})$
consisting of a graded algebra and a double Poisson bracket of weight
$N$. Following \cite[\S 2.7]{VdB08}, double Poisson algebras of weight
-1 will be called \emph{double Gerstenhaber algebras}. Next, given a
double bracket $\lr{-,-}$, the \emph{bracket associated to} $\lr{-,-}$
is
\begin{equation}
\label{corchete asociado}
\{-,-\}\colon A\otimes A\lto A\colon\quad  (a,b)\longmapsto\{a,b\}\defeq m\circ\lr{a,b}=\lr{a,b}^\prime\lr{a,b}^{\prime\prime},
\end{equation}
where $m$ is the multiplication map. It is clear that $\{-,-\}$ is a
derivation in its second argument. Furthermore, it follows from
\eqref{skew-symmetry-double} that
\begin{equation}
\{a,b\}=-(-1)^{\lvert a\rvert_n\lvert b\rvert_n}\{b,a\}\,\,\text{mod}\,[A,A],
\label{skew-sym-assoc}
\end{equation}
A \emph{left Loday algebra} is a vector space $V$ equipped with a
bilinear operation $[-,-]$ such that the following Jacobi identity is
satisfied: $[a,[b,c]]=[[a,b],c]+[b,[a,c]]$, for all $a,b,c\in V$.

\begin{lemma}
\begin{enumerate}
\item [\textup{(i)}]
Let $A$ be a double Poisson algebra. Then
\begin{equation}
\{a,\lr{b,c}\}-\lr{\{a,b\},c}-\lr{b,\{a,c\}}=0,.
\label{mix-corchetes}
\end{equation}
\item [\textup{(ii)}]
Let $A$ be an associative $\mathbb{N}$-algebra endowed with a double Poisson bracket of weight $N$. Then the following identity holds:
\begin{equation}
(-1)^{\lvert a\rvert_N\lvert c\rvert_N}\{\{a,b\},c\}+(-1)^{\lvert b\rvert_N\lvert a\rvert_N}\{b,\{a,c\}\}+(-1)^{\lvert c\rvert_N\lvert b\rvert_N}\{a,\{b,c\}\}=0,
\label{LodayLoday}
\end{equation}
\end{enumerate}
\noindent where, in \eqref{mix-corchetes}, $\{a,-\}$ acts on tensors by $\{a,u\otimes v\}=\{a,u\}\otimes v+u\otimes\{a,v\}$, for all $a,u,v\in A$. In fact, $(A,\{-,-\})$ is a left Loday graded algebra.
\end{lemma}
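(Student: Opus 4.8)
The plan is to deduce both identities from the graded double Jacobi identity~\eqref{graded-Jacobi-double} by applying the multiplication map in stages, tracking Koszul signs with the $N$-shifted degree $\lvert-\rvert_N$. First I would address part~(i). The key observation is that the operator $\{a,-\}$, defined on tensors by the graded Leibniz rule $\{a,u\otimes v\}=\{a,u\}\otimes v+u\otimes\{a,v\}$, is precisely the composition $m\circ\lr{a,-}_L$ extended to work on the left slot of a two-fold tensor, together with the symmetric term acting on the right slot. Concretely, I would start from~\eqref{graded-Jacobi-double} with the roles of the arguments arranged so that $a$ is the `outer' derivation, and apply $m\colon A\otimes A\to A$ to the appropriate tensor factors. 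The term $\lr{a,\lr{b,c}}_L=\lr{a,b}\otimes\lr{b,c}''$-type expression (in Sweedler notation for the inner bracket) collapses, after one application of $m$, to $\{a,\lr{b,c}'\}\otimes\lr{b,c}''+\lr{b,c}'\otimes\{a,\lr{b,c}''\}=\{a,\lr{b,c}\}$; the two cyclically permuted terms, after applying $m$ and using that $\sigma_{(123)}$, $\sigma_{(132)}$ become ordinary multiplications once composed with $m$, collapse to $-\lr{\{a,b\},c}$ and $-\lr{b,\{a,c\}}$ respectively. The signs work out because the permutation signs in~\eqref{eq:permutation-tau} are exactly compensated by the Koszul signs produced when $m$ reorders homogeneous factors; this is the graded analogue of Van den Bergh's Lemma relating the double Jacobi identity to~\eqref{mix-corchetes} in \cite[\S 2.4]{VdB08}.

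For part~(ii), I would apply $m$ once more. Starting from~\eqref{mix-corchetes}, write $\lr{b,c}=\lr{b,c}'\otimes\lr{b,c}''$ and compose with $m$: the left-hand term $\{a,\lr{b,c}\}$ becomes, after $m$, exactly $\{a,\{b,c\}\}$ by the definition~\eqref{corchete asociado} of the associated bracket and the Leibniz action of $\{a,-\}$. The terms $\lr{\{a,b\},c}$ and $\lr{b,\{a,c\}}$ become $\{\{a,b\},c\}$ and $\{b,\{a,c\}\}$ after $m$. So composing~\eqref{mix-corchetes} with $m$ directly yields $\{a,\{b,c\}\}-\{\{a,b\},c\}-\{b,\{a,c\}\}=0$, which is the left Loday (Leibniz) identity $\{a,\{b,c\}\}=\{\{a,b\},c\}+\{b,\{a,c\}\}$. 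Rewriting this with the two subtracted terms moved to the other side and multiplying through by the appropriate overall sign gives the cyclic form~\eqref{LodayLoday}; here I need to check that the three sign prefactors $(-1)^{\lvert a\rvert_N\lvert c\rvert_N}$, $(-1)^{\lvert b\rvert_N\lvert a\rvert_N}$, $(-1)^{\lvert c\rvert_N\lvert b\rvert_N}$ are consistent, which follows by relabelling. The final assertion that $(A,\{-,-\})$ is a left Loday graded algebra is then immediate from~\eqref{LodayLoday} (or directly from the Leibniz form), since that identity \emph{is} the defining graded Jacobi/Leibniz identity for a left Loday algebra.

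The main obstacle will be the bookkeeping of Koszul signs: one has to be scrupulous about when the shifted degree $\lvert-\rvert_N$ versus the plain degree $\lvert-\rvert$ appears, since the double bracket has weight $N$ (so $\lr{a,-}$ shifts degrees by $N$), the permutation signs in $\sigma_s$ in~\eqref{eq:permutation-tau} use plain degrees, and the multiplication map $m$ introduces further Koszul signs when it transposes factors. I expect the cleanest route is to verify the signs on homogeneous elements by a direct but careful expansion, checking the special cases already known in the ungraded setting of \cite{VdB08} and in the graded setting of \cite{BCER12} as a consistency check, rather than attempting a slick sign-free argument. The derivation-in-the-second-argument property~\eqref{graded-Leibniz-Poisson} and the fact (already noted after~\eqref{corchete asociado}) that $\{-,-\}$ is a derivation in its second argument are used repeatedly to move $m$ past the tensor symbols.
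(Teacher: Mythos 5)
Your plan is correct and coincides with what the paper actually does: the paper's proof consists of citing Van den Bergh's Proposition 2.4.2 and Corollary 2.4.4, whose proofs are precisely the argument you sketch, namely applying the multiplication map in stages to the double Jacobi identity and tracking the Koszul signs. The only point to watch is in part (ii): the cyclic signed form \eqref{LodayLoday} is most cleanly obtained by applying the total multiplication $A^{\otimes 3}\to A$ directly to the cyclic double Jacobi identity \eqref{graded-Jacobi-double} (whose three terms already carry exactly those prefactors), rather than by rearranging the Leibniz-form identity you derive first, since the three sign prefactors in \eqref{LodayLoday} differ from one another and no single overall sign or relabelling converts the Leibniz form into the cyclic one.
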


\begin{proof}
  Immediate from~\cite[Proposition 2.4.2]{VdB08}. In fact,
  \eqref{LodayLoday} is \cite[Corollary 2.4.4]{VdB08}.
\end{proof}

Consider the bigraded algebra $\T_{A}\D_{R}A$ of $R$-linear
poly-vector fields on a finitely generated graded algebra $A$~\cite[\S
3]{VdB08}, with degree $d$ component
$(\T_{A}\D_{R}A)_d=(\D_{R}A)^{\otimes_A d}$. Then 
Van den Bergh~\cite[Proposition 4.1.1]{VdB08} constructs a map 
\begin{equation}
\label{mu}
\mu\colon P\longmapsto \lr{-,-}_P,
\end{equation}
from $(\T_{A}\D_{R}A)_{ 2}$ into the space of $R$-bilinear double
brackets on $A$, given by 
\begin{equation}
\lr{a,b}_P=(\Theta^\prime(a)*\Delta*\Theta^{\prime\prime}(a))(b)-(\Delta^\prime(a)*\Theta*\Delta^{\prime\prime}(a))(b),
\label{subQ}
\end{equation}
for all $P=\Theta\Delta$, with $\Theta,\Delta\in\D_R A$, and $a,b\in
A$. Furthermore, the map $\mu$ is isomorphism, provided $A$ is smooth
over $R$~\cite[Proposition 4.1.2]{VdB08}.




\begin{definition}[{\cite[Definition 4.4.1]{VdB08}}]
\label{DDP-def}
We say that $A$ is a \emph{differential double Poisson algebra} (a \emph{DDP} for short) over $R$ if it is equipped with an element $P\in (\T_A\D_RA)_2$ (a \emph{differential double Poisson bracket}) such that
\begin{equation}\label{DDP}
\{P,P\}=0\; \text{mod}\;[\T_A\D_RA,\T_A\D_RA].
\end{equation}
\end{definition}

Note that if $A$ is smooth over $R$, then the notions of differential
double Poisson algebra and double Poisson algebra coincide, because
$\mu$ in \eqref{mu} is an isomorphism in this case.

\begin{example}[{\cite[Theorem 6.3.1]{VdB08}}]
Let $A=k\dPP$ be the graded path algebra of a double graded quiver
$\dPP$. Then $A$ has the following differential double Poisson
bracket: 
\begin{equation}
P=\sum_{a\in\PP}  \frac{\partial}{\partial a}\frac{\partial}{\partial a^*}.
\label{P}
\end{equation}
\label{DDP-doubled-graded-quivers}
\end{example}

\subsubsection{The double Schouten--Nijenhuis bracket}
\label{sub:double-Schouten-Nijenhuis}

Suppose $A$ is a finitely generated graded $R$-algebra. Given 
homogeneous $\Theta,\Delta\in\D_R A$, a graded version of
\cite[Proposition 3.2.1]{VdB08} provides two graded derivations $A\to
A^{\otimes 3}$, for the graded outer structure on $A^{\otimes 3}$,
given by 
\begin{equation}
\begin{aligned}
\lr{\Theta,\Delta}^{\sim}_{l}&=(\Theta\otimes 1)\Delta-(-1)^{\lvert \Delta\rvert\lvert \Theta\rvert}(1\otimes \Delta)\Theta,
\\
\lr{\Theta,\Delta}^{\sim}_{r}&=(1\otimes \Theta)\Delta-(-1)^{\lvert \Delta\rvert\lvert \Theta\rvert}( \Delta\otimes 1)\Theta= -\lr{\Delta, \Theta}^{\sim}_{l}.
\end{aligned}
\label{tilde-brackets}
\end{equation}
Now, the graded $A$-bimodule isomorphisms
\[
\tau_{(12)}\colon A\otimes(A\otimes A)_\out\lra{\cong}A^{\otimes 3}
\colon a_1\otimes(a_2\otimes a_3)\longmapsto (-1)^{\lvert a_1\rvert\lvert a_2\rvert}a_2\otimes a_1\otimes a_3,
\]
\[
\tau_{(23)}\colon(A\otimes A)_\out\otimes A\lra{\cong}A^{\otimes 3}
\colon (a_1\otimes a_2)\otimes a_3\longmapsto (-1)^{\lvert a_2\rvert\lvert a_3\rvert}a_1\otimes a_3\otimes a_2,
\]
induce isomorphisms
\begin{equation}\label{eq:tau_12}
\begin{aligned}
\tau_{(12)}\colon 
\Der_R(A,&A^{\otimes 3})
\cong\Hom_{A^\e}(\diff{A},A^{\otimes 3})
\\&\lra{\cong}
\Hom_{A^\e}(\diff{A},A\otimes (A\otimes A)_\out)
\cong A\otimes\D_R{A}
\end{aligned}
\end{equation}
\begin{equation}\label{eq:tau_23}
\begin{aligned}
\tau_{(23)}\colon 
\Der_R(A,&A^{\otimes 3})\cong\Hom_{A^\e}(\diff{A},A^{\otimes 3})
\\&\lra{\cong} 
{\cong}\Hom_A(\diff{A},(A\otimes A)_\out\otimes A)
\cong \D_R A\otimes A,
\end{aligned}
\end{equation}
because $\diff{A}$ is finitely generated. Hence we can convert the
above triple derivations into 
\begin{align}\label{eq:doublebracket-l}
&
\lr{\Theta,\Delta}_l\defeq\tau_{(23)}\circ\lr{\Theta,\Delta}^\sim_l
\in\D_R A\otimes A,
\\&
\label{eq:doublebracket-r}
\lr{\Theta,\Delta}_r\defeq\tau_{(12)}\circ\lr{\Theta,\Delta}^\sim_r
\in A\otimes \D_R A, 
\end{align}
and hence make decompositions 
\begin{align*}
\lr{\Theta,\Delta}_l=\lr{\Theta,\Delta}^\prime_l\otimes \lr{\Theta,\Delta}^{\prime\prime}_l,
\\
\lr{\Theta,\Delta}_r=\lr{\Theta,\Delta}^\prime_r\otimes \lr{\Theta,\Delta}^{\prime\prime}_r,
\end{align*}
with $\lr{\Theta,\Delta}^{\prime\prime}_l,
\lr{\Theta,\Delta}^\prime_r\in A$,
$\lr{\Theta,\Delta}^{\prime\prime}_r,
\lr{\Theta,\Delta}^\prime_l\in\D_R A$. Using these constructions,
given homogeneous $a,b\in A$ and $\Theta, \Delta\in\D_R A$, we define
\begin{equation}\label{Gerstenhaber}
\begin{aligned}
\lr{a,b} &=0,
\\
\lr{\Theta,a}&=\Theta(a),
\\
\lr{\Theta,\Delta}&=\lr{\Theta,\Delta}_{l}+\lr{\Theta,\Delta}_{r},
\end{aligned}
\end{equation}
with the right-hand sides in~\eqref{Gerstenhaber} viewed as elements
of $(\T_A\D_RA)^{\otimes 2}$. Now, the \emph{graded double
  Schouten--Nijenhuis bracket} is the unique extension
\[
\lr{-,-}\colon (\T_A\D_R A)^{\otimes 2}\to (\T_A\D_R A)^{\otimes 2}
\]
of~\eqref{Gerstenhaber} of weight -1 to the tensor algebra $\T_A\D_R
A$ satisfying the graded Leibniz rule
\[
\lr{\Delta,\Theta\Phi}=(-1)^{(\lvert\Delta\rvert-1)\lvert\Theta\rvert}\Theta\lr{\Delta,\Phi}+\lr{\Delta,\Theta}\Phi,
\]
for homogeneous $\Delta,\Theta,\Phi\in\T_A\D_RA$.

\begin{example}[{\cite[Proposition 6.2.1]{VdB08}}]
The double Schouten--Nijenhuis bracket has very simple formulae for 
quiver path algebras. Let $A=\kk Q$ and $a,b\in Q_1$. Then
\begin{equation*}
\begin{aligned}
\lr{a,b}&=0
\\
\lr{\frac{\partial}{\partial a},b}&= \begin{cases} e_{h(a)}\otimes e_{t(a)} &\text{if } a=b,\\
0 & \text{otherwise},\end{cases} \label{Schouten quivers}
\\
\lr{\frac{\partial}{\partial a},\frac{\partial}{\partial b}}&=0.
\end{aligned}
\end{equation*}
\label{Schouten-Nijenhuis-quivers}
\end{example}

One of the most remarkable results in \cite{VdB08} is the following.

\begin{proposition}[{\cite[Theorem 3.2.2]{VdB08}}]
The tensor algebra $\T_A\D_R(A)$ together with the double
graded Schouten-Nijenhuis bracket $\lr{-,-}\colon
(\T_A\D_R(A))^{\otimes 2}\to (\T_A\D_R(A))^{\otimes 2}$ is a double
Gerstenhaber algebra.
\label{Gerstenhaber-str}
\end{proposition}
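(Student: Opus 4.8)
The plan is to adapt the proof of \cite[Theorem 3.2.2]{VdB08} to the $\NN$-graded setting; the only genuinely new ingredient is the bookkeeping of Koszul signs, which here come both from the tensor degree on $\T_A\D_R A$ (with $\D_R A$ placed in degree $1$, the grading for which the Schouten--Nijenhuis bracket has weight $-1$) and from the weight grading inherited from $A$. Concretely, three things have to be checked: that $\lr{-,-}$ defined in \eqref{Gerstenhaber} is a well-defined double bracket of weight $-1$, that it is $(-1)$-graded skew-symmetric, and that it satisfies the graded double Jacobi identity \eqref{graded-Jacobi-double}. Granting these, $(\T_A\D_R A,\lr{-,-})$ is by definition a double Poisson algebra of weight $-1$, i.e.\ a double Gerstenhaber algebra.

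First I would settle well-definedness and the double-derivation property. One checks that the triple maps $\lr{\Theta,\Delta}^\sim_l$ and $\lr{\Theta,\Delta}^\sim_r$ of \eqref{tilde-brackets} are honest graded derivations $A\to A^{\otimes 3}$ for the outer structure (the graded form of \cite[Proposition 3.2.1]{VdB08}), so that the isomorphisms $\tau_{(23)}$ and $\tau_{(12)}$ of \eqref{eq:tau_23}--\eqref{eq:tau_12} genuinely produce elements $\lr{\Theta,\Delta}_l\in\D_R A\otimes A$ and $\lr{\Theta,\Delta}_r\in A\otimes\D_R A$. Since $\T_A\D_R A$ is generated over $A$ by $\D_R A$ and $A$ is finitely generated over $R$, the graded Leibniz rule then forces a unique extension of \eqref{Gerstenhaber} to all of $\T_A\D_R A$, and the weight-$(-1)$ double-derivation property in the second argument follows by reducing to the generator cases $\lr{\Theta,a}=\Theta(a)$ and $\lr{\Theta,\Delta}$, where the weight count is immediate because the $\tau$'s and the $\sim$-brackets preserve the weight from $A$ while the shift by $-1$ is built into the degree convention. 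Skew-symmetry is then verified on generators --- $\lr{a,b}=0$ is symmetric in the required sense; $\lr{a,\Theta}$ is \emph{defined} by skew-symmetry from $\lr{\Theta,a}=\Theta(a)$; and for $\Theta,\Delta\in\D_R A$ one combines the identity $\lr{\Theta,\Delta}^\sim_r=-\lr{\Delta,\Theta}^\sim_l$ from \eqref{tilde-brackets} with the compatibility of $\tau_{(12)},\tau_{(23)}$ with the graded flip to get $\lr{\Theta,\Delta}_r=-\lr{\Delta,\Theta}_l^\circ$ up to the appropriate Koszul sign (and symmetrically), so that summing the two halves gives the $(-1)$-graded skew-symmetry of $\lr{\Theta,\Delta}$ --- and propagated from generators to all of $\T_A\D_R A$ via the Leibniz rule, exactly as in \cite[\S 3.2]{VdB08}.

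The hard part is the graded double Jacobi identity \eqref{graded-Jacobi-double}. The standard device applies: since $\lr{-,-}$ is a graded double derivation in each slot, the graded Jacobiator (the right-hand side of \eqref{graded-Jacobi-double}) is a triple derivation in each of its three arguments, and so it vanishes on all of $\T_A\D_R A$ as soon as it vanishes on triples of generators. This leaves finitely many cases: $(a,b,c)$ with all entries in $A$, which is trivial since all brackets involved vanish; $(\Theta,a,b)$ and its cyclic images, which reduce to $\Theta$ being a graded derivation of $A$; $(\Theta,\Delta,a)$ and its cyclic images, which amount to showing that $\lr{\Theta,\Delta}\in(\D_R A\otimes A)\oplus(A\otimes\D_R A)$ acts on $a$ exactly as the Jacobi identity predicts --- the graded analogue of the $\tau$-computation in \cite{VdB08}; and $(\Theta,\Delta,\Phi)$ with all entries in $\D_R A$, which is a direct but long expansion using \eqref{tilde-brackets}. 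In every case the underlying combinatorics is Van den Bergh's, and the only genuinely new point is to verify that the signs produced by $\sigma_{(123)}$ and $\sigma_{(132)}$ in \eqref{graded-Jacobi-double} (computed from the Koszul rule with the shift by $-1$) match those produced by commuting tensor factors in \eqref{tilde-brackets} and in the $\tau$'s. I expect essentially all of the sign bookkeeping to live in this last, all-generators case; carrying it out once and then invoking \cite[Theorem 3.2.2]{VdB08} for the sign-free identities completes the proof.
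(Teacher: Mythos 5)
Your proposal is correct and is essentially the approach the paper takes: the paper offers no proof of this proposition at all, simply citing \cite[Theorem 3.2.2]{VdB08} for the ungraded case, and your sketch (reduction to generators via the double-derivation property, with the genuinely new content confined to the Koszul-sign bookkeeping in \eqref{tilde-brackets}, \eqref{eq:tau_12}, \eqref{eq:tau_23} and \eqref{Gerstenhaber}) is precisely the graded adaptation that this citation presupposes. The only organisational remark is that Van den Bergh's reduction of the Jacobi identity to generators uses that the Jacobiator is cyclically invariant and a derivation in its \emph{last} argument, which is equivalent in effect to your ``derivation in each slot'' claim but is the cleaner way to run the final sign check.
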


\subsection{Bi-symplectic tensor $\mathbb{N}$-algebras}

\subsubsection{Bi-symplectic tensor $\mathbb{N}$-algebras}

Let $B$ be an associative algebra and $A$ a tensor
$\mathbb{N}$-algebra over $B$.

\begin{definition}
\label{bi-sympldef}
An element $\omega\in \DR{2}(A)$ of weight $N$ which is closed for the
universal derivation $\du\,$ is a \emph{bi-symplectic form of weight
  $N$} if
the following map of graded $A$-bimodules is an isomorphism:
\[
\iota(\omega)\colon\D_R A\lra{\cong} \Omega^{1}_R(A)[-N]\colon \quad
\Theta \longmapsto \iota_{\Theta}\omega.
\]
\end{definition}

A tensor $\mathbb{N}$-algebra $(A,\omega)$ equipped with a
bi-symplectic form of weight $N$ is called a \emph{bi-symplectic
  tensor $\mathbb{N}$-algebra of weight} $N$ over $B$ if the
underlying tensor $\mathbb{N}$-algebra can be written as $A=\T_BM$,
for an $\NN$-graded $B$-bimodule $M=\bigoplus_{i\in\mathbb{N}}M^i$,
such that $M^i=0$ for $i > N$, and the underlying ungraded
$B$-bimodule of $M^i$ is finitely generated and projective, for all
$0\leq i\leq N$.

\subsubsection{Double Hamiltonian derivations}
\label{sub:double-Ham}

Following \cite{CBEG07,VdB15}, if $(A,\omega)$ is a bi-symplectic
tensor $\mathbb{N}$-algebra, we define the \emph{Hamiltonian double
  derivation} $H_a\in\D_R A$ corresponding to $a\in A$ via
 \begin{equation}
 \iota_{H_a}\omega=\du a,
 \label{hamiltonian-double-deriv}
 \end{equation}
 and write
 \begin{equation}
 \lr{a,b}_\omega=H_a(b)\in A\otimes A;
 \label{10.333}
 \end{equation}
 since $H_a(b)=i_{H_a}(\du b)$, we may write this expression as
 \begin{equation}
 \lr{a,b}_{\omega}=i_{H_a}\iota_{H_b}\omega,
 \label{determinacion del corchete doble}
 \end{equation}

\begin{lemma}
If $(A,\omega)$ is a bi-symplectic associative $\mathbb{N}$-algebra of weight $N$ over $R$, then $\lr{-,-}_{\omega}$ is a double Poisson bracket of weight $-N$ on $A$.
\label{bi-symp-double-Poisson}
\end{lemma}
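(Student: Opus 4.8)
The plan is to verify directly that $\lr{-,-}_\omega$ satisfies the three defining properties of a double Poisson bracket of weight $-N$: it is an outer double derivation in the second argument, it is $(-N)$-graded skew-symmetric, and it satisfies the graded double Jacobi identity. The first two should follow almost formally from the definitions. For the Leibniz rule in the second slot, I would start from $\lr{a,bc}_\omega = H_a(bc)$ and use that $H_a\in\D_R A$ is an $R$-linear graded derivation of weight $\lvert a\rvert_{-N}$ (since $\iota_{H_a}\omega=\du a$ forces the weight of $H_a$, because $\iota(\omega)\colon\D_R A\to\Omega^1_R(A)[-N]$ is a weight-preserving isomorphism and $\du a$ has weight $\lvert a\rvert$); then the graded Leibniz rule for $H_a$ gives exactly \eqref{graded-Leibniz-Poisson} with $N$ replaced by $-N$. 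For skew-symmetry, I would use the expression $\lr{a,b}_\omega = i_{H_a}\iota_{H_b}\omega$ in \eqref{determinacion del corchete doble} together with Lemma~\ref{i-leibniz}(ii), the anticommutativity $i_\Theta\circ i_\Delta + i_\Delta\circ i_\Theta = 0$, plus the reduced Cartan identity \eqref{reduced-Cartan} and the closedness $\du\omega=0$, to exchange the roles of $a$ and $b$ and pick up precisely the sign $-(-1)^{\lvert a\rvert_{-N}\lvert b\rvert_{-N}}$ together with the flip $(-)^\circ$; this is the graded analogue of the Crawley-Boevey--Etingof--Ginzburg computation in~\cite{CBEG07} for the weight-$0$ case.

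The substantive part is the graded double Jacobi identity \eqref{graded-Jacobi-double}. Here I would follow Van den Bergh's strategy via the double Schouten--Nijenhuis bracket: using the isomorphism $\mu$ of \eqref{mu}--\eqref{subQ} (valid because $A=\T_BM$ is smooth over $R$ by Lemma~\ref{tensor smooth}), the double Poisson bracket $\lr{-,-}_\omega$ corresponds to a bivector $P\in(\T_A\D_R A)_2$, and the double Jacobi identity for $\lr{-,-}_\omega$ is equivalent to $\lr{P,P}=0$ in the double Gerstenhaber algebra $(\T_A\D_R A,\lr{-,-})$ of Proposition~\ref{Gerstenhaber-str}, modulo the appropriate commutators. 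Concretely, bi-symplecticity lets me write $P$ in terms of a local ``Darboux'' frame of double derivations dual to a frame of $1$-forms under $\iota(\omega)$; the vanishing of $\lr{P,P}$ then reduces to $\du\omega=0$, exactly as in the commutative correspondence between symplectic forms and Poisson bivectors, and in Van den Bergh's~\cite[Appendix]{VdB08} / \cite[\S A]{CBEG07} treatment. Alternatively, and perhaps more cleanly, I would prove the Jacobi identity intrinsically: express $\lr{a,\lr{b,c}_\omega}_\omega$ using $L_{H_a}$ and $i_{H_a}$, commute contractions past $\du$ via the Cartan formula \eqref{Cartan-NC}, use $L_{H_a}\omega = \du\,i_{H_a}\omega + i_{H_a}\du\omega = \du\du a = 0$ (so Hamiltonian Lie derivatives kill $\omega$), and track the cyclic sum of the three terms; the closedness $\du\omega=0$ makes the obstruction vanish, and the permutation signs $\sigma_{(123)},\sigma_{(132)}$ in \eqref{graded-Jacobi-double} are precisely bookkeeping for the bigraded Koszul rule \eqref{eq:bigraded-Koszul}.

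The main obstacle I anticipate is the sign bookkeeping: one must carefully carry the $N$-shifted degrees $\lvert -\rvert_{-N}$ through the reduced contraction operator \eqref{reduced-contraction-general} and the reduced Cartan identity \eqref{reduced-Cartan}, since $\iota_\Theta$ involves a nontrivial Koszul sign coming from the flip $(-)^\circ$ built into \eqref{circulo derecha}. In particular, checking that the weight of $\lr{a,b}_\omega$ genuinely lands in $\bigoplus_{i+j=p+q-N}A^i\otimes A^j$ requires that $\iota(\omega)$ shifts weight by exactly $-N$, which follows from $\omega$ having weight $N$ and $\iota_\Theta$ lowering form degree by $1$ while preserving total weight up to the weight of $\Theta$. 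I would isolate this sign computation as a short lemma and otherwise lean on the graded versions of the Cartan calculus established in~\secref{sub:ncdiffforms} and on~\cite{VdB08,CBEG07,BCER12}, citing the ungraded arguments and indicating only the points where the weight shift $N$ intervenes in the signs. A cleaner route, if it works, is to reduce the weight-$N$ statement to Van den Bergh's weight-$0$ result by the degree-shift functor $M\mapsto M[N]$, but one must check this functor interacts correctly with the bimodule dualities $(-)^\vee$ and with $\du$; verifying that compatibility is itself where the real work lies.
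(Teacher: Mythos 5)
Your plan is correct and matches the paper's route: the paper's entire proof consists of citing the graded version of Van den Bergh's Proposition A.3.3 for the double-bracket axioms and then performing exactly the weight count you give ($\lvert H_a\rvert+\lvert\omega\rvert=\lvert a\rvert$ and $\lvert\lr{a,b}_\omega\rvert=\lvert a\rvert+\lvert b\rvert-\lvert\omega\rvert$, hence weight $-N$). Your outline simply fills in the details the paper delegates to that citation, using the same ingredients (Hamiltonian double derivations, the Cartan identities, and $\du\omega=0$).
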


\begin{proof}
This is a graded version of \cite[Proposition A.3.3]{VdB08}.
To determine the weight of $\lr{-,-}_{\omega}$, observe that by
\eqref{hamiltonian-double-deriv}, $\lvert
H_a\rvert+\lvert\omega\rvert=\lvert a\rvert$ and by \eqref{10.333},
$\lvert \lr{a,b}_\omega\rvert=\lvert a\rvert +\lvert b\rvert -\lvert
\omega\rvert$, so $\lvert\lr{-,-}_{\omega}\rvert=-N$.
\end{proof}

As in~\cite[\S 2.7]{CBEG07}, the grading of $A$ determines the
\emph{Euler derivation} $\Eu\colon A\lto A$, defined by
$\Eu\vert_{A_j}=j\cdot\Id$ for $j\in\NN$. The action of the
corresponding Lie derivative operator
 \begin{equation}
 \LL_\Eu\colon \DR{\bullet}(A)\to \DR{\bullet}(A)
 \label{Euler-derivative}
 \end{equation}
 has nonnegative integral eigenvalues. As usual all canonical objects
 (differential forms, double derivations, etc.) acquire weights by
 means of this operator, which will be denoted by $\lvert-\rvert$ and
 called \emph{weight} (e.g. a double derivation $\Theta\in \D_R A$ has
 weight $\lvert\Theta\rvert$).  Furthermore, if $\omega$ is a
 bi-symplectic form of weight $k$ on a graded $R$-algebra A, we say
 that a homogeneous double derivation $\Theta\in\D_R A$ is
 \emph{bi-ymplectic} if $\mathcal{L}_\Theta\omega=0$ where
 $\mathcal{L}_\Theta$ is the the reduced Lie
 derivative~\eqref{reduced-Lie}.

As in the commutative case, bi-symplectic forms of weight $k$ impose
strong constraints on the associative $\mathbb{N}$-algebra $A$.
\begin{lemma}
Let $\omega$ be a bi-symplectic form of weight $j\neq 0$ on an associative $\mathbb{N}$-algebra $A$ over $R$. Then
\begin{enumerate}
\item [\textup{(i)}]
$\omega$ is exact.
\item [\textup{(ii)}] if $\Theta$ is a bi-symplectic double derivation
  of weight $l$, and $j+l\neq 0$, then $\Theta$ is a Hamiltonian
  double derivation.
\end{enumerate}
\label{hamiltonian7}
\end{lemma}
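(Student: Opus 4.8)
The plan is to transplant to this setting the classical argument for graded symplectic manifolds: a closed form of nonzero weight is exact via contraction with the Euler vector field, and a symplectic vector field of nonzero total weight is Hamiltonian for the same reason. The two inputs are Cartan's formula and the fact that $\LL_\Eu$ acts on a form of weight $w$ as multiplication by $w$. I would first record the latter on generators, $\LL_\Eu(a)=\Eu(a)=wa$ for $a\in A^w$ and $\LL_\Eu(\du b)=\du(\Eu b)=w\,\du b$ for $b\in A^w$, and then extend it using that $\LL_\Eu$ is a derivation of weight $0$ on $\Omega^\bullet_R A$: it multiplies a product $\alpha_0\cdots\alpha_n$ of homogeneous $1$-forms by $\lvert\alpha_0\rvert+\cdots+\lvert\alpha_n\rvert$, i.e.\ by the total weight, and this descends to $\DR{\bullet}{(A)}$. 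Since $\kk$ has characteristic zero, the nonzero integers $j$ and $j+l$ appearing below are invertible.

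For (i), closedness of $\omega$ means $\du\omega=0$, so Cartan's identity for $\Eu$ (the analogue of~\eqref{reduced-Cartan}) gives
\[
j\,\omega=\LL_\Eu\,\omega=\du(\iota_\Eu\omega)+\iota_\Eu(\du\omega)=\du(\iota_\Eu\omega),
\]
whence $\omega=\du\bigl(j^{-1}\iota_\Eu\omega\bigr)$ is exact.

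For (ii), by hypothesis $\Theta$ is bi-symplectic, i.e.\ $\mathcal{L}_\Theta\omega=0$; combined with $\du\omega=0$ and the reduced Cartan identity~\eqref{reduced-Cartan} this shows that $\iota_\Theta\omega$ is a closed $1$-form, of weight $j+l$ since $\iota_\Theta$ has bidegree $(\lvert\Theta\rvert,-1)$. Rerunning the computation of (i) with $\iota_\Theta\omega$ in place of $\omega$,
\[
(j+l)\,\iota_\Theta\omega=\LL_\Eu(\iota_\Theta\omega)=\du\bigl(\iota_\Eu\iota_\Theta\omega\bigr),
\]
so $\iota_\Theta\omega=\du a$ with $a\defeq(j+l)^{-1}\iota_\Eu\iota_\Theta\omega\in A^{j+l}$. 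By~\eqref{hamiltonian-double-deriv} this says $\iota_{H_a}\omega=\iota_\Theta\omega$, and since the bi-symplectic map $\iota(\omega)\colon\D_R A\to\Omega^1_R(A)[-j]$ of Definition~\ref{bi-sympldef} is in particular injective, $\Theta=H_a$ is Hamiltonian (of the expected weight $\lvert a\rvert-j=l$).

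The step I expect to be the main obstacle is the careful bookkeeping between the full de Rham algebra $\Omega^\bullet_R A$ and its Karoubi--de Rham quotient $\DR{\bullet}{(A)}$. In part (ii) one needs $\iota_\Theta\omega$ to be an honest element of $\du(A)\subseteq\Omega^1_R(A)$, not merely exact modulo commutators, so that $\iota(\omega)$ may be applied; this forces one to track the commutator subspace through the contractions by $\Eu$ and through $\du$, and to reconcile the reduced operators $\mathcal{L}_\Theta$, $\iota_\Theta$ (which a priori take values involving tensor products) with the genuine $1$-form $\iota_\Theta\omega$ of Definition~\ref{bi-sympldef}. One also has to confirm that $\LL_\Eu$ and $\iota_\Eu$, built from the ordinary derivation $\Eu\colon A\to A$ rather than a double derivation, obey Cartan's identity and descend to $\DR{\bullet}{(A)}$ --- both following from $\Eu$ being a weight-$0$ derivation --- and that the contractions $\iota_\Eu\omega$ and $\iota_\Eu\iota_\Theta\omega$ land where claimed.
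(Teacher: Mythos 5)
Your argument is correct and coincides with the paper's own proof: both parts rest on the identity $\LL_\Eu\alpha=\lvert\alpha\rvert\,\alpha$ for the Euler derivation combined with the (reduced) Cartan formula, first applied to the closed form $\omega$ to get exactness, then to the closed $1$-form $\iota_\Theta\omega$ (closed because $0=\mathcal{L}_\Theta\omega=\du\,\iota_\Theta\omega$) to produce the Hamiltonian $(j+l)^{-1}\iii_\Eu\iota_\Theta\omega$. The paper's proof is in fact terser than yours --- it does not spell out the final identification $\Theta=H_a$ via injectivity of $\iota(\omega)$, which you rightly make explicit.
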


\begin{proof}
For (i), note that $\LL_\Eu\omega=j\omega$, as $\omega$ has weight
$j$, so the Cartan identity implies $j\omega=\LL_\Eu\omega=\du
\iii_\Eu\omega$, because $\omega$ is closed, where $\iii_\Eu\colon
\DR{\bullet}(A)\to \DR{\bullet}(A)$. For (ii), we apply
\eqref{reduced-Cartan} to a bi-symplectic
double derivation $\Theta$, obtaining
\begin{equation}
\label{un comienzo}
0=\mathcal{L}_\Theta\omega=\du\iota_\Theta\omega,
\end{equation}
so defining $H\defeq\iii_\Eu\iota_\Theta\omega$, we conclude that
\[
\du H=\du\;(\iii_\Eu\iota_\Theta\omega)=\LL_\Eu(\iota_\Theta\omega)=\lvert\iota_\Theta\omega\rvert\iota_\Theta\omega=(l+j)\iota_\Theta\omega,
\]
where the second identity follows from~\eqref{un comienzo}. 
\end{proof}

The following result describes how the Hamiltonian double derivations
$H_a$ exchange double Poisson brackets and double Schouten--Nijenhuis
brackets.

\begin{lemma}[{\cite[Proposition 3.5.1]{VdB08}}]
The following are equivalent:
\begin{enumerate}
\item [\textup{(i)}]
$\lr{-,-}$ is a double Poisson bracket on $A$.
\item [\textup{(ii)}]
$\lr{H_{a},H_{b}}=H_{\lr{a,b}}$, for all $a,b\in A$. Here, $H_x\defeq H_{x'}\otimes x''+x'\otimes H_{x''}$ for all $x=x'\otimes x''\in A\otimes A$.
\end{enumerate}
\label{intercambio}
\end{lemma}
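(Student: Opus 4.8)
The plan is to prove the equivalence of (i) and (ii) by transporting everything through Van den Bergh's isomorphism $\mu$ (see \eqref{mu}) between $(\T_A\D_R A)_2$ and double brackets, exactly as in the ungraded statement \cite[Proposition 3.5.1]{VdB08}, keeping careful track of Koszul signs. First, I would fix a double bracket $\lr{-,-}$ on the smooth algebra $A$ and let $P\in(\T_A\D_R A)_2$ be the corresponding bivector with $\mu(P)=\lr{-,-}$; concretely, writing $P=\Theta\Delta$ (a sum of such terms is handled by bilinearity), the formula \eqref{subQ} expresses $\lr{a,b}_P$ through $\Theta,\Delta$ and their Sweedler components. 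The Hamiltonian double derivations $H_a$ are defined by contracting the bi-symplectic form, but what I actually need here is the purely algebraic characterization: $H_a$ is the unique double derivation such that $\lr{a,b}=H_a(b)=\lr{a,b}_\omega$; the point of Lemma~\ref{bi-symp-double-Poisson} is precisely that every double bracket arises this way. So the identity to be established, $\lr{H_a,H_b}=H_{\lr{a,b}}$ in $(\T_A\D_R A)^{\otimes 2}$, is an identity between elements built from $P$ via the graded double Schouten--Nijenhuis bracket $\lr{-,-}$ on $\T_A\D_R A$.

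The key computational step is then the graded analogue of Van den Bergh's formula $\tfrac12\{P,P\} \leftrightarrow$ (the Jacobiator of $\mu(P)$): one shows that, under $\mu$, the double Schouten--Nijenhuis self-bracket of $P$ corresponds (up to sign and the factor coming from the derived bracket) to the left-hand side of the graded double Jacobi identity \eqref{graded-Jacobi-double}. Granting that dictionary, the double Poisson property (i), namely $\{P,P\}=0 \bmod [\T_A\D_RA,\T_A\D_RA]$, is equivalent to the vanishing of the Jacobiator; and the identity $\lr{H_a,H_b}=H_{\lr{a,b}}$ is a reformulation of that same vanishing once one uses the Leibniz rule for $\lr{-,-}$ on $\T_A\D_R A$ together with the defining relations $\lr{\Theta,a}=\Theta(a)$ and $\lr{a,b}=0$ from \eqref{Gerstenhaber}. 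I would structure the proof as: (1) reduce $H_{\lr{a,b}}$, using the definition $H_x=H_{x'}\otimes x''+x'\otimes H_{x''}$ and the fact that $\lr{a,b}\in A\otimes A$, to an expression in $\T_A\D_R A\otimes A$ and $A\otimes \T_A\D_R A$; (2) expand $\lr{H_a,H_b}$ via \eqref{eq:doublebracket-l}--\eqref{eq:doublebracket-r} and the decompositions $\lr{\Theta,\Delta}_l,\lr{\Theta,\Delta}_r$; (3) apply both sides to an arbitrary $c\in A$ and check that the resulting triple-tensor identity in $A^{\otimes 3}$ is exactly \eqref{graded-Jacobi-double} for the triple $(a,b,c)$, using the graded Leibniz rule \eqref{graded-Leibniz-Poisson} and skew-symmetry \eqref{skew-symmetry-double} to match terms.

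The main obstacle I anticipate is sign bookkeeping: the ungraded proof in \cite{VdB08} already requires delicate manipulation of the permutations $\sigma_{(123)},\sigma_{(132)}$ and the flip $\tau_{(12)},\tau_{(23)}$, and in the graded setting every transposition in these isomorphisms and in the three cyclic terms of \eqref{graded-Jacobi-double} contributes a Koszul sign governed by the weights $\lvert a\rvert_N,\lvert b\rvert_N,\lvert c\rvert_N$ (and, for the form-degree bigrading, by $\eqref{eq:bigraded-Koszul}$). One must verify that the sign attached to the $\sigma_{(123)}$-term coming from the $\lr{H_a,H_b}_l$-part of the Schouten--Nijenhuis bracket agrees with $(-1)^{\lvert a\rvert_N(\lvert b\rvert+\lvert c\rvert)}$, and similarly for the $\sigma_{(132)}$-term, after pushing a Hamiltonian derivation past an algebra element. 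A clean way to organize this, avoiding a long unstructured computation, is to invoke Lemma~\ref{intercambio}'s ungraded predecessor \cite[Proposition 3.5.1]{VdB08} as a template and only check that each sign it produces is the graded refinement forced by the Koszul rule; for the actual equality of elements one reduces (using that $A$ is smooth, so that a double derivation is determined by its action on generators of $A$ over $B$) to evaluating on generators, where \eqref{Gerstenhaber} and Example~\ref{Schouten-Nijenhuis-quivers} make both sides completely explicit. Thus the statement follows once the graded Jacobi identity \eqref{graded-Jacobi-double} is recognized as the value at $c$ of the identity $\lr{H_a,H_b}=H_{\lr{a,b}}$.
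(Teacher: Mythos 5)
The paper offers no proof of Lemma~\ref{intercambio} at all: it is imported verbatim from \cite[Proposition 3.5.1]{VdB08}, so there is no argument of the authors' to compare yours against. Judged on its own terms, your final strategy --- steps (1)--(3) of your second paragraph: expand $H_{\lr{a,b}}$ via $H_x=H_{x'}\otimes x''+x'\otimes H_{x''}$, expand $\lr{H_a,H_b}$ via \eqref{tilde-brackets} and \eqref{eq:doublebracket-l}--\eqref{eq:doublebracket-r}, evaluate both sides on an arbitrary $c\in A$, and recognize the three resulting triple tensors as the three cyclic terms of \eqref{graded-Jacobi-double} after using \eqref{skew-symmetry-double} --- is exactly the mechanism of Van den Bergh's proof, and it is the right one: $(H_a\otimes 1)H_b(c)$ is $\lr{a,\lr{b,c}}_L$, $(1\otimes H_b)H_a(c)$ is a permutation of $\lr{b,\lr{c,a}}_L$ by skew-symmetry, and the $H_{\lr{a,b}}(c)$ terms give the $\sigma_{(132)}\lr{c,\lr{a,b}}_L$ piece; no reduction to generators or appeal to Example~\ref{Schouten-Nijenhuis-quivers} (which is quiver-specific) is needed, since both sides are already compared on arbitrary $c$.

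Two caveats. First, the detour in your opening paragraphs through $\mu$, the bivector $P$, and the condition $\{P,P\}=0\bmod[\,\cdot\,,\cdot\,]$ is unnecessary for this equivalence and conflates it with the separate statement underlying Definition~\ref{DDP-def}; the identity $\lr{H_a,H_b}=H_{\lr{a,b}}$ lives in $\D_RA\otimes A\oplus A\otimes\D_RA$ and is compared with the Jacobiator directly, without invoking smoothness or the isomorphism $\mu$ (smoothness is only needed if you insist on realizing $\lr{-,-}$ as $\lr{-,-}_P$). Second, the decisive content --- that the Koszul signs produced by $\tau_{(12)},\tau_{(23)}$ and by pushing $H_a$ past homogeneous elements reproduce precisely $(-1)^{\lvert a\rvert_N(\lvert b\rvert+\lvert c\rvert)}$ and $(-1)^{\lvert c\rvert_N(\lvert a\rvert+\lvert b\rvert)}$ --- is announced but not carried out; as written this is a correct plan rather than a proof, which is acceptable here only because the paper itself delegates the same verification to the reference.
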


 \subsection{The canonical bi-symplectic form for a doubled graded quiver}
 \label{sub:bi-symplectic form for quivers}

\subsubsection{Casimir elements}
\label{sub:Casimir}

To avoid cumbersome signs, in this subsection we shall deal with a
finitely generated projective graded $(A^\e)^\op$-module $F$. Its
\emph{Casimir element} $\cas_F$ is defined as the pre-image of the
identity under the canonical isomorphism
\[
F \otimes_{(A^\e)^\op} F^\vee\lto \End_{(A^\e)^\op} F.
\]
Note that $F^\vee=\Hom_{A^\e}(F,A^{\e}_{A^{\e}})$ is equipped with the graded $A$-bimodule structure induced by the outer $A$-bimodule structure on $A\otimes A$. In the following result, we determine the Casimir element for a graded quiver:
\begin{lemma}
Let $\PP$ be a graded quiver, with graded path algebra $\kk\PP=T_R
V_{\PP}$. For a homogeneous $b\in\P1$, we define $\widetilde{a}\in
V^\vee_{\PP}$ by
 \begin{equation}
\widetilde{a}(b)=\begin{cases}
e_{h(a)}\otimes e_{t(a)} &\text{if } a=b,\\
0 & \text{otherwise}.
\end{cases}.
\label{elementos-tilda}
\end{equation}
Then $\cas_{V_\PP}=\sum_{a\in P_1}\widetilde{a}\otimes a$ is the element Casimir for the $(R^\e)^\op$-module $V_P$.
\label{Casimir}
\end{lemma}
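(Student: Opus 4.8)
The plan is to verify directly that the proposed element $\cas_{V_\PP}=\sum_{a\in\P1}\widetilde{a}\otimes a$ maps to the identity endomorphism of $V_\PP$ under the canonical isomorphism $V_\PP\otimes_{(A^\e)^\op}V_\PP^\vee\to\End_{(A^\e)^\op}V_\PP$, where $A=\kk\PP$. First I would recall that $V_\PP=\bigoplus_{a\in\P1}\kk a$ is, as an $R$-bimodule, free on the arrows, with $e_{h(a)}ae_{t(a)}=a$; in particular $V_\PP$ is finitely generated projective over $R^\e$ (equivalently over $(A^\e)^\op$ via the structure map $R\to A$), so the Casimir element is well defined and the stated isomorphism applies. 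Next I would make the pairing $V_\PP\otimes_{(A^\e)^\op}V_\PP^\vee\to\End_{(A^\e)^\op}V_\PP$ explicit: an elementary tensor $v\otimes\varphi$ with $\varphi\in V_\PP^\vee=\Hom_{A^\e}(V_\PP,(A\otimes A)_\out)$ is sent to the endomorphism $w\mapsto \varphi(w)'\,v\,\varphi(w)''$ (using Sweedler notation, with the outer bimodule action of $A\otimes A$ on $v\in V_\PP$, which here factors through $R\otimes R$ since $\varphi(w)\in R\otimes R$ by \eqref{elementos-tilda}).

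The heart of the computation is then the following: applying the image of $\cas_{V_\PP}$ to an arrow $b\in\P1$, only the summand $a=b$ of $\sum_a\widetilde a\otimes a$ survives, because $\widetilde{a}(b)=0$ for $a\neq b$. For that surviving term, $\widetilde b(b)=e_{h(b)}\otimes e_{t(b)}$, so the endomorphism sends $b$ to $e_{h(b)}\cdot b\cdot e_{t(b)}$, which equals $b$ by the $R$-bimodule structure of $V_\PP$ recalled in \eqref{eq:grPathAlg.3}. Since this holds for every arrow $b$, and the arrows form an $R^\e$-module generating set (indeed a basis), the image of $\cas_{V_\PP}$ is $\Id_{V_\PP}$, which is exactly the defining property of the Casimir element. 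One should also note that $\{\widetilde a\}_{a\in\P1}$ together with $\{a\}_{a\in\P1}$ form dual bases in the appropriate sense, so that the sum is independent of choices and genuinely lies in $V_\PP\otimes_{(A^\e)^\op}V_\PP^\vee$; this is the graded/bimodule analogue of the familiar dual-basis expression for a Casimir element, and can be cited from the ungraded discussion in \cite[\S 5.3]{CBEG07} or \cite{VdB08} if desired.

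The only mildly delicate point — and the place I expect to spend the most care — is checking that all the Koszul signs are trivial here, i.e. that no sign obstructs the identification of $b$ with $e_{h(b)}\,b\,e_{t(b)}$. This is harmless because the Casimir pairing only ever involves the idempotents $e_i\in R\subset A_0$, which have weight $0$, so every Koszul sign $(-1)^{\lvert-\rvert\lvert-\rvert}$ entering the outer action and the flip is $+1$; this is precisely why the subsection was set up (as the author remarks) with the opposite-enveloping-algebra module $F=V_\PP$ "to avoid cumbersome signs." Hence the verification reduces to the sign-free ungraded computation, and the lemma follows.
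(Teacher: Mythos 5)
Your proposal is correct and follows essentially the same route as the paper: one verifies directly that under the evaluation map the candidate element sends each arrow $b$ to $e_{h(b)}\,b\,e_{t(b)}=b$, so it corresponds to the identity endomorphism and is therefore the Casimir element. The paper's own proof is exactly this one-line computation; your additional remarks on projectivity, dual bases, and the vanishing of Koszul signs (since only the weight-zero idempotents appear) merely flesh out details the paper leaves implicit.
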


\begin{proof}
Observe that $V_P$ is an $(R^\e)^\op$-module. Since, by convention, we
compose arrows from right to left, we can check that
$\texttt{eval}\left(\sum_{a\in P_1}a\otimes \widetilde{a}\right)(b)=b$
for all homogeneous $b\in \PP_1$;
\[
\texttt{eval}\left(\sum_{a\in P_1}a\otimes \widetilde{a}\right)(b)=\sum_{a\in P_1}a*\widetilde{a}(b)=e_{h(b)}\;b\;e_{t(b)}=b.\qedhere
\]
\end{proof}

Let $\PP$ be a graded quiver, with graded path algebra $\kk\PP$ as
above. Then the inverse of the canonical map $\texttt{eval}\colon
V^\vee_P\otimes_{R^{\e}} A^\e\lto\Hom_{A^\e}(V_P,{}_{A^\e}A^\e)$
 is given by
\begin{equation}
\begin{aligned}
\kappa\colon \Hom_{A^\e}(V_P,{}_{A^\e}A^\e)&\lto V^\vee_P\otimes_{R^{\e}}A^\e
\\
g&\longmapsto\sum_{a\in P_1}\widetilde{a}\otimes g(a)=\sum_{a\in P_1}(-1)^\square g^{\prime\prime}(a)\otimes \widetilde{a}\otimes g^{\prime}(a),
\label{kappa}
\end{aligned}
\end{equation}
where now $V_\PP$ is viewed as an $R^\e$-module, we use the
isomorphism $V^\vee_P\otimes_{R^\e}A^\e\cong A\otimes_R
V^\vee_P\otimes_R A$, $g(a)=g^\prime(a)\otimes
(g^{\prime\prime})^\op(a)\in A^\e$ for $a\in P_1$, and
$(-1)^\square=(-1)^{\lvert g^{\prime\prime}(a) \rvert(\lvert
  g^{\prime}(a)\rvert + N- \lvert a\rvert )}$.

\subsubsection{Duals and biduals}

Let $\dPP$ be the weight $N$ double graded quiver of a graded quiver
$\PP$, $R=R_\dPP$, and $A=\kk\dPP$ its graded path algebra.
Since $R$ is a finite-dimensional semisimple algebra over $\kk$
(see~\secref{sub:basics quivers}), it is well known that there are
four sensitive ways of defining the dual of an $R$-bimodule, but that
all of them can be identified by fixing a \emph{trace} on $R$, that
is, a $\kk$-linear map $\Tr\colon R\to\kk$ such that the bilinear form
$R\otimes R\to\kk\colon (a,b)\mapsto \Tr(ab)$ is symmetric and
non-degenerate.
More precisely, let $V$ be an $R$-bimodule, $V^*\defeq\Hom(V,\kk)$ and
$ V^\vee\defeq\Hom(V,R\otimes R)$. Then we can use $\Tr\colon R\to
\kk$ to construct an isomorphism $B\colon V^*\to V^\vee$, by the
following formula, for all $\psi\in V^*, v\in V$:
\begin{equation}
\psi(v)=\Tr((B(\psi)^\prime)(v))\Tr((B(\psi)^{\prime\prime})(v)),
\label{isom-identif-usando-trazas}
\end{equation}

Consider the graded $R$-bimodule $V_\dPP$ as a vector space, and the
space of linear forms $V^*_\dPP\defeq\Hom(V_\dPP,\kk)$.
Then $V_\dPP$ has a basis $\{a\}_{a\in\dPP_1}$ consisting of all the
arrows of $\dPP$. Let $\{\widehat{a}\}_{a\in\dPP_1} \subset V^*_\dPP$
be its dual basis. Given $b\in A$, we have
\begin{equation}
\widehat{a}( b)=\delta_{ab} =\begin{cases}
1 &\mbox{if } a=b \\
0 & \mbox{otherwise}
\end{cases} =\Tr(\delta_{ab}e_{h(a)})\Tr(e_{t(a)})=\Tr((\widetilde{a})^\prime(b))\Tr((\widetilde{a})^{\prime\prime}(b)),
\label{segunda-parte-de-identific-isom-trazas}
\end{equation}
with $\{\widetilde{a}\}_{a\in \dPP_1}$ as in Lemma
\ref{Casimir}. Furthermore, \eqref{isom-identif-usando-trazas} applied
to $\{\widehat{a}\}_{a\in \PP_1}$ gives
\begin{equation}
\widehat{a}( b)=\Tr\left(B(\hat{a})^\prime (b)\right)\Tr\left(B(\hat{a})^{\prime\prime} (b)\right).
\label{otrat-otra-parte-del-isom-trazas}
\end{equation}
Comparing \eqref{segunda-parte-de-identific-isom-trazas} and
\eqref{otrat-otra-parte-del-isom-trazas}, it follows that
$B(\widehat{a})=\widetilde{a}$, that is, 
\begin{equation}
B^{-1}(\widetilde{a})=\widehat{a}.
\label{identidad-B-1}
\end{equation}
Using \eqref{varepsilon}, we define
\[
\langle-,-\rangle\colon V_\dPP\times V_\dPP\lto\kk
\colon (a,b)\longmapsto \langle a,b\rangle 
=\begin{cases}
\varepsilon(a) &\text{if } a=b^* \\ 0 & \text{otherwise}
\end{cases}
= \begin{cases}
1 &\text{if } a=b^*\in \PP_1, \\
-1 & \text{if } a=b^*\in P^*_1, \\
0 & \text{otherwise}.
\end{cases}
\]

It is not difficult to see that $(V_\dPP,\langle-,-\rangle)$ is a
graded symplectic vector space of weight $N$. Moreover, the graded
symplectic form $\langle-,-\rangle$ determines an isomorphism of
graded $R$-bimodules, for the canonical graded $R$-bimodule structure
on $V_\dPP$ and the induced one on its dual. As in \cite{CBEG07}, we
define an isomorphism, where $V_\dPP$ is regarded as a vector space:
\begin{equation}
\#\colon V^*_\dPP\lra{\cong}V_\dPP[-N]\colon\quad  \widehat{a}\longmapsto \varepsilon (a)a^*
\label{sostenido def}
\end{equation}


\subsection{The canonical bi-symplectic form for a double graded  quiver}
\begin{proposition}
\label{bi-symplectic-form-double-quivers}
Let $R=R_\dPP$, $A=\kk\dPP$, and 
\begin{equation}
\label{bi-symplectic-gaded-canonical}
\omega\defeq \sum_{a\in \PP_1}\du a\du a^*\in \DR{2}A.
\end{equation}
Then $\omega$ is bi-symplectic of weight $N$.
\end{proposition}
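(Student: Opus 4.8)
The plan is to verify the three requirements of Definition~\ref{bi-sympldef}: that $\omega=\sum_{a\in\PP_1}\du a\,\du a^{*}$ is a $\du$-closed element of $\DR{2}A$ of weight $N$, and --- the only substantial point --- that the associated morphism of graded $A$-bimodules $\iota(\omega)\colon\D_R A\to\Omega^{1}_R(A)[-N]$, $\Theta\mapsto\iota_{\Theta}\omega$, is an isomorphism. The first two properties are immediate: each summand $\du a\,\du a^{*}$ is a product of two $1$-forms, so it has form-degree $2$, lies in $\Omega^{2}_R A$, and hence descends to $\DR{2}A$; since $a^{*}$ has weight $N-\lvert a\rvert$ (Definition~\ref{def:graded-quiver}), its weight is $\lvert a\rvert+(N-\lvert a\rvert)=N$; and closedness follows from $\du^{2}=0$ via $\du(\du a\,\du a^{*})=(\du\du a)\,\du a^{*}\pm\du a\,(\du\du a^{*})=0$.

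For non-degeneracy, the first step is to evaluate $\iota_{\Theta}\omega$ on the $\kk\dPP$-bimodule generators $\Theta=\partial/\partial b$, $b\in\dPP_1$, of $\D_R(\kk\dPP)$, which act by $(\partial/\partial b)(c)=\delta_{bc}\,e_{h(b)}\otimes e_{t(b)}$. Feeding this and the explicit formula~\eqref{reduced-contraction-general} for the reduced contraction of a $2$-form, exactly one term of $\omega=\sum_{a\in\PP_1}\du a\,\du a^{*}$ survives each contraction, and the surviving trivial paths collapse using $t(a^{*})=h(a)$ and $h(a^{*})=t(a)$; the outcome is $\iota_{\partial/\partial b}\omega=c_{b}\,\du(b^{*})$ for a nonzero scalar $c_{b}\in\{\pm1\}$ fixed by $\varepsilon$~\eqref{varepsilon} and the Koszul signs (concretely, $\iota_{\partial/\partial a}\omega=\du a^{*}$ and $\iota_{\partial/\partial a^{*}}\omega=\pm\du a$ for $a\in\PP_1$). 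Thus $\iota(\omega)$ carries the bimodule-generating set $\{\partial/\partial b\}_{b\in\dPP_1}$ of $\D_R(\kk\dPP)$ onto the bimodule-generating set $\{\du a\}_{a\in\dPP_1}$ of $\Omega^{1}_R(\kk\dPP)$, via $b\mapsto b^{*}$ and nonzero scalars, respecting the source and target trivial paths attached to these generators.

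To upgrade this to the isomorphism statement, I would pass to the descriptions $\Omega^{1}_R A\cong A\otimes_R V_\dPP\otimes_R A$ from Lemma~\ref{lema 2.6} and, dually, $\D_R A=(\Omega^{1}_R A)^{\vee}\cong A\otimes_R V_\dPP^{\vee}\otimes_R A$; this is legitimate because $\kk\dPP$ is smooth over $R=R_\dPP$ (Lemma~\ref{tensor smooth}, $R$ being semisimple, so that $V_\dPP$ is a finitely generated projective $R$-bimodule), whence $\Omega^{1}_R A$ and $\D_R A$ are finitely generated projective graded $A$-bimodules. Under these identifications $\partial/\partial a\leftrightarrow\widetilde a\in V_\dPP^{\vee}$ (Lemma~\ref{Casimir}) and $\du a\leftrightarrow a\in V_\dPP$. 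Since $\iota(\omega)$ is a morphism of graded $A$-bimodules --- which, as in the ungraded theory of~\cite{CBEG07}, is a consequence of $\du\omega=0$ --- it is $A\otimes_R(-)\otimes_R A$ applied to an $R$-bimodule morphism $f\colon V_\dPP^{\vee}\to V_\dPP[-N]$, and the Step-$2$ computation together with the identity $B^{-1}(\widetilde a)=\widehat a$ of~\eqref{identidad-B-1} identifies $f$, transported along the isomorphism $B$, with (a sign-twisted version of) the isomorphism $\#\colon V^{*}_\dPP\xrightarrow{\cong}V_\dPP[-N]$ of~\eqref{sostenido def}. Since $\#$ is an isomorphism of $R$-bimodules and $A\otimes_R(-)\otimes_R A$ preserves isomorphisms, $\iota(\omega)$ is an isomorphism of graded $A$-bimodules, and $\omega$ is bi-symplectic of weight $N$.

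I expect the main obstacle to be the sign and bimodule-structure bookkeeping, exactly as in the corresponding ungraded computation of Crawley-Boevey--Etingof--Ginzburg: one must carry the bigraded Koszul signs through~\eqref{reduced-contraction-general}, keep track of the distinction between the inner and outer $A$-bimodule structures on $A\otimes A$ (which govern $\D_R A$ and $\Omega^{1}_R A$ respectively) and the graded flip $\sigma_{(12)}$ of~\eqref{eq:flip}, and --- the subtlest point --- verify that $\iota(\omega)$ really does descend from an $R^{\e}$-linear map on the generating sub-bimodule, so that its values on generators determine it. Once the conventions are pinned down, each remaining verification is a short direct computation.
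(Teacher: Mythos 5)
Your proposal is correct and is essentially the argument the paper itself intends: the paper omits the proof, declaring it a routine graded generalization of \cite[Proposition 8.1.1(ii)]{CBEG07}, and your computation of $\iota_{\partial/\partial b}\omega$ on generators combined with the identifications $\Omega^1_R A\cong A\otimes_R V_\dPP\otimes_R A$ and $\D_R A\cong A\otimes_R V_\dPP^\vee\otimes_R A$ and the isomorphism $\#$ is precisely that generalization (these are the maps $G$ and $H$ that the paper later extracts ``from the proof of [CBEG, Proposition 8.1.1], or its graded generalization'' in Theorem~\ref{Theorem in weight 1}). One minor slip: the graded $A$-bimodule linearity of $\Theta\mapsto\iota_\Theta\omega$ is a general property of the reduced contraction applied to any element of $\DR{2}A$ (inner structure on the source, outer on the target), already presupposed in Definition~\ref{bi-sympldef}, and is not a consequence of $\du\omega=0$; closedness is only needed elsewhere (e.g.\ for the Jacobi identity of the induced double bracket).
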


\begin{proof}
This result is a routine graded generalization of the ungraded
statement~\cite[Proposition 8.1.1\textup{(ii)}]{CBEG07}. We omit the
proof.
\end{proof}

\section{Restriction theorems of graded bi-symplectic forms}
\label{sec:Darboux}

In this section, we prove two technical results of graded
bi-symplectic forms, roughly speaking corresponding to graded
non-commutative versions, in weights 1 and 2, of the Darboux Theorem
in symplectic geometry. Furthermore, we will describe in
\secref{sub:cotangent} a noncommutative analogue of the cotangent
exact sequence relating relative and absolute differential forms,
focusing on the case of bi-symplectic tensor $\mathbb{N}$-algebras.


\subsection{The cotangent exact sequence}
\label{sub:cotangent}

Let $R$ be a smooth semisimple associative $\kk$-algebra, $B$ a smooth
graded $R$-algebra, $A=\T_B M$ the tensor algebra of a graded
$B$-bimodule $M$, and $\omega\in \DR{2}(A)_N$ a bi-symplectic form of
weight $N$ on $A$, where $N\in\NN$.

The cotangent exact sequence for an arbitrary graded associative
$B$-algebra is as follows.
\begin{lemma}
\begin{enumerate}
\item [\textup{(i)}]
There is a canonical exact sequence of graded $A$-bimodules
\[
0\to \Tor^{B}_1(A,A)\lto A\otimes_B\Omega_{R}^{1} B \otimes_B A\lto
\Omega^{1}_{R} A\lto \Omega^1_B A  \to 0.
\]
\item [\textup{(ii)}]
Suppose $A=\T_{B} M$, where $M$ is a graded $B$-bimodule which is flat
as either left or right graded $B$-module. Then there is an exact
sequence of graded $A$-bimodules
\[
0\to A\otimes_B\Omega_{R}^{1} B\otimes_B A\lto \Omega^{1}_{R} A\lto A\otimes_B M\otimes_B A\to 0.
\]
\end{enumerate}
\end{lemma}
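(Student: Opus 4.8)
The plan is to obtain both statements by applying the left derived functors of the right-exact functor $G=A\otimes_B(-)\otimes_B A$ to the fundamental exact sequence $0\to\Omega^1_R C\to C\otimes_R C\xrightarrow{m}C\to 0$ (valid for any $R$-algebra $C$, with $\Omega^1_R C=\ker m$), taken for $C=B$, and comparing with the same sequence for $C=A$; the term $\Tor^B_1(A,A)$ in~(i) is precisely $L_1G(B)$, and~(ii) is the special case in which $A$ is flat over $B$ and this term vanishes.

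For~(i), I would begin by observing that the canonical surjection $\pi\colon A\otimes_R A\twoheadrightarrow A\otimes_B A$ intertwines the multiplications, $m_B\circ\pi=m_R$, hence carries $\Omega^1_R A=\ker m_R$ into $\Omega^1_B A=\ker m_B$; writing $\bar\pi$ for the restriction, a short diagram chase shows that $\bar\pi$ is surjective, that $\ker\pi\subseteq\Omega^1_R A$, and that $\ker\bar\pi=\ker\pi$. Next I would apply $G$ to the sequence $0\to\Omega^1_R B\to B\otimes_R B\to B\to 0$ of graded $B$-bimodules. Since $R$ is semisimple over the characteristic-zero field $\kk$ it is separable, hence projective over $R^\e$; base-changing along $R^\e\to B^\e$ makes $B\otimes_R B\cong B^\e\otimes_{R^\e}R$ a projective graded $B^\e$-bimodule, so $L_{>0}G$ vanishes on it. Identifying $G$ with $A^\e\otimes_{B^\e}(-)$, so that $L_\bullet G(B)=\Tor^{B^\e}_\bullet(A^\e,B)\cong\Tor^B_\bullet(A,A)$ (e.g.\ via the bar resolution), and using $G(B\otimes_R B)\cong A\otimes_R A$, the long exact sequence collapses to
\[
0\to\Tor^B_1(A,A)\to A\otimes_B\Omega^1_R B\otimes_B A\xrightarrow{\,j\,}A\otimes_R A\xrightarrow{\,\pi\,}A\otimes_B A\to 0,
\]
with $j$ induced by the inclusion $\Omega^1_R B\hookrightarrow B\otimes_R B$, so $j(a_1\otimes\beta\otimes a_2)=\sum a_1\beta'\otimes\beta''a_2$ for $\beta=\sum\beta'\otimes\beta''$. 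As $\sum\beta'\beta''=0$ one gets $m_R\circ j=0$, so $j$ factors through $\Omega^1_R A$, where it becomes the canonical $A$-bimodule map attached to the morphism $\Omega^1_R B\to\Omega^1_R A$ induced by $B\hookrightarrow A$. Combining the two displays --- $\ker j=\Tor^B_1(A,A)$, $\operatorname{im}j=\ker\pi=\ker\bar\pi$, and $\bar\pi$ onto --- yields exactly the four-term exact sequence of graded $A$-bimodules in~(i).

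For~(ii), I would first note that flatness propagates through tensor algebras: since $N\otimes_B X$ is left $B$-flat whenever the $B$-bimodules $N$ and $X$ both are, an induction gives that every $M^{\otimes_B n}$, and hence $A=\T_B M=\bigoplus_{n\ge 0}M^{\otimes_B n}$, is flat as a left (resp.\ right) graded $B$-module when $M$ is; in either case $\Tor^B_1(A,A)=0$. Substituting this vanishing and the canonical isomorphism $\Omega^1_B A\cong A\otimes_B M\otimes_B A$ of Lemma~\ref{lema 2.6} (cf.~\cite{CQ95}) into the sequence of~(i) leaves $0\to A\otimes_B\Omega^1_R B\otimes_B A\to\Omega^1_R A\to A\otimes_B M\otimes_B A\to 0$, as required.

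The main obstacle, and the step I would treat most carefully, is the homological identification $\ker j=\Tor^B_1(A,A)$: it rests on the $B^\e$-projectivity of $B\otimes_R B$ --- which is exactly where the semisimplicity of $R$ is used --- together with the standard base-change isomorphism $\Tor^{B^\e}_\bullet(A^\e,B)\cong\Tor^B_\bullet(A,A)$. Everything else reduces to diagram chasing, and since $R$ sits in weight $0$ and all the structure maps are weight-preserving, the passage from the ungraded arguments of~\cite{CQ95} to the present graded statement is immediate.
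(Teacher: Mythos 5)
Your proof is correct, and it is essentially the argument the paper relies on: the paper simply cites Cuntz--Quillen (Proposition 2.6 and Corollary 2.10 of \cite{CQ95}) together with the observation that all maps preserve weights, and your derivation is precisely an unfolding of that citation --- deriving $A\otimes_B(-)\otimes_B A$ along $0\to\Omega^1_RB\to B\otimes_RB\to B\to 0$, using separability of $R$ to kill the higher derived terms on $B\otimes_RB$, and splicing with the surjection $\Omega^1_RA\to\Omega^1_BA$. The points you flag as delicate (the identification $L_1G(B)\cong\Tor^B_1(A,A)$ and the propagation of one-sided flatness to $\T_BM$) are handled correctly, so nothing is missing.
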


\begin{proof}
This is a consequence of \cite[Proposition 2.6 and Corollary
2.10]{CQ95}, because the maps involved preserve weights.
\end{proof}


We will now use an explicit description of the space of noncommutative
relative differential forms on $A$ over $R$, following \cite[\S
5.2]{CBEG07}. Define the graded $A$-bimodule
\begin{equation}
\label{omega-tilde}
\widetilde{\Omega}\defeq(A\otimes_B\Omega^{1}_{R} B\otimes_{B}A)\bigoplus(A\otimes_R M\otimes_R A).
\end{equation}
Abusing the notation, for any $a^\prime, a^{\prime\prime}\in A$, $m\in
M$, $\beta\in \Omega^{1}_{R} B$, we write
\begin{equation}
\label{abuso de notacion}
\begin{aligned}
a^{\prime}\cdot\widetilde{m}\cdot a^{\prime\prime}:&=0\oplus(a^{\prime}\otimes m\otimes a^{\prime\prime})\in A\otimes_R M\otimes_R A\subset \widetilde{\Omega},\nonumber
\\
a^{\prime}\cdot\widetilde{\beta}\cdot a^{\prime\prime}:&=(a^{\prime}\otimes \beta\otimes a^{\prime\prime})\oplus 0\in A\otimes_B\Omega^{1}_{R} B\otimes_B A\subset \widetilde{\Omega}.
\end{aligned}
\end{equation}
Let $Q\subset\widetilde{\Omega}$ be the graded $A$-subbimodule
generated by the Leibniz rule in $\widetilde{\Omega}$, that is,
\begin{equation}
Q=\langle\!\langle\widetilde{b^{\prime}mb^{\prime\prime}}-\widetilde{db^{\prime}}\cdot(mb^{\prime\prime})-b^{\prime}\cdot \widetilde{m}\cdot b^{\prime\prime}-(b^{\prime}m)\cdot \widetilde{db^{\prime\prime}}\rangle\!\rangle_{b',b''\in B, m\in M},
\label{Q}
\end{equation}
where $\langle\!\langle-\rangle\!\rangle$ denotes the graded
$A$-subbimodule generated by the set $(-)$. 

The graded algebra structure of $A=\T_B M$ induces a graded
$A$-bimodule structure on $\widetilde{\Omega}$. Then
$Q\subset\widetilde{\Omega}$ is a graded $A$-subbimodule, because it
is generated by homogeneous elements, so the quotient
$\widetilde{\Omega}/Q$ is a graded $A$-bimodule. The following result
follows from \cite[Lemma 5.2.3]{CBEG07}, simply because weights are
preserved.

\begin{proposition}
Let $B$ be a smooth graded $R$-algebra, $M$ a finitely generated projective graded $B$-bimodule, and $A=\T_B M$. Then
\begin{enumerate}
\item [\textup{(i)}]
There exists a graded $A$-bimodule isomorphism
\[
f\colon\Omega^{1}_R A\lra{\cong} \tilde{\Omega}/Q .
\]
\item [\textup{(ii)}]
The embedding of the first direct summand in $\widetilde{\Omega}$ (respectively, the projection onto the second direct summand in $\widetilde{\Omega}$), induces, via the isomorphism in \textup{(i)}, a canonical extension of graded $A$-bimodules
\begin{equation}
\label{cotangent}
0\to A\otimes_B\Omega^{1}_R B\otimes_B A\lra{\varepsilon} \Omega^1_R A
\lra{\nu} A\otimes_B M\otimes_B A\lto 0
\end{equation}

\item [\textup{(iii)}]
The assignment $B\oplus M=\T^0_BM\oplus \T^1_B M\to\widetilde{\Omega}$, $b\oplus m\mapsto\widetilde{\du b}+\widetilde{m}$ extends uniquely to a graded derivation $\widetilde{\du}\;\colon A=\T_B M\to\widetilde{\Omega}/Q$; this graded derivation corresponds, via the isomorphism in \textup{(i)}, to the canonical universal graded derivation $\du\;\colon A\to\Omega^{1}_R A$. In other words, we have (see \eqref{abuso de notacion})
\begin{equation}
f(\widetilde{m})=\du m,\quad f(\widetilde{\du b})=\du b,
\label{f}
\end{equation}
for homogeneous $m\in M$ and $b\in B$, and the commutative diagram
\[
\xymatrix{
& A\ar[dl]_-{\widetilde{\du}}\ar[dr]^-{\du}&
\\
\tilde{\Omega}/Q\ar[rr]^-{f}_-{\cong}& & \Omega^1_R A
}
\]
\end{enumerate}
\label{5.2.3}
\end{proposition}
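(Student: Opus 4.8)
The plan is to reduce everything to the already-quoted statement \cite[Lemma 5.2.3]{CQ95}, which is phrased for ungraded algebras, and simply observe that all the constructions involved are weight-homogeneous. First I would note that $\widetilde{\Omega}$ in \eqref{omega-tilde} is a graded $A$-bimodule: $\Omega^1_R B$ is graded because $B$ is a graded $R$-algebra and $\du$ preserves weights, $M$ is graded by hypothesis, the tensor products over $B$ (resp.\ $R$) of graded bimodules are graded, and the outer $A$-bimodule structure on each summand is weight-homogeneous. Next I would check that the generators of $Q$ displayed in \eqref{Q} are homogeneous: for homogeneous $b',b''\in B$ and $m\in M$, each of the four terms $\widetilde{b'mb''}$, $\widetilde{\du b'}\cdot(mb'')$, $b'\cdot\widetilde{m}\cdot b''$, $(b'm)\cdot\widetilde{\du b''}$ has weight $\lvert b'\rvert+\lvert m\rvert+\lvert b''\rvert$, because $\du$ preserves weight and multiplication is additive in weight. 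Hence $Q$ is generated by homogeneous elements, so it is a graded $A$-subbimodule and $\widetilde{\Omega}/Q$ is a graded $A$-bimodule, and the derivation $\widetilde{\du}$ of part (iii) is weight-preserving.

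With these observations in place, the underlying ungraded statement of \cite[Lemma 5.2.3]{CQ95} produces an $A$-bimodule isomorphism $f\colon\Omega^1_RA\xrightarrow{\cong}\widetilde{\Omega}/Q$ intertwining $\du$ and $\widetilde{\du}$, together with the short exact sequence coming from the first inclusion and the second projection. I would then argue that $f$ is automatically a morphism of \emph{graded} bimodules: it is uniquely characterized by the property $f\circ\du=\widetilde{\du}$ (equivalently by the formulas \eqref{f} on the generators $m\in M$ and $\du b$, $b\in B$, of $\Omega^1_RA$ over $A$), and since both $\du$ and $\widetilde{\du}$ are weight-$0$ derivations, the component of $f$ of each nonzero degree is an $A$-bimodule map vanishing on all generators, hence zero; so $f$ preserves weights, and being bijective it is a graded isomorphism. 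The same argument applies to $f^{-1}$. For part (ii), the inclusion $A\otimes_B\Omega^1_RB\otimes_BA\hookrightarrow\widetilde{\Omega}$ and the projection $\widetilde{\Omega}\twoheadrightarrow A\otimes_BM\otimes_BA$ are visibly weight-preserving $A$-bimodule maps, and they descend along $f$ to the maps $\varepsilon$ and $\nu$ of \eqref{cotangent}; exactness is inherited from the ungraded sequence since a sequence of graded modules is exact iff it is exact as a sequence of ungraded modules. Finally part (iii) is then immediate: $\widetilde{\du}$ is the unique graded derivation extending $b\oplus m\mapsto\widetilde{\du b}+\widetilde m$ because the analogous ungraded uniqueness holds and the extension is forced to be weight-homogeneous; the commuting triangle and the identities \eqref{f} are exactly those of \cite[Lemma 5.2.3]{CQ95}.

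The one point requiring genuine (if routine) care, rather than a one-line ``weights are preserved'' remark, is the verification that $f$ and the connecting maps are graded \emph{once one has fixed explicit formulas}; I expect this to be the main, though minor, obstacle. Concretely one must trace through the proof of \cite[Lemma 5.2.3]{CQ95} to see that the isomorphism $f$ it constructs (and its inverse, built by choosing a bimodule splitting of the multiplication $A\otimes_BA\to A$ or equivalently using the identification $\Omega^1_BA\cong A\otimes_BM\otimes_BA$ of Lemma~\ref{lema 2.6}) is assembled entirely from weight-$0$ structure maps — the universal derivations, multiplications, and the canonical projections/inclusions — none of which shifts weight. Since $B$ is smooth over $R$ and $M$ is finitely generated projective, all the modules appearing are finitely generated, so the degreewise decomposition behaves well and there are no completion subtleties. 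Given all this, the proof is genuinely just the sentence already present in the excerpt, and I would keep it at that level of brevity, citing \cite[Lemma 5.2.3]{CQ95} and remarking that each construction is weight-homogeneous, with the homogeneity of the generators of $Q$ spelled out as above.
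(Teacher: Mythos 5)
Your proposal is correct and follows essentially the same route as the paper, whose proof consists of invoking the ungraded statement and observing that weights are preserved; your explicit check that the generators of $Q$ in \eqref{Q} are weight-homogeneous, and that $f$ is forced to be weight-preserving because it intertwines the two weight-$0$ universal derivations, is precisely the content the paper leaves implicit. One correction: the ungraded result you are reducing to is \cite[Lemma 5.2.3]{CBEG07}, not a lemma of \cite{CQ95} --- Cuntz--Quillen supply the cotangent exact sequence itself (their Proposition 2.6 and Corollary 2.10, used in the preceding lemma of this section), whereas the explicit presentation of $\Omega^{1}_R A$ as $\widetilde{\Omega}/Q$ together with the identities \eqref{f} is taken from Crawley-Boevey--Etingof--Ginzburg.
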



Applying the functor $\Hom_{A^\e}(-{}_{A^\e}A^{\e})$ to
\eqref{cotangent}, we obtain the ``tangent exact sequence".
\begin{lemma}
\begin{enumerate}
\item [\textup{(i)}]
Let $B$ be a smooth graded $R$-algebra, $M$ a finitely generated
projective graded $B$-bimodule and $A=\T_B M$. Then there is a short exact sequence
\begin{equation}
\label{dual cotangent}
0\to A\otimes_B M^{\vee}\otimes_B A \lra{\nu^\vee}
 \D_R A\lra{\varepsilon^\vee} A\otimes_B\D_R B\otimes_B A \lto 0.
\end{equation}
\item [\textup{(ii)}]
If, in addition, $A$ is endowed with a bi-symplectic form of weight $N$, then the following diagram, where the rows are short exact sequences, commutes.
\begin{equation}
\xymatrix{
0\ar[r] & A\otimes_{B} M^{\vee}\otimes_{B}A \ar[r]^-{\nu^\vee} \ar[d]& \D_{R} A\ar[r]^-{\varepsilon^\vee} \ar[d]^{\iota(\omega)}&  A\otimes_{B} \D_{R} B \otimes_{B}A \ar[r] \ar[d]& 0
\\
0\ar[r] &A\otimes_{B} \Omega^{1}_{R} B \otimes_{B}A \ar[r]^-{\varepsilon}  \ar[r] & \Omega^{1}_{R} A \ar[r]^-{\nu} & A\otimes_{B} M\otimes_{B}A\ar[r] & 0
}
\label{DIAGRAMA}
\end{equation}
\end{enumerate}
\label{DIAGRAMAGRAL}
\end{lemma}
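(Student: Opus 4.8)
The plan is to treat part (i) and part (ii) separately, with (i) a formal consequence of dualizing the cotangent sequence \eqref{cotangent}, and (ii) a compatibility check built on the universal property of the contraction operator.

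For part (i), I would apply the functor $\Hom_{A^\e}(-,{}_{A^\e}A^\e)$ to the short exact sequence \eqref{cotangent}. Since $R$ is smooth semisimple and $B$ is smooth over $R$, and $M$ is finitely generated projective as a $B$-bimodule, Lemma~\ref{tensor smooth} tells us $A=\T_B M$ is smooth over $R$; hence all three terms in \eqref{cotangent} are finitely generated projective graded $A^\e$-modules. (The outer terms are induced up from finitely generated projective $B$-bimodules, and $\Omega^1_R A$ is projective by smoothness.) Therefore the sequence is a sequence of projectives, so applying $\Hom_{A^\e}(-,{}_{A^\e}A^\e)$ is exact and yields a short exact sequence of graded $A$-bimodules. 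It remains only to identify the three Hom-terms: $\Hom_{A^\e}(\Omega^1_R A,{}_{A^\e}A^\e)\cong\D_R A$ by \eqref{double universal property}; and for the two induced bimodules, the standard adjunction $\Hom_{A^\e}(A\otimes_B N\otimes_B A,{}_{A^\e}A^\e)\cong A\otimes_B\Hom_{B^\e}(N,{}_{B^\e}B^\e)\otimes_B A$ (valid because these are finitely generated projective, so dualization commutes with the induction) identifies the outer terms with $A\otimes_B M^\vee\otimes_B A$ and $A\otimes_B\D_R B\otimes_B A$. The maps $\nu^\vee$ and $\varepsilon^\vee$ are simply the transposes of $\nu$ and $\varepsilon$, and the direction of the arrows reverses, giving \eqref{dual cotangent}. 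Throughout, weights are preserved because all the isomorphisms involved (adjunction, \eqref{double universal property}, the Casimir identifications) are weight-graded.

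For part (ii), the bottom row of \eqref{DIAGRAMA} is \eqref{cotangent} itself, and the top row is \eqref{dual cotangent} from part (i). The middle vertical arrow $\iota(\omega)\colon\D_R A\to\Omega^1_R A[-N]$ is the isomorphism from Definition~\ref{bi-sympldef}; we must construct the left and right vertical arrows making both squares commute. The right square: I would define the right vertical map as the one induced on cokernels, i.e. $A\otimes_B\D_R B\otimes_B A\to A\otimes_B M\otimes_B A$ is forced to be $\nu\circ\iota(\omega)\circ(\varepsilon^\vee)^{-1}$ on the image, but since both rows are exact and $\iota(\omega)$ descends, commutativity of the right square is automatic once we show $\nu\circ\iota(\omega)$ kills the image of $\nu^\vee$ — which is precisely the statement that the left square can be filled, so the two squares are really one claim. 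Concretely, the left vertical arrow $A\otimes_B M^\vee\otimes_B A\to A\otimes_B\Omega^1_R B\otimes_B A$ should be identified via the bi-symplectic structure: contracting $\omega$ with a double derivation supported on $M^\vee$ (i.e. one that vanishes on the subalgebra $B$) produces a relative $1$-form lying in the sub-bimodule $A\otimes_B\Omega^1_R B\otimes_B A=\varepsilon(\cdots)$, and this assignment is the desired left arrow. I would verify this on generators using the explicit formula \eqref{formula-larga-contraccion} for $i_\Theta$ together with \eqref{circulo derecha} for $\iota_\Theta$, and the description of $\Omega^1_R A$ via $\widetilde\Omega/Q$ from Proposition~\ref{5.2.3}; the key point is that a double derivation in the image of $\nu^\vee$ annihilates $B$, hence $\iota_\Theta\omega$ has no $\widetilde m$-component, i.e. $\nu(\iota_\Theta\omega)=0$.

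The main obstacle will be the sign bookkeeping in step (ii): the identification $\Hom_{A^\e}(\Omega^1_R A,{}_{A^\e}A^\e)\cong\D_R A$ goes through the flip $\sigma_{(12)}$ in \eqref{eq:flip}, the reduced contraction $\iota_\Theta$ carries the bigraded Koszul sign from \eqref{circulo derecha}, and the adjunction isomorphism for the induced bimodules introduces its own signs (as in \eqref{kappa}, the $(-1)^\square$ factor). Making all of these consistent so that the squares in \eqref{DIAGRAMA} commute on the nose — rather than up to sign — is where the real work lies; I expect the cleanest route is to fix everything on the generators $\du a$, $\du a^\ast$ of $\Omega^1_R A$ for a doubled graded quiver as a sanity check (using \eqref{bi-symplectic-gaded-canonical} and Example~\ref{derivaciones-quivers}), then invoke naturality and the fact that $f$ in Proposition~\ref{5.2.3} is canonical to conclude in general. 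Since the excerpt attributes the ungraded version to \cite[Lemma 5.2.3]{CBEG07} and asserts ``weights are preserved,'' I would present part (ii) as the graded analogue with the verification reduced to the observation that $\iota(\omega)$, $\varepsilon$, $\nu$ and their duals are all weight-homogeneous of the appropriate degrees ($0$, $0$, $0$, with the $[-N]$ shift absorbed into $\iota(\omega)$).
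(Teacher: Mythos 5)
Your part (i) is correct and is the intended argument: all three terms of \eqref{cotangent} are finitely generated projective graded $A$-bimodules, so the sequence splits, dualizing by $\Hom_{A^\e}(-,{}_{A^\e}A^\e)$ is exact, and the outer terms are identified through the adjunction for induced bimodules; the paper's own proof is only a citation of the ungraded \cite[Lemma 5.4.2]{CBEG07}, so you are supplying strictly more detail there.

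The gap is in part (ii), in the step ``a double derivation in the image of $\nu^\vee$ annihilates $B$, hence $\iota_\Theta\omega$ has no $\widetilde m$-component, i.e.\ $\nu(\iota_\Theta\omega)=0$.'' The hypothesis is right --- $\Theta\in\operatorname{im}\nu^\vee$ exactly when $i_\Theta$ kills $\operatorname{im}\varepsilon$, i.e.\ $\Theta(b)=0$ for all $b\in B$ --- but the conclusion does not follow: vanishing on $B$ controls the contraction against the $\Omega^1_R B$-legs of $\omega$ and says nothing about the contraction against its $M$-legs. Concretely, take the weight-$2$ doubled graded quiver of \secref{sub:standard}, where $M=E_1[-1]\oplus E_2[-2]$, the weight-one arrows are $\widehat a,\widehat a^*$, and $\omega=\sum_a(\du a\,\du a^*+\du\widehat a\,\du\widehat a^*)$. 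The double derivation $\Theta=\partial/\partial\widehat a=\nu^\vee(1\otimes\widetilde{\widehat a}\otimes 1)$ vanishes on $B=\kk Q$, yet \eqref{reduced-contraction-general} gives $\iota_\Theta\omega=\pm\,\du\widehat a^*$, and $\nu(\du\widehat a^*)=1\otimes\widehat a^*\otimes 1\neq 0$. Since $\varepsilon$ is injective, no left vertical arrow can make that square of \eqref{DIAGRAMA} commute on this element, so the statement you are trying to prove fails on all of $A\otimes_B M^\vee\otimes_B A$ as soon as $N\geq 2$ and $\omega$ has a nonzero $\widetilde{\omega}_{\text{MM}}$-component.

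What actually makes the construction work --- and what the paper proves and uses in Appendix A (Claim \ref{claim tecnico} and Theorem \ref{weight 0}) --- is a weight count, not annihilation of $B$: one restricts to the top-weight piece $A\otimes_B M_N^\vee\otimes_B A$, whose associated double derivations have weight $-N$; the contraction against $\widetilde{\omega}_{\text{MM}}$ then vanishes because each $M$-factor there has weight strictly between $0$ and $N$, and only the mixed terms with an $M$-factor of weight exactly $N$ survive, landing inside $A\otimes_B\Omega^1_R B\otimes_B A$. In \cite{CBEG07} the bimodule $M$ is concentrated in the single top weight, so $\widetilde{\omega}_{\text{MM}}$ cannot occur and your implication is valid there; your proof silently imports that hypothesis. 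To repair it, either assume $M=M_N$, or replace the ``kills $B$'' argument by the weight argument and restrict the left column of \eqref{DIAGRAMA} to $M_N^\vee$, which is also the form in which the lemma is actually applied later in the paper.
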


\begin{proof}
This result is a graded version of \cite[Lemma 5.4.2]{CBEG07}.
\end{proof}

\subsection{Restriction Theorem in weight 0}
\label{sub: restriction 0}

\begin{theorem}
\label{weight 0}
Let $R$ be a semisimple finite-dimensional $\kk$-algebra, $B$ a smooth
associative $R$-algebra, and $E_1,\ldots,E_{N}$ finitely generated projective $B$-bimodules, where $N>0$. Define the tensor $\mathbb{N}$-algebra $A=\T_BM$ as the tensor $B$-algebra of the graded $B$-bimodule
\[
M\defeq M_1\oplus\cdots\oplus M_N,
\]
where $M_i\defeq E_i[-i]$, for $i=1,...,N$. Let $\omega\in\DR{2} (A)$ be a bi-symplectic form of weight $N$ over $A$. Then, the isomorphism $\iota(\omega)\colon \D_{R} A\lra{\cong}\Omega^{1}_R A[-N]$ induces another isomorphism
\[
\widetilde{\iota}(\omega)\colon A\otimes_B\D_{R} B\otimes_B A\lra{\cong} A\otimes_B M_{N}\otimes_B A,
\]
which, in weight zero, restricts to the following isomorphism:
\[
\widetilde{\iota}(\omega)_{(0)}\colon \D_{R} B \lra{\cong} E_{N}.
\]
\label{weight 0}
\end{theorem}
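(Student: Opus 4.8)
The plan is to exploit the cotangent/tangent exact sequences from \secref{sub:cotangent} together with the compatibility diagram~\eqref{DIAGRAMA}, and then extract the weight-$0$ piece. First I would set up the relevant filtration by weight: since $M=\bigoplus_{i=1}^N M_i$ with $M_i=E_i[-i]$, the graded $B$-bimodule $M$ is concentrated in weights $1,\dots,N$, its highest-weight component being $M_N=E_N[-N]$. Correspondingly $A\otimes_B M\otimes_B A$ has a top weight-$N$ part whose weight-$N$ component is exactly $E_N$ (since $A_0=B$ and $M_N$ is concentrated in weight $N$), while $A\otimes_B\Omega^1_R B\otimes_B A$, built from the weight-$0$ algebra $B$ and the weight-$0$ bimodule $\Omega^1_R B$, has its lowest (weight-$0$) component equal to $\Omega^1_R B$, and dually $A\otimes_B\D_R B\otimes_B A$ has weight-$0$ component $\D_R B$.

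Next I would invoke Lemma~\ref{DIAGRAMAGRAL}: the vertical map $\iota(\omega)\colon\D_R A\to\Omega^1_R A[-N]$ is a graded $A$-bimodule isomorphism (by the definition of bi-symplectic form of weight $N$), and it sits in the commutative diagram~\eqref{DIAGRAMA} between the tangent sequence~\eqref{dual cotangent} and the cotangent sequence~\eqref{cotangent} (the latter shifted by $[-N]$). Because $\iota(\omega)$ shifts weight by $-N$, it carries the sub-bimodule $A\otimes_B M^\vee\otimes_B A\hookrightarrow\D_R A$ — whose image under $\iota(\omega)$ is forced by the diagram to land in $\varepsilon(A\otimes_B\Omega^1_R B\otimes_B A)$ since $\nu\circ\iota(\omega)\circ\nu^\vee$ must vanish by exactness — isomorphically onto $A\otimes_B\Omega^1_R B\otimes_B A$ inside $\Omega^1_R A[-N]$. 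By the five lemma (or by taking cokernels of the left-hand vertical), $\iota(\omega)$ then descends to the claimed isomorphism $\widetilde{\iota}(\omega)\colon A\otimes_B\D_R B\otimes_B A\xrightarrow{\cong}A\otimes_B M_N\otimes_B A$ — here one uses that $A\otimes_B M\otimes_B A$, after the shift $[-N]$, has $A\otimes_B M_N\otimes_B A$ as its lowest weight summand, matching the weight-$0$ nature of $A\otimes_B\D_R B\otimes_B A$, so the image of $\nu\circ\iota(\omega)$ is precisely that summand.

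Finally, to get the weight-zero restriction, I would take the weight-$0$ component of the isomorphism $\widetilde{\iota}(\omega)$. On the source, the weight-$0$ part of $A\otimes_B\D_R B\otimes_B A$ is $B\otimes_B\D_R B\otimes_B B=\D_R B$ (using $A_0=B$ and that $\D_R B$ is concentrated in weight $0$); on the target, the weight-$0$ part of $A\otimes_B M_N\otimes_B A$ is $B\otimes_B (M_N)_N\otimes_B B=(M_N)_N=E_N$ (using again $A_0=B$ and that $M_N=E_N[-N]$ is concentrated in weight $N$, so $(M_N)$ contributes its only nonzero component there, which after the identification is $E_N$). Since taking a fixed-weight component of a graded isomorphism yields an isomorphism of the components, this gives $\widetilde{\iota}(\omega)_{(0)}\colon\D_R B\xrightarrow{\cong}E_N$, as desired.

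The main obstacle I anticipate is bookkeeping the weight shifts consistently through~\eqref{DIAGRAMA}: one must check carefully that $\iota(\omega)$, having weight $-N$, really does send the lowest-weight summand of $\D_R A$ coming from $M^\vee$ (whose relevant component sits in a weight determined by $N-i$ for the piece $M_i^\vee$) to the lowest-weight summand on the differential-forms side, and in particular that no cross-terms between different $M_i$'s spoil the identification of the top-weight quotient with $M_N$. This is a routine but sign- and grading-sensitive verification, and it is where the hypothesis $M^i=0$ for $i>N$ (equivalently $E_i$ only for $i\le N$) is used to ensure the top weight of $A\otimes_B M\otimes_B A$, after the shift, is genuinely $A\otimes_B M_N\otimes_B A$.
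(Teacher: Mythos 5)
Your skeleton---push the bi-symplectic isomorphism through the diagram \eqref{DIAGRAMA} and then extract weight components---matches the paper's, and your identification of the weight-zero pieces of $A\otimes_B\D_R B\otimes_B A$ and $A\otimes_B M_N\otimes_B A$ with $\D_R B$ and $E_N$ is correct. But the two steps you treat as formal are exactly where the content of the theorem lies, and as stated both fail. First, the vanishing of $\nu\circ\iota(\omega)\circ\nu^\vee$ is not ``forced by exactness'': exactness of the two rows says nothing about whether the middle vertical carries $\ker\varepsilon^\vee$ into $\ker\nu$. Worse, on the full sub-bimodule $A\otimes_B M^\vee\otimes_B A$ this vanishing is generally false, and it has to be: if it held, your five-lemma step would make the induced map $A\otimes_B\D_R B\otimes_B A\to A\otimes_B M\otimes_B A$ an isomorphism raising weight by $N$, which is impossible once $N\ge 2$ and some $E_i\neq 0$ with $i<N$, since the source is generated as an $A$-bimodule by its weight-$0$ part $\D_R B$, so its image sits in weights $\ge N$ and misses the weight-$i$ component $E_i$ of the target. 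This is why the paper restricts from the outset to $\sigma\in M_N^\vee$, decomposes $\widetilde\omega$ into its $MM$, $BB$, $MB$, $BM$ components relative to $\widetilde{\Omega}=(A\otimes_B\Omega^1_R B\otimes_B A)\oplus(A\otimes_R M\otimes_R A)$, and verifies by direct contraction (Claim~\ref{claim tecnico}) that the $MM$ and $BB$ pieces die (the $MM$ piece because both tensor factors have weight $<N$, so $\sigma$ annihilates them) while the mixed pieces land in the $\Omega^1_R B$ summand.

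Second, even granting an induced map on quotients, ``$A\otimes_B M_N\otimes_B A$ is the lowest weight summand'' is not true and does not pin down the image: the weight-$N$ component of $A\otimes_B M\otimes_B A$ contains, besides $E_N$, all pieces $A_j\otimes_B M_i\otimes_B A_k$ with $i<N$ and $i+j+k=N$, so weight bookkeeping alone cannot exclude the cross-terms you worry about at the end; they are a genuine obstruction, not a routine verification. The paper circumvents this by building the inverse directly: $\iota(\omega)^{-1}\circ\varepsilon$ sends $1\otimes\du_B b\otimes 1$ to the Hamiltonian double derivation $H_b$, which has weight $-N$ and therefore annihilates $B$ and every $M_i$ with $i<N$; hence $H_b$ lies in $\nu^\vee\bigl(A\otimes_B M_N^\vee\otimes_B A\bigr)$, the two maps are checked to be mutually inverse, and only then does one dualize (using smoothness of $B$ and projectivity of $M_N$) to obtain $\widetilde{\iota}(\omega)$. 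Your proposal needs both of these computations, or equivalents, to close the gap.
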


The technical proof of this result is given in Appendix A.

\subsection{Restriction Theorem in weight 1 for doubled graded quivers}
\label{sub: restriction-1}

For convenience, we fix the following. 

\begin{framework}
\label{framework-quivers}
Let $\dPP$ be a doubled graded quiver of weight 2, with graded path
algebra $A\defeq\kk\dPP$. Let $R=R_\dPP$ be the semisimple finite
dimensional algebra with basis the trivial paths in $P$, and let $B$
be the smooth path algebra of the weight 0 subquiver of $\dPP$.  Let
$\omega$ be the canonical bi-symplectic form $\omega\in\DR{2}(A)$ of
weight $2$ on $A$. Then $A=\T_BM$, where $M$ is the graded
$B$-bimodule $M\defeq E_1[-1]\oplus E_2[-2]$, for finitely generated
projective $B$-bimodules $E_1$ and $E_2$.
\end{framework}

\begin{theorem}
\label{Theorem in weight 1}
In the Framework \ref{framework-quivers}, the isomorphism
$\iota(\omega)\colon \D_R A\lto \Omega^1_R A[-2]$ restricts, in weight
1, to a $B$-bimodule isomorphism
\begin{equation}
\label{Caligraphic R}
(\iota(\omega))_1\colon E^\vee_1\lra{\cong} E_1\colon \quad\widetilde{a}\longmapsto\varepsilon (a)a^*,
\end{equation}
with inverse
\begin{equation}
\label{flat}
\flat\colon E_1\lra{\cong} E^\vee_1\colon \quad a\longmapsto \varepsilon(a)\widetilde{a^*}.
\end{equation}
\end{theorem}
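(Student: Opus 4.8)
The plan is to compute $\iota_\Theta\omega$ explicitly for a weight-1 double derivation $\Theta$ on the graded path algebra $A=\kk\dPP$, and then read off the restriction of $\iota(\omega)$ to weight $1$. First I would use Example~\ref{derivaciones-quivers}, which says that $\D_R A$ is generated, as an $A$-bimodule, by the double derivations $\partial/\partial a$ for $a\in\dPP_1$; since the weight of $\partial/\partial a$ equals $-\lvert a\rvert$ and the canonical form $\omega=\sum_{b\in\PP_1}\du b\,\du b^*$ has weight $2$, the contraction $\iota_\Theta\omega$ lands in $\Omega^1_R A[-2]$, which in weight $1$ means $\partial/\partial a$ must have weight $-1$, i.e.\ $\lvert a\rvert=1$. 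So the relevant generators are exactly $\{\partial/\partial a : a\in\dPP_1,\ \lvert a\rvert=1\}$, and one checks (via Lemma~\ref{lema 2.6} and the identification $E_1^\vee\cong\Hom_{B^\e}(E_1,(B\otimes B)_\out)$ of Section~\ref{sub:Casimir}) that under the dual description these correspond precisely to the elements $\widetilde{a}\in E_1^\vee$ of Lemma~\ref{Casimir}.

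Next I would carry out the contraction. Applying formula~\eqref{reduced-contraction-general} for $\iota_\Theta$ to the $2$-form $\omega=\sum_{b\in\PP_1}\du b\,\du b^*$ with $\Theta=\partial/\partial a$, only the term with the arrow matching $a$ survives, because $\partial b/\partial a$ and $\partial b^*/\partial a$ vanish unless $a=b$ or $a=b^*$ respectively. Since $\dPP$ has weight $2$, for each $a\in\dPP_1$ of weight $1$ its reverse $a^*$ also has weight $N-1=1$, so both $a$ and $a^*$ are weight-1 arrows and each appears in exactly one summand of $\omega$ (either as the "$b$" of some pair, or as the "$b^*$"). Tracking the signs produced by $\iota_\Theta$, by the values of $\partial/\partial a$ in Example~\ref{derivaciones-quivers}, and by the sign function $\varepsilon$ of~\eqref{varepsilon} (which records whether an arrow lies in $\PP_1$ or $\PP_1^*$), one gets $\iota_{\partial/\partial a}\omega=\varepsilon(a)\,\du a^*$ up to the canonical identifications. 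Passing to the $B$-bimodule isomorphism $\Omega^1_R A[-2]\cong A\otimes_B M\otimes_B A$ of Proposition~\ref{5.2.3} and restricting to weight $1$ (where $M_1=E_1[-1]$), this reads $(\iota(\omega))_1(\widetilde{a})=\varepsilon(a)a^*$, which is~\eqref{Caligraphic R}.

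Finally I would verify that $\flat$ as defined in~\eqref{flat} is a two-sided inverse. Applying $(\iota(\omega))_1$ to $\flat(a)=\varepsilon(a)\widetilde{a^*}$ gives $\varepsilon(a)\varepsilon(a^*)(a^*)^*=\varepsilon(a)\varepsilon(a^*)a$; since $\varepsilon(a)\varepsilon(a^*)=1\cdot(-1)=-1$ when $a\in\PP_1$ and $=(-1)\cdot 1=-1$ when $a\in\PP_1^*$, one must double-check the sign bookkeeping here — this is the point where the conventions from~\eqref{varepsilon} and~\eqref{sostenido def} have to be matched carefully, and it is where I expect the only real friction, since the paper's $\#$-map already absorbs an $\varepsilon$ and one needs consistency between that and the present $\flat$. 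Modulo that sign check, bijectivity on the $R$-basis $\{a,a^*\}$ of weight-1 arrows is immediate, and since both maps are manifestly $B$-bimodule maps (they are built from the $R$-bilinear pairing and the canonical identifications, all of which are bimodule morphisms), the claimed $B$-bimodule isomorphism follows. The main obstacle, then, is purely the sign/normalization reconciliation between $\varepsilon$, the flip isomorphisms~\eqref{eq:flip}, and the reduced-contraction signs in~\eqref{reduced-contraction-general}; the algebra is otherwise a direct specialization of the ungraded computation in~\cite[\S 8]{CBEG07}.
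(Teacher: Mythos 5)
Your proposal is correct in substance and amounts to the same underlying computation as the paper's, packaged more directly. The paper never contracts $\omega$ explicitly in this proof: it imports from Proposition~\ref{bi-symplectic-form-double-quivers} (the graded version of \cite[Proposition 8.1.1]{CBEG07}) the isomorphisms $G\colon A\otimes_R V_\dPP\otimes_R A\lra{\cong}\diff A$ and $H\colon\D_RA\lra{\cong}A\otimes_R V^*_\dPP\otimes_R A$, under which $\iota(\omega)$ becomes $\Id\otimes\#\otimes\Id$, and then chases $1\otimes\widetilde a\otimes 1$ through a commutative diagram reconciling the two presentations $\T_RV_\dPP$ and $\T_BM_\dPP$ of $A$, finally extracting the weight-$1$ and weight-$(-1)$ components. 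Your direct evaluation $\iota_{\partial/\partial a}\omega=\varepsilon(a)\,\du a^*$ via \eqref{reduced-contraction-general} is precisely the content hidden in that diagram, so nothing essential is lost; what the paper's longer bookkeeping buys is an explicit justification of the identifications $A\otimes_BM_1\otimes_BA\cong A\otimes_RV_1\otimes_RA$ and $E_1^\vee\cong(A\otimes_BM_1^\vee\otimes_BA)_{-1}$, which your sketch invokes only implicitly when passing from $\partial/\partial a$ and $\du a^*$ to $\widetilde a$ and $a^*$.

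The one point you defer should in fact be resolved, and your own computation already settles it: $(\iota(\omega))_1(\flat(a))=\varepsilon(a)\varepsilon(a^*)(a^*)^*=-a$, since $\varepsilon(a)\varepsilon(a^*)=-1$ for every $a\in\dPP_1$. Hence $\flat$ as written in \eqref{flat} is the \emph{negative} of the inverse of \eqref{Caligraphic R}; the genuine inverse is $a\mapsto\varepsilon(a^*)\widetilde{a^*}=-\varepsilon(a)\widetilde{a^*}$. This is a sign slip in the statement itself (the paper's proof simply writes down $\mathcal R$ and $\flat$ at the end without checking that they compose to the identity), not a defect of your method, but a complete write-up should correct the sign in one of the two formulas rather than leave it as an unresolved ``friction point.''
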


\begin{proof}
By Lemma \ref{lem:grPathAlg-TensorAlg} and \eqref{eq:grPathAlg.2}, we
know that $A$ can be identified both with $\T_RV_\dPP$ and
$\T_BM_\dPP$.  We will use the following isomorphisms from the proof
of~\cite[Proposition 8.1.1]{CBEG07}, or its graded generalization,
namely, Theorem~\ref{bi-symplectic-form-double-quivers}:
\begin{equation}
\label{G}
G\colon A\otimes_R V_\dPP\otimes_R A\lra{\cong} \diff A
\colon\sum_{a\in\dPP_1}f_a\otimes a\otimes g_a\longmapsto  \sum_{a\in\dPP_1}f_a\du a\; g_a,
\end{equation}
\begin{equation}
\label{H}
H\colon \D_R A \lra{\cong} A\otimes_R V^*_\dPP\otimes_R A
\colon \Theta \longmapsto\sum_{a\in\overline{P}_1}(-1)^{\square}\Theta^{\prime\prime}(a)\otimes \widehat{a}\otimes\Theta^\prime(a),
\end{equation}
where $(-1)^{\square}$ is given by the Koszul sign rule. 
To shorten notation,
\[
V_1\defeq {(V_\dPP)}_1,
\quad 
V^\vee_1\defeq\Hom_{R^\e}(V_1,{}_{R^\e}R^\e), 
\quad 
M_w\defeq {(M_\dPP)}_w, \quad\text{ for $w>0$},
\]
where the subindexes mean weights. Using \eqref{sostenido
  def}, \eqref{G} and \eqref{H}, we consider the following commutative
diagram (some arrows will be constructed below).
\begin{equation}
\xymatrix{
A\otimes_B M^\vee_1\otimes_B A \ar@{^{(}->}[r]^J \ar[dr]^{h}_\cong &\D_R A\ar[r]^{\iota(\omega)}_{\cong}\ar[d]_H^\cong &\diff A \ar@{->>}[r]^-P & A\otimes_B M_1\otimes_ B A
\\
&A\otimes_R V^*_\dPP\otimes_R  A\ar[r]^{\Id\otimes \#\otimes\Id}_{\cong} &A\otimes_R V_\dPP\otimes_R A \ar@{->>}[r]^-{\text{proj}} \ar[u]^G_\cong& A\otimes_R V_1\otimes_R A\ar[u]^{g}_\cong
}
\label{diagrama peso 1}
\end{equation}

\begin{claim}
\label{claim-M-V}
If $a\in \PP_1$, we have $A\otimes_B M_1\otimes_B A\lra{\cong} A\otimes_R V_1\otimes_R A$, and consequently
\[
T\colon A\otimes_B M^\vee_1\otimes_B A\lra{\cong} A\otimes_R V^\vee_1\otimes_R A.
\]
\end{claim}
\begin{proof}[Proof of Claim~\ref{claim-M-V}]
This follows simply because
\begin{equation}
\begin{aligned}
 \label{isomorfismoVM}
A\otimes_B M_1\otimes_B A&=\bigoplus_{\lvert a\rvert=1}A\otimes_B BaB\otimes_B A
\cong\bigoplus_{\lvert a\rvert=1}AaA
\\&
\cong\bigoplus_{\lvert
  a\rvert=1}A\otimes_R\kk a\otimes_R  A
=A\otimes_R V_1\otimes_R A.
\qedhere
\end{aligned}
 \end{equation}
\end{proof}
Using Claim \ref{claim-M-V}, we can construct the left-hand triangle
in \eqref{diagrama peso 1}. Let $a\in\dPP_1$ with $\lvert
a\rvert=1$. Then consider $\widetilde{a}\in M^\vee_1$ (as defined in
\eqref{elementos-tilda}) and $1\otimes \widetilde{a}\otimes 1\in
A\otimes_B M^\vee_1\otimes_B A$. By Claim \ref{claim-M-V}, and the
natural injection, this element can be viewed in $A\otimes_R
V^\vee_{\dPP}\otimes_R A\cong V^\vee_\dPP\otimes_{R^\e}A^\e$. Now, to
define $J$, we use the sequence of isomorphisms
\begin{equation}
\begin{aligned}
V^\vee_\dPP\otimes_{R^\e}A^\e &\cong \Hom_{R^\e}(V_\dPP,\Hom_{A^\e}(A^\e,A^\e))
\\
&\cong \Hom_{R^\e}(A^\e\otimes_{R^\e}V_\dPP,{}_{A^\e}A^\e)
\\
&=\Hom_{A^\e}(\diff A, {}_{A^\e}A^\e)=\D_RA.
\label{more-isomorphisms}
\end{aligned}
\end{equation}
Note that $h=H\circ J$ is given by 
\begin{equation}
\xymatrix{
h\colon A\otimes_B M^\vee_1\otimes_B A\ar[r]^-J &\D_R A\ar[r]^-H & A\otimes_R V^*_\dPP\otimes_R A
\\
1\otimes\widetilde{a}\otimes 1  \ar@{|->}[r] & \frac{\partial}{\partial a}  \ar@{|->}[r] & e_{t(a)}\otimes\widetilde{a}\otimes e_{h(a)}.
}
\label{aplicacion-h-compos-J-H}
\end{equation}
Next, we will focus on the right-hand square of \eqref{diagrama peso
  1}, where $\text{proj}$ is the canonical projection, and construct
$g$ so that it is commutative. Let $qap$ be a generator of $A\otimes_R
V_\dPP\otimes_R A$, i.e., $a\in \P1$ is such that $\lvert a\rvert=1$,
and $p,q$ are paths in $\PP$, such that $h(p)=t(a), h(a)=t(q)$. Then
$\text{proj}\vert_{A\otimes_R V_1\otimes_R A}=\Id$, and
$P=\texttt{pr}\circ\nu\circ f$ by Lemma \ref{5.2.3}, where
$\texttt{pr}\colon M=\bigoplus_{w>0}M_w\to M_1$ is the natural
projection. Hence
\begin{equation}
\begin{aligned}
(\texttt{pr}\circ \nu\circ f\circ G)(q\otimes a\otimes p)&=(\texttt{pr}\circ \nu\circ f)(q\du a\;p)
\\
&=(\texttt{pr}\circ \nu)((0\oplus (q\otimes a\otimes p))\;\text{mod}\; Q)
\\
&=\texttt{pr}(q\otimes a\otimes p)=q\otimes a\otimes p,
\end{aligned}
 \label{h prima}
 \end{equation}
so the isomorphism $g$ restricts to the isomorphism $G$, because 
\begin{equation}
g\colon A\otimes_R V_1\otimes_R A\lra{\cong} A\otimes_B M_1\otimes_B A\colon \quad q\otimes a\otimes p\longmapsto q\otimes a\otimes p.
\label{G pequeno}
\end{equation}
Therefore, by \eqref{aplicacion-h-compos-J-H}, \eqref{sostenido def} and \eqref{G pequeno}, we have
\begin{equation}
\begin{aligned}
(g\circ p)\circ(\Id\otimes\sharp\otimes \Id)\circ h\colon A\otimes_B M^\vee_1\otimes_B A&\lto A\otimes_B M_1\otimes_B A
\\
 1\otimes\widetilde{a}\otimes 1&\longmapsto e_{t(a)}\otimes\varepsilon(a)a^*\otimes e_{h(a)}.
\end{aligned}
\label{final-restriction-weight1-morphisms}
\end{equation}
Let $(-)_w$ mean the component of weight $w\in\mathbb{Z}$. Then
$(A\otimes_B M_1\otimes_B A)_1\cong B\otimes_B M_1\otimes_B B\cong
M_1$. Furthermore, since $\omega$ is a bi-symplectic form of weight 2,
\eqref{final-restriction-weight1-morphisms} has weight -2, so
$(A\otimes_B M^\vee_1\otimes_B A)_{-1}=B\otimes_B M^\vee_1\otimes_B
A\cong M^\vee_1$. Therefore, we obtain the following isomorphism of
$B$-bimodules:
\begin{equation}
\label{Caligraphic R1}
\mathcal{R}\colon E^\vee_1\lra{\cong} E_1\colon \quad\widetilde{a}\longmapsto\varepsilon (a)a^*,
\end{equation}
with inverse
\begin{equation}
\label{flat1}
\flat\colon E_1\lra{\cong} E^\vee_1\colon \quad a\longmapsto \varepsilon(a)\widetilde{a^*}.
\qedhere
\end{equation}
\end{proof}

\section{Bi-symplectic $\mathbb{N}$-algebras of weight 2}
\label{sec:bisympl-weight-2}

\subsection{The graded algebra $A$}
\label{sub: bracket-poisson}


For convenience, we introduce the following.

\begin{framework}
Let $R$ be a semisimple associative algebra, $B$ a smooth associative
$R$-algebra, and $E_1$ and $E_2$ projective finitely generated
$B$-bimodules. Let
\[
A:=\T_BM
\]
be the graded tensor $\mathbb{N}$-algebra of the graded $B$-bimodule
\[
M\defeq E_1[-1]\oplus E_2[-2].
\]
Let $\omega\in \DR{2}(A)$ be a bi-symplectic form of weight $2$. Thus
the pair $(A,\omega)$ is a bi-symplectic tensor $\mathbb{N}$-algebra
of weight 2 (see Definition \ref{bi-sympldef}).
\label{framework-general}
\end{framework}

In this framework, we have
\[
A=\bigoplus_{n\in\mathbb{N}}A^n,
\]
where
\begin{equation}
A^0=B,\quad A^1=E_1,\quad A^{2}=E_1\otimes_B E_1\oplus E_2.
\label{decomposition-A-NC}
\end{equation}
By Lemma \ref{bi-symp-double-Poisson}, the bi-symplectic form $\omega$
on $A$ determines a double Poisson bracket $\lr{-,-}_\omega$ of weight
-2. This bracket satisfies the following relations:
\begin{equation}
\begin{aligned}
\lr{A^{0}, A^{0}}_{\omega}&=\lr{A^{0},A^{1}}_{\omega}=0,
\\
 \lr{A^{1},A^{1}}_{\omega}&\subset (A\otimes A)_{(0)}=B\otimes B ,
\\
 \lr{A^{2}, A^{0}}_{\omega}&\subset (A\otimes A)_{(0)}=B\otimes B,
\\
 \lr{A^{2},A^{1}}_{\omega}&\subset(A\otimes A)_{(1)}= (E_1\otimes B)\oplus (B\otimes E_1),
 \\
  \lr{A^{2},A^{2}}_{\omega}&\subset(A\otimes A)_{(2)}.
 \end{aligned}
 \label{corchete-doble-relaciones}
 \end{equation}


\subsection{The pairing}
\label{sub:pairing}

\subsubsection{A family of double derivations}

By \eqref{decomposition-A-NC}, \eqref{corchete-doble-relaciones}, $\lr{A^2, B}_{\omega}\subset B\otimes B$, so we can define 
\[
\mathbb{X}_a:=\lr{a,-}_{\omega}\vert_B\colon B\lto B\otimes B.
\]
for all $a\in A^2$. 
Since $\lr{-,-}_{\omega}$ is a double Poisson bracket, in particular,
it satisfies the graded Leibniz rule in its second argument (with
respect to the outer structure), and so
\[
\mathbb{X}_a(b_1b_2)=\lr{a,b_1b_2}_{\omega}=b_1 \lr{a,b_2}_{\omega}+\lr{a,b_1}_\omega b_2= b_1\mathbb{X}_a(b_2)+\mathbb{X}_a(b_1) b_2,
\]
for all $a\in A^{2}$, and $b_1,b_2\in B$. Therefore
$\mathbb{X}_a\in\D_R B$, and we construct a `family of double
derivations' parametrized by $A^2$, namely,
\begin{equation}
\mathbb{X}\colon A^2 \lto\D_R B\colon\quad a  \longmapsto\mathbb{X}_a:=\lr{a,-}_{\omega}\vert_B.
\label{XX}
\end{equation}

\subsubsection{A family of double differential operators}
\label{sub:double-diff-cov-op}

By \eqref{corchete-doble-relaciones}, $\lr{A^2,A^1}_{\omega}\subset
(A\otimes A)_{(1)}=E_1\otimes B\oplus B\otimes E_1$, so for all $a\in
A^2$, we can define a map
\begin{equation}
\label{DD}
\mathbb{D}\colon A^2 \lto \Hom_{R^\e}(E_1,E_1\otimes B\oplus B\otimes E)\colon \quad a \longmapsto \mathbb{D}_a:=\lr{a,-}_{\omega}\vert_{E_1}.
\end{equation}
Then, given $b\in B, e\in E_1, a\in A^2$, the graded Leibniz
rule applied to $\lr{-,-}_\omega$ yields
\begin{subequations}\label{eq:Leibniz-double-covariant}
\begin{align}
\label{eq:Leibniz-double-covariant.a}
\mathbb{D}_a(be)&=b\mathbb{D}_a(e)+\mathbb{X}_a(b)e,
\\
\label{eq:Leibniz-double-covariant.b}
\mathbb{D}_a(eb)&=\mathbb{D}_a(e)b+e\mathbb{X}_a(b),
 \end{align}
 \end{subequations}
 with $b$ acting via the outer bimodule structure on $\DD_a$ and
 $\mathbb{X}_a$. Therefore, $\DD$ can be regarded as a family of
 'covariant' double differential operators parametrized by $A^2$,
 associated to the family of double derivations $\XX$.

\subsubsection{The pairing}
\label{sub:pairing}

Given a graded algebra $C$, Van den Bergh \cite[Appendix A]{Kel11}
defines a \emph{pairing} between two graded $C$-bimodules $P$ and $Q$
as a homogeneous map of weight $n$ 
\[
\langle -,-\rangle\colon P\times Q\to C\otimes C,
\]
such that $\langle p,-\rangle$ is linear for the outer graded bimodule
structure on $C\otimes C$, and $\langle -,q\rangle$ is linear for the
inner graded bimodule structure on $C\otimes C$, for all $p\in P$,
$q\in Q$. We say that the pairing is \emph{symmetric} 
if $\langle p,q\rangle=\sigma_{(12)}\langle q,p\rangle$ (with
$\sigma_{(12)}$ as in~\eqref{eq:flip}), and \emph{non-degenerate} if
$P$ and $Q$ are finitely generated graded projective $C$-bimodules and
the pairing induces an isomorphism
\[
Q\lra{\cong} P^{\vee}[-n]\colon\quad q\longmapsto \langle-,q\rangle,
\]
with $P^\vee=\Hom_{C^\e}(P, {}_{C^\e}C^\e)$.

Consider now the Framework \ref{framework-quivers} associated to a
doubled graded quiver $\dPP$ of weight 2. Using the isomorphism
$\flat$ in \eqref{flat1}, we define
\begin{equation}
\langle -,-\rangle\colon E_1\otimes E_1\lto B\otimes B\colon\quad
 (a,b) \longmapsto \flat(a)(b)=\varepsilon(a)\widetilde{a^*}(b).
\label{inner quivers}
\end{equation}
Consider also the double Poisson bracket $\lr{-,-}_\omega$ of weight
-2 associated to the graded bi-symplectic form $\omega$ of weight 2 in
Proposition \ref{bi-symplectic-form-double-quivers}. By the third
inclusion in \eqref{corchete-doble-relaciones},
\[
\lr{-,-}_{\omega}\vert_{(E_1\otimes E_1)}\colon E_1\otimes E_1\to
B\otimes B.
\]

\begin{lemma}
\begin{enumerate}
\item [\textup{(i)}]
$\lr{a,b}_\omega=\langle a,b\rangle$, for all arrows $a,b\in\P1$ of weight 1.
\item [\textup{(i)}]
The map $\langle-,-\rangle$ in \eqref{inner quivers} is a non-degenerate symmetric pairing.
\end{enumerate}
\label{non-degenerate pairing}
\end{lemma}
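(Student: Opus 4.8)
The plan is to prove the two assertions in sequence, deriving (ii) as a formal consequence of (i) together with the properties of the isomorphism $\flat$ established in Theorem~\ref{Theorem in weight 1}. For part (i), the key observation is that both $\lr{a,b}_\omega$ and $\langle a,b\rangle$ are computed from the same canonical bi-symplectic form $\omega=\sum_{c\in\PP_1}\du c\,\du c^*$ of Proposition~\ref{bi-symplectic-form-double-quivers}. First I would unwind the definition $\lr{a,b}_\omega=i_{H_a}\iota_{H_b}\omega$ from \eqref{determinacion del corchete doble}, so the computation reduces to identifying the Hamiltonian double derivation $H_a$ for an arrow $a\in\PP_1$ of weight $1$. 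Using $\iota_{H_a}\omega=\du a$ together with the explicit action of the double derivations $\partial/\partial c$ on generators (Example~\ref{derivaciones-quivers}) and the reduced-contraction formula \eqref{reduced-contraction-general}, one checks that $H_a$ is, up to the sign $\varepsilon(a)$, the double derivation $\partial/\partial a^*$; this is exactly the content of the standard computation behind the differential double Poisson bracket $P=\sum_{c\in\PP}\tfrac{\partial}{\partial c}\tfrac{\partial}{\partial c^*}$ of Example~\ref{DDP-doubled-graded-quivers}. Feeding this back into $\lr{a,b}_\omega=H_a(b)$ and comparing with the definition \eqref{inner quivers} of $\langle a,b\rangle=\varepsilon(a)\widetilde{a^*}(b)$ yields the claimed equality on arrows of weight~$1$.

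For part (ii), symmetry and non-degeneracy, I would argue as follows. Bilinearity for the appropriate (outer in the first slot, inner in the second) bimodule structures is inherited from the corresponding linearity properties of the double Poisson bracket $\lr{-,-}_\omega$ together with part~(i); concretely, $\langle a,-\rangle=\flat(a)\in E_1^\vee=\Hom_{B^\e}(E_1,(B\otimes B)_\out)$ is linear for the outer structure by the very definition of the dual bimodule \eqref{eq:dual-bimodule}, and linearity of $\langle-,b\rangle$ for the inner structure follows from the $B$-bimodule-morphism property of $\flat\colon E_1\to E_1^\vee$ (the dual being equipped with the bimodule structure induced by the inner structure on $B\otimes B$, as in \secref{eq:dual-bimodule}). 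Non-degeneracy is immediate: by Theorem~\ref{Theorem in weight 1}, $\flat\colon E_1\to E_1^\vee$ is an isomorphism of $B$-bimodules, and $E_1$ is finitely generated and projective by Framework~\ref{framework-quivers}, so the induced map $E_1\to E_1^\vee[-n]$ (here $n$ is forced to be the appropriate weight, namely $0$ on the shifted bimodules $E_1$, or $-2$ keeping track of the weight of $\omega$) is precisely $\flat$, hence an isomorphism.

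For symmetry I would verify $\langle a,b\rangle=\sigma_{(12)}\langle b,a\rangle$ on basis elements $a,b\in\PP_1$ of weight $1$. By part~(i) this is equivalent to $\lr{a,b}_\omega=\sigma_{(12)}\lr{b,a}_\omega$, which in turn follows from the skew-symmetry \eqref{skew-symmetry-double} of the double Poisson bracket $\lr{-,-}_\omega$ of weight $-2$: one has $\lr{a,b}_\omega=-(-1)^{\lvert a\rvert_{-2}\lvert b\rvert_{-2}}\lr{b,a}_\omega^\circ$, and for arrows of weight $1$ the exponent $\lvert a\rvert_{-2}\lvert b\rvert_{-2}=(-1)(-1)=1$ makes the overall sign $+1$, while $(-)^\circ$ on $B\otimes B$ (where both factors sit in weight $0$, as $\lr{E_1,E_1}_\omega\subset B\otimes B$ by \eqref{corchete-doble-relaciones}) agrees with $\sigma_{(12)}$. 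Since all the arrows of weight $1$ form a $B$-bimodule basis of $E_1$, symmetry on these generators propagates to all of $E_1\otimes E_1$ by the already-established bilinearity.

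The main obstacle I anticipate is the careful bookkeeping of Koszul signs and jump-indices in the identification of $H_a$ with $\varepsilon(a)\,\partial/\partial a^*$: one must track the sign $(-1)^\square$ in \eqref{H}, the sign in the reduced contraction $\iota_\Theta$ from \eqref{circulo derecha}--\eqref{reduced-contraction-general}, and the weight shift $[-N]$ with $N=2$, and confirm they conspire to reproduce exactly the factor $\varepsilon(a)$ appearing in \eqref{sostenido def} and \eqref{flat1}. Because the arrows involved have weight $1$ (and their duals weight $N-1=1$), most of these signs are $\pm1$ with the exponents being products of odd numbers, so the computation is controlled, but it is the one place where an error would propagate. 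Everything else is a formal consequence of results already proved, chiefly Theorem~\ref{Theorem in weight 1} and Lemma~\ref{bi-symp-double-Poisson}.
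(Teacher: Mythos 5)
Your proposal is correct and follows essentially the same route as the paper: part (i) reduces to the explicit identification of the Hamiltonian double derivation of an arrow $a$ with $\varepsilon(a)\,\partial/\partial a^*$ and its contraction against the canonical form $\omega=\sum_{a\in\PP_1}\du a\,\du a^*$, and part (ii) checks symmetry on the arrow basis and gets non-degeneracy from the isomorphism $\flat$ of Theorem~\ref{Theorem in weight 1}. The only cosmetic difference is that you deduce symmetry from the $N$-graded skew-symmetry \eqref{skew-symmetry-double} of $\lr{-,-}_\omega$, whereas the paper evaluates $\sigma_{(12)}\langle a,b\rangle$ directly on arrows; given (i) these verifications are equivalent.
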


\begin{proof}
To prove (i), we use the formula
\[
\lr{a,b}_\omega=i_{\frac{\partial}{\partial a}}\iota_{\frac{\partial}{\partial b}}\omega,
\]
with $\frac{\partial}{\partial a},\frac{\partial}{\partial a}\in\D_R
A$ (see \eqref{derivaciones-quivers}) and the
formula~\eqref{bi-symplectic-gaded-canonical} for the bi-symplectic
form $\omega$. Then
\begin{align*}
\lr{a,b}_\omega&=i_{\frac{\partial}{\partial a}}\iota_{\frac{\partial}{\partial b}}\omega
=i_{\frac{\partial}{\partial a}}\left(\sum_{a\in\dPP_1}\varepsilon(b^*)e_{h(b)}(\du b) e_{t(b)} \right)
\\
&=\sum_{a\in\dPP_1}i_{\frac{\partial}{\partial a^*}}\left(\varepsilon(b^*)e_{h(b)}(\du b) e_{t(b)} \right)
=\varepsilon(a)\widetilde{a^*}(b)
=\langle a,b\rangle.
\end{align*}
The fact that $\langle-,-\rangle$ is a pairing follows from (i) and
properties of double brackets. This pairing is symmetric, because
$\sigma_{(12)}\langle a,b\rangle=\varepsilon(b)e_{t(b)}\otimes
e_{h(b)}=\langle b,a\rangle$, and non-degenerate, because $\flat$ is
an isomorphism (see Theorem \ref{Theorem in weight 1}).
\end{proof}

\subsubsection{Preservation of the pairing}
\label{sub: conserv-pairing-double}

Extend the pairing \eqref{inner quivers} to two maps
\[
\langle-,-\rangle_L\colon E_1\times (A\otimes A)_{(1)} \lto B^{\otimes
  3}, 
\]
\[
\langle-,-\rangle_R\colon E_1\times (A\otimes A)_{(1)} \lto B^{\otimes 3},
\]
given, for all $e_1,e_2\in E_1, b\in B$, by
%
%
\begin{equation}
\label{pairing extendido}
\langle e_1,e_2\otimes b\rangle_L=\langle e_1,e_2\rangle\otimes b,\quad \langle e_1, b\otimes e_2\rangle_L=0,
\end{equation}
%
%
\begin{equation}
\label{extension-pairing-right}
\langle e_1,e_2\otimes b\rangle_R=0,\quad \langle e_1, b\otimes e_2\rangle_R=b\otimes \langle e_1,e_2\rangle.
\end{equation}
We extend similarly the pairing in the first argument with the inner
$\otimes$-product in~\eqref{jumping-notation}, so
\[
\langle-,-\rangle_L\colon (A\otimes A)_{(1)}\times E_1 \lto B^{\otimes 3}
\]
is given by 
\begin{equation}
\langle e_1\otimes b,e_2\rangle_L=\langle e_1,e_2\rangle\otimes_1 b,\quad \langle b\otimes e_1,e_2\rangle_L=0.
\label{extension-pairing-primer-argeumento}
\end{equation}
We will also extend double derivations $\Theta\colon B\to B^{\otimes
  2}$ to maps
\begin{equation}\label{eq:double-Der-extendida}
\Theta\colon B^{\otimes 2}\to B^{\otimes 3}\colon b_1\otimes
b_2\mapsto \Theta(b_1)\otimes b_2.
\end{equation}
Then the family $\XX$ of double derivations in \eqref{XX} and the
family $\DD$ of covariant double operators in \eqref{DD},
\emph{preserve} the pairing $\langle-,-\rangle$, that is,
\begin{equation}
\label{conservation}
\mathbb{X}_a(\langle e_1,e_2\rangle)
=\tau_{(132)}\langle e_2,\mathbb{D}_a(e_1)\rangle_L-\tau_{(123)}\langle e_1,\mathbb{D}_a(e_2)^\circ\rangle_L,
\end{equation}
for all $a\in A^2, e_1,e_2\in E_1$. This follows because
\begin{align*}
\mathbb{X}_a(\langle e_1,e_2\rangle)
&=\lr{a,\lr{e_1,e_2}_\omega}_{\omega,L}
=\tau_{(123)}\lr{e_1,\lr{e_2,a}_\omega}_{\omega,L}+\tau_{(132)}\lr{e_2,\lr{a,e_1}_\omega}_{\omega,L}
\\&
=\tau_{(132)}\langle e_2,\mathbb{D}_a(e_1)\rangle_L-\tau_{(123)}\langle e_1,\mathbb{D}_a(e_2)^\circ\rangle_L,
\end{align*}
where the first identity follows from the
definition~\eqref{eq:double-Der-extendida}, the second identity is
the graded double Jacobi identity, and the third identity follows by
graded skew-symmetry of $\lr{-,-}_\omega$.


\subsection{Twisted double Lie--Rinehart algebras}
\label{sub:A2 and Double Lie--Rinehart algebras}

A key ingredient in \v Severa--Roytenberg's characterization of
symplectic $\NN$-manifolds of weight 2 can be interpreted
algebraically as saying that the Atiyah algebroids and the commutative
analogue of $A^2$ have structures of Lie--Rinehart algebras
(\cite[Theorem 3.3]{Roy00}). In this subsection, we define a slight
generalization of Van den Bergh's double Lie algebroid
\cite[Definition 3.2.1]{VdB08a}, that fits both the underlying
algebraic structure of $A^2$ and a suitable non-commutative version of
the Atiyah algebroid that will be introduced in~\secref{sub: double
  Atiyah}.

\subsubsection{Definition of double Lie--Rinehart algebras}
\label{sub:twisted-LR}

Let $N$ be a $B$-bimodule. Following \cite[\S 2.3]{VdB08}, given
$n,n_1,n_2\in N$ and $b,b_1b_2\in B$, we define
\begin{equation}
\begin{aligned}
\lr{n_1,b\otimes n_2}_L&=\lr{n_1,b}\otimes n_2,\quad  \lr{n_1,n_2\otimes b}_L&=\lr{n_1,n_2}\otimes b,
 \\
\lr{n,b_1\otimes b_2}_L&=\lr{n,b_1}\otimes b_2, \quad
\lr{b_1,n\otimes b_2}_L&=\lr{b_1,n}\otimes b_2.
  \end{aligned}
 \label{ExtensionNormalDoubleBracket}
 \end{equation}
 The rest of combinations will be zero by definition. Given a
 permutation $s\in S_n$, we will use the notation $\tau_s$ for the map
 in~\eqref{eq:permutation-tau}, to emphasize that permutations act on
 mixed tensor products of algebras and bimodules. For instance,
 $\tau_{(12)}\colon N\otimes B\to B\otimes N\colon n\otimes b\mapsto
 b\otimes n$.

\begin{definition}
A \emph{twisted double Lie--Rinehart algebra} over $B$ is a
4-tuple 
\[
(N,\overline{N},\rho,\lr{-,-}_N),
\]
where $N$ is a $B$-bimodule, $\overline{N}\subset N$ is a
$B$-subbimodule called the \emph{twisting subbimodule}, $\rho\colon
N\to \D_R B$ is a $B$-bimodule map, called the \emph{anchor}, and 
 \[
\lr{-,-}_N\colon N\times N\lto N\otimes B\oplus B\otimes N\oplus \overline{N}^\vee\otimes \overline{N}\oplus \overline{N}\oplus\overline{N}^\vee,
\]
is a bilinear map, called the \emph{double bracket}, satisfying the
following conditions:
\begin{enumerate}
\item [\textup{(a)}]
$\lr{n_1,n_2}_N=-\tau_{(12)}\lr{n_2,n_1}_N$,
\item [\textup{(b)}]
$\lr{n_1,bn_2}_N=b \lr{n_1,n_2}_N+\rho(n_1)(b) n_2$,
\item [\textup{(c)}]
$\lr{n_1,n_2b}_N=\lr{n_1,n_2}_N  b+n_2 \rho(n_1)(b)$,
\item [\textup{(d)}]
$\displaystyle\parbox{0pt}{
\begin{align*}
0=\lr{n_1,\lr{n_2,n_3}_N}_{N,L}
+&\tau_{(123)}\lr{n_2,\lr{n_3,n_1}_N}_{N,L}
\\+&\tau_{(132)}\lr{n_3,\lr{n_1,n_2}_N}_{N,L},
\end{align*}}$
\item [\textup{(e)}]
$\rho(\lr{n_1,n_2}_N)=\lr{\rho(n_1),\rho(n_2)}_{\text{SN}}$,
\end{enumerate}
for all $n_1,n_2,n_3\in N$, $b\in B$. If the twisting subbimodule is
zero (i.e. $\overline{N}=0$), then we say that the triple
$(N,\rho,\lr{-,-}_N)$ is a \emph{double Lie--Rinehart algebra}.
\label{DoubleLieRinehart}
\end{definition}

In Definition \ref{DoubleLieRinehart}, all products involved use the
outer bimodule structure. Also, in (e), $\lr{-,-}_{\text{SN}}$ denotes
the double Schouten--Nijenhuis bracket (see \eqref{Gerstenhaber}), and
by convention, $\rho$ acts by the Leibniz rule on tensor products.

\begin{example}
By Proposition \ref{Gerstenhaber-str}, $\D_R B$ is a double
Lie--Rinehart algebra when it is equipped with the double
Schouten--Nijenhuis bracket restricted to $(\D_R B)^{\otimes 2}$ and
the identity as anchor.
 \end{example}

\subsubsection{$A^2$ as a twisted double Lie--Rinehart algebra}

As we showed in \eqref{corchete-doble-relaciones}, $\lr{A^2,A^2}_\omega\subset (A\otimes A)_{(2)}=E_2\otimes B\oplus B\otimes E_2\oplus E_1\otimes E_1$, so we can define
\begin{equation}
\lr{-,-}_{A^2}:=\lr{-,-}_{\omega}\vert_{A^2\otimes A^2}\colon A^2\otimes A^2\longrightarrow (A\otimes A)_{(2)}.
\label{corchete-A2}
\end{equation}
In Framework \ref{framework-quivers}, by Lemma \ref{non-degenerate pairing}, we know that $E_1$ is endowed with a non-degenerate symmetric pairing. Then, in particular, $E_1\cong E^\vee_1$.
\begin{proposition}
In the setting of Framework \ref{framework-quivers}, $A^2$ is a
twisted double Lie-Rinehart algebra, with the bracket
$\lr{-,-}_{A^2}$, the anchor $\rho\colon A^2\to \D_R B\colon$
$a\mapsto \mathbb{X}_a$ (see \eqref{XX}), and the twisting subbimodule $E_1$.
\label{A2LR}
\end{proposition}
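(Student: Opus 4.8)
The plan is to verify the five conditions (a)--(e) of Definition~\ref{DoubleLieRinehart} for the $4$-tuple $(N,\overline{N},\rho,\lr{-,-}_N)=(A^2,E_1,\XX,\lr{-,-}_{A^2})$, deducing each one from the corresponding structure carried by the double Poisson bracket $\lr{-,-}_\omega$ of weight $-2$ (Lemma~\ref{bi-symp-double-Poisson}), together with the decomposition~\eqref{decomposition-A-NC} and the inclusions~\eqref{corchete-doble-relaciones}. First I would record that $\lr{-,-}_{A^2}$ takes values in the required codomain: by~\eqref{corchete-doble-relaciones}, $\lr{A^2,A^2}_\omega\subset(A\otimes A)_{(2)}=E_2\otimes B\oplus B\otimes E_2\oplus E_1\otimes E_1$; since $E_2$ is a direct summand of $A^2=N$, and since by Lemma~\ref{non-degenerate pairing} the map $\flat$ of Theorem~\ref{Theorem in weight 1} is a $B$-bimodule isomorphism $E_1\lra{\cong}E_1^\vee=\overline{N}^\vee$, this lies inside $N\otimes B\oplus B\otimes N\oplus\overline{N}^\vee\otimes\overline{N}$ (the summands $\overline{N}$ and $\overline{N}^\vee$ are not hit). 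A simplification exploited throughout is that every homogeneous $n\in A^2$ satisfies $\lvert n\rvert_{-2}=\lvert n\rvert-2=0$, so every Koszul prefactor attached to an argument of $\lr{-,-}_\omega$ equals $1$; one also checks, from the graded skew-symmetry~\eqref{skew-symmetry-double}, that $\rho=\XX\colon A^2\to\D_R B$ is $B$-bilinear for the inner bimodule structure on $\D_R B$, so it is an admissible anchor.

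With these preliminaries, conditions (a), (b) and (c) are immediate. Condition (a) is the $N$-graded skew-symmetry~\eqref{skew-symmetry-double} of $\lr{-,-}_\omega$ for $N=-2$, using that the sign $(-1)^{\lvert n_1\rvert_{-2}\lvert n_2\rvert_{-2}}$ equals $1$ and that the operation $(-)^\circ$ coincides with $\tau_{(12)}$ on two-fold tensors. Conditions (b) and (c) are the graded Leibniz rule~\eqref{graded-Leibniz-Poisson} in the second argument of $\lr{-,-}_\omega$, applied to $\lr{n_1,b n_2}_\omega$ and $\lr{n_1,n_2 b}_\omega$ (which is legitimate because $b n_2,n_2 b\in A^2$, since $A^2$ is a $B$-bimodule), once one substitutes $\lr{n_1,b}_\omega=\XX_{n_1}(b)=\rho(n_1)(b)$, which holds by~\eqref{corchete-doble-relaciones} and~\eqref{XX}, and uses $\lvert n_1\rvert_{-2}=0$ to kill the remaining sign.

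For condition (d) I would restrict the graded double Jacobi identity~\eqref{graded-Jacobi-double} for $\lr{-,-}_\omega$ to $n_1,n_2,n_3\in A^2$. Since $\lvert n_i\rvert_{-2}=0$, the two exponential prefactors in~\eqref{graded-Jacobi-double} are $1$, and the maps $\sigma_{(123)},\sigma_{(132)}$ there coincide with the mixed-tensor permutations $\tau_{(123)},\tau_{(132)}$ occurring in (d), since both are given by the sign rule~\eqref{eq:permutation-tau}. The one point needing care is that Van den Bergh's $L$-extension $\lr{n_1,-}_{\omega,L}$ of the inner bracket $\lr{n_2,n_3}_\omega\in(A\otimes A)_{(2)}$ agrees with the $L$-extension $\lr{n_1,-}_{N,L}$ from~\eqref{ExtensionNormalDoubleBracket}: on the summand $E_2\otimes B$ this is tautological; on $B\otimes E_2$ it uses $\lr{n_1,b}_\omega=\rho(n_1)(b)$; and on $E_1\otimes E_1$ it uses $\lr{n_1,e}_\omega=\DD_{n_1}(e)$ (see~\eqref{DD}) together with the identification $E_1\cong E_1^\vee$ above; in each case it is only the first tensor factor that gets bracketed with $n_1$, exactly as prescribed by~\eqref{eq:permutation-tau} and~\eqref{ExtensionNormalDoubleBracket}. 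Under these identifications~\eqref{graded-Jacobi-double} becomes condition (d) verbatim.

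For condition (e) I would restrict Lemma~\ref{intercambio}, that is, the exchange identity $\lr{H_{n_1},H_{n_2}}=H_{\lr{n_1,n_2}_\omega}$ for the double Schouten--Nijenhuis bracket, to the subalgebra $B\subset A$. The Hamiltonian double derivation $H_{n_i}$ has weight $\lvert n_i\rvert-\lvert\omega\rvert=0$ by~\eqref{hamiltonian-double-deriv}, and by~\eqref{corchete-doble-relaciones} it maps $B$ into $B\otimes B$, so $H_{n_i}|_B=\XX_{n_i}=\rho(n_i)$; moreover the double Schouten--Nijenhuis bracket is compatible with this restriction (the constituent maps in~\eqref{tilde-brackets}--\eqref{eq:doublebracket-r} send $B$ into tensor powers of $B$, and the twists $\tau$ preserve weights), so the left-hand side restricts to $\lr{\rho(n_1),\rho(n_2)}_{\text{SN}}$. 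On the right-hand side, writing $\lr{n_1,n_2}_\omega=y'\otimes y''$ with components in $E_2\otimes B\oplus B\otimes E_2\oplus E_1\otimes E_1$, the Hamiltonian derivations $H_{y'},H_{y''}$ attached to factors of weight $0$ or $1$ vanish on $B$, because $\lr{A^0,A^0}_\omega=\lr{A^0,A^1}_\omega=0$ by~\eqref{corchete-doble-relaciones}; hence only the $E_2$-factors survive and $H_{\lr{n_1,n_2}_\omega}|_B=\XX_{y'}\otimes y''+y'\otimes\XX_{y''}=\rho(\lr{n_1,n_2}_N)$, the last equality being the value of $\rho$ extended by the Leibniz rule (and annihilating the twisting factors $\overline{N}=E_1$). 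This yields (e). I expect the main obstacle to lie in the bookkeeping for (d) and (e): matching Van den Bergh's $L$-extension and permutation-sign conventions with those of Definition~\ref{DoubleLieRinehart}, and tracking which summands of $(A\otimes A)_{(2)}$ actually occur; the conceptual content is just a transcription of the identities for $\lr{-,-}_\omega$, made transparent by the low weights available in Framework~\ref{framework-quivers}.
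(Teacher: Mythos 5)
Your proposal is correct and follows the same route as the paper's proof: conditions (a) and (d) come from skew-symmetry and the double Jacobi identity of the restricted double Poisson bracket, (b) and (c) from the graded Leibniz rule together with $\lr{a,b}_\omega=\mathbb{X}_a(b)$, and (e) from Lemma~\ref{intercambio}. You merely spell out the weight bookkeeping (all Koszul signs trivialize since $\lvert n\rvert_{-2}=0$ on $A^2$) and the restriction of the Schouten--Nijenhuis bracket to $B$, which the paper leaves as ``automatic.''
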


\begin{proof}
Conditions (a) and (d) in Definition \ref{DoubleLieRinehart} are
automatic, because $\lr{-,-}_{A^2}$ is the restriction of a double
Poisson bracket, and (e) follows by applying Lemma \ref{intercambio}. Finally,
(b) and (c) are consequences of the graded Leibniz rule applied to
$\lr{-,-}_\omega$. For instance, (b) follows because, given
$a_1,a_2\in A^2$, $b\in B$,
\begin{align*}
\lr{a_1,ba_2}_{A^2}&= b \lr{a_1,a_2}_{\omega}+\lr{a_1,b}_{\omega} a_2
\\
&=b \lr{a_1,a_2}_{\omega}+\mathbb{X}_{a_1}(b)  a_2
=b \lr{a_1,a_2}_{A^2}+\rho(a_1)(b)  a_2
\qedhere
\end{align*}
\end{proof}

\subsubsection{Morphisms of twisted double Lie--Rinehart algebras}

Let $(N,\overline{N},\lr{-,-}_N,\rho_N)$ be a twisted double
Lie--Rinehart algebra. Suppose that $\overline{N}$ is endowed with a
non-degenerate pairing. Then $\overline{N}\cong\overline{N}^\vee$,
and we can perform the following decomposition of the double bracket
$\lr{-,-}_N$:
\begin{align}
\label{decomposition-rolls}
&\lr{n_1,n_2}_N=\lr{n_1,n_2}^l+\lr{n_1,n_2}^l+\lr{n_1,n_2}^m
\\\notag
&\;=\lr{n_1,n_2}^{l^\prime}\otimes \lr{n_1,n_2}^{l^{\prime\prime}}+\lr{n_1,n_2}^{r^\prime}\otimes \lr{n_1,n_2}^{r^{\prime\prime}}+\lr{n_1,n_2}^{m^\prime}\otimes \lr{n_1,n_2}^{m^{\prime\prime}},
\end{align}
with $\lr{n_1,n_2}^l\in N\otimes B$, $\lr{n_1,n_2}^r\in B\otimes N$, $\lr{n_1,n_2}^m\in \overline{N}\otimes \overline{N}$, and
\[
\lr{n_1,n_2}^{l^\prime}, \lr{n_1,n_2}^{r^{\prime\prime}}\in N,\quad \lr{n_1,n_2}^{r^\prime}, \lr{n_1,n_2}^{l^{\prime\prime}}\in B, \quad \lr{n_1,n_2}^{m^\prime}, \lr{n_1,n_2}^{m^{\prime\prime}}\in \overline{N}.
\]

\begin{definition}
\label{MapDoubleLieRinehart}
Let $(N,\overline{N},\lr{-,-}_N,\rho_N)$ and $(N^{\prime},\overline{N}^\prime,
\lr{-,-}_{N^{\prime}},\rho_{N^{\prime}})$ be two twisted double
Lie--Rinehart algebras over $B$, and $\langle-,-\rangle$ a
non-degenerate pairing on $\overline{N}$, so the double bracket
$\lr{-,-}_N$ admits the decomposition~\eqref{decomposition-rolls}.
Then a \emph{morphism of twisted double Lie--Rinehart algebras}
between them is a pair $\varphi=(\varphi_1,\varphi_2)$, where $\varphi_1\colon N\to
N^\prime$ and $\varphi_2\colon \overline{N}\to
(\overline{N}^\prime)^\vee$ are $B$-bimodule morphisms, such that for
all $n_1,n_2\in N$,
 \begin{enumerate}
  \item [\textup{(i)}]
$\lr{n_1,n_2}^{m^\prime}, \lr{n_1,n_2}^{m^{\prime\prime}}\in \overline{N}^\prime$;
  \item [\textup{(ii)}]
$\varphi(\lr{n_1,n_2}_N)=\lr{\varphi(n_1),\varphi(n_2)}_{N^{\prime}}$,
where in the left-hand side, by convention, 
\begin{align*}
\varphi(\lr{n_1,n_2}_N)
&=\varphi_1(\lr{n_1,n_2}^{l^\prime})\otimes\lr{n_1,n_2}^{l^{\prime\prime}}
+\lr{n_1,n_2}^{r^\prime}\otimes\varphi_1(\lr{n_1,n_2}^{r^{\prime\prime}})
\\&
+\varphi_2(\lr{n_1,n_2}^{m^\prime})\otimes\lr{n_1,n_2}^{m^{\prime\prime}}
+\lr{n_1,n_2}^{m^\prime}\otimes\varphi_2(\lr{n_1,n_2}^{m^{\prime\prime}});
\end{align*}
\item [\textup{(iii)}]
the following diagram commutes. 
\[
\xymatrix{
N \ar[rd]^{\rho_N} \ar[dd]^-{\varphi_1}
\\
& \D_RB
\\
N^{\prime}\ar[ru]^-{\rho_{N^\prime}}
}
\]
 \end{enumerate}
\end{definition}

\subsection{The double Atiyah algebra}
\label{sub: double Atiyah}

We will now define a non-commutative analogue of Atiyah algebroids,
and use the square-zero construction to show that they are twisted
double Lie--Rinehart algebras.

\subsubsection{The definition of double Atiyah algebra}

Let $R$ be an associative algebra, $B$ be an associative $R$-algebra,
and $E$ a finitely generated projective $B$-bimodule equipped with a
symmetric non-degenerate pairing $\langle-,-\rangle$ (see
\secref{sub:pairing}). Define 
\begin{equation}
\EEnd_{R}(E )\defeq\Hom_{R^\e}(E,E \otimes B \oplus B\otimes E),
\label{End-Nc}
\end{equation}
with the outer $R$-bimodule structure on $E \otimes B\oplus B\otimes
E$. The surviving inner $B$-bimodule structure on $ E\otimes B \oplus
B\otimes E$ makes $\EEnd_{R}(E)$ into a $B$-bimodule.  Its elements
will be called $R$-linear \emph{double endomorphisms}. Given $e\in E,
\mathbb{D}\in \EEnd_{R}(E)$, we will use the decomposition
\begin{equation}
\mathbb{D}(e) = \left(\mathbb{D}^l+\mathbb{D}^r \right)(e)=\left( \mathbb{D}^{l^{\prime}}\otimes\mathbb{D}^{l^{\prime\prime}}+\mathbb{D}^{r^{\prime}}\otimes\mathbb{D}^{r^{\prime\prime}}\right)(e),
\label{decomposition-D}
\end{equation}
omitting the summation symbols, where $\mathbb{D}^l(e)\in E_1\otimes
B$, $\mathbb{D}^r(e)\in B\otimes E_1$, and
\[
\mathbb{D}^{l^{\prime}}(e),\mathbb{D}^{r^{\prime\prime}}(e)\in E_1,\quad  \mathbb{D}^{l^{\prime\prime}}(e), \mathbb{D}^{r^{\prime}}(e)\in B.
\]

The following conditions (i) and (ii) should be compared
with~\eqref{eq:Leibniz-double-covariant}.

\begin{definition}
\label{double-Atiyah}
The ($R$-linear) \emph{double Atiyah algebra} $\AAt_B(E)$ is the set
of pairs $(\mathbb{X},\mathbb{D})$ with $\mathbb{X}\in\D_R B$ and
$\mathbb{D}\in\EEnd_{R}(E)$, satisfying the following conditions for
all $b\in B, e\in E$:
\begin{enumerate}
\item [\textup{(i)}]
$\mathbb{D}(b e)=b\mathbb{D}(e)+\mathbb{X}(b)e$,
\item [\textup{(ii)}]
$\mathbb{D}(e b)=\mathbb{D}(e)b+e\mathbb{X}(b)$.
\end{enumerate}
Here, all the products are taken with respect to the outer structure.
\end{definition}

It is easy to see that $\AAt_B(E_1)$ is a $B$-subbimodule of the
direct sum of the $B$-bimodules $\D_R B$ and $\EEnd_R (E)$. Using now
the symmetric non-degenerate pairing of $E$, we can impose
preservation of the pairing, in the sense of \eqref{conservation}, on
elements of $\AAt_B(E)$.

\begin{definition}
Let $E$ be a finitely generated projective $B$-bimodule equipped with
a symmetric non-degenerate pairing $\langle-,-\rangle$. The
($R$-linear) \emph{metric double Atiyah algebra} of $E$ is the
subspace $\AAt_B(E, \langle-,-\rangle)\subset\AAt_B(E)$ of pairs
$(\XX,\DD)$ that preserve the pairing, i.e. 
%
\[
\mathbb{X}(\langle e_2,e_1\rangle)=\tau_{(123)}\langle e_1,\mathbb{D}(e_2)\rangle_L-\tau_{(132)}\langle e_2,\mathbb{D}(e_1)^\circ\rangle_L,
\]
for all $e_1,e_2\in E$, $\mathbb{X}\in\D_R B$ and $\mathbb{D}\in\EEnd_{R}(E)$.
\label{metric-double-Atiyah}
\end{definition}

\subsubsection{The bracket}
\label{sub:bracket-Atiyah}

Using the square-zero construction (see, e.g., \cite[\S
3.2]{Gin05}), we define a graded associative $R$-algebra
\[
C:=B\sharp (E[-1]),
\]
with underlying graded $R$-bimodule $B\oplus(E[-1])$, and
multiplication
\[
(b,e)\cdot (b^\prime,e^\prime)=(bb^\prime,be^\prime+b^\prime e),
\]
for all $b,b^\prime\in B, e,e^\prime\in E$. Observe that $E$ is a
nilpotent ideal, i.e.  $e\cdot e'=0$, and $C$ has unit
$1_C=(1_B,0)$. Given a graded $C$-bimodule $F$, we obviously have
\begin{equation}
\Der_R(C,F)=\Bigg\{ D\colon C\lto F\Bigg| \begin{aligned}
D(bb^\prime)&=D(b)b^\prime+bD(b^\prime),
\\
D(be)&=D(b)e+bD(e),\; \text{for all }b,b^\prime\in B,\, e\in E\\
D(eb)&=D(e)b+eD(b),
\end{aligned}
\Bigg\}, 
\label{isomorphism-DerC}
\end{equation}
so when $F=(C\otimes C)_\out$, the subspace of derivations of weight 0
is
\[
(\D_R(C))_{(0)}\cong \AAt_B(E),
\]
where the isomorphism maps a derivation $\Theta\colon C\to C\otimes C$
of weight 0 into the pair $(\XX,\DD)$ consisting of its restrictions
to $B$ and $E[-1]$ (with the appropriate weight shift). The inverse
will be denoted
\begin{equation}\label{Atiyah-peso-cero}
\Xi\colon \AAt_B(E_1)\lra{\cong} (\D_R(C))_{(0)}.
\end{equation}

It also follows from \eqref{isomorphism-DerC} that the subspace of
double derivations of weitht -1 is
\[
(\D_R C)_{(-1)}\cong E^\vee.
\]
Now, we extend the symmetric non-degenerate pairing
$\langle-,-\rangle$ from $E$ to $C$ by the formulae $\langle
b,b^\prime\rangle=\langle e,b\rangle=\langle b,e\rangle =0$ (this
should be compared with Lemma \ref{non-degenerate pairing}(i), in the
Framework \ref{framework-quivers}). Then~\eqref{Atiyah-peso-cero}
restricts to another isomorphism
\begin{equation}
\overline{\Xi}\,\colon \AAt_B(E_1,\langle-,-\rangle)\lra{\cong} (\D_R(C,\langle-,-\rangle))_{(0)},
\label{Atiyah-isom-inner}
\end{equation}
where $(\D_R(C,\langle-,-\rangle))_{(0)}$ is the $C$-bimodule of
double derivations of weight 0 that preserve the pairing (extended to
$C$).

Observe now that the double Schouten--Nijenhuis bracket on $T_C \D_R
C$ preserves weights, so it restricts to another bracket on the tensor
subalgebra $T_C((\D_R C)_{(-1)})$, that also satisfies skew-symmetry
and the Leibniz and Jacobi rules. In the rest of this subsection, we
will use this restricted double Schouten--Nijenhuis bracket to
construct another double bracket on $T_B\AAt_B(E)$ via the
isomorphisms~\eqref{Atiyah-peso-cero} and~\eqref{Atiyah-isom-inner}. 


Let $T_1,T_2\in(\D_R C)_{(0)}$. Then formulae \eqref{tilde-brackets}
define weight 0 $R$-linear triple derivations
\[
\(\lr{T_1,T_2}^{\sim}_r,\lr{T_1,T_2}^{\sim}_l\colon C\to C^{\otimes
  3}\)
\in(\Der_R(C,C^{\otimes 3}))_{(0)},
\]
for the outer bimodule structure on $C^{\otimes 3}$, and
formulae~\eqref{eq:tau_12},~\eqref{eq:tau_23} define isomorphisms
\[
\tau_{(12)}\colon \Der_R (C,C^{\otimes 3})\lra{\cong} C\otimes\D_RC,
\quad
\tau_{(23)}\colon \Der_R (C,C^{\otimes 3})\lra{\cong} \D_RC\otimes C.
\]
Since these isomorphisms preserve weights, they restrict to
isomorphisms in weight 0, 
\begin{equation}
\begin{aligned}
\mathfrak{T}_{(12)}\,\colon\Der(C,C^{\otimes 3})_{(0)}&\lra{\cong} (C\otimes \D_R C)_{(0)}
\\
&\lra{\cong} C_{(0)}\otimes (\D_R C)_{(0)}\oplus C_{(1)}\otimes (\D_R C)_{(-1)}
\\
&\lra{\cong} B\otimes \AAt_B (E) \oplus E\otimes E^\vee,
\end{aligned}
\label{Tau-12}
\end{equation}
\begin{equation}
\begin{aligned}
\mathfrak{T}_{(23)}\,\colon\Der(C,C^{\otimes 3})_{(0)}&\lra{\cong} (\D_R C\otimes C)_{(0)}
\\
&\lra{\cong} (\D_R C)_{(0)}\otimes C_{(0)}\oplus (\D_R C)_{(-1)}\otimes C_{(1)}
\\
&\lra{\cong} \AAt_B (E)\otimes B\oplus E^\vee\otimes E.
\end{aligned}
\label{Tau-23}
\end{equation}
Hence the double brackets $\lr{T_1,T_2}_{l}$ and $\lr{T_1,T_2}_{r}$,
defined as in \eqref{eq:doublebracket-l}, \eqref{eq:doublebracket-r},
restrict to
\begin{align*}
\lr{T_1,T_2}_{0,l}&:=\mathfrak{T}_{(23)}\circ\lr{T_1,T_2}^\sim_l
\in \AAt_B (E)\otimes B\oplus E^\vee\otimes E,
\\
\lr{T_1,T_2}_{0,r}&:=\mathfrak{T}_{(12)}\circ\lr{T_1,T_2}^\sim_l
\in B\otimes \AAt_B (E) \oplus E\otimes E^\vee,
\end{align*}
whereas formulae~\eqref{Gerstenhaber}, for all $c,c_1,c_2\in C$,
$T,T_1,T_2\in(\D_R C)_{(0)}$, restrict to
\begin{equation}
\label{eq:restricteddoubleSchouten-Nijenhuis}
\begin{aligned}&
\lr{c_1,c_2}_0=0,
\\&
\lr{T,c}_0=T(c),
\\&
\lr{c,T}_0=-\sigma_{(12)}(T(c)),
\\&
\lr{T_1,T_2}_0=\lr{T_1,T_2}_{l,0}+\lr{T_1,T_2}_{r,0}.
\end{aligned}
\end{equation}
Now, it follows from~\eqref{isomorphism-DerC} that 
\[
(\Der_R(C,C^{\otimes 3}))_{(0)}\cong \Bigg\{ \begin{aligned}
\mathbb{X}\colon B &\lto B^{\otimes 3}
\\
\mathbb{D}\colon E_1 &\lto E_1\otimes B\otimes B+\textit{c.p.}
\end{aligned}
\Bigg|
\begin{aligned}
\mathbb{X}(bb^\prime)&=\mathbb{X}(b)b^\prime+b\mathbb{X}(b^\prime)
\\
\mathbb{D}(be)&=\mathbb{X}(b)e+b\mathbb{D}(e)
\\
\mathbb{D}(eb)&=\mathbb{D}(e)b+e\mathbb{X}(b)
\end{aligned}
\Bigg\},
\]
where ``$\textit{c.p.}$'' denotes cyclic permutations of the triple
tensor product. Therefore the restricted double Schouten--Nijenhuis
bracket~\eqref{eq:restricteddoubleSchouten-Nijenhuis} corresponds, via
the isomorphisms~\eqref{Atiyah-peso-cero}
and~\eqref{Atiyah-isom-inner}, 
to a double bracket on $T_B\AAt_B(E)$ given on generators by
\begin{align}
\label{bracket-Atiyah-generators}
\notag &
\cc{b_1,b_2}_{\AAt}=0,
\\&
\cc{(\mathbb{X},\mathbb{D}),b}_{\AAt}
=\mathbb{X}(b),
\\\notag
&
\cc{b,(\mathbb{X},\mathbb{D})}_{\AAt}
=-\sigma_{(12)}(\mathbb{X}(b)),
\\\notag
&
\cc{(\mathbb{X}_1,\mathbb{D}_1), (\mathbb{X}_2,\mathbb{D}_2)}_{\AAt}=\lr{\Xi((\mathbb{X}_1,\mathbb{D}_1)),\Xi((\mathbb{X}_2,\mathbb{D}_2))}_{l,0}+\lr{\Xi((\mathbb{X}_1,\mathbb{D}_1)),\Xi((\mathbb{X}_2,\mathbb{D}_2))}_{r,0}.
\end{align}

\subsubsection{The double Atiyah algebra as a twisted double Lie--Rinehart algebra}

\begin{proposition}
The 4-tuple $(\AAt_B(E),E, \cc{-,-}_{\AAt},\rho)$ is a twisted double
Lie--Rinehart algebra, where the bracket is defined by
\eqref{bracket-Atiyah-generators}, and the anchor is
\[
 \rho\colon \AAt_B(E)\longrightarrow \D_R B\colon\; (\mathbb{X},\mathbb{D})\longmapsto\mathbb{X}.
\]
\end{proposition}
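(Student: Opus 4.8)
The plan is to verify, one by one, the five axioms (a)--(e) of Definition~\ref{DoubleLieRinehart} for the $4$-tuple $(\AAt_B(E),E,\cc{-,-}_{\AAt},\rho)$, using throughout that, by the construction of \secref{sub:bracket-Atiyah}, the bracket $\cc{-,-}_{\AAt}$ is the transport of the (weight-$0$ restriction of the) double Schouten--Nijenhuis bracket of $T_C\D_R C$ to $T_B\AAt_B(E)$ along the weight-preserving isomorphisms $\Xi$, $\overline{\Xi}$ of~\eqref{Atiyah-peso-cero},~\eqref{Atiyah-isom-inner} and $\mathfrak{T}_{(12)}$, $\mathfrak{T}_{(23)}$ of~\eqref{Tau-12},~\eqref{Tau-23}, where $C=B\sharp(E[-1])$, with values on generators given by~\eqref{bracket-Atiyah-generators}. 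First I would record the structural prerequisites: $\rho\colon(\XX,\DD)\mapsto\XX$ is a $B$-bimodule map, as observed right after Definition~\ref{double-Atiyah}; $E$ is a finitely generated projective $B$-bimodule carrying the non-degenerate symmetric pairing $\langle-,-\rangle$, which provides the identification $\overline{N}\cong\overline{N}^\vee$ making $E$ eligible as the twisting subbimodule $\overline{N}=E$; and, by~\eqref{Tau-12},~\eqref{Tau-23}, the restricted bracket lands in $\AAt_B(E)\otimes B\oplus B\otimes\AAt_B(E)\oplus E^\vee\otimes E\oplus E\otimes E^\vee$, which under $E\cong E^\vee$ and the flip~\eqref{eq:flip} is a submodule of the codomain $N\otimes B\oplus B\otimes N\oplus\overline{N}^\vee\otimes\overline{N}\oplus\overline{N}\oplus\overline{N}^\vee$ prescribed in Definition~\ref{DoubleLieRinehart} with $N=\AAt_B(E)$.

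Next I would dispatch axioms (a) and (d), which come essentially for free. By Proposition~\ref{Gerstenhaber-str}, $(T_C\D_R C,\lr{-,-}_{\text{SN}})$ is a double Gerstenhaber algebra, and since the double Schouten--Nijenhuis bracket preserves the weight grading induced by that of $C$, its weight-$0$ restriction is again graded skew-symmetric and satisfies the graded double Jacobi identity (as recorded before~\eqref{bracket-Atiyah-generators}); transporting these along the weight-$0$, hence sign-free, isomorphisms $\Xi$, $\mathfrak{T}_{(12)}$, $\mathfrak{T}_{(23)}$ yields (a) and (d) for $\cc{-,-}_{\AAt}$ verbatim --- with (d) interpreted in $N$ using $\overline{N}\hookrightarrow N$ and $\overline{N}^\vee\cong\overline{N}$ to make sense of the iterated brackets, exactly as in the analogous Proposition~\ref{A2LR}. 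Axioms (b) and (c) would then follow from the Leibniz rule of the double Schouten--Nijenhuis bracket together with the base formula $\lr{T,c}_0=T(c)$ from~\eqref{eq:restricteddoubleSchouten-Nijenhuis}: for $n_i=(\XX_i,\DD_i)$ corresponding under $\Xi$ to $T_i\in(\D_R C)_{(0)}$ and $b\in B\subset C$ of weight $0$, the element $b\,n_2$ corresponds to $b\cdot T_2\in(\D_R C)_{(0)}$ and
\[
\lr{T_1,b\cdot T_2}_{\text{SN}}=b\,\lr{T_1,T_2}_{\text{SN}}+\lr{T_1,b}_{\text{SN}}\,T_2=b\,\lr{T_1,T_2}_{\text{SN}}+\XX_1(b)\,T_2,
\]
which, upon transport via~\eqref{bracket-Atiyah-generators}, reads $\cc{n_1,b\,n_2}_{\AAt}=b\,\cc{n_1,n_2}_{\AAt}+\rho(n_1)(b)\,n_2$; this is (b), with (c) identical using the right-handed Leibniz rule, and no signs appear since every term has weight $0$.

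The hard part will be axiom (e), the compatibility of the anchor with the brackets, which is really the only computational point. The key observation is that, via $\Xi$, the anchor $\rho$ becomes the restriction map $(\D_R C)_{(0)}\to\D_R B$, $\Theta\mapsto\Theta\vert_B$, which is well defined because $B\subset C$ is a subalgebra (indeed $C\to B$ is an algebra retraction). Since $T_i(b)=\XX_i(b)\in B\otimes B$ for $b\in B$, the triple derivations of~\eqref{tilde-brackets} restrict to $B$ as $\lr{T_1,T_2}^{\sim}_{l}\vert_B=\lr{\XX_1,\XX_2}^{\sim}_{l}$ and $\lr{T_1,T_2}^{\sim}_{r}\vert_B=\lr{\XX_1,\XX_2}^{\sim}_{r}$, and the conversion isomorphisms $\tau_{(12)}$, $\tau_{(23)}$ of~\eqref{eq:tau_12},~\eqref{eq:tau_23} restrict on $B$-components to the ones defining the double brackets~\eqref{eq:doublebracket-l},~\eqref{eq:doublebracket-r} on $\D_R B$. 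Hence the $\AAt_B(E)\otimes B$- and $B\otimes\AAt_B(E)$-components of $\lr{T_1,T_2}_{l,0}$, $\lr{T_1,T_2}_{r,0}$, post-composed with $\rho$, reproduce $\lr{\XX_1,\XX_2}_l$, $\lr{\XX_1,\XX_2}_r$, while the $E^\vee\otimes E$- and $E\otimes E^\vee$-components are annihilated by $\rho$; summing and comparing with~\eqref{Gerstenhaber} gives
\[
\rho\big(\cc{n_1,n_2}_{\AAt}\big)=\lr{\XX_1,\XX_2}_l+\lr{\XX_1,\XX_2}_r=\lr{\XX_1,\XX_2}_{\text{SN}}=\lr{\rho(n_1),\rho(n_2)}_{\text{SN}},
\]
which is (e). I expect the genuine work to be precisely this last bookkeeping --- tracking how the decompositions~\eqref{Tau-12},~\eqref{Tau-23} interact with the inclusion $B\hookrightarrow C$ and with the cyclic-permutation conventions of Definition~\ref{DoubleLieRinehart} --- while axioms (a)--(d) reduce to transporting already-established properties of the double Schouten--Nijenhuis bracket.
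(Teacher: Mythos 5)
Your proposal is correct and follows essentially the same route as the paper: the paper's own proof is a two-line remark invoking the isomorphism $\Xi$ of~\eqref{Atiyah-peso-cero} and the fact that the double Schouten--Nijenhuis bracket on $T_C\D_R C$ restricts to weight $0$ preserving the double Gerstenhaber properties, and your axiom-by-axiom verification is exactly the expansion of that argument. The only extra content you supply is the explicit check of the Leibniz axioms (b), (c) and of the anchor compatibility (e) via restriction to $B\subset C$, which the paper leaves implicit.
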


\begin{proof}
The 4-tuple $(\AAt_B(E),E, \cc{-,-}_{\AAt},\rho)$ satisfies the
properties in Definition \ref{DoubleLieRinehart}, due to the
isomorphism \eqref{Atiyah-peso-cero} and the fact that we can restrict
the canonical double Schouten--Nijenhuis bracket on $T_C \D_R C$
(which is a double Gerstenhaber algebra) to $(\D_R C)_{(0)}$
preserving the required properties.
\end{proof}

\subsection{The map $\Psi$}
\label{sub:psipsi}

Consider the setting of Framework \ref{framework-general}. Here, we
will construct a map $\Psi$ of twisted double Lie--Rinehart algebras
between $A^2$ and $\AAt_B(E_1)$, using the isomorphism
\eqref{Atiyah-peso-cero}. Let $a\in A^2, (b,e)\in C=B\#(E_1[-1])$ (see
\secref{sub:bracket-Atiyah}). Define
\begin{equation}
\lr{a,(b,e)}_{\omega,0}:=(\lr{a,b}_\omega,\lr{a,e}_\omega).
\label{def-omega-0}
\end{equation}
Then $\lr{a,-}_{\omega,0}\in( \D_R C)_{(0)}$, because
$\lr{-,-}_\omega$ is a double Poisson bracket of weight -2 and
$\lr{a,(b,e)}_{\omega,0}=(\mathbb{X}_a(b),\mathbb{D}_a(e))$ (see
\eqref{XX} and \eqref{DD}), so we can define a map
\begin{equation}
\Psi_1\colon\, A^2\lto (\D_R C)_{(0)}\colon\quad a\longmapsto \mathbb{T}_a= \lr{a,-}_{\omega,0}.
\label{Psi}
\end{equation}
Given $c\in C$, we write
$\mathbb{T}_a(c)=\mathbb{T}^\prime_a(c)\otimes\mathbb{T}^{\prime\prime}_a(c)\in
C\otimes C$. Similarly, we define
\begin{equation}
\Psi_2\,\colon E_1\lto E^\vee_1\colon\quad e\longmapsto \mathbb{T}_e:=\lr{e,-}_\omega.
\label{Psi-2}
\end{equation}

\begin{proposition}
\label{map of LR}
The pair $\Psi=(\Psi_1,\Psi_2)$ is a morphism of twisted double Lie--Rinehart algebras.
\end{proposition}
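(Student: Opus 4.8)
The plan is to verify the three conditions of Definition~\ref{MapDoubleLieRinehart} for $\Psi=(\Psi_1,\Psi_2)$. Here we regard $\Psi_1$ as the $B$-bimodule map $\varphi_1\colon A^2\to\AAt_B(E_1)$ obtained by composing with $\Xi^{-1}$ (so that $\varphi_1(a)=(\mathbb{X}_a,\mathbb{D}_a)$, cf.~\eqref{XX}--\eqref{DD}), and $\varphi_2=\Psi_2\colon E_1\to E_1^\vee=(\overline N')^\vee$. That $\Psi_1$ and $\Psi_2$ are genuinely $B$-bimodule morphisms is routine: it rests on the relations $\lr{A^0,A^0}_\omega=\lr{A^0,A^1}_\omega=0$ from~\eqref{corchete-doble-relaciones}, which by graded skew-symmetry of $\lr{-,-}_\omega$ force the first argument of $\lr{-,-}_\omega$ to behave linearly once restricted to $C$, and on the observation that $\Psi_2(e)=\lr{e,-}_\omega$ is the isomorphism onto the dual induced by the non-degenerate symmetric pairing on $E_1$ carried by $\omega$ (cf.\ Lemma~\ref{non-degenerate pairing} and Theorem~\ref{Theorem in weight 1}).

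Conditions (i) and (iii) are then immediate. For (iii): the anchor of $\AAt_B(E_1)$ sends $(\mathbb{X}_a,\mathbb{D}_a)\mapsto\mathbb{X}_a$, which is exactly the anchor $a\mapsto\mathbb{X}_a$ of $A^2$ from Proposition~\ref{A2LR}, so the triangle commutes. For (i): by~\eqref{corchete-doble-relaciones} the middle component of $\lr{a_1,a_2}_{A^2}=\lr{a_1,a_2}_\omega$ is the summand lying in $E_1\otimes E_1$, and since the twisting bimodule of both $A^2$ and $\AAt_B(E_1)$ is $E_1$, this summand automatically takes values in $\overline N'=E_1$.

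The substance is condition (ii). Unwinding the definition~\eqref{bracket-Atiyah-generators} of $\cc{-,-}_{\AAt}$ together with the isomorphisms~\eqref{Atiyah-peso-cero},~\eqref{Atiyah-isom-inner},~\eqref{Tau-12},~\eqref{Tau-23} and $(\D_RC)_{(-1)}\cong E_1^\vee$, one sees that (ii) is equivalent to the single identity
\begin{equation*}
\lr{\mathbb{T}_{a_1},\mathbb{T}_{a_2}}=\mathbb{T}_{\lr{a_1,a_2}_\omega}\qquad(a_1,a_2\in A^2),
\end{equation*}
where the left-hand side is the restricted double Schouten--Nijenhuis bracket of $\mathbb{T}_{a_1},\mathbb{T}_{a_2}\in(\D_RC)_{(0)}$ and, on the right, $\mathbb{T}_{(-)}$ is applied componentwise to the decomposition $\lr{a_1,a_2}_\omega\in E_2\otimes B\oplus B\otimes E_2\oplus E_1\otimes E_1$ of~\eqref{corchete-doble-relaciones} --- with $\mathbb{T}_x\in(\D_RC)_{(0)}\cong\AAt_B(E_1)$ when $x\in E_2$ and $\mathbb{T}_x\in(\D_RC)_{(-1)}\cong E_1^\vee$ when $x\in E_1$, so that the four summands so produced reproduce exactly the $\varphi_1$- and $\varphi_2$-mixed expression of Definition~\ref{MapDoubleLieRinehart}(ii). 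To prove this identity, I would first note that $\mathbb{T}_a$ agrees on $C\subset A$ with the Hamiltonian double derivation $H_a$ of~\eqref{hamiltonian-double-deriv} and, by~\eqref{corchete-doble-relaciones}, takes values in $C\otimes C$ there; hence computing $\lr{\mathbb{T}_{a_1},\mathbb{T}_{a_2}}$ through formulae~\eqref{tilde-brackets},~\eqref{eq:doublebracket-l}--\eqref{eq:doublebracket-r},~\eqref{Gerstenhaber} only ever evaluates the $\mathbb{T}_{a_i}$ on elements of $C$, and therefore yields the restriction to $C$ of $\lr{H_{a_1},H_{a_2}}\in(T_A\D_RA)^{\otimes 2}$. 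Now Lemma~\ref{intercambio} gives $\lr{H_{a_1},H_{a_2}}=H_{\lr{a_1,a_2}_\omega}$ in $T_A\D_RA$; decomposing $\lr{a_1,a_2}_\omega$ along~\eqref{corchete-doble-relaciones}, applying $H_{x'\otimes x''}=H_{x'}\otimes x''+x'\otimes H_{x''}$, and using that $H_b$ restricts to $0$ on $C$ for $b\in B$ (because $\lr{A^0,A^0}_\omega=\lr{A^0,A^1}_\omega=0$), one reads off precisely the four-term right-hand side above. Equivalently, one may bypass $A$ and prove the identity directly, by evaluating both sides on an arbitrary $c\in C$ and applying the graded double Jacobi identity~\eqref{graded-Jacobi-double} to the triple $(a_1,a_2,c)$ --- this is the same mechanism by which~\eqref{conservation} was obtained.

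I expect no further conceptual obstacle; the hard part is bookkeeping. One must (a) track the $N$-graded Koszul signs ($|a_i|=2$, $|\omega|=2$, $|c|\in\{0,1\}$, and $\lr{-,-}_\omega$ of weight $-2$); (b) check carefully that the structural maps $\tau_{(12)},\tau_{(23)},\mathfrak{T}_{(12)},\mathfrak{T}_{(23)}$ and the pairing isomorphism $E_1\cong E_1^\vee$ deposit the four terms of the rearranged Jacobi identity into the summands $\AAt_B(E_1)\otimes B$, $B\otimes\AAt_B(E_1)$, $E_1^\vee\otimes E_1$, $E_1\otimes E_1^\vee$ in the order prescribed by Definition~\ref{MapDoubleLieRinehart}(ii); and (c) keep straight the truncation point --- several summands of $H_{\lr{a_1,a_2}_\omega}$ are nonzero as double derivations of $A$ but vanish on $C$, which is concentrated in weights $0$ and $1$, and only after this truncation do the two sides match term by term. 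The cleanest write-up is probably to isolate and prove the displayed identity $\lr{\mathbb{T}_{a_1},\mathbb{T}_{a_2}}=\mathbb{T}_{\lr{a_1,a_2}_\omega}$ as a lemma, and then read off conditions (i)--(iii).
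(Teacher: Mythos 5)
Your proposal is correct and follows essentially the same route as the paper: the paper's proof reduces everything to the single identity \eqref{igualdad deseada}, observes (your ``restriction'' remark) that $\lr{-,-}_{\omega,0}$ inherits skew-symmetry and the double Jacobi identity from $\lr{-,-}_\omega$, and then carries out exactly the computation you describe as the second route --- evaluating on $c\in C$ and applying the graded double Jacobi identity to the triple $(a_1,a_2,c)$, extracting the $l$-component directly and the $r$-component by skew-symmetry. Your primary route via Lemma~\ref{intercambio} restricted to $C$ is only a cosmetic repackaging of the same mechanism (the paper itself announces that it will ``partially adapt'' that lemma), so there is nothing substantively different to compare.
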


\begin{proof}
We will partially adapt Lemma \ref{intercambio}. Since $(A^2,E_1,\lr{-,-}_{A^2},\mathbb{X})$ is a twisted double Lie--Rinehart algebra, we can perform the following decomposition
\begin{align*}
\lr{a_1,a_2}_{A^2}&=\lr{a_1,a_2}^l+\lr{a_1,a_2}^r+\lr{a_1,a_2}^m
\\
&=\lr{a_1,a_2}^{l^{\prime}}\otimes \lr{a_1,a_2}^{l^{\prime\prime}}+\lr{a_1,a_2}^{r^{\prime}}\otimes \lr{a_1,a_2}^{r^{\prime\prime}}+\lr{a_1,a_2}^{m^{\prime}}\otimes \lr{a_1,a_2}^{m^{\prime\prime}},
\end{align*}
with $\lr{a_1,a_2}^l\in A^2\otimes B$, $\lr{a_1,a_2}^r\in B\otimes
A^2$, $\lr{a_1,a_2}^m\in E_1\otimes E_1$, and hence
\[
\lr{a_1,a_2}^{l^{\prime}},\lr{a_1,a_2}^{r^{\prime\prime}}\in A^2,\quad \lr{a_1,a_2}^{r^{\prime}},\lr{a_1,a_2}^{l^{\prime\prime}}\in B, \quad
\lr{a_1,a_2}^{m^{\prime}},\lr{a_1,a_2}^{m^{\prime\prime}}\in E_1.
\]
In view of Definition \ref{MapDoubleLieRinehart}, we need to prove
\begin{align}
\label{igualdad deseada}
\begin{split}
\Psi(\lr{a_1,a_2}_{A^2})
=&\lr{\Psi_1(a_1),\Psi_1(a_2)}_0+\Psi_2(\lr{a_1,a_2}^{m^{\prime}})\otimes
\lr{a_1,a_2}^{m^{\prime\prime}}
\\&
+\lr{a_1,a_2}^{m^{\prime}}\otimes \Psi_2(\lr{a_1,a_2}^{m^{\prime\prime}}).
\end{split}
\end{align}

\begin{claim}
The bracket $\lr{-,-}_{\omega,0}$ is skew-symmetric and satisfies the double Jacobi identity.
\label{clcim-corchete-omega-0}
\end{claim}

\begin{proof}
Straightforward, because $\lr{-,.-}_{\omega,0}$ is defined in terms
of the double Poisson bracket $\lr{-,-}_\omega$ on $A$, that already
satisfies the required properties.
\end{proof}

Let $c=(b,e)\in C$, with $b\in B, e\in E_1$. Let $\mathfrak{T}_{(123)}$ and $\mathfrak{T}_{(132)}$  the permutations in $S_3$ that acts on $(\D_R C)_{0}\otimes C\otimes C+c.p$. Then by Claim \ref{clcim-corchete-omega-0}, we have the identity
\begin{equation}\label{Jacobi-Psi}
0=\!\lr{a_1,\lr{a_2,e}_{\omega,0}}_{\omega,0,L}
\!\!
+\mathfrak{T}_{(123)}\lr{a_2,\lr{c,a_1}_{\omega,0}}_{\omega,0,L}
\!\!
+\mathfrak{T}_{(132)}\lr{e,\lr{a_1,a_2}_{A^2}}_{\omega,0,L}.
\!
\end{equation}
The first summand in \eqref{Jacobi-Psi} can be written as
\begin{equation}
\begin{aligned}
\lr{a_1,\lr{a_2,c}_{\omega,0}}_{\omega,0,L}&=\lr{a_1,\mathbb{T}_{a_2}(c)}_{\omega,0,L}
\\
&=\lr{a_1,\mathbb{T}^\prime_{a_2}(c)}_{\omega,0}\otimes \mathbb{T}^{\prime\prime}_{a_2}(c)
\\
&=(\mathbb{T}_{a_1}\otimes\Id_C)\mathbb{T}_{a_1}(c).
\end{aligned}
\label{sumando-1}
\end{equation}
Using the skew-symmetry of $\lr{-,-}_{\omega,0}$, we transform the second summand:
\begin{align*}
\lr{a_2,\lr{c,a_1}_{\omega,0}}_{\omega,0,L}&=-\lr{a_2,(\mathbb{T}_{a_1}(c))^{\circ}}_{\omega,0,L}
\\
&=-\lr{a_2,\mathbb{T}^{\prime\prime}_{a_1}(c)}_{\omega,0}\otimes\mathbb{T}^\prime_{a_1}(c)
\\
&=-(\mathbb{T}_{a_2}\otimes\Id_C)(\mathbb{T}_{a_1}(c))^\circ.
\end{align*}
Consequently,
\begin{equation}
\begin{aligned}
\mathfrak{T}_{(123)}\lr{a_2,\lr{c,a_1}_{\omega,0}}_{\omega,0,L}&=-\mathfrak{T}_{(123)}\left((\mathbb{T}_{a_2}\otimes\Id_C)(\mathbb{T}_{a_1}(c))^\circ\right)
\\
&=-\mathfrak{T}_{(123)}\mathfrak{T}_{(132)}\left( (\Id_C\otimes\mathbb{T}_{a_2})\mathbb{T}_{a_1}\right)
\\
&=-(\Id_C\otimes\mathbb{T}_{a_2})\mathbb{T}_{a_1}.
\end{aligned}
\label{sumando-2}
\end{equation}
To calculate the third summand, note first that
$\lr{-,-}_{\omega,0}\vert_{A^2\otimes
  A^2}=\lr{-,-}_\omega\vert_{A^2\otimes A^2}$. Also,
$\lr{a_1,a_2}^{r^{\prime}}\in B$, and so
$\lr{e,\lr{a_1,a_2}^{r^{\prime}}}_{\omega}=0$, as $\lr{-,-}_{\omega}$
is a double Poisson bracket of weight -2. Similarly, by
\eqref{def-omega-0},
$\lr{\lr{a_1,a_2}^{m^\prime},c}_{\omega,0}=\lr{\lr{a_1,a_2}^{m^\prime},e}_\omega$.
Hence
\begin{align*}
&\!\!\!\!
\lr{c,\lr{a_1,a_2}_{A^2}}_{\omega,0,L}\\
&=-\left(\left(\lr{\lr{a_1,a_2}^{l^{\prime}},c}_{\omega,0}\right)^\circ\otimes\lr{a_1,a_2}^{l^{\prime\prime}}+\left(\lr{\lr{a_1,a_2}^{m^{\prime}},e}_{\omega,0}\right)^\circ\otimes\lr{a_1,a_2}^{m^{\prime\prime}}\right)
\\
&=-\left(\left(\mathbb{T}_{\lr{a_1,a_2}^{l^{\prime}}}(c)\right)^{\circ}\otimes\lr{a_1,a_2}^{l^{\prime\prime}}+\left(\mathbb{T}_{\lr{a_1,a_2}^{m^{\prime}}}(e)\right)^{\circ}\otimes\lr{a_1,a_2}^{m^{\prime\prime}}\right).
\end{align*}
Next,
\begin{equation}
\begin{aligned}
&\!\!\!\!
\mathfrak{T}_{(132)}\lr{e,\lr{a_1,a_2}}_{\omega,0L}\\
&=-\mathfrak{T}_{(132)}\mathfrak{T}_{(12)}\left(\mathbb{T}_{\lr{a_1,a_2}^{l^{\prime}}}(c)\otimes\lr{a_1,a_2}^{l^{\prime\prime}}+\mathbb{T}_{\lr{a_1,a_2}^{m^{\prime}}}(e)\otimes\lr{a_1,a_2}^{m^{\prime\prime}}\right)
\\
&= -\mathfrak{T}_{(23)}\left(\mathbb{T}_{\lr{a_1,a_2}^{l^{\prime}}}\otimes\lr{a_1,a_2}^{l^{\prime\prime}}+\mathbb{T}_{\lr{a_1,a_2}^{m^{\prime}}}(e)\otimes\lr{a_1,a_2}^{m^{\prime\prime}}\right).
\end{aligned}
\label{sumando-3}
\end{equation}
Summing up, from \eqref{Jacobi-Psi}, applying \eqref{sumando-1}, \eqref{sumando-2}, \eqref{sumando-3}, we obtain
\begin{equation}
\begin{aligned}
0&=\lr{a_1,\lr{a_2,c}_{\omega,0}}_{\omega,0,L}(c)+\mathfrak{T}_{(123)}\lr{a_2,\lr{c,a_1}_{\omega,0}}_{\omega,0,L}+\mathfrak{T}_{(132)}\lr{c,\lr{a_1,a_2}_{\omega,0}}_{\omega,0,L},
\\
&=\lr{\mathbb{T}_{a_1},\mathbb{T}_{a_2}}^{\sim}_l (c)-\mathfrak{T}_{(23)}\left(\mathbb{T}_{\lr{a_1,a_2}^{l^{\prime}}}(c)\otimes\lr{a_1,a_2}^{l^{\prime\prime}}+\mathbb{T}_{\lr{a_1,a_2}^{m^{\prime}}}(e)\otimes\lr{a_1,a_2}^{m^{\prime\prime}}\right)
\\
&=\lr{\mathbb{T}_{a_1},\mathbb{T}_{a_2}}_{l,0}(c)-\left(\mathbb{T}_{\lr{a_1,a_2}^{l^{\prime}}}(c)\otimes\lr{a_1,a_2}^{l^{\prime\prime}}+\mathbb{T}_{\lr{a_1,a_2}^{m^{\prime}}}(e)\otimes\lr{a_1,a_2}^{m^{\prime\prime}}\right).
\end{aligned}
\label{parte-izquierda}
\end{equation}
Therefore 
\begin{equation}
\begin{aligned}
\lr{T_{a_1},\mathbb{T}_{a_2}}_{r,0}&=-\lr{\mathbb{T}_{a_2},\mathbb{T}_{a_1}}^\circ_{l,0}
\\
&=-\left(\mathbb{T}_{\lr{a_2,a_1}^{l^\prime}}(c)\otimes \lr{a_2,a_1}^{l^{\prime\prime}}+\mathbb{T}_{\lr{a_2,a_1}^{m^\prime}}(e)\otimes \lr{a_2,a_1}^{m^{\prime\prime}}\right)^\circ
\\
&=-\left(\lr{a_2,a_1}^{l^{\prime\prime}}\otimes\lr{\lr{a_2,a_1}^{l^\prime},c}_{\omega,0}+\lr{a_2,a_1}^{m^{\prime\prime}}\otimes\lr{\lr{a_2,a_1}^{m^\prime},c}_{\omega,0}\right)
\\
&=\lr{a_2,a_1}^{l^{\prime\prime}}\otimes\left(\lr{c,\lr{a_2,a_1}^{l^\prime}}_{\omega,0}\right)^\circ+\lr{a_2,a_1}^{m^{\prime\prime}}\otimes\left(\lr{c,\lr{a_2,a_1}^{m^\prime}}_{\omega,0}\right)^\circ
\\
&=\left(\lr{c,\lr{a_2,a_1}^\circ_{A^2}}_{\omega,0,R}\right)^\circ
\\
&=\left(\lr{c,\lr{a_1,a_2}_{A^2}}_{\omega,0,R}\right)^\circ
\\
&=-\left(\lr{a_1,a_2}^{m^\prime}\otimes \lr{\lr{a_1,a_2}^{m^{\prime\prime}},c}_{\omega,0}+\lr{a_1,a_2}^{r^\prime}\otimes \lr{\lr{a_1,a_2}^{r^{\prime\prime}},c}_{\omega,0}\right)
\\
&=-\left(\lr{a_1,a_2}^{m^\prime}\otimes \lr{\lr{a_1,a_2}^{m^{\prime\prime}},e}_{\omega,0}+\lr{a_1,a_2}^{r^\prime}\otimes \lr{\lr{a_1,a_2}^{r^{\prime\prime}},c}_{\omega,0}\right)
\\
&=-\left(\lr{a_1,a_2}^{m^\prime}\otimes \mathbb{T}_{\lr{a_1,a_2}^{m^{\prime\prime}}}(e)+\lr{a_1,a_2}^{r^\prime}\otimes \mathbb{T}_{\lr{a_1,a_2}^{r^{\prime\prime}}}(c)\right).
\end{aligned}
\label{parte-derecha}
\end{equation}
Finally, \eqref{igualdad deseada} is the sum of \eqref{parte-izquierda} and \eqref{parte-derecha}, as required.
\end{proof}

\subsection{The isomorphism between $A^2$ and $\AAt(E_1)$}
\label{sub:The left isomorphism}

Consider the setting of Framework \ref{framework-general}. Here, we
will show that the map $\Psi$ constructed in~\secref{sub:psipsi} is an
isomorphism of twisted double Lie--Rinehart algebras. As a
consequence, this will imply the following non-commutative version of
\cite[Theorem 3.3]{Roy00}.

\begin{theorem}
\label{ChapterIV}
Let $(A,\omega)$ be the pair consisting of the graded path algebra of
a double quiver $\dPP$ of weight 2, and the bi-symplectic form
$\omega\in\DR{2}(A)$ of weight 2 defined in \secref{sub:bi-symplectic
  form for quivers}. Let $B$ be the path algebra of the weight 0
subquiver of $\dPP$.
Then $(A,\omega)$ is completely determined by the pair
$(E_1,\langle-,-\rangle)$ consisting of the $B$-bimodule $E_1$ with
basis given by paths in $\PP$ of weight 1, and the symmetric
non-degenerate pairing
\[
\langle-,-\rangle\defeq \lr{-,-}_\omega\vert_{E_1\otimes E_1}E_1\otimes E_1\lto B\otimes B.
\]
\end{theorem}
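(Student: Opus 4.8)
The plan is to deduce Theorem~\ref{ChapterIV} as a corollary of the preceding structural results, chiefly Proposition~\ref{map of LR} (that $\Psi=(\Psi_1,\Psi_2)$ is a morphism of twisted double Lie--Rinehart algebras) together with the identification of both $\Psi_1$ and $\Psi_2$ as isomorphisms. First I would observe that, in Framework~\ref{framework-quivers}, the bi-symplectic form $\omega$ of \eqref{bi-symplectic-gaded-canonical} produces, via Lemma~\ref{bi-symp-double-Poisson}, a double Poisson bracket $\lr{-,-}_\omega$ of weight $-2$ on $A=\T_BM$ with $M=E_1[-1]\oplus E_2[-2]$; and by \eqref{decomposition-A-NC} the graded pieces $A^0=B$, $A^1=E_1$, $A^2=E_1\otimes_B E_1\oplus E_2$ together with the relations \eqref{corchete-doble-relaciones} show that all the structure of $\lr{-,-}_\omega$ is controlled by its restrictions to $B$, $E_1$ and $A^2$. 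Thus the statement ``$(A,\omega)$ is completely determined by $(E_1,\langle-,-\rangle)$'' is to be understood as: the whole bi-symplectic tensor $\NN$-algebra can be reconstructed, up to isomorphism, from the pair $(E_1,\langle-,-\rangle)$ together with the canonical double Atiyah algebra $\AAt_B(E_1,\langle-,-\rangle)$ built functorially out of it in \secref{sub: double Atiyah}.

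The core of the argument is then to show that $\Psi=(\Psi_1,\Psi_2)$ of \secref{sub:psipsi} is an \emph{isomorphism} of twisted double Lie--Rinehart algebras onto $(\AAt_B(E_1,\langle-,-\rangle),E_1,\cc{-,-}_{\AAt},\rho)$. For $\Psi_2\colon E_1\to E_1^\vee$, $e\mapsto\lr{e,-}_\omega$, this is exactly the non-degeneracy of the pairing: by Lemma~\ref{non-degenerate pairing}(i), $\lr{e,-}_\omega|_{E_1}=\langle e,-\rangle=\flat(e)$ up to the sign $\varepsilon$, and $\flat$ is the inverse isomorphism $E_1\to E_1^\vee$ exhibited in Theorem~\ref{Theorem in weight 1}; hence $\Psi_2$ is an isomorphism. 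For $\Psi_1\colon A^2\to(\D_RC)_{(0)}\cong\AAt_B(E_1)$, $a\mapsto\lr{a,-}_{\omega,0}=(\XX_a,\DD_a)$, I would argue that $\Psi_1$ lands in the metric double Atiyah algebra $\AAt_B(E_1,\langle-,-\rangle)$ because of the preservation-of-pairing identity \eqref{conservation} proved in \secref{sub: conserv-pairing-double}, and that it is bijective by a weight-count: using Theorem~\ref{Theorem in weight 1} (the weight-1 restriction $E_1^\vee\cong E_1$) and Theorem~\ref{weight 0} / the weight-0 restriction isomorphism $\D_RB\cong E_2$, both sides decompose compatibly, so $\Psi_1$ is an iso of $B$-bimodules; injectivity also follows directly from non-degeneracy of $\omega$, since $\lr{a,-}_\omega$ vanishing on $B\oplus E_1$ forces $\iota_{H_a}\omega$ to vanish in low weight and hence $H_a=0$, i.e. $\du a=0$, i.e. $a\in B$, but $a\in A^2$ so $a=0$.

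Having both $\Psi_1,\Psi_2$ isomorphisms and $\Psi$ a morphism of twisted double Lie--Rinehart algebras (Proposition~\ref{map of LR}), $\Psi$ is an isomorphism of such structures, so the twisted double Lie--Rinehart algebra $A^2$ — hence, by \eqref{corchete-doble-relaciones} and the Leibniz rule, the whole double Poisson bracket $\lr{-,-}_\omega$ on $A$ — is isomorphic to the one canonically manufactured from $(E_1,\langle-,-\rangle)$ through $\AAt_B(E_1,\langle-,-\rangle)$. Conversely, given any $(E_1,\langle-,-\rangle)$ one runs the construction of \secref{sub: double Atiyah} to get $\AAt_B(E_1,\langle-,-\rangle)$, sets $E_2\defeq\D_RB$ (matched via the weight-0 restriction), builds $A\defeq\T_B(E_1[-1]\oplus E_2[-2])$, and recovers $\omega$ as $\sum_{a\in\P1}\du a\,\du a^*$ for the doubled graded quiver; smoothness of $A$ (Lemma~\ref{tensor smooth}) guarantees the double Poisson bracket and the bi-symplectic form determine each other via $\mu$ of \eqref{mu}. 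I expect the main obstacle to be pinning down the precise bookkeeping in the weight-count for $\Psi_1$ — in particular checking that the image of $A^2$ is \emph{all} of $\AAt_B(E_1,\langle-,-\rangle)$ and not a proper subbimodule — which amounts to combining Theorem~\ref{weight 0}, Theorem~\ref{Theorem in weight 1} and the isomorphism $(\D_RC)_{(0)}\cong\AAt_B(E_1)$ of \eqref{Atiyah-peso-cero} into a single commuting diagram of $B$-bimodules with exact rows; once that diagram commutes, the five lemma (or direct inspection of the split short exact sequences) finishes the surjectivity.
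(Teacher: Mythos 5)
Your overall architecture is the same as the paper's: compare the short exact sequence $0\to E_1\otimes_B E_1\to A^2\to E_2\to 0$ with $0\to \aad_{B}(E_1)\to\AAt_B(E_1)\to\D_RB\to 0$ via the commutative diagram \eqref{diagrama izquierdo}, use Theorem~\ref{weight 0} to see that the right-hand vertical arrow $E_2\to\D_RB$ is an isomorphism, invoke Proposition~\ref{map of LR} for compatibility with the brackets, and finish with a five-lemma argument. Up to that point your proposal matches the paper.

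The genuine gap is the left-hand column. The map whose bijectivity you still owe is $\Psi\vert_{E_1\otimes_BE_1}\colon E_1\otimes_BE_1\to\aad_{B}(E_1)$, and this is \emph{not} the map $\Psi_2\colon E_1\to E_1^\vee$: Theorem~\ref{Theorem in weight 1} gives $E_1\cong E_1^\vee$, but says nothing by itself about $\aad_{B}(E_1)$, which is carved out of $\EEnd_{B^\e}(E_1)=\Hom_{B^\e}(E_1,E_1\otimes B\oplus B\otimes E_1)$ by the pairing-preservation condition \eqref{aad}. Your ``weight-count'' and ``both sides decompose compatibly'' assert exactly what must be checked, namely that $\aad_{B}(E_1)$ is a single copy of $E_1\otimes_BE_1$ and that $\Psi$ realizes that identification. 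In the paper this is the technical core: Lemma~\ref{description E} identifies $\EEnd_{B^\e}(E_1)$ with \emph{two} copies of $E_1\otimes_BE_1$ in an explicit quiver basis $[ra^*qbp]_1,[ra^*qbp]_2$; Lemma~\ref{basis of aad} shows that the preservation condition cuts this down to the span of the combinations $\varepsilon(b)[ra^*qbp]_2-\varepsilon(a)[ra^*qbp]_1$; and Lemma~\ref{lem:isom-restricted-Psi} computes $\Psi(ra^*qbp)$ from the Leibniz rule and the canonical differential double Poisson bracket, obtaining precisely those combinations. Without these computations (or a substitute) one of the two outer isomorphisms needed for the five lemma is missing, so surjectivity onto the metric Atiyah algebra is not established. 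A secondary point: your direct injectivity argument for $\Psi_1$ is also incomplete as stated, since $\lr{a,-}_\omega$ vanishing on $B\oplus E_1$ does not immediately force $H_a=0$ --- a double derivation of $A=\T_B(E_1[-1]\oplus E_2[-2])$ is determined by its values on $B$, $E_1$ \emph{and} $E_2$; the diagram chase is what actually delivers injectivity.
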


To prove that $\Psi$ is an isomorphism, we will construct the
following commutative diagram, where the rows are the short exact
sequences of $B$-bimodules given by the definitions of $A^2$ and
$\AAt(E_1)$ (see \eqref{decomposition-A-NC} and Definition
\ref{metric-double-Atiyah}, respectively).
\begin{equation}\label{diagrama izquierdo}
\xymatrix{
0\ar[r] & E_1\otimes_B E_1\ar[r]\ar[d]^{\Psi\vert_{E_1\otimes_B E_1}} & A^2\ar[r]\ar[d]^{\Psi} & E_2\ar[r] \ar[d]^{\widetilde{\iota}(\omega)_{(0)}}&0
\\
0\ar[r] & \aad_{B}(E_1)\ar[r] &\AAt(E_1)\ar[r]& \D_RB\ar[r] &0
}
\end{equation}
Here, using \eqref{conservation} and \eqref{pairing extendido}, we
define the \emph{double adjoint $B$-bimodule of $(E_1,\langle-,-\rangle)$} as
\begin{equation}
 \aad_{B}(E_1)\defeq\{ \mathbb{D}\in\EEnd_{B^\e}(E_1) \vert -\langle e_1,\mathbb{D}_a(e_2)\rangle_L=\sigma_{(132)}\langle e_2,\mathbb{D}_a(e_1)^\circ\rangle_L\}.
 \label{aad}
 \end{equation}
 After the construction of the above commutative diagram, it will
 follow that $\Psi$ is an isomorphism if and only if so is its
 restriction to $E_1\otimes_B E_1$ (because
 $\widetilde{\iota}(\omega)_{(0)}$ is an isomorphism by Theorem
 \ref{weight 0}). The latter fact will be proved in the setting of
 doubled graded quivers developed in \secref{sub:bi-symplectic form
   for quivers}. From now on, let $R$ be the semisimple commutative
 algebra with basis the trivial paths in $\PP$.

 The fact that the restriction of $\Psi$ to $E_1\otimes_B E_1$ is an
 isomorphism will follow by performing the following tasks:
%
%
%
%
%
%
\begin{enumerate}
\item [\textup{(i)}]
  Construction of an isomorphism $\EEnd_{B^\e}(E_1)\cong E_1\otimes_B
 E_1\oplus E_1\otimes_B E_1$ (Lemma \ref{description E}).
\item [\textup{(ii)}]
 Description of a basis of $\aad_{B}(E_1)$ (Proposition \ref{basis of aad}).
\item [\textup{(iii)}]
 Description of $\Psi\vert_{E_1\otimes_B E_1}$ in the basis of \textup{(ii)}.
 \end{enumerate}

 \subsection{Explicit description of $\EEnd_{B}(E_1)$}
 \label{Description of the double endomorphisms}

 \begin{lemma}\label{description E}
 There is a canonical isomorphism
 \[
\EEnd_{B}(E_1)\cong E_1\otimes_B E_1\oplus E_1\otimes_B E_1.
\]
\end{lemma}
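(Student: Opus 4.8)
The plan is to split $\EEnd_B(E_1)$ according to the two summands of its target, identify each piece using that $E_1$ is finitely generated projective as a $B$-bimodule, and collapse the duals at the end with the pairing. By \eqref{End-Nc} (with $R$ replaced by $B$) we have $\EEnd_B(E_1)=\Hom_{B^\e}(E_1,E_1\otimes B\oplus B\otimes E_1)$ for the outer $B$-bimodule structure on the target, and since $\Hom$ carries finite direct sums in the second argument to direct sums, this equals $\Hom_{B^\e}(E_1,(E_1\otimes B)_\out)\oplus\Hom_{B^\e}(E_1,(B\otimes E_1)_\out)$. So it suffices to produce canonical isomorphisms $\Hom_{B^\e}(E_1,(E_1\otimes B)_\out)\cong E_1^\vee\otimes_B E_1$ and $\Hom_{B^\e}(E_1,(B\otimes E_1)_\out)\cong E_1\otimes_B E_1^\vee$, and then to remove the factors $E_1^\vee$.

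For the first isomorphism I would write down the natural map
\[
E_1^\vee\otimes_B E_1\lto\Hom_{B^\e}\bigl(E_1,(E_1\otimes B)_\out\bigr),\qquad
\psi\otimes e\longmapsto\bigl(x\longmapsto \psi'(x)\,e\otimes\psi''(x)\bigr),
\]
where $\psi\in E_1^\vee=\Hom_{B^\e}(E_1,(B\otimes B)_\out)$ is written $\psi(x)=\psi'(x)\otimes\psi''(x)$ in Sweedler notation and the balanced tensor product uses the right $B$-action on $E_1^\vee$ induced by the \emph{inner} structure on $B\otimes B$ (see \eqref{eq:dual-bimodule}) together with the left $B$-action on $E_1$. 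From $\psi(b_1xb_2)=b_1\psi(x)b_2$ (outer structure) one checks the image of $\psi\otimes e$ is a morphism into $(E_1\otimes B)_\out$, and from $(\psi\cdot b)(x)=\psi'(x)b\otimes\psi''(x)$ (inner-induced action) one checks the map descends to $E_1^\vee\otimes_B E_1$. It is an isomorphism because $E_1$ is finitely generated projective over $B^\e$: one verifies it on the free module $E_1=B^\e$ and extends along a projective presentation, equivalently one writes the inverse explicitly using a dual basis, i.e.\ a Casimir element as in Lemma~\ref{Casimir}. The second isomorphism is the mirror image with $E_1$ in the other tensor slot, $e\otimes\psi\mapsto\bigl(x\mapsto\psi'(x)\otimes e\,\psi''(x)\bigr)$, giving $\Hom_{B^\e}(E_1,(B\otimes E_1)_\out)\cong E_1\otimes_B E_1^\vee$.

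To conclude, $E_1$ carries a symmetric non-degenerate pairing (Lemma~\ref{non-degenerate pairing}), so the map $\flat$ of Theorem~\ref{Theorem in weight 1} is a $B$-bimodule isomorphism $E_1\lra{\cong}E_1^\vee$. Transporting along $\flat^{-1}$ in the appropriate factor converts $E_1^\vee\otimes_B E_1$ and $E_1\otimes_B E_1^\vee$ into $E_1\otimes_B E_1$, and assembling the two summands yields the asserted canonical isomorphism $\EEnd_B(E_1)\cong E_1\otimes_B E_1\oplus E_1\otimes_B E_1$. In the quiver Framework~\ref{framework-quivers} the whole argument can alternatively be run concretely, using the $B$-basis of $E_1$ by the weight-one arrows of $\dPP$ and the decomposition $E_1=\bigoplus_{|a|=1}BaB$.

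The only point requiring care is the bookkeeping of outer versus inner $B$-bimodule structures — on $E_1^\vee$ and on the targets $E_1\otimes B$ and $B\otimes E_1$ — so that the two candidate maps are simultaneously $B$-bimodule morphisms and well defined on the balanced tensor products. Since $E_1$ is an ungraded $B$-bimodule no Koszul signs intervene, and bijectivity is the routine reduction to the free case, so I do not expect a genuine obstacle here.
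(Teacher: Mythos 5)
Your proof is correct, but it takes a more abstract route than the paper's. The paper works entirely inside the quiver Framework~\ref{framework-quivers}: it writes $E_1=\bigoplus_{\lvert c\rvert=1}BcB$ with $BcB=Be_{h(c)}\otimes e_{t(c)}B$, computes $\Hom_{B^\e}(E_1,E_1\otimes B)$ and $\Hom_{B^\e}(E_1,B\otimes E_1)$ summand by summand via the adjunctions $\Hom_B(Be_i,M)\cong e_iM$ and $\Hom_{B^{\op}}(e_iB,N)\cong Ne_i$, lands on $\bigoplus_{\lvert c\rvert=\lvert d\rvert=1}Bc^*BdB$ and $\bigoplus_{\lvert c\rvert=\lvert d\rvert=1}BdBc^*B$, and only at the end trades $c^*$ for $c$ using the arrow/reverse-arrow bijection of the doubled quiver of weight $2$ --- which is exactly your final step $\flat\colon E_1\lra{\cong}E_1^\vee$ in concrete form. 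Your version packages the same content as the dual-basis isomorphisms $\Hom_{B^\e}(E_1,(E_1\otimes B)_\out)\cong E_1^\vee\otimes_B E_1$ and $\Hom_{B^\e}(E_1,(B\otimes E_1)_\out)\cong E_1\otimes_B E_1^\vee$, valid for any finitely generated projective $B$-bimodule, followed by $E_1^\vee\cong E_1$ from the pairing; this is cleaner, isolates exactly where projectivity and where the pairing enter, and generalizes beyond path algebras. What the paper's explicit computation buys --- and what you would still need to extract if you adopt the abstract argument --- is the concrete generating set $[ra^*qbp]_1$, $[ra^*qbp]_2$ of $\EEnd_{B^\e}(E_1)$ produced along the way: these elements are used immediately afterwards (Lemma~\ref{basis of aad} and Lemma~\ref{lem:isom-restricted-Psi}) to describe $\aad_{B^\e}(E_1)$ and to compute $\Psi\vert_{E_1\otimes_BE_1}$, so the later arguments rely on the explicit form of the isomorphism and not merely on its existence.
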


\begin{proof}
We will need the  canonical isomorphisms
\begin{equation}
\begin{aligned}
\Hom_B(Be_i,M)\stackrel{\cong}{\longrightarrow} e_iM\colon \quad f\longmapsto f(e_i),
\\
\Hom_{B^\op}(e_iB,N)\stackrel{\cong}{\longrightarrow} Ne_i\colon \quad g\longmapsto g(e_i),
\end{aligned}
\label{hom-quivers}
\end{equation}
with inverse isomorphisms
\begin{equation}
\begin{aligned}
e_iM\stackrel{\cong}{\longrightarrow}\Hom_B(Be_i,M)\colon \quad e_im\longmapsto f_m,
\\
Ne_i\stackrel{\cong}{\longrightarrow}\Hom_{B^\op}(e_iB,N)\colon \quad ne_i\longmapsto g_n,
\end{aligned}
\label{isomorfismos-canonicos-isomorfismos-quivers}
\end{equation}
where, for all $b\in B$,
\begin{align*}
f_m\colon Be_i&\lto M\colon\quad be_i\longmapsto f_m(be_i)=be_im,
\\
g_n\colon e_iB&\lto N\colon \quad ne_i\longmapsto g_n(e_ib)=ne_ib.
\end{align*}
%
%
Since $E_1=\bigoplus_{\lvert c\rvert=1} BcB=\bigoplus_{\lvert
  c\rvert=1}Be_{h(c)}\otimes e_{t(c)}B$, we have 
\begin{equation}
E_1\otimes_B E_1
\cong \bigoplus_{\lvert c\rvert=\lvert d\rvert=1} Be_{h(c)}\otimes e_{t(c)}Be_{h(d)}\otimes e_{t(d)}B\cong \bigoplus_{\lvert c\rvert=\lvert d\rvert=1}BcB dB,
\label{EOE1}
\end{equation}
where we sum over arrows $c,d$ of weight 1. This explicit description of $E_1\otimes_B E_1$ in the setting of quivers enables the following explicit description of $\Hom_{B^\e}(E_1, E_1\otimes B)$:
\begin{equation}
\begin{aligned}
\Hom_{B^\e}(E_1,E_{1}&\otimes B)\cong \bigoplus_{\lvert c\rvert=\lvert d\rvert=1}\Hom_{B^\e}\left(Be_{h(c)}\otimes e_{t(c)}B, Be_{h(d)}\otimes e_{t(d)}B\otimes B \right);
\\
&\cong \bigoplus_{\lvert c\rvert=\lvert d\rvert=1}\Hom_B(Be_{h(c)},Be_{h(d)}\otimes e_{t(d)}B)\otimes \Hom_{B^\op}(e_{t(c)}B,B); 
\\
&\cong \bigoplus_{\lvert c\rvert=\lvert d\rvert=1}(e_{h(c)}Be_{h(d)}\otimes e_{t(d)}B)\otimes Be_{t(c)} ;
\\
&\cong \bigoplus_{\lvert c\rvert=\lvert d\rvert=1}Be_{t(c)}\otimes e_{h(c)}Be_{h(d)}\otimes e_{t(d)}B;
\\
&\cong \bigoplus_{\lvert c\rvert=\lvert d\rvert=1}Bc^*BdB\cong \bigoplus_{\lvert c\rvert=\lvert d\rvert=1}BcBdB,
\end{aligned}
\label{iso-222}
\end{equation}
where we used \eqref{hom-quivers}. Also, the last isomorphism is due to the fact that $\dPP$ is a doubled graded quiver of weight 2; hence there exists an isomorphism between the set of arrows $\{a\}$ such that $\lvert a\rvert=1$ and the set of reverse arrows $\{ a^*\}$. In conclusion,
\[
\Hom_{B^\e}(E_1,E_1\otimes B)\cong \bigoplus_{\lvert c\rvert=\lvert d\rvert=1}BcBdB\cong E_1\otimes_B E_1.
\]
Similarly,
\begin{equation}
\begin{aligned}
\Hom_{B^\e}(E_1,B &\otimes E_1)\cong\bigoplus_{\lvert c\rvert=\lvert d\rvert=1}\Hom_{B^\e}\left(Be_{h(c)}\otimes e_{t(c)}B,B\otimes Be_{h(d)}\otimes e_{t(d)}B\right);
\\
&=\bigoplus_{\lvert c\rvert=\lvert d\rvert=1}\Hom_B\left(Be_{h(c)},B)\otimes\Hom_{B^\op}(e_{t(c)}B,Be_{h(d)}\otimes e_{t(d)}B\right);
\\
&\cong \bigoplus_{\lvert c\rvert=\lvert d\rvert=1}e_{h(c)}B\otimes Be_{h(d)}\otimes e_{t(d)}Be_{t(c)};
\\
&\cong \bigoplus_{\lvert c\rvert=\lvert d\rvert=1} Be_{h(d)}\otimes e_{t(d)}Be_{t(c)}\otimes e_{h(c)}B ;
\\
&\cong \bigoplus_{\lvert c\rvert=\lvert d\rvert=1} BdBc^*B\cong \bigoplus_{\lvert c\rvert=\lvert d\rvert=1}BdB cB,
\end{aligned}
\label{isom-111}
\end{equation}
and we obtain that $\Hom_{B^\e}(E_1,B \otimes E_1)\cong \bigoplus_{\lvert c\rvert=\lvert d\rvert=1}BdB cB\cong E_1\otimes_B E_1$.
\end{proof}


Let $a,b $ be arrows of weight 1, and $r,q,p$ paths in $Q$ that
compose, that is,
\begin{equation}
h(p)=t(b), \quad h(b)=t(q),\quad  h(q)=h(a), \quad t(a)=h(r).
\label{natural-compatibility-cpnditions}
\end{equation}
Then we consider the path $ra^*qbp\in\bigoplus_{\lvert c\rvert=\lvert
  d\rvert=1}Bc^*BdB\cong E_1\otimes_B E_1$. By Lemma \ref{description
  E}, we need to determine an explicit basis of the $B$-bimodule
$\EEnd_{B^{\e}}(E_1)$; the image of the path $ra^*qbp$ under the
isomorphism \eqref{isom-111} (resp. \eqref{iso-222}) will be denoted
$[ra^*qbp]_2\in\Hom_{B^{\e}}(E_1 ,B\otimes E_1)$
(resp. $[ra^*qbp]_1\in\Hom_{B^{\e}}(E_1,E_1\otimes B)$). Focusing on
\eqref{iso-222},
\[
\xymatrix{
\oplus_{\lvert c\rvert=\lvert d\rvert=1}Be_{t(c)}\otimes e_{h(c)}Be_{h(d)}\otimes e_{t(d)}B \ar[d]^-{\cong}& re_{t(a)}\otimes e_{h(a)}qe_{h(b)}\otimes e_{t(b)}p \ar@{|->}[d]
\\
\oplus_{\lvert c\rvert=\lvert d\rvert=1} e_{h(c)}Be_{h(d)}\otimes e_{t(d)}B\otimes Be_{t(c)}  \ar[d]^-{\cong}& e_{h(a)}qe_{h(b)}\otimes e_{t(b)}p\otimes re_{t(a)} \ar@{|->}[d]
\\
\oplus_{\lvert c\rvert=\lvert d\rvert=1} \Hom_B(Be_{h(c)},Be_{h(d)}\otimes e_{t(d)}B)\otimes \Hom_{B^{\op}}(e_{t(c)}B,B) & f_{e_{h(a)}qe_{h(b)}\otimes e_{t(b)}p}\otimes f^\prime_{re_{t(a)}},
}
\]
where, by \eqref{isomorfismos-canonicos-isomorfismos-quivers}, we have for $s,s^\prime\in B$,
\begin{align*}
f_{e_{h(a)}qe_{h(b)}p}\colon Be_{h(c)}&\lto Be_{h(d)}\otimes e_{t(d)}B\colon\quad se_{h(a)}\longmapsto se_{h(a)}qe_{h(b)}\otimes e_{t(b)}p;
\\
f^\prime_{re_{t(a)}}\colon e_{t(c)}B&\lto B\colon\quad e_{t(a)}s^\prime\longmapsto re_{t(a)}s^\prime.
\end{align*}
Hence, the first isomorphism in \eqref{iso-222} enables us to write $[ra^*qbp]_1\in \Hom_{B^{\e}}(E_1,E_1\otimes B)$:
\begin{equation}
\left[ra^*qbp\right]_1\colon E_1\lto {}_BE_1\otimes B_B\colon\quad ses^\prime \longmapsto \delta_{ae}(se_{h(a)}qe_{h(b)}\otimes e_{t(b)}p)\otimes (re_{t(a)}s^\prime).
\label{nuevo-cuadrado-1}
\end{equation}

Finally, a generic element of a basis of $\EEnd_{B^{\e}}(E_1)$, in view of \eqref{nuevo-cuadrado-1} and \eqref{nuevo-cuadrado-2}, shall be written as
\begin{equation}
f=\sum_{\lvert a\rvert=\lvert b\rvert=1}\alpha_{ra^*qbp}[ra^*qbp]_1+\alpha^\prime_{ra^*qbp}[ra^*qbp]_2,
\label{descripcion-explicita-f-en-EEnd}
\end{equation}
where $\alpha_{ra^*qbp}, \alpha'_{ra^*qbp}\in\kk$.

\label{descripcion-explicita-f-en-EEnd}


 \subsection{Description of a basis of $\aad_{B^\e}(E_1)$}
 \label{sub:basis-in-aad}

 In this subsection, we will describe a basis of
 $\aad_{B^{\e}}(E_1)$. Note that $f\in \aad_{B^{\e}}(E_1)$ if and only
 if $f\in \EEnd_{B^{\e}}(E_1)$ (see
 \eqref{descripcion-explicita-f-en-EEnd}) satisfies the following
 additional condition for all $a,b$ arrows of weight 1:
 \begin{equation}
 \langle a,f(b)\rangle_L=-\sigma_{(132)}\langle b,f(a)^\circ\rangle_L,
 \label{objetivo}
 \end{equation}
where $\sigma_{(123)}\colon B\otimes B\otimes B\to B\otimes B\otimes B \colon b_1\otimes b_2\otimes b_3\mapsto  b_2\otimes b_3\otimes b_1$.

 \begin{lemma}
 A basis of $\aad_{B^\e}(E_1)$ consists of the elements
%
%
\begin{equation}
\varepsilon(b)[ra^*qbp]_2-\varepsilon(a)[ra^*qbp]_1,
 \label{basis of aad1}
\end{equation}
where $p,q,r$ are paths in $Q$ and $a,b$ arrows of weight 1, which satisfy the following compatibility conditions:
\[
h(p)=t(b),\quad h(b)=t(q),\quad h(q)=h(a),\quad t(a)=t(r).
\]
 \label{basis of aad}
 \end{lemma}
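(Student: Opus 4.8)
The plan is to plug the explicit description of a general $R$-linear double endomorphism of $E_1$ into the defining relation of $\aad_{B^\e}(E_1)$ and to read off the resulting linear conditions on its coordinates in the basis $\{[ra^*qbp]_1,[ra^*qbp]_2\}$ of $\EEnd_{B^\e}(E_1)$ supplied by Lemma~\ref{description E} and \eqref{descripcion-explicita-f-en-EEnd}. Concretely, by \eqref{descripcion-explicita-f-en-EEnd} every $f\in\EEnd_{B^\e}(E_1)$ has the form $f=\sum_{\lvert a\rvert=\lvert b\rvert=1}\bigl(\alpha_{ra^*qbp}\,[ra^*qbp]_1+\alpha'_{ra^*qbp}\,[ra^*qbp]_2\bigr)$ with scalars $\alpha_{ra^*qbp},\alpha'_{ra^*qbp}\in\kk$, the sum running over the paths $ra^*qbp$ of $\dPP$ built from paths $p,q,r$ in $Q$ and weight-$1$ arrows $a,b$ satisfying the compatibility conditions of the statement (these are exactly the conditions making $ra^*qbp$ a genuine path of $\dPP$). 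By \eqref{aad}, $f$ lies in $\aad_{B^\e}(E_1)$ if and only if it obeys \eqref{objetivo}, namely $\langle a,f(b)\rangle_L=-\sigma_{(132)}\langle b,f(a)^\circ\rangle_L$ for all weight-$1$ arrows $a,b$; thus the whole task reduces to evaluating the two sides of this identity on the basis vectors and comparing coefficients in $B^{\otimes 3}$.

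I would carry out this evaluation with the explicit formulae \eqref{nuevo-cuadrado-1}, \eqref{nuevo-cuadrado-2} for $[ra^*qbp]_1$ and $[ra^*qbp]_2$, the pairing \eqref{inner quivers} — which, because $\widetilde{a^*}$ is the dual element of Lemma~\ref{Casimir}, vanishes on every summand $BcB$ of $E_1$ with $c\neq a^*$ and takes the value $\langle a,a^*\rangle=\varepsilon(a)\,e_{t(a)}\otimes e_{h(a)}$ otherwise — together with the extensions $\langle-,-\rangle_L$ of \eqref{pairing extendido} and \eqref{extension-pairing-primer-argeumento}, the permutation $\sigma_{(132)}$, and the flip $(-)^\circ$. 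The structural fact that makes this tractable is that $\langle a,-\rangle$ only detects the $a^*$-summand of $E_1$: on the left-hand side this pins down a unique contributing term in $f(b)$, and likewise a unique one in $f(a)$ on the right-hand side, so that evaluating and comparing coefficients turns \eqref{objetivo} into an explicit linear system in the $\alpha_\bullet,\alpha'_\bullet$. Working this system out — this is where the combinatorial bookkeeping concentrates — one finds that its only relations are, for each admissible path $ra^*qbp$,
\[
\varepsilon(b)\,\alpha_{ra^*qbp}+\varepsilon(a)\,\alpha'_{ra^*qbp}=0,
\]
the two $\varepsilon$-signs entering through \eqref{inner quivers} and through $\sigma_{(132)}$ and $(-)^\circ$ on the right.

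It then follows that $f\in\aad_{B^\e}(E_1)$ precisely when, for each such path, $(\alpha_{ra^*qbp},\alpha'_{ra^*qbp})$ is a scalar multiple of $(-\varepsilon(a),\varepsilon(b))$; hence the elements $\varepsilon(b)[ra^*qbp]_2-\varepsilon(a)[ra^*qbp]_1$ of \eqref{basis of aad1} span $\aad_{B^\e}(E_1)$, and they are linearly independent because distinct paths $ra^*qbp$ involve disjoint subsets of the basis of $\EEnd_{B^\e}(E_1)$, so they form a basis, as claimed. I expect the main obstacle to be precisely the sign and permutation bookkeeping in the second step: tracking the two $\varepsilon$-factors, the actions of $\sigma_{(123)}$, $\sigma_{(132)}$ on $B^{\otimes 3}$ and of $(-)^\circ$, and — just as important — checking that the pairing really does kill all cross-terms coupling distinct indexing paths, since it is this that lets \eqref{objetivo} decouple into the single relation displayed above.
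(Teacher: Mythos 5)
Your proposal follows the same route as the paper's proof: expand a general $f\in\EEnd_{B^\e}(E_1)$ in the basis $\{[ra^*qbp]_1,[ra^*qbp]_2\}$ from \eqref{descripcion-explicita-f-en-EEnd}, impose the defining condition \eqref{objetivo} of $\aad_{B^\e}(E_1)$ by evaluating both sides on weight-1 arrows via the pairing \eqref{inner quivers}, and read off one linear relation per indexing path. Although you assert rather than fully carry out that evaluation, the relation you state, $\varepsilon(b)\alpha_{ra^*qbp}+\varepsilon(a)\alpha'_{ra^*qbp}=0$, is exactly the one the paper derives (there written as $\alpha_{rb^*qap}=-\tfrac{\varepsilon(b)}{\varepsilon(a)}\alpha'_{rb^*qap}$), and your concluding linear-algebra step is correct.
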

 \begin{proof}
To prove \eqref{objetivo}, we will write explicitly $f(b)$ and $f(a)^\circ$ for $a,b$ some arrows of weight 1. By \eqref{descripcion-explicita-f-en-EEnd}, 
 \begin{align*}
 f(b)&=\sum_{\lvert c\rvert=\lvert d\rvert=1}\alpha_{rc^*qdp}[rc^*qdp]_1(b)+\alpha^\prime_{rc^*qdp}
 [rc^*qdp]_2(b)
 \\
 &=\sum_{\lvert d\rvert=1}(\alpha_{rb^*qdp}e_{h(b)}qe_{h(d)}\otimes e_{t(d)}p\otimes re_{t(b)}+\alpha^\prime_{rb^*qdp}e_{h(b)}p\otimes re_{t(a)}\otimes e_{h(a)}qe_{t(b)}).
 \end{align*}
Next, we can compute the left hand side of \eqref{objetivo} (using \eqref{inner quivers} and \eqref{pairing extendido}):
\begin{align*}
\langle a,f(b)\rangle_L&=
\langle a, \sum_{\lvert d\rvert=1}\alpha_{rb^*qdp}e_{h(b)}qe_{h(d)}\otimes e_{t(d)}p\rangle \otimes re_{t(b)}
\\
&=\varepsilon(a) \alpha_{rb^*qap}e_{h(b)}qe_{t(a)}\otimes e_{h(a)}p\otimes re_{t(b)}.
\end{align*}
Let $h\in E_1$ and $s,s^\prime\in B$. Then using the maps
 \[
 [rc^*qdp]^\circ_{1}\colon E_1\lto B_B\otimes{}_B E_1\colon shs^\prime\longmapsto \delta_{dh}(re_{t(c)}s^\prime)\otimes (se_{h(c)}qe_{h(d)}\otimes e_{t(d)}p)
 \]
and
\[
   [rdqc^*p]^\circ_{2}\colon E_1\lto (E_1)_{B}\otimes {}_BB\colon shs^\prime\longmapsto  \delta_{dh} (re_{t(c)}\otimes e_{h(c)}qe_{t(d)}s^\prime)\otimes (se_{h(d)}p),
 \]
and the fact that $(-)^\circ$ is linear, we can calculate
\begin{align*}
f(a)^\circ &=\left(\sum_{\lvert c\rvert=\lvert d\rvert=1}(\alpha_{rc^*qdp}[rc^*qdp]_1(a)+\alpha^\prime_{rc^*qdp}[rc^*qdp]_2(a))\right)^\circ
\\
&=\sum_{\lvert c\rvert=\lvert d\rvert=1}(\alpha_{rc^*qdp}[rc^*qdp]^\circ_1(a)+\alpha^\prime_{rc^*qdp}[rc^*qdp]^\circ_2(a))
\\
&=\sum_{\lvert c\rvert=1} \alpha_{rc^*qap}re_{t(c)}\otimes e_{h(c)}qe_{h(a)}\otimes e_{t(a)}p+\alpha^{\prime}_{rc^*qap}re_{t(c)}\otimes e_{h(c)}qe_{t(a)}\otimes e_{h(a)}p.
\end{align*}
Then we compute $\sigma_{(132)}\langle b,f(a)^\circ\rangle_L$:
\begin{align*}
\sigma_{(132)}\langle b,f(a)^\circ\rangle_L&=\sigma_{(132)}\sum_{\lvert c\rvert=1}\langle b,\alpha^\prime_{rc^*qap}rc^*qe_{t(a)}\rangle \otimes e_{h(a)}p
\\
&=\sigma_{(132)}\alpha^\prime_{rb^*qap}\varepsilon(b) re_{t(b)}\otimes e_{h(b)}qe_{t(a)}\otimes e_{h(a)}p
\\
&=e_{h(b)}qe_{t(a)}\otimes e_{h(a)}p\otimes \alpha^\prime_{rb^*qap}\varepsilon(b) re_{t(b)}
\end{align*}
Therefore, by \eqref{objetivo}, we obtain the condition
\[
\alpha_{rb^*qap}=-\frac{\varepsilon(b)}{\varepsilon(a)}\alpha^\prime_{rb^*qap},
\]
from which we conclude that \eqref{basis of aad1} is a basis of $\aad_{B^\e}(E_1)$.
\qedhere

\end{proof}

Observe that the above descriptions of $\aad_{B^\e}(E_1)$ and $E_1\otimes_B E_1$ given in Lemma \ref{basis of aad} and \eqref{EOE1} provide an isomorphism between these $B$-bimodules:
\begin{equation}
\begin{aligned}
E_1\otimes_B E_1 &\lto \aad_{B^\e}(E_1)
\\
ra^*qbp&\longmapsto \varepsilon(b)[ra^*qbp]_1-\varepsilon(a) [ra^*qbp]_2.
\label{left isomorphism NC}
\end{aligned}
\end{equation}

\subsection{The isomorphism $\Psi\vert_{E_1\otimes_B E_1}$}

We can now compute $\Psi\vert_{E_1\otimes_B E_1}$ at the basis
elements.

\begin{lemma}
\label{lem:isom-restricted-Psi}
$\Psi$ restricts to an isomorphism $\Psi\colon E_1\otimes_B E_1\lra{\cong} \aad_{B^\e}(E_1)$, given by
\[
\Psi(ra^*qbp)=\varepsilon(b)[ra^*qbp]_2-\varepsilon(a)[ra^*qbp]_1,
\]
where $p,q,r$ are paths in $Q$ and $a,b$ arrows of weight 1, that
compose, that is,
\[
h(p)=t(b),\quad h(b)=t(q),\quad h(q)=h(a),\quad t(a)=t(r).
\]
\end{lemma}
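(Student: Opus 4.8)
The plan is to compute $\Psi(ra^*qbp)$ directly by unwinding the definition of $\Psi_1$ in terms of the double Poisson bracket $\lr{-,-}_\omega$ and then comparing with the explicit bases of $\aad_{B^\e}(E_1)$ found in Lemma~\ref{basis of aad} and of $\EEnd_{B^\e}(E_1)$ found in Lemma~\ref{description E}. First I would recall that $ra^*qbp$ lives in $E_1\otimes_B E_1$ via the identification $E_1\otimes_B E_1\cong\bigoplus_{\lvert c\rvert=\lvert d\rvert=1}Bc^*BdB$ of~\eqref{EOE1}, and that by Proposition~\ref{map of LR} the pair $\Psi=(\Psi_1,\Psi_2)$ is already known to be a morphism of twisted double Lie--Rinehart algebras; in particular $\Psi$ carries the twisting subbimodule $E_1\otimes_B E_1\subset A^2$ into $\aad_{B^\e}(E_1)$, so the only task is to pin down the formula on generators and then to observe it agrees, up to the sign dictated by $\varepsilon$, with the isomorphism~\eqref{left isomorphism NC}, whence bijectivity is immediate.

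Concretely, I would take $a,b\in\PP_1$ of weight $1$ and paths $p,q,r$ in $Q$ satisfying the compatibility conditions, regard the element $ra^*qbp$ as $a^*\otimes b$ suitably bordered and sandwiched by the idempotents, and compute $\Psi_1(ra^*qbp)=\mathbb{T}_{ra^*qbp}=\lr{ra^*qbp,-}_{\omega,0}$ on $E_1$ using the Leibniz rule~\eqref{graded-Leibniz-Poisson} and the base-case formula $\lr{a,b}_\omega=\langle a,b\rangle=\varepsilon(a)\widetilde{a^*}(b)$ from Lemma~\ref{non-degenerate pairing}(i). The reverse arrow $a^*$ contributes through $\lr{a^*,-}_\omega$, which by the definition of the canonical bracket~\eqref{P} and Example~\ref{DDP-doubled-graded-quivers} detects the arrow $a$; the two ``halves'' of the double bracket — landing in $E_1\otimes B$ and in $B\otimes E_1$ respectively — are exactly the two summands $[ra^*qbp]_1$ and $[ra^*qbp]_2$ of the explicit basis of $\EEnd_{B^\e}(E_1)$ introduced after Lemma~\ref{description E}. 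Carrying the signs $\varepsilon(a),\varepsilon(b)$ through~\eqref{varepsilon} and matching with~\eqref{nuevo-cuadrado-1} and~\eqref{nuevo-cuadrado-2} gives
\[
\Psi(ra^*qbp)=\varepsilon(b)[ra^*qbp]_2-\varepsilon(a)[ra^*qbp]_1 .
\]
Finally, comparing this with the $B$-bimodule isomorphism~\eqref{left isomorphism NC} (which differs only by swapping the two summands) shows $\Psi\vert_{E_1\otimes_B E_1}$ sends a basis to a basis of $\aad_{B^\e}(E_1)$ by Lemma~\ref{basis of aad}, hence is an isomorphism.

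The main obstacle I anticipate is bookkeeping of the Koszul and $\varepsilon$-signs: one must be careful that the bigraded Koszul rule~\eqref{eq:bigraded-Koszul} applied inside $\lr{a^*,\cdot}$ — where $a^*$ has weight $N-\lvert a\rvert=1$ — and the factors $\varepsilon(a^*)=-\varepsilon(a)$ from~\eqref{varepsilon} combine to produce precisely the relative sign $\varepsilon(b)$ versus $-\varepsilon(a)$ in the stated formula, and not some permuted variant. A secondary subtlety is verifying that the two components of $\lr{ra^*qbp,e}_\omega$ for $e$ an arrow of weight $1$ are correctly identified with $[ra^*qbp]_1$ (the $E_1\otimes B$ part) and $[ra^*qbp]_2$ (the $B\otimes E_1$ part) under the chains of canonical isomorphisms~\eqref{iso-222} and~\eqref{isom-111}; this is where our right-to-left composition convention (see Example~\ref{derivaciones-quivers}) must be tracked consistently. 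Once these signs are settled the rest is routine.
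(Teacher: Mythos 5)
Your proposal is correct and follows essentially the same route as the paper: reduce $\lr{ra^*qbp,-}_\omega$ on a weight-1 arrow to the arrow-level brackets $\lr{c,b}_\omega$ and $\lr{c,a^*}_\omega$ via the graded Leibniz rule and skew-symmetry, evaluate those from the canonical bi-symplectic/double Poisson structure of $\dPP$, identify the two resulting components with $[ra^*qbp]_1$ and $[ra^*qbp]_2$, and conclude bijectivity by matching against the basis of $\aad_{B^\e}(E_1)$ from Lemma~\ref{basis of aad}. The only caveat is that the sign bookkeeping you flag as the main obstacle is precisely the substantive content of the paper's computation, so it would need to be carried out explicitly rather than just anticipated.
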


\begin{proof}
First, we note that $\lr{-,-}_{\omega}$ is a double Poisson bracket of
weight -2, so $\mathbb{X}_{ra^*qbp}(b^\prime)=0$ for all $b^\prime\in B$,
because by a simple application of the Leibniz rule and \eqref{XX},
\[
\mathbb{X}_{ra^*qbp}(b^\prime)=-\sigma_{(12)}\left(ra^*\lr{b^\prime,qbp}_\omega+\lr{b^\prime,ra^*}_\omega qbp\right)=0.
\]
We can apply similarly the (graded) Leibniz rule when $c$ is an arrow of weight 1:
\begin{equation}
\begin{aligned}
\mathbb{D}_{ra^*qbp}(c)&=\lr{ra^*qbp,c}_\omega
\\
&=-\sigma_{(12)}\lr{c,ra^*qbp}_\omega
\\
&=-\sigma_{(12)}\(ra^*q\lr{c,b}_\omega p+r\lr{c,a^*}_\omega qbp\) .
\end{aligned}
\label{DDDD}
\end{equation}
To compute $\lr{c,b}_\omega$ and $\lr{c,a^*}_\omega$ we will need an
explicit description of the \emph{differential double Poisson bracket}
$P\in(\T_A\D_BA)_2$ (see \secref{sub:DDP}). 
%
Recall that in our convention, arrows compose from to left.
Applying now Propositions 
\ref{P} and \ref{Schouten-Nijenhuis-quivers}, we compute
\begin{equation}
\begin{aligned}
\lr{c,b}_\omega&=\lr{c,\{P,b\}}_L
=-\lr{c,\varepsilon(b)\frac{\partial}{\partial b^*}}_\omega
\\&
=-\varepsilon(b)\sigma_{(12)}\lr{\frac{\partial}{\partial b^*},c}_\omega
=-\varepsilon(b)e_{h(b)}\otimes e_{t(b)},
\end{aligned}
\label{Pb}
\end{equation}
%
where $\{-,-\}$ is the associated bracket to $\lr{-,-}_\omega$.
Replacing $b$ by $a^*$ in \eqref{Pb}, we obtain
\begin{equation}
\begin{aligned}
\lr{c,a^*}_\omega
=\varepsilon(a)e_{t(a)}\otimes e_{h(a)}.
\end{aligned}
\label{Pa}
\end{equation}
Using now \eqref{Pb} and \eqref{Pa} in \eqref{DDDD}, we conclude
\begin{align*}
\mathbb{D}_{ra^*qbp}(c)&=-\sigma_{(12)}(\varepsilon(b)ra^*qe_{h(b)}\otimes e_{t(b)}p-\varepsilon(a)re_{t(a)}\otimes e_{h(a)}qbp)
\\
&=\varepsilon(b)e_{t(b)}p\otimes re_{t(a)}\otimes e_{h(a)}qe_{h(b)}-\varepsilon(a)e_{h(a)}qe_{h(b)}\otimes e_{t(b)}p\otimes re_{t(a)}
\end{align*}
Therefore, we obtain the formula in the statement of Lemma~\ref{lem:isom-restricted-Psi}.
\end{proof}


\section{Non-commutative Courant algebroids}
\label{sub:DoubleCourant}

In this section, we determine the non-commutative geometric structures
associated to a bi-symplectic $\mathbb{N}Q$-algebra, namely, a graded
bi-symplectic tensor $\mathbb{N}$-algebra $(A,\omega)$ equipped with a
homological double derivation $Q$. We will focus on bi-symplectic
$\mathbb{N}Q$-algebras of weight 2 attached to a double graded quiver
$\dPP$, obtaining in this case a non-commutative analogue of
\cite[Theorem 4.5]{Roy00}, expressible in terms of the so-called
double Courant algebroids over the path algebra of the weight 0
subquiver of $\dPP$, as introduced
in~\secref{definition-Courant-double}.

\subsection{Definition of double Courant algebroids}
\label{definition-Courant-double}

Recall that, in the the setting of Framework \ref{framework-general},
the data of the bi-symplectic tensor $\mathbb{N}$-algebra $(A,\omega)$
of weight 2 is equivalent to a pair $(E,\langle-,-\rangle)$, where
$E\defeq E_1$ is a projective finitely generated $B$-bimodule and
$\langle-,-\rangle$ is the symmetric non-degenerate pairing defined in
\eqref{inner quivers} (see Theorem~\ref{ChapterIV}).

\begin{definition}
Let $R$ be a finite-dimensional semisimple associative algebra and 
$B$ a smooth $R$-algebra. A \emph{double pre-Courant algebroid
  over} $B$ is a 4-tuple $(E,\langle-,-\rangle,\rho,\cc{-,-})$
consisting of a projective finitely generated $B$-bimodule $E$ endowed
with a symmetric non-degenerate pairing, called the \emph{inner product},
\[
\langle-,-\rangle\,\colon E\otimes E\lto B\otimes B,
\]
a $B$-bimodule morphism
\begin{equation}
\rho\,\colon E\lto \D_RB,
\label{anchor-Courant}
\end{equation}
 called the \emph{anchor}, and an operation
 \begin{equation}
\cc{-,-}\,\colon E\otimes E\lto E\otimes B\oplus B\otimes E,
\label{double-Dorfman}
\end{equation}
called the \emph{double Dorfman bracket}, that is $R$-linear for the
outer (resp. inner) bimodule structure on $B\otimes B$ in the second
(resp. first) argument. This data must satisfy the following
conditions:
\begin{subequations}
\label{eq:NCPreCourant}
\begin{gather}
\label{eq:NCPreCourant.a}
\cc{e_1,be_2}=\rho(e_1)(b) e_2+b\cc{e_1,e_2},
\\
\label{eq:NCPreCourant.b}
\cc{e_1,e_2b}=e_2 \rho(e_1)(b)+\cc{e_1,e_2}b,
\\
\label{eq:NCPreCourant.c}
\rho^\vee\du\, (\langle e_2,e_2\rangle)=2\left(\cc{e_2,e_2}+\cc{e_2,e_2}^\sigma\right),
\\
\label{eq:NCPreCourant.d}
\rho(e_1)(\langle e_2,e_2\rangle)=\langle\cc{ e_1,e_2},e_2\rangle_L+\langle e_2,\cc{e_1,e_2}\rangle_R
\end{gather}
\end{subequations}
for all $b\in B$ and $e_1,e_2\in E$.  If the bracket $\cc{-,-}$ satisfies the \emph{double Jacobi identity},
\begin{equation}
\cc{e_1,\cc{e_2,e_3}}_L=\cc{e_2,\cc{e_1,e_3}}_R+\cc{\cc{e_1,e_2},e_3}_L,
\label{Jacobi-CourantNC}
\end{equation}
for all $e_1,e_2,e_3\in E$, then $(E,\langle-,-\rangle,\rho,\cc{-,-})$ is called a \emph{double Courant algebroid}.
\label{definitionpreCourantNC}
\end{definition}

As usual, the products in \eqref{eq:NCPreCourant.a} and
\eqref{eq:NCPreCourant.b} are taken with respect to the outer bimodule
structure. In \eqref{eq:NCPreCourant.c}, the universal derivation
$\du\,\colon B\to \Omega^1_RB$ acts on tensor products by the Leibniz
rule. Furthermore, $\rho^\vee\,\colon \Omega^1_R B\to E$ is composite
\[
 \xymatrix{
 \rho^\vee\,\colon\diff B\ar[rr]^-{\texttt{bidual}} && (\D_R B)^\vee\ar[rr]^-{\Hom_{B^\e}(\rho,{}_{B^\e}B^{\e})}\ar[rr]&& E^\vee\ar[rr]^-\cong && E,
 }
\]
where the first map was defined in \eqref{bidual}, and the isomorphism
between $E$ and $E^\vee$ is induced by the inner product
$\langle-,-\rangle$. As for commutative Courant
algebroids~\cite{LWX97}, given $b\in B$, $e\in E$, we use of the
identification
\[
\langle e,\rho^\vee\du b\rangle=\rho(e)(b).
\]
Finally, we use the notation $(-)^\sigma\defeq\sigma_{(12)}(-)$
(see~\eqref{eq:permutation-tau}).

Given $e_1,e_2\in E$, we perform the following decomposition of the double Dorfman bracket:
\begin{align*}
\cc{e_1,e_2}&=\cc{e_1,e_2}^l+\cc{e_1,e_2}^r
\\
&=\cc{e_1,e_2}^{l^\prime}\otimes\cc{e_1,e_2}^{l^{\prime\prime}}+\cc{e_1,e_2}^{r^\prime}\otimes\cc{e_1,e_2}^{r^{\prime\prime}}\in E\otimes B\oplus B\otimes E,
\end{align*}
with $\cc{e_1,e_2}^{l^\prime},\cc{e_1,e_2}^{r^{\prime\prime}}\in E$, and $\cc{e_1,e_2}^{r^\prime},\cc{e_1,e_2}^{l^{\prime\prime}}\in B$. Then in  \eqref{eq:NCPreCourant.c},
\[
\cc{e_2,e_2}^\sigma=\sigma_{(12)}(\cc{e_2,e_2})=\cc{e_2,e_2}^{l^{\prime\prime}}\otimes\cc{e_2,e_2}^{l^{\prime}}+\cc{e_2,e_2}^{r^{\prime\prime}}\otimes\cc{e_2,e_2}^{r^{\prime}}\in B\otimes E\oplus E\otimes B .
\]

In \eqref{eq:NCPreCourant.d}, if $\langle e_2,e_2\rangle=\langle
e_2,e_2\rangle^\prime\otimes \langle e_2,e_2\rangle^{\prime\prime}\in
B\otimes B$, $\rho(e_1)(\langle e_2,e_2\rangle)=\rho(\langle
e_2,e_2\rangle^\prime)\otimes \langle
e_2,e_2\rangle^{\prime\prime}$. The notation $\langle-,-\rangle_L$ and
$\langle-,-\rangle_R$ was defined in \eqref{extension-pairing-right}
and \eqref{extension-pairing-primer-argeumento}, respectively.

Finally, in the double Jacobi identity \eqref{Jacobi-CourantNC}, we
used the following extensions of the double Dorfman bracket
\begin{align*}
\cc{e_1,e_2\otimes b}_L&=\cc{e_1,e_2}\otimes b, \quad \cc{e_1,b\otimes e_2}_L=\cc{e_1,b}\otimes e_2,
\\
 \cc{e_1,e_2\otimes b}_R &=e_2\otimes \cc{e_1,b},\quad \cc{e_1,b\otimes e_2}_R=b\otimes\cc{e_1,e_2},
\\
\cc{e_1\otimes b,e_2}_L&=\cc{e_1,e_2}\otimes_1 b,\quad \cc{b\otimes e_1,e_2}_L=\cc{b,e_2}\otimes_1 e_2,
\end{align*}
where in the last two identities, we used the inner $\otimes$-product
in~\eqref{jumping-notation}.

\subsection{Bi-symplectic $\mathbb{N}Q$-algebras}

Let $R$ be an associative algebra, and
$B$ an $R$-algebra. We will add now more structure to the data of
Definition \ref{def-tensor-N-algebra}.

\begin{definition}
\begin{enumerate}
\item [\textup{(i)}]
An \emph{associative} \emph{$\mathbb{N}Q$-algebra} $(A,Q)$ over $B$ is an associative
$\mathbb{N}$-algebra $A$ of weight $N$ over $B$ endowed with a double
derivation $Q\colon A\to A\otimes A$ of weight +1 which is
\emph{homological}, that is,
\[
\lr{Q,Q}=0,
\]
where $\lr{-,-}$ is the double Schouten--Nijenhuis bracket. 
\item [\textup{(ii)}]
 A \emph{tensor $\mathbb{N}Q$-algebra} $(A,Q)$ over $B$ is an associative
$\mathbb{N}Q$-algebra over $B$ whose underlying associative $\mathbb{N}$-algebra
is a tensor $\mathbb{N}$-algebra over $B$.
\item [\textup{(iii)}]
A \emph{bi-symplectic $\mathbb{N}Q$-algebra $(A,\omega, Q)$ of weight}
$N$ is a tensor $\mathbb{N}Q$-algebra $(A,Q)$ over $B$, endowed with a
graded bi-symplectic form $\omega\in\DR{2}(A)$ of weight $N$, such that
\begin{enumerate}
\item [\textup{(a)}] the underlying tensor $\mathbb{N}$-algebra $A$
  over $B$, equipped with the graded bi-symplectic form $\omega$, is a
  bi-symplectic tensor $\mathbb{N}$-algebra of weight $N$ over $B$;
\item [\textup{(b)}] the homological double derivation $Q$ is
  bi-symplectic, that is,
\[
\mathcal{L}_Q\omega=0,
\]
where $\mathcal{L}_Q$ is the reduced Lie derivative.
\end{enumerate}
\end{enumerate}
\label{Defintion-NQ-algebra-bisymp}
\end{definition}

\subsection{Bi-symplectic $\mathbb{N}Q$-algebras and double Courant algebroids}

We will now explore the interplay between bi-symplectic
$\mathbb{N}Q$-algebras of weight 2 and double Courant algebroids. As
for commutative manifolds, by Lemma \ref{hamiltonian7}, a
bi-symplectic double derivation is a Hamiltonian double derivation, so
$Q=\lr{S,-}_\omega$, where $\lr{-,-}_\omega$ is the double Poisson
bracket of weight -2 induced by $\omega$ and $S\in A^3$ is a `cubic'
non-commutative polynomial. Then $S$ encodes the structure of a double
pre-Courant algebroid, recoverable via a non-commutative version of
derived brackets and, if in addition, $\{S,S\}_\omega=0$ (here
$\{-,-\}_\omega$ denotes the associated bracket to $\lr{-,-}_\omega$),
then we will have a double Courant algebroid.

Let $(A,\omega, Q)$ be a bi-symplectic $\mathbb{N}Q$-algebra of weight
2 over $B$. In particular, $Q$ is a homological double derivation,
that is,
\[
 \lvert Q\rvert=1,\quad \mathcal{L}_Q\omega=0, \quad \lr{Q,Q}=0,
\]
 where $\lr{-,-}$ is the graded double Schouten--Nijenhuis bracket on
 $\T_A\D_R A$.

\begin{lemma}
The identity $\lr{Q,Q}=0$ is equivalent to $\lr{S,S}_\omega=0$.
\label{lema-equiv-Hamilt}
\end{lemma}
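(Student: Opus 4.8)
The plan is to establish the equivalence by relating the double Schouten--Nijenhuis bracket of $Q$ with itself to the associated bracket of the Hamiltonian generator $S$, exploiting the fact that $Q=\lr{S,-}_\omega$ is the Hamiltonian double derivation of the cubic element $S\in A^3$. First I would recall that since $\omega$ is a bi-symplectic form of weight $2$ and $Q$ is a bi-symplectic homological double derivation, Lemma~\ref{hamiltonian7}(ii) applies (here $j=2$, $l=1$, so $j+l=3\neq 0$), giving $Q=H_S$ for some $S$, which by weight reasons must lie in $A^3$. Then by Lemma~\ref{bi-symp-double-Poisson} the bi-symplectic form $\omega$ induces a double Poisson bracket $\lr{-,-}_\omega$ of weight $-2$, and by \eqref{10.333} we have $Q(b)=H_S(b)=\lr{S,b}_\omega$ for all $b\in A$.

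The key step is to transport the identity $\lr{H_a,H_b}=H_{\lr{a,b}}$ from Lemma~\ref{intercambio}: since $\lr{-,-}_\omega$ is a double Poisson bracket, that lemma (its part (ii)) gives $\lr{H_S,H_S}=H_{\lr{S,S}_\omega}$, i.e.\ $\lr{Q,Q}=H_{\lr{S,S}_\omega}$, where on the right-hand side $H$ is extended to $A\otimes A$ by the Leibniz rule as in the statement of Lemma~\ref{intercambio}. Now I would use that the assignment $x\mapsto H_x$ is injective: because $\omega$ is bi-symplectic, the map $\iota(\omega)\colon\D_RA\to\Omega^1_RA[-N]$ is an isomorphism, and $H_a$ is \emph{defined} by $\iota_{H_a}\omega=\du a$; hence $H_a=0$ forces $\du a=0$, which on a tensor $\mathbb{N}$-algebra (with trivial de Rham cohomology of $\Omega^\bullet_RA$) means $a$ is constant, i.e.\ $a\in R$, and one checks $a$ must in fact vanish in positive weight. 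Applying this to the components of $\lr{S,S}_\omega\in A\otimes A$, the vanishing of $\lr{Q,Q}=H_{\lr{S,S}_\omega}$ is equivalent to the vanishing of $\lr{S,S}_\omega$ modulo constants; and since $\{S,S\}_\omega=m\circ\lr{S,S}_\omega$ and $S$ has weight $3$, the element $\lr{S,S}_\omega$ has weight $3\cdot 2-2=4>0$, so there are no constant contributions to discard. Thus $\lr{Q,Q}=0\iff\lr{S,S}_\omega=0$.

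The main obstacle I anticipate is handling signs and the precise meaning of the right-hand side of the equivalence: the statement writes $\lr{S,S}_\omega$ but via the associated bracket $\{-,-\}_\omega$ one more naturally obtains $\{S,S\}_\omega$, and one must be careful that the double Schouten--Nijenhuis bracket $\lr{Q,Q}$ has weight $2\cdot 1-1=1$ matching the weight of $H_{\lr{S,S}_\omega}$ (since $\lr{S,S}_\omega$ has weight $4$ and $\omega$ has weight $2$, so $H_{\lr{S,S}_\omega}$ has weight $4-2=2$---I would need to recheck this bookkeeping, possibly the relevant comparison is between $\{Q,Q\}$-type reduced brackets and $\{S,S\}_\omega$, cf.\ \eqref{DDP} and Definition~\ref{DDP-def}). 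A secondary subtlety is that Lemma~\ref{intercambio} is stated for double Poisson brackets on a fixed algebra, so I should make sure the identification $\lr{H_a,H_b}=H_{\lr{a,b}}$ is being used in the algebra $A$ with its Schouten--Nijenhuis bracket on $\T_A\D_RA$, and that the reduction modulo commutators $[\T_A\D_RA,\T_A\D_RA]$ appearing in Definition~\ref{DDP-def} corresponds exactly to reduction modulo $[A,A]$ under the Hamiltonian correspondence. Once these sign and weight verifications are in place, the equivalence follows formally from injectivity of $a\mapsto H_a$ together with Lemma~\ref{intercambio}.
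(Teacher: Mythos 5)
Your proof follows the paper's argument essentially verbatim: identify $Q=H_S$ with $S\in A^3$ via Lemma~\ref{hamiltonian7}\textup{(ii)} and a weight count, apply Lemma~\ref{intercambio} to get $\lr{Q,Q}=H_{\lr{S,S}_\omega}$, and conclude from the fact that $H_x=0$ forces the components of $x$ into $R$ while $\lr{S,S}_\omega$ has weight $4>0$. The weight discrepancy you flag ($1$ versus $2$) disappears once you note that the ``weight $-1$'' of the double Schouten--Nijenhuis bracket refers to the polyvector degree on $\T_A\D_R A$, not the internal weight inherited from the grading of $A$, so both sides of $\lr{Q,Q}=H_{\lr{S,S}_\omega}$ carry internal weight $2$ and the bookkeeping is consistent.
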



\begin{proof}
  By Lemma \ref{hamiltonian7}\textup{(ii)}, $\iota_Q\omega=\du S$ for
  some $S\in A$. It is easy to see that $S\in A^3$ because
  $\lvert\lr{-,-}_\omega\rvert=-2$, whereas $\lvert Q\rvert=+1$.
%
%
  For all $a\in A$, we see that $\lr{S,a}_\omega=Q(a)$ implies $\lvert
  S\rvert=\lvert Q\rvert-\lvert\lr{-,-}_\omega\rvert=3$, that is,
  $S\in A^3$.  Now, the identity $H_{\lr{a,b}_{\omega}}=\lr{H_a,H_b}$
  of Proposition \ref{intercambio} applied to $a=b=S$ gives
\[
H_{\lr{S,S}_{\omega}}=\lr{Q,Q}.
\]
Therefore
the identity $\lr{Q,Q}=0$ is equivalent to $\lr{S,S}_\omega\in
B\otimes B$ because, by \eqref{hamiltonian-double-deriv},
$H_{\lr{S,S}_{\omega}}=0$ implies $0=\du\,(\lr{S,S}_{\omega})=
(\du\lr{S,S}_{\omega}^\prime)\otimes
\lr{S,S}_{\omega}^{\prime\prime}+\lr{S,S}_{\omega}^\prime\otimes( \du
\lr{S,S}_{\omega}^{\prime\prime})$, which implies
$\lr{S,S}_{\omega}^{\prime},\lr{S,S}_{\omega}^{\prime\prime}\in
R$. Finally, $\lvert\lr{S,S}_{\omega}\rvert=0$, as $B$ is an
associative $R$-algebra.
 However, $\lr{S,S}_\omega$ has weight 4 because $\lvert S\rvert=3$, so $\lr{S,S}_\omega=0$.
\end{proof}

By the following result, double pre-Courant algebroids can be
recovered using non-commutative derived brackets.

\begin{proposition}
Every weight 3 function $S\in A^3$ induces a double pre-Courant
algebroid structure on $(E,\langle-,-\rangle)$, given by
\begin{subequations}
\label{eq:DerivadoNC}
\begin{gather}
\label{eq:DerivadoNC.a}
\rho(e_1)(b)\defeq\lr{\{S,e_1\}_\omega,b}_\omega,
\\
\label{eq:DerivadoNC.b}
\cc{\;e_1,e_2\;}\defeq\lr{\{S,e_1\}_\omega,e_2}_\omega,
\end{gather}
\end{subequations}
for all $b\in B $ and $e_1,e_2\in E$. Here, $\{-,-\}_\omega=m\circ\lr{-,-}_\omega$ is the (graded) \emph{associated bracket} in $A$ (see \eqref{corchete asociado}).
\label{corchetes-derivados-NC}
\end{proposition}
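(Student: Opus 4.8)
The plan is to verify directly that the operations $\rho$ and $\cc{-,-}$ defined by the derived-bracket formulae~\eqref{eq:DerivadoNC.a} and~\eqref{eq:DerivadoNC.b} satisfy the five axioms~\eqref{eq:NCPreCourant.a}--\eqref{eq:NCPreCourant.d} (together with the well-definedness of $\rho$ as a $B$-bimodule morphism into $\D_RB$), using exclusively the algebraic properties of the weight $-2$ double Poisson bracket $\lr{-,-}_\omega$ established earlier: the graded Leibniz rule~\eqref{graded-Leibniz-Poisson} in the second argument, the graded skew-symmetry~\eqref{skew-symmetry-double}, the graded double Jacobi identity~\eqref{graded-Jacobi-double}, and the compatibility relations~\eqref{mix-corchetes}, \eqref{LodayLoday} between $\lr{-,-}_\omega$ and its associated bracket $\{-,-\}_\omega$. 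Throughout one keeps careful track of weights: since $\lvert S\rvert=3$ and $\lvert\lr{-,-}_\omega\rvert=-2$, we have $\{S,e_i\}_\omega\in A^2$ for $e_i\in E=E_1=A^1$, so $\lr{\{S,e_1\}_\omega,-}_\omega$ restricted to $B=A^0$ lands in $A\otimes A$ of weight $0$, i.e.\ in $B\otimes B$, giving~\eqref{eq:NCPreCourant.a}'s ambient space, and restricted to $E_1$ lands in weight $1$, i.e.\ in $E_1\otimes B\oplus B\otimes E_1$, as required for~\eqref{double-Dorfman}; this is exactly the computation already recorded in~\eqref{corchete-doble-relaciones}.

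First I would check that $\rho$ is a well-defined $B$-bimodule map into $\D_RB$: that $\rho(e_1)(b_1b_2)=\rho(e_1)(b_1)b_2+b_1\rho(e_1)(b_2)$ follows from the Leibniz rule~\eqref{graded-Leibniz-Poisson} for $\lr{-,-}_\omega$ in its second slot, and $B$-bilinearity of $e_1\mapsto\rho(e_1)$ follows from the Leibniz rule applied to the \emph{first} slot via skew-symmetry~\eqref{skew-symmetry-double}, precisely as in the proof that $\XX_a\in\D_RB$ in~\secref{sub:pairing}; indeed $\rho(e_1)=\XX_{\{S,e_1\}_\omega}$ in the notation of~\eqref{XX}. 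Axioms~\eqref{eq:NCPreCourant.a} and~\eqref{eq:NCPreCourant.b} are then immediate consequences of the Leibniz rule for $\lr{-,-}_\omega$ in the second argument together with the definitions, exactly mirroring~\eqref{eq:Leibniz-double-covariant}; here one uses that $\{S,-\}_\omega$ is a derivation of $A$ (it is $m\circ\lr{S,-}_\omega$, hence a derivation by~\eqref{corchete asociado} and the discussion after it). For~\eqref{eq:NCPreCourant.d}, I would expand $\rho(e_1)(\langle e_2,e_2\rangle)$ using that $\langle e_2,e_2\rangle=\lr{e_2,e_2}_\omega$ (Lemma~\ref{non-degenerate pairing}(i), or directly Lemma~\ref{bi-symp-double-Poisson}) and then apply the mixed identity~\eqref{mix-corchetes} for double Poisson algebras to $\{S,e_1\}_\omega$ acting on $\lr{e_2,e_2}_\omega$, which produces exactly $\lr{\{S,e_1\}_\omega,e_2}_\omega$ terms on each tensor factor — i.e.\ $\langle\cc{e_1,e_2},e_2\rangle_L+\langle e_2,\cc{e_1,e_2}\rangle_R$ after matching the extension conventions~\eqref{pairing extendido}, \eqref{extension-pairing-right}.

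The relation~\eqref{eq:NCPreCourant.c} is the one requiring real care, and I expect it to be \textbf{the main obstacle}. It encodes the failure of $\cc{-,-}$ to be skew-symmetric, and on the commutative side (Roytenberg) it comes from the fact that $\langle\{S,e\},e\rangle$ and $\{S,\langle e,e\rangle\}$ differ by a $\du$-exact term controlled by $\rho^\vee$. In the non-commutative setting I would proceed by first showing $\cc{e_2,e_2}+\cc{e_2,e_2}^\sigma = \lr{\{S,e_2\}_\omega,e_2}_\omega + \sigma_{(12)}\lr{\{S,e_2\}_\omega,e_2}_\omega$, then using skew-symmetry~\eqref{skew-symmetry-double} of $\lr{-,-}_\omega$ to rewrite $\sigma_{(12)}\lr{\{S,e_2\}_\omega,e_2}_\omega = -\lr{e_2,\{S,e_2\}_\omega}_\omega$ (up to a weight-parity sign, which here is $+1$ since both arguments have $N$-degree $\lvert e_2\rvert_{-2}=-1$), and then invoking the Loday-type identity~\eqref{LodayLoday} or~\eqref{mix-corchetes} to relate $\lr{\{S,e_2\}_\omega,e_2}_\omega - \lr{e_2,\{S,e_2\}_\omega}_\omega$ to $\lr{\{S,\lr{e_2,e_2}_\omega^{\mathrm{something}}\},-}$; the point is to recognise the right-hand side as $\tfrac12\rho^\vee\du(\langle e_2,e_2\rangle)$ by unwinding the definition of $\rho^\vee$ through $\mathtt{bidual}$, $\Hom_{B^\e}(\rho,-)$ and the inner-product identification $E\cong E^\vee$ as in Definition~\ref{definitionpreCourantNC}, using the characterising identity $\langle e,\rho^\vee\du b\rangle=\rho(e)(b)$. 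Concretely, pairing both candidate sides of~\eqref{eq:NCPreCourant.c} with an arbitrary $e_1\in E$ reduces the claim to~\eqref{eq:NCPreCourant.d} (already proved) plus the skew/Jacobi bookkeeping, so~\eqref{eq:NCPreCourant.c} follows by non-degeneracy of the inner product — this is the cleanest route and avoids computing $\rho^\vee$ explicitly. Finally, since the statement only asserts a double \emph{pre}-Courant structure, the double Jacobi identity~\eqref{Jacobi-CourantNC} is not required here; it would follow separately from $\{S,S\}_\omega=0$ via the graded double Jacobi identity for $\lr{-,-}_\omega$ and Lemma~\ref{lema-equiv-Hamilt}, but that is the content of a later result, not this Proposition.
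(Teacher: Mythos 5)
Your proposal is correct and follows essentially the same route as the paper's proof: (a) and (b) from the Leibniz rule of $\lr{-,-}_\omega$ in its second slot, (d) from the double Jacobi identity, and (c) by pairing against an arbitrary $e_1$, exploiting the weight-vanishing $\lr{e_1,b}_\omega=0$ for $b\in B$ together with skew-symmetry, and concluding by non-degeneracy of $\langle-,-\rangle$. Two small points of bookkeeping: for (d) the identity you actually need is the double Jacobi identity in Loday form, $\lr{a,\lr{b,c}}_L=\lr{\lr{a,b},c}_L+\lr{b,\lr{a,c}}_R$, rather than \eqref{mix-corchetes} (whose outer operation is the multiplied bracket $\{a,-\}$, not $\lr{\{S,e_1\},-}_\omega$); and in (c) the paper shows that \emph{each} tensor factor of $\rho^\vee\du\,\langle e_2,e_2\rangle$ separately equals $\cc{e_2,e_2}+\cc{e_2,e_2}^\sigma$, which is where the factor $2$ in \eqref{eq:NCPreCourant.c} comes from.
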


\begin{remark}
  In this section, we will need the graded associated bracket
  $\{-,-\}\defeq\{-,-\}_\omega$, and particularly, the graded version
  of \eqref{LodayLoday} (cf. \cite{Akm97}), namely, 
\begin{equation}
\{a,\{b,c\}\}=\{\{a,b\},c\}+(-1)^{\lvert a\rvert\lvert b\rvert}\{b,\{a,c\}\}.
\label{Loday-Jacobi}
\end{equation}
Recall also that $\{a,-\}$ acts on tensors by $\{a,u\otimes v\}\defeq
\{a,u\}\otimes v+u\otimes\{a,v\}$.
\end{remark}

\begin{proof}
  To simplify the notation, the double Poisson bracket of weight -2
  induced by the bi-symplectic form $\omega$ will be denoted
  $\lr{-,-}\defeq\lr{-,-}_\omega$, and its associated bracket
  $\{-,-\}\defeq\{-,-\}_\omega$. To prove \eqref{eq:NCPreCourant.a} in
  Definition \ref{definitionpreCourantNC}, we will use the fact that
  $\lr{-,-}$ is a double derivation in its second argument with
  respect to the outer structure, so
\begin{align*}
\cc{e_1,be_2}&=\lr{\{S,e_1\},be_2}
\\
&=b\lr{\{S,e_1\},e_2}+\lr{\{S,e_1\},b} e_2
\\
&=b\cc{e_1,e_2}+\rho(e_1)(b)e_2.
\end{align*}
Similarly, we can prove that \eqref{eq:NCPreCourant.a} holds.
To prove \eqref{eq:NCPreCourant.c}, first note that $\lr{e_1,\lr{e_2,e_2}^\prime}\otimes\lr{e_2,e_2}^{\prime\prime}=0$.
Then, by \eqref{mix-corchetes}, 
\begin{align*}
 0&=\{S,\lr{e_1,\lr{e_2,e_2}^\prime}\otimes\lr{e_2,e_2}^{\prime\prime}\}
 \\
 &=\lr{e_1,\lr{e_2,e_2}^\prime}\otimes\{S,\lr{e_2,e_2}^{\prime\prime}\}+\{S,\lr{e_1,\lr{e_2,e_2}^\prime}\}\otimes\lr{e_2,e_2}^{\prime\prime}
 \\
 &=(\lr{\{S,e_1\},\lr{e_2,e_2}^\prime}-\lr{e_1,\{S,\lr{e_2,e_2}^\prime\}})\otimes\lr{e_2,e_2}^{\prime\prime}
 \\
  &=\lr{\{S,e_1\},\lr{e_2,e_2}}_L-\lr{e_1,\{S,\lr{e_2,e_2}^\prime\}\otimes\lr{e_2,e_2}^{\prime\prime}}_L
 \\
 &=\lr{\{S,e_1\},\lr{e_2,e_2}}_L-\lr{e_1,\{S,\lr{e_2,e_2}\}}_L
 \\
  &=\lr{\{S,e_1\},\lr{e_2,e_2}}_L-\lr{e_1,\lr{\{S,e_2\},e_2}-\lr{e_2,\{S,e_2\}}}_L
 \\
 &=\lr{\{S,e_1\},\lr{e_2,e_2}}_L-\lr{e_1,\lr{\{S,e_2\},e_2}}_L-\lr{e_1,\lr{\{S,e_2\},e_2}^\sigma}_L,
 \end{align*}
where
$\lr{-,-}^\sigma=\lr{-,-}^{\prime\prime}\otimes\lr{-,-}^\prime$. By
definition, if $e,e^\prime\in E$, $\lr{e,e^\prime}=\langle
e,e^\prime\rangle$, so 
\[
\rho(e_1)(\langle e_2,e_2\rangle^\prime)\otimes \langle e_2,e_2\rangle ^{\prime\prime}=\langle e_1,\cc{e_2,e_2}+\cc{e_2,e_2}^\sigma\rangle_L.
\]
Next, since $\langle-,-\rangle$ is non-degenerate, the identity
\[
\langle e_1,\rho^\vee(\du\, \langle e_2,e_2\rangle^\prime)\otimes \langle e_2,e_2\rangle^{\prime\prime}\rangle_L=\langle e_1,\cc{e_2,e_2}+\cc{e_2,e_2}^\sigma\rangle_L.
\]
implies that
\begin{equation}
\rho^\vee(\du\, \langle e_2,e_2\rangle^\prime)\otimes \langle e_2,e_2\rangle^{\prime\prime}=\cc{e_2,e_2}+\cc{e_2,e_2}^\sigma.
\label{parcial-ancla-Courant-1}
\end{equation}
Similarly, the identity $\lr{e_2,e_2}^\prime\otimes\lr{e_1,\lr{e_2,e_2}^{\prime\prime}}=0$ implies that
\[
0=\lr{\{S,e_1\},\lr{e_2,e_2}}_R-\lr{e_1,\lr{\{S,e_2\},e_2}}_R-\lr{e_1,\lr{\{S,e_2\},e_2}^\sigma}_R,
\]
which is equivalent to 
\begin{equation}
\langle e_2,e_2\rangle^\prime\otimes \rho^\vee(\du\,\langle e_2,e_2\rangle^{\prime\prime})=\cc{e_2,e_2}+\cc{e_2,e_2}^\sigma.
\label{parcial-ancla-Courant-2}
\end{equation}
The sum of \eqref{parcial-ancla-Courant-1} and \eqref{parcial-ancla-Courant-2} gives \eqref{eq:NCPreCourant.c}.

Finally, the key fact needed to prove \eqref{eq:NCPreCourant.d} is the
double Jacobi identity for $\lr{-,-}$:
\begin{align*}
\rho(e_1)(\langle e_2,e_2\rangle^\prime)\otimes \langle e_2,e_2\rangle^{\prime\prime}&=\lr{\{S,e_1\},\lr{e_2,e_2}^\prime}\otimes \lr{e_2,e_2}^{\prime\prime}
\\
&=\lr{\{S,e_1\},\lr{e_2,e_2}}_L
\\
&=\lr{\lr{\{S,e_1\},e_2},e_2}_L+\lr{e_2,\lr{\{S,e_1\},e_2}}_R
\\
&=\langle \cc{e_1,e_2},e_2\rangle_L+\langle e_2,\cc{e_1,e_2}\rangle_R.
\qedhere
\end{align*}
\end{proof}

%

In Lemma \ref{lema-equiv-Hamilt} we showed that the fact that the
identity $\lr{Q,Q}=0$ is equivalent to $\lr{ S,S}_\omega=0$. In the
next proposition, we prove that the weaker condition
$\{S,S\}_\omega=0$ implies the double Jacobi identity
\eqref{Jacobi-CourantNC}.  As before, $\{-,-\}_\omega$ is the
associated bracket.
\begin{proposition}
If $\{S,S\}_\omega=0$ then the double Jacobi identity \eqref{Jacobi-CourantNC} holds.
\label{recovering-double Courant}
\end{proposition}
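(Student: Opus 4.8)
The plan is to derive the double Jacobi identity \eqref{Jacobi-CourantNC} by applying the derived-bracket formulae \eqref{eq:DerivadoNC} together with the Loday--Jacobi identity \eqref{Loday-Jacobi} for the associated bracket $\{-,-\}\defeq\{-,-\}_\omega$, using the hypothesis $\{S,S\}=0$ at the crucial step. First I would recall that, by Proposition \ref{corchetes-derivados-NC}, the double Dorfman bracket is $\cc{e_1,e_2}=\lr{\{S,e_1\},e_2}$, so the left-hand side of \eqref{Jacobi-CourantNC} unfolds to $\cc{e_1,\cc{e_2,e_3}}_L=\lr{\{S,e_1\},\lr{\{S,e_2\},e_3}}_L$. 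The strategy is to rewrite the inner double bracket $\lr{\{S,e_2\},e_3}$ in a form that lets the graded double Jacobi identity \eqref{graded-Jacobi-double} for $\lr{-,-}=\lr{-,-}_\omega$ act, and to recognize the resulting terms as $\cc{e_2,\cc{e_1,e_3}}_R$ and $\cc{\cc{e_1,e_2},e_3}_L$ after the bracket $\{S,-\}$ has been moved through using the mixed identity \eqref{mix-corchetes} and the Loday identity.

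The key algebraic manipulations, in order, would be: (1) express $\{S,e_1\}$ as the image $m\circ\lr{S,e_1}$ and note that, since $\lvert S\rvert=3$, one can commute $\{S,-\}$ past double brackets of lower-weight arguments using \eqref{mix-corchetes}, namely $\{a,\lr{b,c}\}=\lr{\{a,b\},c}+\lr{b,\{a,c\}}$ with $a=S$; (2) combine this with the Loday--Jacobi identity \eqref{Loday-Jacobi}, $\{S,\{S,e\}\}=\{\{S,S\},e\}+\{S,\{S,e\}\}$ specialised via the hypothesis $\{S,S\}=0$, which forces $\{S,\{S,e\}\}$ to vanish modulo the appropriate sign — this is where $\{S,S\}=0$ is used; (3) apply the graded double Jacobi identity \eqref{graded-Jacobi-double} for $\lr{-,-}_\omega$ to the three arguments $\{S,e_1\}$, $\{S,e_2\}$, $e_3$ (paying attention to the weights: $\lvert\{S,e_i\}\rvert=\lvert e_i\rvert +1$ for the weight-$(-2)$ bracket), producing the three cyclic terms with the permutation operators $\sigma_{(123)}$, $\sigma_{(132)}$; (4) re-identify each term, using \eqref{eq:DerivadoNC} again and the definitions of $\cc{-,-}_L$, $\cc{-,-}_R$ from \secref{definition-Courant-double}, as the three terms appearing in \eqref{Jacobi-CourantNC}; the permutation operators are absorbed precisely by the $L$ versus $R$ conventions in the extensions of the double Dorfman bracket.

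The main obstacle I anticipate is bookkeeping: matching the permutation operators $\sigma_{(123)},\sigma_{(132)},\tau_{(-)}$ and Koszul signs produced by \eqref{graded-Jacobi-double} against the $L$/$R$ conventions in the extensions of $\cc{-,-}$, and verifying that the ``cross terms'' — those involving $\{S,\{S,e_i\}\}$ or $\lr{\{S,e_i\},\{S,e_j\}}$ with both arguments hit by $S$ — either cancel in pairs or vanish by $\{S,S\}=0$. Concretely, the term $\lr{e_2,\lr{\{S,e_1\},\{S,e_3\}}}$-type contributions, coming from the double derivation property of $\lr{-,-}$ in its second slot when $\{S,-\}$ is distributed, must be shown to reorganise into $\cc{e_2,\cc{e_1,e_3}}_R$ rather than producing an obstruction; here I would use that $Q=\lr{S,-}_\omega$ is a double derivation and that the weight-$3$ element $S$ has $\{S,S\}_\omega$ of weight $4$, hence $\{S,S\}_\omega=0$ is a genuine (not merely ``mod $[A,A]$'') identity, which cleanly kills the offending terms. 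Once the sign accounting is settled, the identity \eqref{Jacobi-CourantNC} should fall out directly; I would present the computation as a single chain of equalities mirroring the proof of \eqref{eq:NCPreCourant.d} in Proposition \ref{corchetes-derivados-NC}, but carried one bracket deeper.
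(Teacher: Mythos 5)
Your proposal is correct and follows essentially the same route as the paper: unfold $\cc{e_1,\cc{e_2,e_3}}_L$ via the derived-bracket formula, apply the double Jacobi identity for $\lr{-,-}_\omega$ (the paper states it directly in the Leibniz form $\lr{a,\lr{b,c}}_L=\lr{\lr{a,b},c}_L+\lr{b,\lr{a,c}}_R$, which absorbs the permutation bookkeeping you worry about), then use \eqref{mix-corchetes} to rewrite $\lr{\{S,e_1\},\{S,e_2\}}$ and the Loday identity \eqref{Loday-Jacobi} to get $\{S,\{S,e_1\}\}=\tfrac12\{\{S,S\},e_1\}=0$. The only slip is the sign in your step (2) — the correct specialisation reads $\{S,\{S,e\}\}=\{\{S,S\},e\}-\{S,\{S,e\}\}$, hence the factor $\tfrac12$ — but you flag this yourself and it does not affect the argument.
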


\begin{proof}
  To simplify the notation, we set $\lr{-,-}\defeq\lr{-,-}_\omega$. By
  \eqref{eq:DerivadoNC.b},
\[
\cc{e_1,\cc{e_2,e_3}}_L=\lr{\{S,e_1\},\lr{\{\Theta,e_2\},e_3}}_L,
\]
where the double Jacobi identity implies
\[
\lr{\{S,e_1\},\lr{\{\Theta,e_2\},e_3}}_L=\lr{\lr{\{S,e_1\},\{S,e_2\}},e_3}_L+\lr{\{S,e_2\},\lr{\{S,e_1\},e_3}}_R
\label{applying-double-Jacobi}
\]
Regarding the term $\lr{\lr{\{S,e_1\},\{S,e_2\}},e_3}_L$, it follows
from \eqref{mix-corchetes} that 
\[
\lr{\{S,e_1\},\{S,e_2\}}=\{S,\lr{\{S,e_1\},e_2}\}-\lr{\{S,\{S,e_1\}\},e_2}.
\]
Since $(A,\{-,-\})$ is a graded Loday algebra, by \eqref{Loday-Jacobi},
\[
\{S,\{S,e_1\}\}=\frac{1}{2}\{\{S,S\},e_1\}
\]
Thus, since $\{\Theta,\Theta\}=0$ by hypothesis, we obtain the identity
\begin{align*}
 \lr{\lr{\{S,e_1\},\{S,e_2\}},e_3}_L&=\lr{\{S,\lr{\{S,e_1\},e_2}\},e_3}_L
 \\
 &=\cc{\cc{e_1,e_2},e_3}_L.
\end{align*}
Putting all together
\begin{align*}
\cc{e_1,\cc{e_2,e_3}}_L&=\lr{\lr{\{S,e_1\},\{S,e_2\}},e_3}_L+\lr{\{S,e_2\},\lr{\{S,e_1\},e_3}}_R
\\
&=\cc{\cc{e_1,e_2},e_3}_L+\lr{\{S,e_2\},\cc{e_1,e_3}}_R
\\
&=\cc{\cc{e_1,e_2},e_3}_L+\cc{e_2,\cc{e_1,e_3}}_R.
 \end{align*}
%
so \eqref{Jacobi-CourantNC} follows.
\end{proof}

In conclusion, we have proved the following. 

\begin{theorem}
Let $(A,\omega, Q)$ be a bi-symplectic $\mathbb{N}Q$-algebra of weight
2, where $A$ is the graded path algebra of a double quiver $\dPP$ of
weight 2 endowed with a bi-symplectic form $\omega\in\DR{2}(A)$ of
weight 2 defined in \textup{\secref{sub:bi-symplectic form for
    quivers}} and a homological double derivation $Q$.
Let $B$ be the path algebra of the weight 0 subquiver of $\dPP$, and
$(E,\langle-,-\rangle)$ the pair consisting of the $B$-bimodule $E$
with basis weight 1 paths in $\PP$ and the symmetric non-degenerate
pairing
\[
\langle-,-\rangle\defeq \lr{-,-}_\omega\vert_{E\otimes E}
\colon E\otimes E\to B\otimes B.
\]
%
Then the bi-symplectic
$\mathbb{N}Q$-algebra $(A,\omega,Q)$ of weight 2 induces a double
Courant algebroid
$(E,\langle-,-\rangle,\rho,\;\cc{\;-,-\;}\;)$ over $B$, where
\[
\rho(e_1)(b)\defeq\lr{\{\Theta,e_1\}_\omega,b}_\omega,\quad \cc{\;e_1,e_2\;}\defeq\lr{\{\Theta,e_1\}_\omega,e_2}_\omega,
\]
for all $b\in B $ and $e_1,e_2\in E$. Here $\Theta\in A^3$ is determined by the triple $(A,\omega,Q)$, and $\{-,-\}_\omega=m\circ\lr{-,-}_\omega$ is the \emph{associated bracket} in $A$.
\label{ChapterV}
\end{theorem}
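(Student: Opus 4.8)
The plan is to assemble the statement from the structural results established in this section; most of the analytic content has already been isolated in Propositions~\ref{corchetes-derivados-NC} and~\ref{recovering-double Courant}, so the argument is essentially a synthesis. First I would apply Lemma~\ref{hamiltonian7}\textup{(ii)} to the homological double derivation $Q$: by Definition~\ref{Defintion-NQ-algebra-bisymp}\textup{(iii)(b)} it is bi-symplectic of weight $1$, and since $\omega$ has weight $j=2$ with $j+1=3\neq 0$, the lemma guarantees that $Q$ is Hamiltonian, so $\iota_Q\omega=\du\Theta$ for some $\Theta\in A$. The weight bookkeeping used in the proof of Lemma~\ref{lema-equiv-Hamilt} then gives $\lvert\Theta\rvert=\lvert Q\rvert-\lvert\lr{-,-}_\omega\rvert=1-(-2)=3$, i.e.\ $\Theta\in A^3$; moreover $\Theta$ is determined by $\du\Theta=\iota_Q\omega$ up to an element of $R\subset A^0$, whose Hamiltonian double derivation has negative weight and hence vanishes, so the derived brackets below do not depend on the choice.

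Next I would invoke Theorem~\ref{ChapterIV}: the bi-symplectic tensor $\mathbb{N}$-algebra $(A,\omega)$ of weight $2$ is equivalent to the pair $(E,\langle-,-\rangle)$, where $E\defeq E_1$ is the $B$-bimodule spanned by the weight-$1$ paths of $\PP$ and $\langle-,-\rangle=\lr{-,-}_\omega\vert_{E\otimes E}$ is the symmetric non-degenerate pairing of Lemma~\ref{non-degenerate pairing}. Applying Proposition~\ref{corchetes-derivados-NC} to $S=\Theta\in A^3$ produces a double pre-Courant algebroid structure on $(E,\langle-,-\rangle)$ whose anchor and double Dorfman bracket are exactly the derived brackets $\rho(e_1)(b)=\lr{\{\Theta,e_1\}_\omega,b}_\omega$ and $\cc{e_1,e_2}=\lr{\{\Theta,e_1\}_\omega,e_2}_\omega$ of the statement; this already establishes the axioms~\eqref{eq:NCPreCourant.a}--\eqref{eq:NCPreCourant.d} of Definition~\ref{definitionpreCourantNC}.

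It then remains to upgrade this to a double \emph{Courant} algebroid, i.e.\ to verify the double Jacobi identity~\eqref{Jacobi-CourantNC}. For this I would use that $Q$ is homological, $\lr{Q,Q}=0$: by Lemma~\ref{lema-equiv-Hamilt} this is equivalent to $\lr{\Theta,\Theta}_\omega=0$, and composing with the multiplication map $m$ yields the weaker identity $\{\Theta,\Theta\}_\omega=m\circ\lr{\Theta,\Theta}_\omega=0$ on the associated bracket. Proposition~\ref{recovering-double Courant} says precisely that $\{\Theta,\Theta\}_\omega=0$ forces~\eqref{Jacobi-CourantNC}. Combining with the previous paragraph, $(E,\langle-,-\rangle,\rho,\cc{-,-})$ satisfies all the conditions of Definition~\ref{definitionpreCourantNC} for a double Courant algebroid, which is the assertion.

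I expect no serious obstacle, since the heavy computations are already carried out in Propositions~\ref{corchetes-derivados-NC} and~\ref{recovering-double Courant}. The only points that need care are the weight bookkeeping pinning down $\Theta\in A^3$ (together with the remark that its ambiguity modulo $R$ is harmless), and checking that the pre-Courant data produced abstractly by Proposition~\ref{corchetes-derivados-NC} matches, under the identification of Theorem~\ref{ChapterIV}, the explicit $\rho$ and $\cc{-,-}$ written in the statement in terms of $\dPP$.
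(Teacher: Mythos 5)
Your proposal is correct and follows essentially the same route as the paper: the theorem is stated there as a synthesis of Lemma~\ref{hamiltonian7}, Lemma~\ref{lema-equiv-Hamilt}, Theorem~\ref{ChapterIV}, and Propositions~\ref{corchetes-derivados-NC} and~\ref{recovering-double Courant}, exactly as you assemble them. Your added remark on the harmlessness of the ambiguity of $\Theta$ modulo $R$ is a useful clarification not made explicit in the paper.
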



\section{Exact non-commutative Courant algebroids}

\subsection{The standard non-commutative Courant algebroid}
\label{sub:standard}

Let $B$ be the path algebra of a quiver $Q$, i.e., $B=\kk Q=T_R V_Q$,
$R=R_Q$ the corresponding semisimple finite-dimensional $k$-algebra,
and $\du_B\,\colon B\to\Omega^1_R B$ the universal derivation. By
Lemma \ref{lema 2.6},
\[
E_1:=\Omega^1_R B=B\otimes_R V_Q\otimes_R B=\bigoplus_{a\in Q_1}B\du_B a B
\]
To simplify the notation, we define $\widehat{a}:=\du_B a$, so
$E_1=\bigoplus_{a\in Q_1}B\widehat{a}B$, and 
\[
S\defeq\Omega^\bullet_R[1]B\defeq T_B(E_1[-1]).
\]
Then we have an identification $S=\kk\PP$ with the graded path algebra
of a graded quiver $\PP$ with vertex set $\P0=\Q0$ and arrow set
$P_1=\Q1\sqcup\widehat{Q}_1$ obtained from $\QQ$ by adjoining an arrow
$\widehat{a}\in\widehat{Q}_1$ for each arrow $a\in Q_1$, with the same
tail and head, i.e., $t(\widehat{a})=t(a)$, $h(\widehat{a})=h(a)$ for
all $a\in\Q1$, and weight function given by $\lvert a\rvert=0$ and
$\lvert\widehat{a}\rvert=1$, for all $a\in Q_1$. Then the graded path
algebra of the double quiver $\dPP$ of $\PP$ of weight 2 can be written
\[
A=\kk\dPP=\T_S\(\D_R S[-2]\).
\]
Note that the arrows of $\dPP$ have weights given by 
\[
\lvert a\rvert=0,\quad \lvert a^*\rvert=2,\quad \lvert \widehat{a}\rvert=1,\quad \lvert \widehat{a}^*\rvert=1,
\]
for all $a\in Q_1$. In particular, $\lvert P\rvert=2$ if $Q_1$ is
non-empty (as we will assume). Furthermore, by Proposition
\ref{bi-symplectic-form-double-quivers}, there is a canonical
bi-symplectic form of weight 2 on $A$, given by
\begin{equation}
\omega_0=\sum_{a\in Q_1}\left(\du a\du a^*+\du\widehat{a}\du\widehat{a}^*\right)\in\DR{2}(A).
\label{bi-sympl-standard}
\end{equation}
where $\du$ is the universal derivation on $A$.

\begin{lemma}
The double derivation of weight +1
\begin{equation}
Q_0=\sum_{a\in Q_1}\left(\frac{\partial}{\partial a}\widehat{a}-a^*\frac{\partial}{\partial \widehat{a}^*}\right)
\label{Q-estandar}
\end{equation}
is a homological, bi-symplectic Hamiltonian double derivation, with Hamiltonian
\begin{equation}
S_0=\sum_{a\in Q_1}a^*\widehat{a}.
\label{Hamiltonian-standard}
\end{equation}
\label{lema-Q-estandar}
\end{lemma}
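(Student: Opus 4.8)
The plan is to verify the three asserted properties of $Q_0$ in turn: that it is Hamiltonian with the stated Hamiltonian $S_0$, that it is bi-symplectic (equivalently, a consequence of being Hamiltonian, by Lemma \ref{hamiltonian7}), and that it is homological. The computations take place in the graded path algebra $A=\kk\dPP=\T_S(\D_RS[-2])$, whose arrows are $a,a^*,\widehat a,\widehat a^*$ of weights $0,2,1,1$ respectively, equipped with the canonical bi-symplectic form $\omega_0$ of \eqref{bi-sympl-standard}. All formulae we need are in the excerpt: the action of the basic double derivations $\partial/\partial x$ on arrows (Example \ref{derivaciones-quivers}), the contraction and reduced-contraction operators \eqref{formula-larga-contraccion}, \eqref{reduced-contraction-general}, the reduced Cartan identity \eqref{reduced-Cartan}, and the double Schouten--Nijenhuis bracket formulae for quiver path algebras (Example \ref{Schouten-Nijenhuis-quivers}).

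First I would check that $Q_0$ is Hamiltonian with Hamiltonian $S_0=\sum_{a}a^*\widehat a$, i.e.\ that $\iota_{Q_0}\omega_0=\du S_0$. Using \eqref{bi-sympl-standard} and the reduced-contraction formula \eqref{reduced-contraction-general} on the $2$-form $\omega_0=\sum_a(\du a\,\du a^*+\du\widehat a\,\du\widehat a^*)$, one contracts with each of the two terms of $Q_0$. The term $\partial/\partial a$ hits only $\du a$ (producing $e_{h(a)}\otimes e_{t(a)}$), and upon applying $\iota$ and the reduced multiplication ${}^\circ(-)$ one gets a contribution proportional to $\du a^*$; the term $-a^*\,\partial/\partial\widehat a^*$ hits $\du\widehat a^*$, producing a contribution proportional to $a^*\,\du\widehat a$. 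Collecting the two contributions and comparing with $\du S_0=\sum_a(\du a^*\,\widehat a + a^*\,\du\widehat a)$ — using that $\du S_0$ is computed by the graded Leibniz rule for $\du$ and noting the weights $|a^*|=2$, $|\widehat a|=1$ to fix the Koszul signs — one verifies the equality term by term. This simultaneously gives that $Q_0$ is bi-symplectic: since $\iota_{Q_0}\omega_0$ is exact, $\du\iota_{Q_0}\omega_0=0$, and by the reduced Cartan identity \eqref{reduced-Cartan} together with $\du\omega_0=0$ we get $\mathcal L_{Q_0}\omega_0=\du\iota_{Q_0}\omega_0+\iota_{Q_0}\du\omega_0=0$.

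Finally I would verify $\lr{Q_0,Q_0}=0$ for the double Schouten--Nijenhuis bracket. The cleanest route is Lemma \ref{lema-equiv-Hamilt}: since $Q_0$ is Hamiltonian with Hamiltonian $S_0\in A^3$, the identity $\lr{Q_0,Q_0}=0$ is equivalent to $\lr{S_0,S_0}_{\omega_0}=0$, and in fact it suffices to check the weaker $\{S_0,S_0\}_{\omega_0}=0$ for the associated bracket (then the argument in the proof of Lemma \ref{lema-equiv-Hamilt} upgrades this, via weight reasons, to the vanishing of the double bracket). One computes $\{S_0,S_0\}_{\omega_0}=\sum_{a,b}\{a^*\widehat a,\,b^*\widehat b\}_{\omega_0}$ using the Leibniz rule for the associated bracket and the explicit double Poisson bracket $P=\sum_a\frac{\partial}{\partial a}\frac{\partial}{\partial a^*}+\frac{\partial}{\partial\widehat a}\frac{\partial}{\partial\widehat a^*}$ of Example \ref{DDP-doubled-graded-quivers} (restricted to the relevant arrows). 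The only nonzero pairings come from an unstarred arrow meeting its starred partner; a short bookkeeping of which pairs $(a,b)$ and which of the four arrow-types contribute shows the surviving terms cancel in pairs. Alternatively, one can compute $\lr{Q_0,Q_0}$ directly from the Leibniz rule for $\lr{-,-}$ together with Example \ref{Schouten-Nijenhuis-quivers} (which gives $\lr{\partial/\partial x,y}$, $\lr{\partial/\partial x,\partial/\partial y}=0$, $\lr{x,y}=0$ on arrows) and the bimodule structures; here each cross-term $\lr{\frac{\partial}{\partial a}\widehat a,\,-a^*\frac{\partial}{\partial\widehat a^*}}$ contributes, and the $\widehat a$ vs.\ $\partial/\partial\widehat a$ pairing against the $a^*$ vs.\ $\partial/\partial a$ pairing produce two terms that cancel because of the opposite signs built into $Q_0$. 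The main obstacle, as usual in this Van den Bergh formalism, is not conceptual but the precise tracking of Koszul signs coming from the weights $(0,2,1,1)$ of the four arrow types and from the non-commutativity of the tensor factors in $A\otimes A$; organizing the computation through Lemma \ref{lema-equiv-Hamilt} (so that one only needs the scalar-valued associated bracket $\{S_0,S_0\}_{\omega_0}$) minimizes this bookkeeping.
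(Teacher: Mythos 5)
Your treatment of the Hamiltonian and bi-symplectic claims follows the paper exactly: the paper computes $i_{Q_0}(\du a)=Q_0(a)$ on the four types of arrows, plugs these into the reduced-contraction formula applied to $\omega_0$, obtains $\iota_{Q_0}\omega_0=\sum_a(\du a^*\,\widehat a+a^*\du\widehat a)=\du S_0$, and then invokes the reduced Cartan identity \eqref{reduced-Cartan} to conclude bi-symplecticity. No issues there.

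The gap is in your preferred route to homologicity. Lemma \ref{lema-equiv-Hamilt} states that $\lr{Q,Q}=0$ is equivalent to the vanishing of the full double bracket $\lr{S,S}_\omega$, not of the associated bracket $\{S,S\}_\omega=m\circ\lr{S,S}_\omega$. Your claim that $\{S_0,S_0\}_{\omega_0}=0$ ``upgrades, via weight reasons, to the vanishing of the double bracket'' reverses the logic of that lemma's proof: the weight argument there ($\lr{S,S}_\omega$ has weight $4$, hence vanishes once it is known to lie in $B\otimes B$) is applied only after one knows $H_{\lr{S,S}_\omega}=0$, i.e.\ after assuming $\lr{Q,Q}=0$ --- precisely the statement to be proved. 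From $m(\lr{S_0,S_0}_{\omega_0})=0$ alone one cannot conclude $\lr{S_0,S_0}_{\omega_0}=0$: the multiplication map has a large kernel and $(A\otimes A)_{(4)}\neq 0$; the skew-symmetry \eqref{skew-symmetry-double} only forces $\lr{S_0,S_0}_{\omega_0}$ to be flip-symmetric, and flip-symmetric elements of $\ker m$ abound. The paper itself keeps the two conditions separate: Proposition \ref{recovering-double Courant} derives only the double Jacobi identity from $\{S,S\}_\omega=0$, while homologicity of $Q$ requires the stronger $\lr{S,S}_\omega=0$. Your fallback --- the direct computation of $\lr{Q_0,Q_0}$ on generators via \eqref{tilde-brackets} --- is exactly what the paper does, and it works; note though that in that computation nothing needs to cancel between the two terms of $Q_0$: each of $(Q_0\otimes 1)Q_0$ and $(1\otimes Q_0)Q_0$ already vanishes on every generator, because the second application of $Q_0$ always lands on $\widehat a$, $a^*$ or a trivial path, all of which $Q_0$ kills. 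So either repair your main route by verifying the full double bracket $\lr{S_0,S_0}_{\omega_0}=0$ (feasible, using $\lr{\widehat a,\widehat b}_{\omega_0}=0$ and the Leibniz rules), or promote the direct computation to the main argument.
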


\begin{proof}
Since $\lvert \partial/\partial a\rvert=-\lvert a\rvert$, it follows
that $\lvert Q_0\rvert=+1$. Note also that
\begin{equation}
\begin{aligned}&
i_{Q_0}(\du a)=Q_0(a)=(e_{h(a)}\otimes e_{t(a)})*\widehat{a}=e_{h(a)}\otimes\widehat{a};
\\&
i_{Q_0}(\du \widehat{a}^*)=Q_0(\widehat{a}^*)=-e_{t(a)}\otimes a^*;
\\&
i_{Q_0}(\du a^*)=Q_0(a^*)=0;
\\&
i_{Q_0}(\du \widehat{a})=Q_0(\widehat{a})=0.
\end{aligned}
\label{aplicaciones-de-Q-a-flechitas}
\end{equation}
To prove that $Q_0$ is Hamiltonian (i.e. $\iota_{Q_0}\omega=\du S_0$
for some $S_0\in A^3$), we apply a graded version of
\eqref{reduced-contraction-general} and
\eqref{aplicaciones-de-Q-a-flechitas}:
\begin{align*}
\iota_{Q_0}\omega&=\iota_{Q_0}\left(\sum_{a\in Q_1}(\du a\du a^*+\du\widehat{a}\du\widehat{a}^*)\right)
\\
&=\sum_{a\in Q_1}\left(i^{\prime\prime}_{Q_0}(\du a)\du a^* i^{\prime}_{Q_0}(\du a)+  i^{\prime\prime}_{Q_0}(\du \widehat{a}^*)\du \widehat{a} i^{\prime}_{Q_0}(\du \widehat{a}^*) \right)
\\
&=\sum_{a\in Q_1}\left(\du a^*\widehat{a}+a^*\du\widehat{a}  \right)=\du\,\(\sum_{a\in Q_1}a^*\widehat{a}\).
\end{align*}
Therefore, \eqref{reduced-Cartan} implies that $Q_0$ is bi-symplectic,
because it is Hamiltonian.

It remains to show that $Q_0$ is homological. Using
\eqref{aplicaciones-de-Q-a-flechitas}, 
\begin{align*}
\lr{Q_0,Q_0}_l(a)&=\tau_{(23)}\left((Q_0\otimes \Id) Q_0(a)-(\Id\otimes Q_0)Q_0(a)\right)
\\
&=\tau_{(23)}\left(Q_0(\widehat{a})\otimes e_{t(a)}-\widehat{a}\otimes Q_0(e_{t(a)})\right)=0,
\end{align*}
so $\lr{Q_0,Q_0}_r(a)=0$ too, by skew-symmetry, and hence
$\lr{Q_0,Q_0}(a)=0$. One can show similarly that
$\lr{Q_0,Q_0}(\widehat{a}^*)=0$. Finally,
$\lr{Q_0,Q_0}(a^*)=\lr{Q_0,Q_0}(\widehat{a})=0$, by
\eqref{aplicaciones-de-Q-a-flechitas}.
\end{proof}


We define the non-commutative derived brackets (see Proposition \ref{corchetes-derivados-NC})
\begin{equation}
\begin{aligned}&
\rho_0(e)(b)=\lr{\{S_0,e\},b}_\omega;
\\&
\cc{e_1,e_2}_0=\lr{\{S_0,e\},b}_\omega,
\end{aligned}
 \label{corchete-standard}
 \end{equation}
for all $b\in B$, $e,e_1,e_2\in E$, with $S_0$ given by
\eqref{Hamiltonian-standard}. Using Theorems \ref{weight 0} and
\ref{ChapterV}, and the above results, we can conclude the following.

\begin{proposition}
Let $Q$ be a quiver with path algebra $B$. Let $A=T_BM$, with
$M=\Omega^1_RB[-1]\oplus\D_RB[-2]$. We endow the graded algebra $A$
with the graded bi-symplectic form $\omega_0$ defined in
\eqref{bi-sympl-standard} and the double derivation $Q_0$ defined in
\eqref{Q-estandar}. Then
\begin{enumerate}
\item [\textup{(i)}]
The triple $(A,\omega_0,Q_0)$ is a bi-symplectic $\mathbb{N}Q$-algebra of weight 2.
\item [\textup{(ii)}]
The 4-tuple $(\Omega^1_R B, \langle-,-\rangle, \rho_0, [-,-]_0)$ (where $\rho_0$, $\cc{-,-}_0$ were defined in  \eqref{corchete-standard} and $\langle-,-\rangle$ in \eqref{inner quivers}) is a double Courant algebroid.
\end{enumerate}
\label{proposition-standard}
\end{proposition}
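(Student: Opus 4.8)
The plan is to obtain both statements by assembling results already in place, so that the only genuinely new step is to check that the explicit data $(A,\omega_0,Q_0)$ satisfies, verbatim, the hypotheses of Theorem~\ref{ChapterV}.

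For part~(i) I would argue as follows. First, $\dPP$ is a doubled graded quiver of weight~$2$ in the sense of Definition~\ref{def:graded-quiver}: its arrows $a,a^*,\widehat{a},\widehat{a}^*$ carry weights $0,2,1,1$, so $\lvert\dPP\rvert=2$, and its weight~$0$ subquiver is $\QQ$, with path algebra $B$; hence by Lemma~\ref{lem:grPathAlg-TensorAlg} and Framework~\ref{framework-quivers}, $A=\kk\dPP=\T_B M$ with $M=E_1[-1]\oplus E_2[-2]$ for finitely generated projective $B$-bimodules, i.e.\ $A$ is a tensor $\mathbb{N}$-algebra of weight~$2$ over $B$. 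Second, since $\PP_1=\Q1\sqcup\widehat{Q}_1$, the $2$-form $\omega_0$ of~\eqref{bi-sympl-standard} is exactly the canonical form $\sum_{b\in\PP_1}\du b\du b^*$ attached to $\dPP$, so Proposition~\ref{bi-symplectic-form-double-quivers} gives that $\omega_0$ is bi-symplectic of weight~$2$; thus $(A,\omega_0)$ is a bi-symplectic tensor $\mathbb{N}$-algebra of weight~$2$ over $B$. Third, Lemma~\ref{lema-Q-estandar} supplies the last ingredient: $Q_0$ is a double derivation of weight $+1$ with $\mathcal{L}_{Q_0}\omega_0=0$ and $\lr{Q_0,Q_0}=0$. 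Collecting these three facts verifies every clause of Definition~\ref{Defintion-NQ-algebra-bisymp}, which is~(i).

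For part~(ii) I would feed the bi-symplectic $\mathbb{N}Q$-algebra $(A,\omega_0,Q_0)$ produced in~(i) into Theorem~\ref{ChapterV}. Its hypotheses --- $A=\kk\dPP$, $\omega_0$ the canonical bi-symplectic form of weight~$2$ (Proposition~\ref{bi-symplectic-form-double-quivers}), and $Q_0$ a homological double derivation --- are exactly what~(i) records, and its conclusion is a double Courant algebroid $(E,\langle-,-\rangle,\rho,\cc{-,-})$ over $B$ with $\langle-,-\rangle=\lr{-,-}_{\omega_0}\vert_{E\otimes E}$ the pairing of~\eqref{inner quivers}, $\rho(e)(b)=\lr{\{S_0,e\}_{\omega_0},b}_{\omega_0}$ and $\cc{e_1,e_2}=\lr{\{S_0,e_1\}_{\omega_0},e_2}_{\omega_0}$, where $S_0\in A^3$ is the Hamiltonian of $Q_0$. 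By Lemma~\ref{hamiltonian7}\textup{(ii)} and Lemma~\ref{lema-Q-estandar}, that Hamiltonian is $S_0=\sum_{a\in\Q1}a^*\widehat{a}$ (indeed $\iota_{Q_0}\omega_0=\du S_0$), so $\rho$ and $\cc{-,-}$ coincide with $\rho_0$ and $\cc{-,-}_0$ of~\eqref{corchete-standard}, and~(ii) follows. Alternatively, one can bypass Theorem~\ref{ChapterV}: Proposition~\ref{corchetes-derivados-NC} turns $S_0$ into a double pre-Courant algebroid, and Proposition~\ref{recovering-double Courant} upgrades it to a double Courant algebroid once $\{S_0,S_0\}_{\omega_0}=0$; but $\lr{Q_0,Q_0}=0$ forces $\lr{S_0,S_0}_{\omega_0}=0$ by Lemma~\ref{lema-equiv-Hamilt}, whence $\{S_0,S_0\}_{\omega_0}=m\circ\lr{S_0,S_0}_{\omega_0}=0$.

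The hard part here is purely bookkeeping. One must track the identification of the carrier $E$ coming out of Theorem~\ref{ChapterV} with the data named in the statement, using Lemma~\ref{non-degenerate pairing} for the symmetric non-degenerate pairing and Theorem~\ref{ChapterIV} for the equivalence between $(A,\omega_0)$ and $(E,\langle-,-\rangle)$, and one must confirm that the abstract cubic function appearing in Theorem~\ref{ChapterV} is the explicit $S_0$ of~\eqref{Hamiltonian-standard} --- but this is precisely the computation already carried out in Lemma~\ref{lema-Q-estandar}, so no fresh calculation is needed, only a careful matching of notation. In particular no Darboux-type normalisation enters, since $\omega_0$ is already in canonical form.
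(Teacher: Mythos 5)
Your proposal is correct and follows essentially the same route as the paper, whose own justification of Proposition~\ref{proposition-standard} is precisely the assembly of Proposition~\ref{bi-symplectic-form-double-quivers}, Lemma~\ref{lema-Q-estandar}, and Theorems~\ref{weight 0} and~\ref{ChapterV} that you spell out. The only extra content you supply is the explicit matching of $\omega_0$ with the canonical form of $\dPP$ and of the abstract cubic Hamiltonian with $S_0$, which is exactly the bookkeeping the paper leaves implicit.
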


The double Courant algebroid obtained in Proposition \ref{proposition-standard} will be called the \emph{standard double Courant algebroid}.

\subsection{Deformations of the standard non-commutative Courant
  algebroid}

\v Severa and Weinstein~\cite{SW01} showed that the standard Courant
bracket on $TM\oplus T^*M$, over any smooth manifold $M$, can be
`twisted' in the following way. Given a 3-form $H$, define a bracket
$\cc{-,-}_H$ on $TM\oplus T^*M$ by
$\cc{X+\xi,Y+\eta}_H\defeq\cc{X+\xi,Y+\eta}+i_Yi_XH$, for vector
fields $X,Y$ and 1-forms $\xi,\eta$ on $M$. Then $\cc{-,-}_H$ defines
a Courant algebroid structure on $TM\oplus T^*M$ (using the standard
inner product and anchor) if and only if the 3-form $H$ is
closed. Roytenberg~\cite[\S 5]{Roy00} developed an approach to the
deformation theory of Courant algebroids using the language of derived
brackets in the context of differential graded symplectic
manifolds. We will explain now how Roytenberg's approach can be
adapted to our noncommutative formalism.

Consider the Framework \ref{framework-quivers}, for a quiver $Q$ and
the double graded quiver $\dPP$ defined in~\secref{sub:standard}. 
Define an injection 
\begin{equation}
\label{injection-lambda}
\lambda\colon\Omega^n_{R}B\hookrightarrow A^n.
\end{equation}
by the formulae 
\[
\lambda(a)=a,\quad \lambda(\du\,a)=\widehat{a}, 
\]
and the rule $\lambda(a\du\,b)=\lambda(a)\lambda(\du\, b)$, for all
arrows $a,b\in Q_1$ (so $\du b\in\Omega^1_R B$).

Let $\texttt{Alg}_k$ (respectively $\texttt{CommAlg}_k$) be the
category of associative algebras (resp. the category of commutative
algebras). It is well-known that the inclusion functor
\[
\texttt{CommAlg}_k\hookrightarrow \texttt{Alg}_k
\]
has a left adjoint functor, given by the abelianization
\[
(-)_{\ab}\colon \texttt{Alg}_k\to\texttt{CommAlg}_k
\colon A\mapsto A_{\ab}:=A/(A[A,A]A).
\]
Using the Karoubi-de Rham complex~\eqref{Karoubi-de-Rham}, we get the
following by inspection.

\begin{lemma}
We have the following commutative diagram.
\[
\xymatrix{
\Omega^3_R B \ar@{^{(}->}[r]^-{\lambda}\ar@{->>}[d] & A^3 \ar@{->>}[d]
\\
\DR{3} A \ar@{^{(}->}[r]^-{\lambda_{\ab}} &A^3_{\ab}
}
\]
\label{diagrama-pequenito}
\end{lemma}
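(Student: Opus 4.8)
The plan is to verify directly that the square commutes, tracing generators through both composites. The objects are completely explicit: the top row is the injection $\lambda\colon\Omega^3_R B\hookrightarrow A^3$ defined by $\lambda(a)=a$, $\lambda(\du a)=\widehat a$ and multiplicativity, where $A=\kk\dPP$ is the graded path algebra of the double graded quiver built in \secref{sub:standard}; the vertical maps are the natural projections onto the abelianizations (on the left, the projection $\Omega^\bullet_R B\surj\DR{\bullet}B$ of the Karoubi--de Rham complex; on the right, the quotient $A\surj A_{\ab}=A/(A[A,A]A)$); and the bottom row is the induced map $\lambda_{\ab}$, which is what we must produce. So the first step is to observe that $\lambda$ is a morphism of \emph{graded} algebras (weights match: $|a|=0=|\du a|$ in $\Omega^\bullet_RB$ on the form-degree-zero part, while $|\widehat a|=1$, but in degree $3$ of $\Omega^\bullet_R B$ one has three $\du$'s, matching $|\lambda(\du a_1\,a\,\du a_2\cdots)|$ correctly since each $\widehat a$ carries weight $1$); more importantly, $\lambda$ is injective because the paths in $\dPP$ of the relevant form are linearly independent, as already used implicitly in \secref{sub:standard}.

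Next I would check that $\lambda$ descends to the abelianizations. Since $\lambda$ is an algebra homomorphism, it carries commutators to commutators, hence $\lambda\bigl(\Omega^\bullet_R B\,[\Omega^\bullet_R B,\Omega^\bullet_R B]\,\Omega^\bullet_R B\bigr)\subseteq A[A,A]A$; because the Karoubi--de Rham complex $\DR{\bullet}A$ is by definition $\Omega^\bullet_R A/[\Omega^\bullet_R A,\Omega^\bullet_R A]$ and similarly for $B$, and because in the graded setting the bigraded commutator reduces, in form-degree considerations, to the ordinary one after applying $(-)_{\ab}$, the composite $\Omega^3_R B\xrightarrow{\lambda}A^3\surj A^3_{\ab}$ kills $\Omega^3_R B\cap[\Omega^\bullet_R B,\Omega^\bullet_R B]$. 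Therefore it factors uniquely through $\DR{3}B\to A^3_{\ab}$; call this factorization $\lambda_{\ab}$. This is precisely the assertion that the square commutes. (One should also note that the target of the left-bottom projection is $\DR{3}B$, not $\DR{3}A$, and that $\DR{3}B=\Omega^3_RB/[\ ,\ ]$ is exactly what appears in the deformation discussion via $\lambda$; I would double-check that the paper's $\lambda$ does land in $A^3$ and that its image is generated by the monomials $\widehat a_1\cdots$ with all weights adding to $3$ — in fact all three differentials, so weight exactly $3$.)

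The only genuine subtlety — and the step I expect to be the main obstacle — is bookkeeping with signs and with the bigraded Koszul convention~\eqref{eq:bigraded-Koszul} when passing commutators through $\lambda$. In $\Omega^\bullet_R B$ the commutator is $[\alpha,\beta]=\alpha\beta-(-1)^{(|\alpha|,\|\alpha\|)\cdot(|\beta|,\|\beta\|)}\beta\alpha$, whereas in $A$ (an ordinary graded algebra, with weight but no form-degree) the abelianization quotient is by the \emph{ungraded} two-sided ideal $A[A,A]A$ generated by plain commutators $xy-yx$. One has to check that $\lambda$ of a Koszul-signed commutator still lands in $A[A,A]A$: since $\lambda$ multiplies $\du a$'s to $\widehat a$'s and the sign discrepancy $(-1)^{\|\alpha\|\|\beta\|}$ involves only form degrees which have no counterpart in $A$, the image of $\alpha\beta-(-1)^{\cdots}\beta\alpha$ is $\lambda(\alpha)\lambda(\beta)\pm\lambda(\beta)\lambda(\alpha)$, and both $\lambda(\alpha)\lambda(\beta)-\lambda(\beta)\lambda(\alpha)$ and $\lambda(\alpha)\lambda(\beta)+\lambda(\beta)\lambda(\alpha)$ — in the case the sign is $-1$ — need to be shown to lie in $A[A,A]A$; the first does trivially, and for the second one uses that $2\lambda(\alpha)\lambda(\beta)=[\lambda(\alpha),\lambda(\beta)]+(\lambda(\alpha)\lambda(\beta)+\lambda(\beta)\lambda(\alpha))$ is not obviously in the ideal, so one must instead argue degree by degree that whenever the Koszul sign is $+1$ (which happens exactly when $\|\alpha\|\|\beta\|$ is even) the two conventions agree, and when it is odd one is comparing $\alpha\beta+\beta\alpha$ in $\Omega^\bullet$ with the image, but such elements of odd total form degree paired to give form degree $3$ cannot both be odd — so in fact the relevant commutators in degree $3$ always have at least one factor of even form degree, and the signs match. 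I would make this precise by splitting $\Omega^3_RB$ into the span of $b_0\,\du b_1\,\du b_2\,\du b_3$ and noting the commutators generating the Karoubi--de Rham relations in this degree involve moving a degree-$0$ element (an element of $B$) past a degree-$\le 3$ form, where the Koszul sign is trivial. This makes the factorization, hence the commutativity of the diagram, a formal consequence, and the lemma follows ``by inspection'' as the paper says.
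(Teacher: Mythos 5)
Your verification of the commutativity of the square is correct, and it is essentially the ``inspection'' the paper has in mind: $\lambda$ identifies $\Omega^\bullet_R B=\T_B\Omega^1_RB$ with the subalgebra $\T_B(E_1[-1])\subset A$, converting form degree into weight, and the kernel of $\Omega^3_RB\to\DR{3}B$ is spanned by bigraded commutators $[\alpha,\beta]$ with $\lVert\alpha\rVert+\lVert\beta\rVert=3$; since two odd form degrees cannot sum to $3$, the Koszul sign is always $+1$ there, so these are ordinary commutators and $\lambda$ carries them into the kernel of $A^3\to A^3_{\ab}$. (Your parenthetical that the relations in this degree only involve moving a degree-$0$ element past a form is too narrow --- $[\du b_1,\,b\du b_2\du b_3]$ also occurs --- but its Koszul sign is likewise trivial, so the conclusion stands. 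You are also right that the lower-left corner should read $\DR{3}B$, or else the left vertical arrow is not surjective.)

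The genuine gap is that you establish only the existence of the factorization, not the injectivity of $\lambda_{\ab}$ asserted by the hooked bottom arrow --- and that is exactly the property invoked in the next lemma (``Since $\lambda_{\ab}$ is injective\dots''). This does not follow from injectivity of $\lambda$, and it is sensitive to which quotient $A_{\ab}$ denotes. With the abelianization-functor definition $A_{\ab}=A/(A[A,A]A)$ it in fact fails once the quiver has enough arrows: for loops $x,y,z,w$ at a vertex, $\phi=\du x\,[y,\du z]\,\du w\in\Omega^3_RB$ maps to $\widehat x\,[y,\widehat z]\,\widehat w$, which lies in the two-sided ideal $A[A,A]A$, while $[\phi]\neq 0$ in $\DR{3}B$ because $\du x\,y\,\du z\,\du w$ and $\du x\,\du z\,y\,\du w$ are distinct necklaces and the commutator \emph{subspace} only identifies cyclic rotations. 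The reading consistent with the paper's subsequent remark $A_{\ab}=\DR{0}A$ is the cyclic quotient $A/[A,A]$; with that reading injectivity does hold, by combining your parity argument (in total degree $3$ the super- and ordinary commutator subspaces of $\Omega^\bullet_RB$ coincide) with the fact that distinct necklaces of $\PP$ stay linearly independent among necklaces of $\dPP$, so that $[A,A]\cap\lambda(\Omega^\bullet_RB)=\lambda([\Omega^\bullet_RB,\Omega^\bullet_RB])$. Your write-up should fix the meaning of $A_{\ab}$ and supply this step.
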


Given any $\alpha\in \Omega^\bullet_R A$, the corresponding element in
$ \DR{\bullet} A$ will be denoted $[\alpha]$. Note that
$A_{\ab}=\DR{0}A$. Let $a,b$ and $c$ be three arrows of weight 1, and
$p,q,r,s$ be three paths in $Q$ that compose, that is, 
\[
h(p)=t(a),\quad h(a)=t(q),\quad h(q)=t(b),\quad h(b)=t(r),\quad h(r)=t(c),\quad h(c)=t(s).
\]
Define a noncommutative differential 3-form on $B$, 
\[
\phi:=s(\du\, c)r(\du\, b )q(\du\, a) p\in\Omega^3_R B,
\]
with corresponding cubic polynomial on $A$, 
\[
\widehat{\phi}:=\lambda(\phi)=s\widehat{c} r\widehat{b}q\widehat{a}p\in A^3.
\]
As usual, $m\colon A\otimes A\to A$ denotes multiplication. 

\begin{lemma}
\begin{enumerate}
\item [\textup{(i)}] $\displaystyle{
\lambda(\du\phi)=m\left(Q_0(\lambda(\phi))\right);
}$
\item [\textup{(ii)}] $\displaystyle{
\lr{\widehat{a},\widehat{b}}_\omega=0.
}$
\end{enumerate}
\label{Lema-1-lambda}
\end{lemma}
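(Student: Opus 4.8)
\textbf{Proof plan for Lemma~\ref{Lema-1-lambda}.}

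The plan is to verify both identities by a direct computation in the graded path algebra $A=\kk\dPP$, using the explicit action of $Q_0$ on the generators recorded in~\eqref{aplicaciones-de-Q-a-flechitas} together with the Leibniz rule for the double derivation $Q_0$. For part~(i), I would first note that $Q_0(\lambda(\phi))$ is computed by applying the graded Leibniz rule to the product $s\widehat{c}\,r\widehat{b}\,q\widehat{a}\,p$, where $s,r,q,p$ are paths in $Q$ (weight $0$) and $\widehat{a},\widehat{b},\widehat{c}$ are the weight-$1$ arrows. Since $Q_0$ vanishes on all arrows of $Q$ (as $Q_0(a)=e_{h(a)}\otimes\widehat a$ only for the purpose of producing $\widehat a$, but $Q_0$ sends weight-$0$ arrows $a$ of $Q$ to $e_{h(a)}\otimes\widehat a$ — here I must be careful: in the standard quiver the arrows of $Q$ sit at weight $0$ and $Q_0(a)=e_{h(a)}\otimes\widehat a$), the relevant point is that $Q_0(\widehat a)=0$ for the hatted arrows, so only the paths $p,q,r,s$ contribute differentiated terms, each $a$ in them being replaced by $e_{h(a)}\otimes\widehat a$. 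After applying $m\colon A\otimes A\to A$, the term $Q_0(a)$ inside a path becomes the substitution $a\rightsquigarrow \widehat a$ at that position. Summing over all positions in $p,q,r,s$ reproduces exactly $\lambda$ applied to the de Rham differential $\du\phi=\sum (\text{differentiate each }p,q,r,s\text{ factor})$, because $\lambda$ by definition turns $\du a$ into $\widehat a$ and is multiplicative; the hatted factors $\widehat a,\widehat b,\widehat c$ in $\phi$ are already closed under $\du$ up to the Koszul signs, which match the signs coming from $Q_0$ having weight $+1$. So part~(i) reduces to checking that the sign conventions in~\eqref{formula-larga-contraccion}-style Leibniz expansions agree with those in the de Rham differential on $\Omega^\bullet_R B$, which is routine.

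For part~(ii), I would use the formula $\lr{-,-}_\omega=i_{H_{(-)}}\iota_{H_{(-)}}\omega$ from~\eqref{determinacion del corchete doble}, or more directly the fact that $\lr{a,b}_\omega=i_{\partial/\partial a}\iota_{\partial/\partial b}\omega$ as in the proof of Lemma~\ref{non-degenerate pairing}(i), applied with $\omega=\omega_0$ from~\eqref{bi-sympl-standard}. Since $\widehat a,\widehat b$ have weight $1$, their Hamiltonian double derivations are, up to sign, $\partial/\partial\widehat a^*$ and $\partial/\partial\widehat b^*$ respectively (this follows from inspecting which term of $\omega_0=\sum(\du a\,\du a^*+\du\widehat a\,\du\widehat a^*)$ pairs with $\widehat a$). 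Then $\lr{\widehat a,\widehat b}_\omega$ involves contracting $\omega_0$ twice with derivations of the form $\partial/\partial\widehat{(-)}^*$, and since $\omega_0$ contains no monomial of the form $\du\widehat a^*\,\du\widehat b^*$ (the form is a sum of $\du a\,\du a^*$ and $\du\widehat a\,\du\widehat a^*$ only), the double contraction yields $0$. Equivalently, one checks directly that $\partial b^*/\partial\widehat a^*$-type contractions produce nothing that survives: $\lr{\widehat a,\widehat b}_\omega$ lands in $(A\otimes A)$ but the only way to get a nonzero double bracket of two weight-$1$ elements under a weight-$2$ symplectic form is via the pairing~\eqref{inner quivers}, $\langle-,-\rangle=\lr{-,-}_\omega|_{E_1\otimes E_1}$, and $\langle\widehat a,\widehat b\rangle=\varepsilon(\widehat a)\widetilde{\widehat a^*}(\widehat b)=0$ because $\widehat a^*\neq\widehat b$ as arrows (they are distinct generators of $\dPP_1$, and $\widehat a^*$ is never equal to an unstarred weight-$1$ arrow $\widehat b$).

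The main obstacle I anticipate is bookkeeping the Koszul signs in part~(i): the double derivation $Q_0$ has weight $+1$, so its Leibniz expansion on a long product of weight-$0$ and weight-$1$ factors carries signs $(-1)^{|\cdot|}$ that must be matched against the signs in the de Rham differential $\du$ on $\Omega^\bullet_R B$ (which also picks up signs from the form degree). I would organize this by checking the identity on a short product first (say $\phi=q(\du a)p$ with $p,q$ single arrows), confirming that $m(Q_0(\lambda(\phi)))=\lambda(\du\phi)$ term by term, and then invoking the fact that both $\du$ and $m\circ Q_0\circ\lambda$ are graded derivations agreeing on generators — so they agree on all of $\Omega^\bullet_R B$ by uniqueness of the extension (Lemma~\ref{lem:NC-diff-forms} and the derivation property of $Q_0$). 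Part~(ii) is essentially immediate once the Hamiltonian double derivations of the hatted arrows are identified, so no real difficulty there. Finally I would remark that Lemma~\ref{diagrama-pequenito} guarantees the compatibility of $\lambda$ with passing to the Karoubi-de Rham quotient, which is what makes statement~(i) meaningful at the level of $\DR{\bullet}A$.
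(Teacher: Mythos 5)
Your proposal is correct and follows essentially the same route as the paper: for (i) the paper likewise reduces to the generator case $\phi=a\in Q_1$ (where $m(Q_0(\lambda(a)))=m(e_{h(a)}\widehat a\otimes e_{t(a)})=\widehat a=\lambda(\du a)$) and extends by the derivation property, exactly as you propose. For (ii) the paper reads off the vanishing by inspection from the bivector $P=\sum_{a\in Q_1}\bigl(\tfrac{\partial}{\partial a}\tfrac{\partial}{\partial a^*}+\tfrac{\partial}{\partial \widehat a}\tfrac{\partial}{\partial \widehat a^*}\bigr)$ via~\eqref{subQ}, which amounts to the same observation as your contraction/pairing argument that $\widehat a$ only pairs with $\widehat a^*$ under $\omega_0$ and never with an unstarred $\widehat b$.
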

\begin{proof}
To prove (i), by the definition  \eqref{injection-lambda} of
$\lambda$, it suffices to show the identity when  $\phi=a\in\Omega^1_R
B$, with $a\in Q_1$. In this case,  it is clear that the left-hand
side is $\lambda(\du\,a)=\widehat{a}$, while the right-hand side is 
\[
m\left(Q_0(\lambda(a))\right)=m\left(Q_0(a)\right)=m(e_{h(a)}\widehat{a}\otimes e_{t(a)})=\widehat{a},
\]
as required. For (ii), by \eqref{bi-sympl-standard}, we have the
differential double Poisson bracket
\[
P=\sum_{a\in Q_1}\left(\frac{\partial}{\partial a}\frac{\partial}{\partial a^*}+\frac{\partial}{\partial \widehat{a}}\frac{\partial}{\partial \widehat{a}^*} \right).
\]
Then (ii) follows by inspection, applying \eqref{subQ}.
\end{proof}

We now define the `deformed Hamiltonian'
\[
S_{\widehat{\phi}}:=S_0+\widehat{\phi}.
\]

\begin{lemma}
Let $\{-,-\}_\omega$ be the associated bracket to
$\lr{-,-}_\omega$. Then 
\[
[\{S_{\widehat{\phi}},S_{\widehat{\phi}}\}_\omega]=0
\quad\Longleftrightarrow\quad
\du_{\dR}\,[\phi]=0 \text{ in $\DR{4}A$}.
\]
\end{lemma}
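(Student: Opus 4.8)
The plan is to reduce the Maurer--Cartan-type equation $[\{S_{\widehat\phi},S_{\widehat\phi}\}_\omega]=0$ to a de Rham closedness condition by expanding $\{S_{\widehat\phi},S_{\widehat\phi}\}_\omega$ bilinearly, using $S_{\widehat\phi}=S_0+\widehat\phi$, and identifying each of the three resulting terms. First I would write
\[
\{S_{\widehat\phi},S_{\widehat\phi}\}_\omega
=\{S_0,S_0\}_\omega+2\{S_0,\widehat\phi\}_\omega+\{\widehat\phi,\widehat\phi\}_\omega
\quad\text{mod }[A,A],
\]
where the cross term uses the graded skew-symmetry~\eqref{skew-sym-assoc} of the associated bracket modulo commutators (here $\lvert S_0\rvert=\lvert\widehat\phi\rvert=3$ are both odd, so the sign is $+$). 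By Lemma~\ref{lema-Q-estandar}, $Q_0$ is the Hamiltonian double derivation with Hamiltonian $S_0$, and $Q_0$ is homological; by Lemma~\ref{lema-equiv-Hamilt} this gives $\lr{S_0,S_0}_\omega=0$, hence in particular $\{S_0,S_0\}_\omega=m\circ\lr{S_0,S_0}_\omega=0$. So the first term drops.

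Next I would handle the term $\{\widehat\phi,\widehat\phi\}_\omega$. Since $\widehat\phi=\lambda(\phi)$ is built only from the arrows $a$ and $\widehat a$ (weights $0$ and $1$), while the bi-symplectic form $\omega_0$ in~\eqref{bi-sympl-standard} pairs $a$ with $a^*$ and $\widehat a$ with $\widehat a^*$, the bracket $\lr{\widehat a,\widehat b}_\omega=0$ by Lemma~\ref{Lema-1-lambda}(ii), and also $\lr{\widehat a,b}_\omega=0$, $\lr{a,b}_\omega=0$ for all arrows of weight $0$ or $1$; applying the Leibniz rule for $\lr{-,-}_\omega$ in both arguments shows $\lr{\widehat\phi,\widehat\phi}_\omega=0$, hence $\{\widehat\phi,\widehat\phi\}_\omega=0$. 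The remaining term is $2\{S_0,\widehat\phi\}_\omega$. Here I would use that $\{S_0,-\}_\omega=m\circ\lr{S_0,-\}_\omega$ and that $\lr{S_0,-\}_\omega=Q_0$ acting on generators (up to the usual $m$), so modulo commutators $\{S_0,\widehat\phi\}_\omega=m\circ Q_0(\widehat\phi)=m\circ Q_0(\lambda(\phi))$. By Lemma~\ref{Lema-1-lambda}(i), $m(Q_0(\lambda(\phi)))=\lambda(\du\phi)$. Therefore
\[
[\{S_{\widehat\phi},S_{\widehat\phi}\}_\omega]=2\,[\lambda(\du\phi)]
=2\,\lambda_{\ab}([\du\phi])=2\,\lambda_{\ab}(\du_{\dR}[\phi])
\]
in $A^3_{\ab}=\DR{0}A$-ish degree (more precisely in the relevant component of $A_{\ab}$), where the passage from $[\lambda(\du\phi)]$ to $\lambda_{\ab}(\du_{\dR}[\phi])$ is exactly the commutativity of the diagram in Lemma~\ref{diagrama-pequenito}, together with the compatibility of $\lambda$ with $\du$ recorded there. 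Since $\lambda_{\ab}$ is injective (Lemma~\ref{diagrama-pequenito}) and $\kk$ has characteristic $0$ so the factor $2$ is invertible, this vanishes if and only if $\du_{\dR}[\phi]=0$ in $\DR{4}A$, which is the claim.

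The main obstacle I expect is bookkeeping rather than conceptual: making the identification $\{S_0,\widehat\phi\}_\omega\equiv m\circ Q_0(\widehat\phi)$ modulo commutators fully rigorous requires being careful that $\{S_0,-\}_\omega$ agrees with the induced derivation $Q_0$ on all of $A^3$ (not just on generators), which follows from the Leibniz rule for $\{-,-\}_\omega$ in its second argument together with $Q_0=\lr{S_0,-\}_\omega$, but one must track the Koszul signs through the cyclic words defining $\widehat\phi=s\widehat c\,r\widehat b\,q\widehat a\,p$. A secondary subtlety is checking that $\{\widehat\phi,\widehat\phi\}_\omega$ genuinely vanishes \emph{before} passing to the quotient by commutators — this is where Lemma~\ref{Lema-1-lambda}(ii) and the explicit form of $P$ in~\eqref{bi-sympl-standard} do the real work, since a priori only the class in $A_{\ab}$ need vanish. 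Once these two points are pinned down, the equivalence is immediate.
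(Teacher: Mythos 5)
Your proposal is correct and follows essentially the same route as the paper: the same bilinear expansion of $\{S_{\widehat\phi},S_{\widehat\phi}\}_\omega$, the same appeals to Lemma~\ref{lema-Q-estandar} and Lemma~\ref{lema-equiv-Hamilt} for $\{S_0,S_0\}_\omega=0$, to Lemma~\ref{Lema-1-lambda} for both the vanishing of $\{\widehat\phi,\widehat\phi\}_\omega$ and the identification of the cross term with $\lambda(\du\phi)$, and to the commutative diagram of Lemma~\ref{diagrama-pequenito} plus injectivity of $\lambda_{\ab}$ to conclude. The only (harmless) differences are that you combine the two cross terms via~\eqref{skew-sym-assoc} at the outset rather than at the end, and you are slightly more explicit about the sign check and the invertibility of $2$.
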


\begin{proof}
By the properties of the associated bracket, we have
\[
[\{S_{\widehat{\phi}},S_{\widehat{\phi}}\}_\omega]=[\{S_0,S_0\}_\omega+\{S_0,\widehat{\phi}\}_\omega+\{\widehat{\phi},S_0\}_\omega+\{\widehat{\phi},\widehat{\phi}\}_\omega]
\]
Let $a$ be an arrow of weight 1, and $p$ a path in $Q$. Lemma
\ref{Lema-1-lambda}(ii) and the fact that
$\lr{\widehat{a},p}_\omega=0$ (since $\lvert\lr{-,-}_\omega\rvert=-2$)
imply that $\lr{\widehat{\phi},\widehat{\phi}}_\omega=0$ and,
consequently, $\{\widehat{\phi},\widehat{\phi}\}_\omega=0$. Also,
Lemmas \ref{lema-equiv-Hamilt} and \ref{lema-Q-estandar} enable us to
conclude that $\{S_0,S_0\}=0$. So,
\[
[\{S_{\widehat{\phi}},S_{\widehat{\phi}}\}_\omega]=[\{S_0,\widehat{\phi}\}_\omega+\{\widehat{\phi},S_0\}_\omega]=2[\{S_0,\widehat{\phi}\}_\omega],
\]
 where we applied \eqref{skew-sym-assoc} in the last identity. Then by Lemmas \ref{Lema-1-lambda}(i) and Lemma \ref{diagrama-pequenito}:
 \begin{align*}
[\{S_0,\widehat{\phi}\}_\omega]&=[m\left(S_0(\lambda(\phi))\right)]
\\
&=[\lambda(\du\,\phi)]
\\
&=\lambda_{\ab}(\du_{\dR}\,[\phi]).
\end{align*}
Since $\lambda_{\ab}$ is injective, $[\{S_{\widehat{\phi}},S_{\widehat{\phi}}\}_\omega]=0$ if and only if $\du_{\dR}\,[\phi]=0$ in $\DR{4}A$, as required.
\end{proof}

In conclusion, any noncommutative differential 3-form
$\lambda\in\Omega^3_R B$ that is closed in the Karoubi--de Rham
complex determines a cubic noncommutative polynomial
$\widehat{\phi}\in A^3$ and hence a deformation of the standard double
Courant algebroid.

\appendix
\section{Proof of Theorem \ref{weight 0}}

Consider the diagram \eqref{DIAGRAMA} constructed in
\secref{sub:cotangent}. Let $a^{\prime},a^{\prime\prime}\in A$ and
$\sigma\in M_N^{\vee}$. Then $\lvert\sigma\rvert=-N$ and
$a^{\prime}\otimes\sigma\otimes a^{\prime\prime}\in A\otimes_B
M^{\vee}_N\otimes_B A$ has weight $\lvert a^{\prime}\rvert +\lvert
a^{\prime\prime}\rvert-N$, when viewed as an element of the space $
A\otimes_B M^{\vee}\otimes_B A $ under the injection $A\otimes_B
M^{\vee}_N\otimes_B A\hra A\otimes_B M^{\vee}\otimes_B A$.

We will write explicitly the morphism $\nu^{\vee}$ of Proposition
\ref{DIAGRAMAGRAL} using the isomorphisms $\kappa$ and $F$ which will
be defined below. Also, these maps will make the square in the
following diagram commute.
\begin{equation}
\xymatrix{
& (A\otimes_B M\otimes_B A)^\vee\ar[r]^-{\nu^\vee} &  (\widetilde{\Omega}/Q)^{\vee}\ar[d]^{F}
\\
0\ar[r] & A\otimes_{B} M^{\vee}\otimes_{B}A \ar[r] \ar[u]^{\kappa}& \D_{R} A\ar[r] &  A\otimes_{B} \D_{R} B \otimes_{B}A \ar[r] & 0
}
\label{diagrama-kappa}
\end{equation}
Firstly, define
\[
\kappa\colon A\otimes_B M^\vee\otimes_B A \lra{\cong} (A\otimes_B M\otimes_B A)^\vee\colon \quad a^\prime\otimes \sigma\otimes a^{\prime\prime}\longmapsto \kappa(a^\prime\otimes \sigma\otimes a^{\prime\prime})
\]
by
\begin{align*}
 \kappa(a^\prime\otimes \sigma\otimes a^{\prime\prime})\colon A\otimes_B M\otimes_B A&\lto A\otimes A
 \\
a^{(1)}_1\otimes m_1\otimes a^{(2)}_1&\longmapsto \pm (a^\prime\sigma^{\prime\prime}(m_1)a^{(2)}_1)\otimes (a^{(1)}_1\sigma^{\prime}(m_1)a^{\prime\prime}),
\end{align*}
where $a^{(1)}_1,a^{(1)}_1\in A$ and $m_1\in M$. As usual, we use
Sweedler's convention. In addition, we will use the sign $\pm$ to
indicate the signs involved since we will construct a map which will
turn out to be zero hence signs can be ignored for this purpose. By
Proposition \ref{5.2.3}, the morphism $\nu$ is the natural projection:
\[
\nu\colon \widetilde{\Omega}/Q\lto A\otimes_B M \otimes_B A\colon \left( \begin{array}{c}
\overline{a}^{(1)}_2\otimes\beta_2\otimes \overline{a}^{(2)}_2  \\
+\\
a^{(1)}_2 \otimes m_2 \otimes a^{(2)}_2
 \end{array} \right)\text{mod }Q\longmapsto a^{(1)}_2\otimes m_2\otimes a^{(2)}_2,
 \]
 where $a^{(1)}_2,a^{(2)}_2,\overline{a}^{(1)}_2,\overline{a}^{(2)}_2\in A$, $m_2\in M$ and $\beta_2\in\Omega^1_R A$. Define the map
 \begin{align*}
  \varphi &\defeq \kappa(a^\prime\otimes \sigma\otimes a^{\prime\prime})\left(\nu  \left( \begin{array}{c}
\overline{a}^{(1)}_2\otimes\beta_2\otimes\overline{a}^{(2)}_2  \\
+\\
a^{(1)}_2\otimes m_2 \otimes a^{(2)}_2
 \end{array} \right)\text{mod }Q \right)\in(\widetilde{\Omega}/Q)^\vee
\\
&= \nu^\vee\left( \kappa(a^\prime\otimes \sigma\otimes a^{\prime\prime})\right)
\end{align*}



To define $F$ in \eqref{diagrama-kappa}, we will need the action of
$\widetilde{\du }$ on homogeneous generators $b\in B=\T^0_BM$ and
$m_i\in M=\T^1_B M$, for all $i=1,...,r$:
\[
\widetilde{\du }\, a=\begin{cases}
((1_A\otimes\du_B b\otimes 1_A)\oplus 0)\text{mod }Q &\mbox{if } a=b
\\
( 0\oplus\sum^{r}_{i=1}m_1\cdots m_{i-1}\otimes m_i\otimes m_{i+1}\cdots m_r)\text{mod }Q & \mbox{if } a=m_1\otimes\cdots \otimes m_r
\end{cases}
\]
Define now
\[
F\colon (\widetilde{\Omega}/Q)^\vee\lto\D_R A
\]
by
\[
 F(\varphi)(a)=\begin{cases}
0 &\mbox{if } a=b,
 \\
(\varphi(\widetilde{\du}\;a))^\circ & \mbox{otherwise},
\end{cases}
\]
where $\varphi\in (\widetilde{\Omega}/Q)^\vee$ and $a\in A$. In particular, if $a=m_1\otimes\cdots\otimes m_r$ with $r>0$:
\begin{align*}
(\varphi(\widetilde{d}\; a))^\circ&=\sum^r_{i=1}\sigma_{(12)}\left((a^{\prime}\sigma^{\prime\prime}(m_i)m_{i+1}\cdots m_r)\otimes (m_1\cdots m_{i-1}\sigma^{\prime}(m_i)a^{\prime\prime})\right)
\\
&=\sum^r_{i=1}(m_1\cdots m_{i-1}\sigma^{\prime}(m_i)a^{\prime\prime})\otimes (a^{\prime}\sigma^{\prime\prime}(m_i)m_{i+1}\cdots m_r)
\end{align*}

%

\begin{claim}
$F(\varphi)\in\D_R A$.
\end{claim}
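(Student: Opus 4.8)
The plan is to show that $F(\varphi)$ coincides with the composite
$\sigma_{(12)}\circ\varphi\circ\widetilde{\du}\colon A\to A\otimes A$, and then to
deduce the graded Leibniz rule and $R$-linearity from the fact that this is a
composition of the tautological derivation $\widetilde{\du}$ with an $A$-bimodule
morphism, post-composed with the flip $\sigma_{(12)}$.

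First I would check that $F(\varphi)=\sigma_{(12)}\circ\varphi\circ\widetilde{\du}$ as
linear maps $A\to A\otimes A$. On a product $m_1\otimes\cdots\otimes m_r$ with
$r\geq 1$ this is the very definition of $F(\varphi)$, since $(-)^\circ=\sigma_{(12)}$.
On $B$ it holds because, by Proposition~\ref{5.2.3}(iii),
$\widetilde{\du}\,b=\bigl((1_A\otimes\du_B b\otimes 1_A)\oplus 0\bigr)\bmod Q$ lies in
the first direct summand $A\otimes_B\Omega^1_R B\otimes_B A$ of $\widetilde{\Omega}$,
which is annihilated by $\nu$; since $\varphi=\nu^\vee(\kappa(a'\otimes\sigma\otimes a''))
=\kappa(a'\otimes\sigma\otimes a'')\circ\nu$, we get $\varphi(\widetilde{\du}\,b)=0$,
hence $\sigma_{(12)}(\varphi(\widetilde{\du}\,b))=0=F(\varphi)(b)$. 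As
$A=B\oplus\bigoplus_{n\geq 1}M^{\otimes_B n}$, these two cases determine the map on all
of $A$ and prove the identity.

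Next I would record the three structural facts behind the argument: (i) by
Proposition~\ref{5.2.3}(iii), $\widetilde{\du}\colon A\to\widetilde{\Omega}/Q$ is an
$R$-linear graded derivation for the canonical $A$-bimodule structure on
$\widetilde{\Omega}/Q$ (it corresponds to $\du$ under the isomorphism $f$); (ii) by
Proposition~\ref{5.2.3}(ii), $\nu$ is a morphism of graded $A$-bimodules; (iii) a short
computation directly from the explicit formula defining $\kappa(a'\otimes\sigma\otimes a'')$
shows that, as a map $A\otimes_B M\otimes_B A\to A\otimes A$, it intertwines the
canonical $A$-bimodule structure on the source with the \emph{inner} bimodule structure
$(A\otimes A)_{\inn}$ on the target (the two ``factors'' of $\sigma$ are interchanged
precisely so that left and right multiplications land on the crossed tensor component).
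Composing (ii) and (iii), $\varphi$ is an $A$-bimodule morphism $\widetilde{\Omega}/Q\to
(A\otimes A)_{\inn}$, and combining with (i), the composite $\varphi\circ\widetilde{\du}$
is an $R$-linear graded derivation $A\to(A\otimes A)_{\inn}$ (a derivation composed with a
bimodule morphism is again a derivation, and $R$-linearity is inherited because
$\widetilde{\du}$ kills $R$). Finally $\sigma_{(12)}\colon(A\otimes A)_{\inn}\to
(A\otimes A)_{\out}$ is an isomorphism of graded $A$-bimodules (the inverse
of~\eqref{eq:flip}), so post-composing with it turns $\varphi\circ\widetilde{\du}$ into an
$R$-linear graded derivation $F(\varphi)=\sigma_{(12)}\circ\varphi\circ\widetilde{\du}\colon
A\to(A\otimes A)_{\out}$, that is, $F(\varphi)\in\D_R A$.

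The only real point of care is step~(iii): keeping track of which of the inner/outer
$A$-bimodule structures on $A\otimes A$ each of $\kappa$, $\nu^\vee$ and $\sigma_{(12)}$
respects, together with the Koszul signs absorbed into the ``$\pm$'' in the formula for
$\kappa$. As an alternative to the conceptual argument, one can verify the graded Leibniz
rule for $F(\varphi)$ directly on a factorization $m_1\otimes\cdots\otimes m_r=
(m_1\otimes\cdots\otimes m_k)(m_{k+1}\otimes\cdots\otimes m_r)$ by splitting the sum
$\sum_{i=1}^r$ displayed just before the Claim into the ranges $i\leq k$ and $i>k$ and
matching each part with $F(\varphi)(m_1\otimes\cdots\otimes m_k)$ acting on the right and
$F(\varphi)(m_{k+1}\otimes\cdots\otimes m_r)$ acting on the left (in the outer structure),
together with the elementary cases where one factor lies in $B$; this is entirely routine
but notationally heavier.
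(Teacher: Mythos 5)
Your argument is correct, but it is not the one the paper has in mind: the paper's proof is the one-line ``straightforward application of the graded Leibniz rule,'' i.e.\ precisely the direct verification you sketch as an alternative at the end (split the sum $\sum_{i=1}^r$ over a factorization $m_1\cdots m_r=(m_1\cdots m_k)(m_{k+1}\cdots m_r)$ and match terms). Your main route is more structural: you identify $F(\varphi)$ with $\sigma_{(12)}\circ\varphi\circ\widetilde{\du}$ --- which does hold, since $\nu$ kills the summand $A\otimes_B\Omega^1_RB\otimes_BA$ containing $\widetilde{\du}\,b$, so the case distinction in the definition of $F$ is vacuous for $\varphi=\nu^\vee(\kappa(a'\otimes\sigma\otimes a''))$ --- and then invoke the general facts that a derivation post-composed with a bimodule morphism is a derivation, and that the flip exchanges $(A\otimes A)_{\inn}$ and $(A\otimes A)_{\out}$. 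This buys you a proof that requires no term-by-term bookkeeping and makes transparent \emph{why} the flip $(-)^\circ$ must appear in the definition of $F$ (it converts the inner-structure equivariance of $\kappa(a'\otimes\sigma\otimes a'')$ into the outer-structure target required by $\D_RA$), at the cost of having to verify your step (iii); the paper's direct check is shorter to state but hides exactly this point. The only place to be careful, as you note, is the Koszul sign buried in the ``$\pm$'' of $\kappa$, which the paper itself declines to pin down; for the graded Leibniz rule this sign does matter in principle, but your argument localizes the issue to the single claim that $\kappa(a'\otimes\sigma\otimes a'')$ is a graded bimodule morphism into $(A\otimes A)_{\inn}$, which is the correct normalization. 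I consider the proposal a valid (and arguably more informative) proof.
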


\begin{proof}
Straightforward application of the graded Leibniz rule.
\end{proof}

Next, we will focus on the vertical arrow, $\iota(\omega)\colon\D_R A
\lra{\cong}\Omega^1_R A$ given by the bi-symplectic form $\omega$ of
weight $N$ on $A$. We use the canonical isomorphism $f^{-1}\colon
\Omega^{1}_R A\cong\widetilde{\Omega}/Q$ (see \eqref{f}), which
induces another one $\Omega^{2}_R A\lra{\cong}
(\widetilde{\Omega}/Q)^{\otimes_R 2}\colon \beta \longmapsto
\widetilde{\beta}$. In particular, the bi-symplectic form $\omega$
determines an element $\widetilde{\omega}$ that can be decomposed,
using $\widetilde{\Omega}=(A\otimes_B \Omega^1_R B \otimes_B A)\oplus
(A\otimes_R M\otimes_R A)$, as follows:
\[
\widetilde{\omega}=(\widetilde{\omega}_{\text{MM}}+\widetilde{\omega}_{\text{BB}}+\widetilde{\omega}_{\text{MB}}+\widetilde{\omega}_{\text{BM}})\;\text{mod } Q.
\]
Omitting summation signs, we write 
\begin{align*}
\widetilde{\omega}_{\text{MM}}&=(\widetilde{m}_1\otimes \widetilde{m}_2)\;\text{mod }Q,
\\
\widetilde{\omega}_{\text{BB}}&=(\widetilde{\beta}_1\otimes \widetilde{\beta}_2)\;\text{mod }Q,
\\
\widetilde{\omega}_{\text{MB}}&=(\widetilde{m}_3\otimes \widetilde{\beta}_3)\;\text{mod }Q,
\\
\widetilde{\omega}_{\text{BM}}&=(\widetilde{\beta}_4\otimes \widetilde{m}_4)\;\text{mod }Q,
\end{align*}
where $\widetilde{m}_i:=a^{(1)}_i\otimes m_i\otimes a^{(2)}_i\in
A\otimes_R M \otimes_R A$ and
$\widetilde{\beta}_i:=\overline{a}^{(1)}_i\otimes \beta_i\otimes
\overline{a}^{(2)}_i\in A\otimes_B \Omega^1_R B \otimes_B A$ for
$i=1,2,3,4$, with
$a^{(1)}_i,a^{(2)}_i,\overline{a}^{(1)}_i,\overline{a}^{(2)}_i\in A$,
$m_i\in M$ and $\beta_i\in \Omega^1_R B$ for $i=1,2,3,4$. Using this
decomposition and the previous isomorphism, we can now calculate
\begin{equation}
\iota_{F(\varphi)}\widetilde{\omega}=\iota(\widetilde{\omega})(F(\varphi))=\iota\left((\widetilde{\omega}_{\text{MM}}+\tilde{\omega}_{\text{BB}}+\tilde{\omega}_{\text{MB}}+\tilde{\omega}_{\text{BM}})\;\text{mod }Q\right)(F(\varphi))
\label{omega-iota}
\end{equation}

\begin{claim}~
\begin{enumerate}
\item [\textup{(i)}]
$ \iota_{F(\varphi)}(a^{(1)}_i\otimes m_i\otimes a^{(2)}_i) = \begin{cases} 0 & \mbox{if }\lvert m_i\rvert<N
\\
a^{(1)}_i {}^{\circ}(a^{\prime}\sigma^{\prime\prime}(m_i)\otimes \sigma^{\prime}(m_i)a^{\prime\prime})a^{(2)}_i& \mbox{if } \lvert m_i\rvert= N\end{cases}$.
\item [\textup{(ii)}]
$ \iota_{F(\varphi)}(\bar{a}^{(1)}_i\otimes \beta \otimes \bar{a}^{(2)}_i) = 0$
 \end{enumerate}
 \label{claim tecnico}
 \end{claim}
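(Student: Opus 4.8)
\textbf{Proof plan for Claim~\ref{claim tecnico}.} The plan is to compute the reduced contraction $\iota_{F(\varphi)}$ directly on the two types of generators of $\widetilde{\Omega}$, using the explicit formula for $F(\varphi)$ obtained above together with the definition of the reduced contraction operator. First I would recall that, under the canonical isomorphism $f\colon\Omega^1_RA\xra{\cong}\widetilde{\Omega}/Q$ of Proposition~\ref{5.2.3}, the generator $a^{(1)}_i\cdot\widetilde{m}_i\cdot a^{(2)}_i$ corresponds to $a^{(1)}_i(\du m_i)a^{(2)}_i$, and $\bar a^{(1)}_i\cdot\widetilde{\beta}_i\cdot\bar a^{(2)}_i$ corresponds to $\bar a^{(1)}_i\beta_i\bar a^{(2)}_i$ with $\beta_i\in\Omega^1_RB\subset\Omega^1_RA$. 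Then by the Leibniz rule for $i_\Theta$ (Lemma~\ref{i-leibniz}(i)) and the fact that $i_\Theta$ kills $A$, one has $i_{F(\varphi)}\big(a^{(1)}(\du m)a^{(2)}\big)=a^{(1)}\,F(\varphi)(m)\,a^{(2)}$ in the outer bimodule sense, and similarly for $\beta$; applying ${}^\circ(-)$ then gives the reduced contraction.

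For part (ii): since $\beta_i\in\Omega^1_RB$ is in the image of $\du_B\colon B\to\Omega^1_RB$, contracting against $F(\varphi)$ reduces to evaluating $F(\varphi)$ on elements of $B$, and by the very definition $F(\varphi)(b)=0$ for all $b\in B$. Hence $i_{F(\varphi)}(\bar a^{(1)}_i\beta_i\bar a^{(2)}_i)=0$, so the reduced version vanishes as well. For part (i): I would evaluate $F(\varphi)$ on $m_i\in M=\T^1_BM$ using the displayed formula for $(\varphi(\widetilde{\du}\,a))^\circ$ specialised to $r=1$, which gives $F(\varphi)(m_i)=\sigma_{(12)}\big((a'\sigma''(m_i))\otimes(\sigma'(m_i)a'')\big)$ when the weights match, and $0$ otherwise. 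The weight bookkeeping is the crucial point: $\sigma\in M_N^\vee$ has weight $-N$, so $\sigma'(m_i)\otimes\sigma''(m_i)\in A\otimes A$ has weight $\lvert m_i\rvert-N$; since this must land in $A\otimes A$ with nonnegative-weight components and $M$ is concentrated in weights $\le N$, the only surviving contribution is $\lvert m_i\rvert=N$, in which case $\sigma(m_i)\in B\otimes B$. Plugging this into the Leibniz formula and then applying ${}^\circ(-)=m\circ\sigma_{(12)}$-twisted map yields exactly $a^{(1)}_i\,{}^\circ\!\big(a'\sigma''(m_i)\otimes\sigma'(m_i)a''\big)\,a^{(2)}_i$, matching the claim.

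The main obstacle I expect is purely bookkeeping: tracking the Koszul signs and the placement of the outer-bimodule actions through the chain $f^{-1}$, the contraction $i_{F(\varphi)}$, and the reduction ${}^\circ(-)$, and making sure that the weight constraint $\lvert m_i\rvert\le N$ combined with $\lvert\sigma\rvert=-N$ really forces the dichotomy. Since the paper has already noted (just before the claim) that signs can be ignored because the map being built turns out to be zero, I would carry out the weight and bimodule-structure analysis carefully but suppress explicit signs, writing $\pm$ where necessary, exactly as in the surrounding text. Concretely, the steps in order are: (1) rewrite both families of generators via $f$ as honest elements of $\Omega^1_RA$; (2) apply $i_{F(\varphi)}$ using Lemma~\ref{i-leibniz}(i) and $i_{F(\varphi)}(A)=0$, reducing everything to $F(\varphi)$ evaluated on $B$ (for (ii)) or on $M$ (for (i)); (3) invoke $F(\varphi)|_B=0$ to finish (ii); (4) compute $F(\varphi)|_M$ from the displayed formula, extract the weight constraint, and assemble the two cases of (i) after applying ${}^\circ(-)$.
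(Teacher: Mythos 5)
Your proposal is correct and follows essentially the same route as the paper: transport the generators of $\widetilde{\Omega}/Q$ into $\Omega^1_RA$ via the isomorphism $f$ of Proposition~\ref{5.2.3}, reduce via the Leibniz rule to ${}^\circ(F(\varphi)(-))$ evaluated on $m_i$ (resp.\ on elements of $B$), then use $F(\varphi)|_B=0$ for (ii) and the weight constraint coming from $\sigma\in M_N^\vee$ together with non-negative grading to obtain the dichotomy in (i). The sign-suppression convention you adopt is also exactly the one the paper uses at this point.
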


 \begin{proof}
   Observe that we do not know how the operator $\iota_{F(\varphi)}$
   acts on elements of $\widetilde{\Omega}/Q$, but we do know how it
   acts on elements of $\Omega^{1}_{R} A$. Hence we
   have to use the canonical isomorphism $f$ between these objects and
   then apply the operator $\iota_{F(\varphi)}$.

\textup{(i)} Firstly,
\begin{align*}
 \iota_{F(\varphi)}(a^{(1)}_i\otimes m_i\otimes a^{(2)}_i) &=\iota_{F(\varphi)}(a^{(1)}_i (\du_A m_i ) a^{(2)}_i)
 \\
 &=a^{(1)}_i \iota_{F(\varphi)}(\du_Am_i)a^{(2)}_i
 \\
 &= a^{(1)}_i{}^{\circ}(F(\varphi)(m_i)) a^{(2)}_i
\end{align*}
We have to distinguish two cases:
\begin{enumerate}
\item [\textup{(a)}]
\textbf{Case $\lvert m_i\rvert<N$:} Since $\sigma\in M^\vee_N$, $\sigma(m_i)=0$ since $(A\otimes A)_{(j)}=\{0\}$ with $j<0$. Thus, $ \iota_{F(\varphi)}(a^{(1)}_i\otimes m_i\otimes a^{(2)}_i) =0$.
\item [\textup{(b)}]
\textbf{Case $\lvert m_i\rvert =N$:}
\begin{align*}
 \iota_{F(\varphi)}(a^{(1)}_i\otimes m_i\otimes a^{(2)}_i) &= a^{(1)}_i{}^{\circ}(F(\varphi)(m_i)) a^{(2)}_i
 \\
 &=a^{(1)}_i {}^{\circ}(a^{\prime}\sigma^{\prime\prime}(m_i)\otimes \sigma^{\prime}(m_i)a^{\prime\prime})a^{(2)}_i\in A
 \end{align*}
 \end{enumerate}

\textup{(ii)} This case is similar. By definition of $F(\varphi)$,
\begin{align*}
  \iota_{F(\varphi)}(\overline{a}^{(1)}_i\otimes \beta_i \otimes \overline{a}^{(2)}_i) &=\iota_{F(\varphi)}(\overline{a}^{(1)}_i ( b^{(1)}_i\du_Ab^{(2)}_i )  \overline{a}^{(2)})
  \\
  &= \bar{a}^{(1)}_ib^{(1)}_i {}^{\circ}(F(\varphi)(b^{(2)}_i))\overline{a}^{(2)}_i=0
\end{align*}\qedhere
\end{proof}

Now, we will use the Claim \ref{claim tecnico} to calculcate each summand in \eqref{omega-iota}:

\begin{itemize}
\item
\textbf{Case $\tilde{\omega}_{\text{BB}}$:}
\\
As $\lvert \widetilde{\omega}_{\text{BB}}\rvert=N$ and $\lvert \beta_1\rvert=\lvert \beta_2\rvert=0$, $\lvert \overline{a}^{(1)}_i\rvert+\lvert \overline{a}^{(2)}_i\rvert=N$, with $\overline{a}^{(1)}_i,\overline{a}^{(2)}_i\in A$ for $i=1,2$. Without loss of generality, in this case, we can assume that $\lvert \overline{a}^{(1)}_{1}\rvert =N$. Then
\begin{align*}
\iota(\widetilde{\omega}_{\text{BB}})(F(\varphi))&=\iota_{F(\varphi)}(\widetilde{\omega}_{\text{BB}})
\\
&=\iota_{F(\varphi)}(\widetilde{\beta}_1\otimes\widetilde{\beta}_2)
\\
&=\left(\iota_{F(\varphi)}(\widetilde{\beta}_1)\right) \widetilde{\beta}_2+\widetilde{\beta}_1\left(\iota_{F(\varphi)}(\widetilde{\beta}_2)\right)
\\
&=\left(\iota_{F(\varphi)}(\overline{a}^{(1)}_1\otimes \beta_1 \otimes \overline{a}^{(2)}_1) \right)\widetilde{\beta}_2+\widetilde{\beta}_1\left(\iota_{F(\varphi)}(\overline{a}^{(1)}_2\otimes \beta_2 \otimes \overline{a}^{(2)}_2)\right)
\\
&=0
\end{align*}

\item
\textbf{Case $\widetilde{\omega}_{\text{MM}}$:}
\\
As $\lvert \widetilde{\omega}_{\text{MM}}\rvert=N$ and $\lvert m_i\rvert\geq 1$, then $\lvert m_i\rvert<N$ for $i=1,2$. Then, using Claim \ref{claim tecnico}:
\begin{align*}
\iota(\widetilde{\omega}_{\text{MM}})(F(\varphi))&=\iota_{F(\varphi)}(\widetilde{\omega}_{\text{MM}})
\\
&=\iota_{F(\varphi)}(\widetilde{m}_1\otimes \widetilde{m}_2)
\\
&=\left(\iota_{F(\varphi)}(\widetilde{m}_1) \right)\widetilde{m}_2+\widetilde{m}_1 \left(\iota_{F(\varphi)}(\widetilde{m}_2)\right)
\\
&=\left(\iota_{F(\varphi)}(a^{(1)}_1\otimes m_1 \otimes a^{(2)}_1) \right)\widetilde{m}_2+\widetilde{m}_1\left(\iota_{F(\varphi)}(a^{(1)}_2\otimes m_2 \otimes a^{(2)}_2)\right)
\\
&=0.
\end{align*}

\item
\textbf{Case $\widetilde{\omega}_{\text{MB}}$:}
\\
In this case, $\lvert \beta_3\rvert=0$, so $N\geq \lvert m_3\rvert \geq 1$. Again, by the Leibniz rule and Claim \ref{claim tecnico}:
\begin{align*}
\iota(\widetilde{\omega}_{\text{MB}})(F(\varphi))&=\iota_{F(\varphi)}(\widetilde{\omega}_{\text{MB}})
\\
&=\iota_{F(\varphi)}(\widetilde{m}_3\otimes \widetilde{\beta}_3)
\\
&=\left(\iota_{F(\varphi)}(\widetilde{m}_3)\right)\widetilde{\beta}_3+\widetilde{m}_3\left(\iota_{\delta(\varphi)}(\widetilde{\beta}_3)\right)
\\
&=\left(\iota_{F(\varphi)}(a^{(1)}_3\otimes m_3 \otimes a^{(2)}_3)\right)\widetilde{\beta}_3+\widetilde{m}_3\left(\iota_{F(\varphi)}(\overline{a}^{(1)}_3\otimes \beta_3 \otimes \overline{a}^{(2)}_3)\right)
\\
&=\left(a^{(1)}_3{}^{\circ}( F(\varphi)(m_3))  a^{(2)}_3\right)\widetilde{\beta}_3
\end{align*}

Now we have to distinguish two cases depending on the weight of $m_3$:

\begin{enumerate}
\item [\textup{(a)}]
\textbf{Case $\lvert m_3\rvert <N$:}
by Claim \ref{claim tecnico},
\[
\iota(\widetilde{\omega}_{\text{MB}})(F(\varphi))=0.
\]
\item [\textup{(b)}]
\textbf{Case $\lvert m_3\rvert =N$:}
by the same Claim,
\begin{align*}
\iota(\widetilde{\omega}_{\text{MB}})(F(\varphi))&=\left(a^{(1)}_3 {}^{\circ}(a^{\prime}\sigma^{\prime\prime}(m_3)\otimes \sigma^{\prime}(m_3)a^{\prime\prime})a^{(2)}_3\right)\widetilde{\beta}_3
\\
&\in \left((A\otimes_B\Omega^{1}_{R} B\otimes_B A)\oplus 0\right)\text{mod }Q \subset \widetilde{\Omega}/Q
\end{align*}
\end{enumerate}

\item
\textbf{Case $\widetilde{\omega}_{\text{MB}}$}
\\
It is similar to the previous case.
\end{itemize}

In conclusion, $\iota(\widetilde{\omega})(F(\varphi))\in (A\otimes_B
\Omega^1_R B\otimes_B A\oplus 0)\;\text{mod }Q$.  For the last step,
we define the map $g$ making the following diagram commutative:
\[
\xymatrix{
0\ar[r] &A\otimes_{B} \Omega^{1}_{R} B\otimes_{B}A \ar[r]^-{\varepsilon}  & \Omega^{1}_{R} A \ar[r]^-{\nu} \ar[d]^{\cong}& A\otimes_{B} M\otimes_{B}A\ar[r] & 0
\\
& & \tilde{\Omega}/Q \ar[ru]^{g}
}
\]
By Proposition \ref{5.2.3}, we know that $\nu$ is the projection onto the second direct summand of $\widetilde{\Omega}/Q$ so $g(\iota(\widetilde{\omega}_{\text{MB}})(F(\varphi)))$ is zero in $A\otimes_B M\otimes_B A$.
%
The universal property of the kernel allows us to conclude the existence of the dashed maps
\[
\xymatrix{
& A\otimes_B M^\vee_N\otimes_B A\ar@{^{(}->}[d]
\\
0\ar[r] & A\otimes_{B} M^{\vee}\otimes_{B}A \ar[r]^-{\nu^\vee} \ar@{-->}[d]  & \D_{R} A\ar[r]^-{\varepsilon^\vee} \ar[d]^{\iota(\omega)}&  A\otimes_{B} \D_{R} B \otimes_{B}A \ar[r]  \ar@{-->}[d] & 0
\\
0\ar[r] &A\otimes_{B} \Omega^{1}_{R} B\otimes_{B}A \ar[r]^-{\varepsilon}  & \Omega^{1}_{R} A \ar[r] ^-{\nu}& A\otimes_{B} M\otimes_{B}A\ar[r] & 0
}
\]
making this diagram commutes. Finally, it follows that we constructed the following map:
\begin{equation}
\xymatrix{
A\otimes_B M^{\vee}_N\otimes_B A \ar@{-->}[r]  & A\otimes_{B} \Omega^{1}_{R} B\otimes_{B}A
}
\label{diagrama directo}
\end{equation}




Next, we will consider the `inverse' diagram:
\begin{equation}
\xymatrix{
0\ar[r] &A\otimes_{B} \Omega^{1}_{R} B\otimes_B A\ar[r]^-{\varepsilon}& \Omega^{1}_{R} A \ar[r]^-{\nu} \ar[d]^{\iota(\omega)^{-1}}&A\otimes_{B} M\otimes_{B}A \ar[r] & 0
\\
0\ar[r] &A \otimes_{B} M^{\vee}\otimes_{B}A \ar[r]^-{\nu^\vee}& \D_{R} A\ar[r]^-{\varepsilon^\vee}  &  A\otimes_{B} \D_{R} B \otimes_{B}A  \ar[r]& 0
}
\label{inverse-diagram-prueba}
\end{equation}

In a first stage, our aim is to construct the following dashed arrow:
\[
\xymatrix{
A\otimes_B \Omega^{1}_{R} B \otimes_B A  \ar@{-->}[r] &A\otimes_B M^{\vee} \otimes_B A
}
\]
which makes the previous diagram commutative.

We begin by recalling that since $\widetilde{\Omega}=(A\otimes_B \Omega^{1}_{R} B \otimes_B A)\oplus (A\otimes_R M \otimes_R A)$, $h$ is the imbedding of the first direct summand in $\widetilde{\Omega}$ (see Proposition \ref{5.2.3}), $\texttt{proj}$ is the natural projection and the isomorphism $f$ was defined in \eqref{f},
\[
\xymatrix{
& & \tilde{\Omega} \ar@{->>}[d]^{\texttt{proj}}
\\
& & \tilde{\Omega}/Q\ar[d]^{f}
\\
0\ar[r] &A\otimes_{B} \Omega^{1}_{R} B\otimes_B A\ar[r]^-{\varepsilon}  \ar[uur]^{h} & \Omega^{1}_{R} A \ar[r]^-{\nu}& A\otimes_{B} M\otimes_{B}A \ar[r] & 0
}
\]

Let $a^{\prime},a^{\prime\prime}\in A$, $b\in B$ and $\du_Bb\in \Omega^{1}_R B$. Then $a^{\prime}\otimes \du_B b\otimes a^{\prime\prime}\in A\otimes_B \Omega^1_R B\otimes_ B A$.
It is a simple calculation that
\begin{equation}
\varepsilon\colon A\otimes_B \Omega^{1}_{R} B \otimes_B A \lto \Omega^{1}_R A\colon \quad a^{\prime}\otimes \du_Bb\otimes a^{\prime\prime} \longmapsto a^{\prime}(\du_Ab)a^{\prime\prime}
\label{varepsilon}
\end{equation}

Now, we focus on the vertical arrow of the diagram \eqref{inverse-diagram-prueba}. Observe that since $\omega$ is a bi-symplectic form of weight $N$, $\iota(\omega)^{-1}$ has weight $-N$. In fact,
%
%
%
using \eqref{10.333}, we can write this double Poisson bracket in terms of the Hamiltonian double derivation. Nevertheless, since $\lr{-,-}_\omega$ is $A$-bilinear with respect to the outer bimodule structure on $A\otimes A$ in the second argument and $A$-bilinear with respect to the inner bimodule structure on $A\otimes A$ in the first argument, it is enough to consider $a^{\prime}=a^{\prime\prime}=1_A$. Then
\begin{equation}
\left(\iota(\omega)^{-1}\circ\varepsilon\right)(1_A\otimes\du_B b \otimes 1_A)=\lr{b,-}_\omega = H_{b}
\label{antesantes}
\end{equation}
Observe that $H_{b}\in \D_R A$  has weight $-N$. Finally, since $\texttt{inj}$ is the imbedding of $A\otimes_B\Omega^{1}_R B\otimes_B A$ in the first direct summand of $\widetilde{\Omega}$, we will determine $\Psi$ to the square in the following diagram commutes: :
\[
\xymatrix{
0\ar[r] &A\otimes_{B} M^{\vee}\otimes_{B}A \ar[r]^-{\varepsilon^\vee} & \D_{R} A \ar[r]^-{\nu^\vee}\ar[d]^{\Psi} & A\otimes_{B} \D_R B\otimes_{B}A\ar[r] & 0
\\
& & (\Omega^{1}_{R} A)^{\vee}  \ar[d]^{ f^{\vee}}
\\
& & (\widetilde{\Omega}/Q)^{\vee}   \ar@{->>}[d]^{\texttt{proj}^{\vee}}
\\
& & (\widetilde{\Omega})^{\vee} \ar@{^{(}->}[r]^-{(\texttt{inj})^{\vee}}& (A\otimes_B\Omega^{1}_R A\otimes_B A)^{\vee}\ar[uuu]^{\cong}
}
\]
In this diagram, we define
\[
\Psi\colon\D_RA\lto ( \Omega^{1}_R A)^{\vee}\colon \quad \Theta \longmapsto \Psi(\Theta)
\]
given by
\[
\Psi(\Theta)\colon \Omega^{1}_R A\lto A\otimes A\colon \quad \alpha\longmapsto \Psi(\Theta)(\alpha)= (i_\Delta \alpha)^{\circ}=\pm i^{\prime\prime}_\Theta (\alpha) \otimes i^{\prime}_\Theta (\alpha),
\]
where $\pm\defeq (-1)^{(\;\mm{\;i^\prime_\Theta(\alpha)\;}\;\;\mm{\;i^{\prime\prime}_\Theta(\alpha)\;}\;+\lvert i^{\prime}_\Theta(\alpha)\rvert\lvert i^{\prime\prime}_\Theta(\alpha)\rvert)}$ (see \eqref{circulo derecha}).
When we apply $\Psi$ to the element in \eqref{antesantes}:
\begin{equation}
\Psi \colon \D_R A  \lto (\Omega^{1}_{R} A)^{\vee}\colon\quad  H_{b}  \longmapsto  i_{H_b},
\label{Psi}
\end{equation}
such that
\begin{equation}
\begin{aligned}
 i_{H_b}\colon \Omega^{1}_{R} A &\lto A\otimes A
 \\
  \overline{c}_1\du_A\overline{c}_2&\longmapsto (   \overline{c}_1H_{b}(\overline{c}_2))^\circ
\end{aligned}
\end{equation}
Next, applying $f^{\vee}$, we obtain the following:
\begin{equation}
\begin{aligned}
(f^{\vee}\circ\Psi) (H_{b} )\colon \widetilde{\Omega}/Q &\to A\otimes A
\\
\left( \begin{array}{c}
\overline{a}^{(1)}_2\otimes b^{(1)}_1\du_Bb^{(2)}_1\otimes\overline{a}^{(2)}_2
\\
+
\\
a^{(1)}_2\otimes m \otimes a^{(2)}_2
 \end{array} \right)\text{mod } Q &\mapsto \left( \begin{array}{c}
\left( \overline{a}^{(1)}_2b^{(1)}_1H_{b}(b^{(2)}_1)\overline{a}^{(2)}_2\right)^\circ
\\
+
\\
 \left( a^{(1)}_2H_{b}(m) a^{(2)}_2\right)^\circ
 \end{array} \right)
 \end{aligned}
 \label{donde-aparece-H}
 \end{equation}

 To shorten the notation, we make the following definition:
 \[
 L\defeq \left(\texttt{proj}^\vee\circ f^\vee\circ\Psi\right)(H_{b}  ).
 \]
  Finally, since  \begin{align*}
\texttt{ inj}^{\vee}\circ L \colon A\otimes_B\Omega^1_R A\otimes_B A &\lto A\otimes A
\\
 \overline{a}^{(1)}_2\otimes b^{(1)}_1\du_Bb^{(2)}_1\otimes \overline{a}^{(2)}_2 &\longmapsto  \left(\overline{a}^{(1)}_2b^{(1)}_1H_{b}(b^{(2)}_1)\overline{a}^{(2)}_2\right)^\circ
 \end{align*}
The key point is to observe that since $b, b^{(2)}_1\in B$, $\lvert b\rvert=\lvert b^{(2)}_1\rvert=0$. Thus, $\lvert H_{b^{\prime\prime}}(b^{(2)}_1)\rvert=-N<0$. Thus, $H_{b}(b^{(2)}_1)=0$ because $A$ is a bi-symplectic tensor $\mathbb{N}$-algebra so, in particular, it is non-negatively graded. By the universal property of the kernel, we conclude the existence of the dashed arrows which makes the following diagram commutes
 \[
 \xymatrix{
0\ar[r] &A\otimes_{B} \Omega^{1}_{R} B\otimes_B A\ar[r]^-{\varepsilon}  \ar@{-->}[d] & \Omega^{1}_{R} A \ar[r]^-{\nu} \ar[d]^{\iota(\omega)^{-1}}&A\otimes_{B} M\otimes_{B}A \ar[r]  \ar@{-->}[d] & 0
\\
0\ar[r] &A \otimes_{B} M^{\vee}\otimes_{B}A \ar[r]^-{\nu^\vee}& \D_{R} A\ar[r]^-{\varepsilon^\vee}  &  A\otimes_{B} \D_{R} B \otimes_{B}A  \ar[r]& 0
}
\]
In special, we are interested in the map
\begin{equation}
\xymatrix{
 A\otimes_B\Omega^1_RB\otimes_B A \ar@{-->}[r] &A\otimes_B M^{\vee}\otimes_B A
}
\label{map-intermedia-bidual}
\end{equation}
Finally, in \eqref{donde-aparece-H}, we point out that $ \left( a^{(1)}_2H_{b}(m) a^{(2)}_2\right)^\circ=0$ unless $m\in M_N$ since $\lvert H_{b}\rvert=\lvert m\rvert-N$. Hence, as a consequence of this discussion and using \eqref{map-intermedia-bidual}, we obtain:
\begin{equation}
\xymatrix{
 A\otimes_B\Omega^1_RB\otimes_B A \ar@{-->}[r] &A\otimes_B M^{\vee}_N\otimes_B A
}
\label{otraotra}
\end{equation}
By construction, \eqref{diagrama directo} and \eqref{otraotra} are inverse to each other. So, we proved the existence of the following isomorphism:
\[
 A\otimes_B\Omega^1_R B\otimes_B A \cong   A\otimes_B M^{\vee}_N\otimes_B A
 \]

 Or, equivalently using the fact that, by hypothesis, $B$ is a smooth associative $R$-algebra,
\begin{equation}
 A\otimes_B\D_RB\otimes_B A\cong   A\otimes_B M_N \otimes_B A
 \end{equation}
For reasons that we will clarify below, we make precise this isomorphism:
\begin{claim}
Let $(A,\omega)$ be a bi-symplectic tensor $\mathbb{N}$-algebra of weight $N$. Then $\iota(\omega)^{-1}$ restricts to a $B$-bimodule isomorphism
\begin{equation}
E_N \cong \D_R B.
\label{restriccionrestriccion}
\end{equation}
\label{Claimrestriction}
\end{claim}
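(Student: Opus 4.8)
The plan is to deduce the $B$-bimodule isomorphism $E_N\cong\D_R B$ from the $A$-bimodule isomorphism $A\otimes_B\D_R B\otimes_B A\cong A\otimes_B M_N\otimes_B A$ established just above, by restricting to the graded summand of extremal weight, where the ambient $A$-module structures collapse to $B$-module structures.

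First I would record precisely in what sense that isomorphism comes from $\iota(\omega)$. Combining the commutative diagrams \eqref{DIAGRAMA} and \eqref{inverse-diagram-prueba} with the mutually inverse maps \eqref{diagrama directo} and \eqref{otraotra}, the graded $A$-bimodule isomorphism $\iota(\omega)\colon\D_R A\lra{\cong}\Omega^1_R(A)[-N]$ of Definition~\ref{bi-sympldef} carries the graded subbimodule $\nu^\vee(A\otimes_B M^\vee_N\otimes_B A)\subseteq\ker(\varepsilon^\vee)$ of $\D_R A$ isomorphically onto the graded subbimodule $\varepsilon(A\otimes_B\Omega^1_R B\otimes_B A)=\operatorname{im}(\varepsilon)$ of $\Omega^1_R A$, and $\iota(\omega)^{-1}$ does the reverse. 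Applying $\Hom_{A^\e}(-,{}_{A^\e}A^\e)$ to this restricted isomorphism, and using the smoothness of $B$ over $R$ (which makes $\Omega^1_R B$ finitely generated projective with $(\Omega^1_R B)^\vee=\D_R B$) together with biduality for the finitely generated projective $B$-bimodules involved, yields the graded $A$-bimodule isomorphism $\widetilde\iota(\omega)\colon A\otimes_B\D_R B\otimes_B A\lra{\cong}A\otimes_B M_N\otimes_B A$ named in Theorem~\ref{weight 0}. This map is homogeneous of a fixed degree, the shift being the one governed by contraction with the weight-$N$ form $\omega$ (cf.\ the count $\lvert\iota_\Theta\omega\rvert=\lvert\Theta\rvert+N$ in the proof of Lemma~\ref{bi-symp-double-Poisson}).

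Next I would pass to extremal weights. Since $B$ is ungraded, $\D_R B$ is concentrated in weight $0$; using $A^i=0$ for $i<0$ and $A^0=B$, the weight-$w$ component of $A\otimes_B\D_R B\otimes_B A$ is $\bigoplus_{i+j=w}A^i\otimes_B\D_R B\otimes_B A^j$, whose unique minimal term is the weight-$0$ piece $B\otimes_B\D_R B\otimes_B B\cong\D_R B$. Likewise $M_N=E_N[-N]$ is concentrated in weight $N$, so the weight-$w$ component of $A\otimes_B M_N\otimes_B A$ has unique minimal term the weight-$N$ piece $B\otimes_B M_N\otimes_B B\cong E_N$. A homogeneous bijection of graded modules carries the unique minimal-weight graded summand of its source onto that of its target; hence $\widetilde\iota(\omega)$ restricts to a bijection between $\D_R B$ and $E_N$, and since the residual left and right actions on these summands are by $A^0=B$, this restriction is $B$-bilinear. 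It coincides with the map $\widetilde\iota(\omega)_{(0)}$ in the statement of Theorem~\ref{weight 0}, and its inverse is the restriction of $\iota(\omega)^{-1}$ asserted in the Claim, giving $E_N\cong\D_R B$.

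I do not expect a serious obstacle, because the substantive step — that $\iota(\omega)$ respects the subbimodules $A\otimes_B M^\vee_N\otimes_B A$ and $A\otimes_B\Omega^1_R B\otimes_B A$ — has already been carried out in the body of the appendix via \eqref{diagrama directo} and \eqref{otraotra}. The points needing care are bookkeeping ones: tracking the $N$-shift consistently; checking that the weight-$w$ decompositions above are genuine direct sums with a unique minimal term (which follows from $A=\bigoplus_{n\geq0}A^n$ as a graded $B$-bimodule and additivity of $\otimes_B$); and confirming that the surviving module structures on the extremal summands are those of $B$ and not merely of $A$. A subsidiary point requiring justification is the dualization used above, namely that $\Hom_{A^\e}(-,{}_{A^\e}A^\e)$ converts $A\otimes_B M^\vee_N\otimes_B A\cong A\otimes_B\Omega^1_R B\otimes_B A$ into $A\otimes_B M_N\otimes_B A\cong A\otimes_B\D_R B\otimes_B A$; this again rests on smoothness of $B$ over $R$ and on the finite generation and projectivity of $M_N$ and $\Omega^1_R B$.
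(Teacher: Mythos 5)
Your proposal is correct and follows essentially the same route as the paper's own proof: both take the already-established isomorphism $A\otimes_B\D_R B\otimes_B A\cong A\otimes_B M_N\otimes_B A$, pin down its homogeneity degree via the weight of $\omega$ (the paper phrases this as $\lvert\varepsilon\rvert=0$ and $\lvert\iota(\omega)^{-1}\rvert=-N$), and then restrict to the extremal weight components $B\otimes_B\D_R B\otimes_B B\cong\D_R B$ and $B\otimes_B M_N\otimes_B B\cong E_N$, where the ambient $A$-actions collapse to $B$-actions. Your explicit remark that a homogeneous bijection must match the unique minimal-weight summands is just a slightly more careful packaging of the same bookkeeping.
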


\begin{proof}
Observe that in the commutative diagram
\[
\xymatrix{
0\ar[r] &A\otimes_{B} \Omega^{1}_{R} B\otimes_B A  \ar[r]^-{\varepsilon}  \ar@{-->}[d]& \Omega^{1}_{R} A \ar[r]^-{\nu} \ar[d]^{\iota(\omega)^{-1}}&A\otimes_{B} M\otimes_{B}A \ar[r]\ar[d] & 0
\\
0\ar[r] &A \otimes_{A} M^{\vee}\otimes_{B}A \ar[r]^-{\nu^\vee} & \D_{R} A\ar[r]^-{\varepsilon^\vee} &  A\otimes_{B} \D_{R} B \otimes_{B}A  \ar[r] & 0
}
\]
the dashed arrow is $\iota(\omega)^{-1}\circ \varepsilon$. We will see that the weight of this map is -N. This is equivalent to prove that $\lvert \varepsilon\rvert=0$ since $\omega$ is a bi-symplectic form of weight $N$. As we discussed in \eqref{varepsilon},
\[
\varepsilon\;\colon  A\otimes_B\Omega^{1}_R B\otimes_B A\lto\Omega^{1}_R A\colon\quad  a^{\prime}\otimes b^{\prime}\du_Bb^{\prime\prime}\otimes a^{\prime\prime}\longmapsto a^{\prime} (b^{\prime}\du_Ab^{\prime\prime}) a^{\prime\prime}.
\]
Thus, it is immediate that $\lvert \varepsilon\rvert=0$.

Finally, observe that $A$ is non-negatively graded and $M_N$ has weight $N$ while $\D_R B$ has weight 0. Note that the part of weight 0 of $A\otimes_B\D_R B\otimes_B A$ is $B\otimes_B\D_R B\otimes_B B$ which is isomorphic to $\D_R B$. Similarly, $(A\otimes_B M_{N}\otimes_B A)_{N}=B\otimes_BM_{N}\otimes_B B$, where $(-)_N$ denotes the part of weight $N$. Thus, we obtain the following isomorphism of $B$-bimodules,
\[
E_N \cong \D_R B\qedhere
\]
\end{proof}

Claim \ref{Claimrestriction} concludes the proof of Theorem
\ref{weight 0}.
\qed

\end{document}